\tikzstyle{vertex}=[ circle, fill, draw, inner sep=0pt, minimum size=4pt,]
\tikzstyle{edge}= [thick]
\theoremstyle{definition}
\newtheorem{thm}{Theorem}[subsection]
\newtheorem{prop}[thm]{Proposition}
\newtheorem{con}[thm]{Conjecture}
\newtheorem{lem}[thm]{Lemma}
\newtheorem{ex}[thm]{Example}
\newtheorem{rem}[thm]{Remark}
\numberwithin{equation}{subsection}
\newcommand{\wt}{\operatorname{wt}}
\newcommand{\nc}{\newcommand}
\newcommand{\rnc}{\renewcommand}
\nc{\cal}{\mathcal} \nc{\goth}{\mathfrak} \rnc{\bold}{\mathbf}
\newcommand{\supp}{\operatorname{supp}}
\newcommand{\seg}{\mathbb S}	
\newcommand{\mseg}{\mathbb M}
\renewcommand{\Bbb}{\mathbb}
\nc\bomega{{\mbox{\boldmath $\omega$}}} \nc\bpsi{{\mbox{\boldmath $\Psi$}}}
 \nc\balpha{{\mbox{\boldmath $\alpha$}}}
 \nc\bbeta{{\mbox{\boldmath $\beta$}}}
 \nc\bpi{{\mbox{\boldmath $\pi$}}}
  \nc\bpis{{\mbox{\boldmath \scriptsize$\pi$}}}
 \nc\bullets{{\mbox{\scriptsize $\bullet$}}}
 \nc\bvarpis{{\mbox{\boldmath \scriptsize$\varpi$}}}
  \nc\bvarpi{{\mbox{\boldmath $\varpi$}}}
  \def\bs#1{\boldsymbol{#1}}
  \def\tlie#1{\tilde{\mathfrak{#1}}}
\def\endd{\hfill$\diamond$}
\newcommand{\spar}{\operatorname{spar}}
\nc\bepsilon{{\mbox{\boldmath $\epsilon$}}}
  \nc\bomegas{{\mbox{\boldmath\scriptsize $\omega$}}}
  \nc\bepsilons{{\mbox{\boldmath \scriptsize$\epsilon$}}}
\nc\hlien{\hat{\lie n}^+}
  \nc\bxi{{\mbox{\boldmath $\xi$}}}
\nc\bmu{{\mbox{\boldmath $\mu$}}} \nc\bcN{{\mbox{\boldmath $\cal{N}$}}} \nc\bcm{{\mbox{\boldmath $\cal{M}$}}} \nc\blambda{{\mbox{\boldmath
$\lambda$}}}
\newcommand{\lie}[1]{\mathfrak{#1}}
\def\section{\def\@secnumfont{\mdseries}\@startsection{section}{1}%
  \z@{.7\linespacing\@plus\linespacing}{.5\linespacing}%
  {\normalfont\scshape\centering}}
\def\subsection{\def\@secnumfont{\bfseries}\@startsection{subsection}{2}%
  {\parindent}{.5\linespacing\@plus.7\linespacing}{-.5em}%
  {\normalfont\bfseries}}
 \nc{\Hom}{\operatorname{Hom}}
  \nc{\mode}{\operatorname{mod}}
\nc{\End}{\operatorname{End}} \nc{\wh}[1]{\widehat{#1}} \nc{\Ext}{\operatorname{Ext}}
 \nc{\ch}{\operatorname{ch}} \nc{\ev}{\operatorname{ev}}
\nc{\Ob}{\operatorname{Ob}} \nc{\soc}{\operatorname{soc}} \nc{\rad}{\operatorname{rad}} \nc{\head}{\operatorname{head}}
\def\Im{\operatorname{Im}}
 \nc{\Cal}{\cal} \nc{\Xp}[1]{X^+(#1)} \nc{\Xm}[1]{X^-(#1)}
\nc{\on}{\operatorname} \nc{\Z}{{\bold Z}} \nc{\J}{{\cal J}} \nc{\C}{{\bold C}} \nc{\Q}{{\bold Q}}
\nc{\N}{{\Bbb N}} \nc\boa{\bold a} \nc\bob{\bold b} \nc\boc{\bold c} \nc\bod{\bold d} \nc\boe{\bold e} \nc\bof{\bold f} \nc\bog{\bold g}
\nc\boh{\bold h} \nc\boi{\bold i} \nc\boj{\bold j} \nc\bok{\bold k} \nc\bol{\bold l} \nc\bom{\bold m} \nc\bon{\bold n} \nc\boo{\bold o}
\nc\bop{\bold p} \nc\boq{\bold q} \nc\bor{\bold r} \nc\bos{\bold s} \nc\boT{\bold t} \nc\boF{\bold F} \nc\bou{\bold u} \nc\bov{\bold v}
\nc\bow{\bold w} \nc\boz{\bold z} \nc\boy{\bold y} \nc\ba{\bold A} \nc\bb{\bold B} \nc\bc{\mathbb C} \nc\bd{\bold D} \nc\be{\bold E} \nc\bg{\bold
G} \nc\bh{\bold H} \nc\bi{\bold I} \nc\bj{\bold J} \nc\bk{\bold K} \nc\bl{\bold L} \nc\bm{\bold M}  \nc\bo{\bold O} \nc\bp{\bold
P} \nc\bq{\bold Q} \nc\br{\bold R}  \nc\bt{\bold T} \nc\bu{\bold U} \nc\bv{\bold V} \nc\bw{\bold W} \nc\bx{\bold x} \nc\KR{\bold{KR}} \nc\rk{\bold{rk}} 
\nc\het{\text{ht }}
\nc\bz{\mathbb Z}
\nc\bn{\mathbb N}
\nc\toa{\tilde a} \nc\tob{\tilde b} \nc\toc{\tilde c} \nc\tod{\tilde d} \nc\toe{\tilde e} \nc\tof{\tilde f} \nc\tog{\tilde g} \nc\toh{\tilde h}
\nc\toi{\tilde i} \nc\toj{\tilde j} \nc\tok{\tilde k} \nc\tol{\tilde l} \nc\tom{\tilde m} \nc\ton{\tilde n} \nc\too{\tilde o} \nc\toq{\tilde q}
\nc\tor{\tilde r} \nc\tos{\tilde s} \nc\toT{\tilde t} \nc\tou{\tilde u} \nc\tov{\tilde v} \nc\tow{\tilde w} \nc\toz{\tilde z} \nc\woi{w_{\omega_i}}
\nc\chara{\operatorname{Char}}
\begin{document}
	\title[]{Imaginary modules arising from\\ tensor products of snake modules }
	\author{Matheus Brito}
	\address{Departamento de Matematica, UFPR, Curitiba - PR - Brazil, 81530-015}
	\email{mbrito@ufpr.br}
	\thanks{The work of M.B. and A.M. was partially supported CNPq grant 405793/2023-5. The work of A.M. was also partially supported by Fapesp grant 2018/23690-6. }
	\author{Adriano Moura}
	\address{Departamento de Matemática, Universidade Estadual de Campinas, Campinas - SP - Brazil, 13083-859.}
	\email{aamoura@unicamp.br}
	\thanks{}

	\begin{abstract}
	Motivated by the limitations of cluster algebra techniques in detecting imaginary modules, we build on the representation-theoretic framework developed by the first author and Chari to extend the construction of such modules beyond previously known cases, which arise from the tensor product of a higher-order Kirillov--Reshetikhin module and its dual. Our first main result gives an explicit description of the socle of tensor products of two snake modules, assuming the corresponding snakes form a covering pair of ladders. By considering a higher-order generalization of the covering relation, we describe a sequence of inclusions of highest-$\ell$-weight submodules of such tensor products. We conjecture all the quotients of subsequent modules in this chain of inclusions are simple and imaginary, except for the socle itself, which might be real. We prove the first such quotient is indeed simple and, assuming an extra mild condition, we also prove it is imaginary, thus giving  rise to new classes of imaginary modules within the category of finite-dimensional representations of quantum loop algebras in type $A$.
    \end{abstract}

\maketitle
\section{Introduction} 

This paper is dedicated to the study of certain aspects of the tensor product of two prime snake modules for a quantum loop algebra $U_q(\tlie g)$ in the case that the finite-dimensional simple Lie algebra $\lie g$ is of type $A$. We assume $q$ is neither zero nor a root of unity. Since the discovery by Hernandez-Leclerc \cite{HL10, HL13a} that the Grothendieck rings of certain proper subcategories of the abelian category of finite-dimensional $U_q(\tlie g)$-modules are cluster algebras, the task o classifying the prime and the real simple objects of the category became one of the most studied topics in the area. Since a module is real if its tensor square is simple, the task of classifying the real modules is equivalent to that of classifying the simple modules which are not real. Such modules were termed imaginary in \cite{Lec02}, where the first example of an imaginary simple module was given.  The results of \cite{BC19a, KKOP24, Nak04,Qin17} imply that, in principle, the real modules can be determined using cluster algebra machinery. In practice, however, determining if a given Drinfeld polynomial arises from a cluster monomial is not a simple task, which motivates the search for alternative techniques. For instance, this is the spirit of \cite{BMS24}. Furthermore, the combinatorial framework of cluster algebras does not provide representation theoretic insights for the existence of imaginary modules. Seeking for such insights is the spirit of the present paper.
	
The class of snake modules introduced in \cite{MY12} has been proven to be particularly important in many aspects. In particular, their $q$-characters can be computed by a relatively simple combinatorial data, which we refer to as Mukhin-Young (MY) paths, and the connection with cluster monomials can be made explicitly, thus showing snake modules are real. The class of Kirillov-Reshetikhin (KR) modules and, more generally, that of minimal affinizations, are proper subclasses of that of (prime) snake modules. 
In \cite{BC23}, the authors considered the subclass of prime snake modules whose highest weight is a multiple of a fundamental weight, thus terming such modules ``higher order KR modules''. Suppose $V$ is such a module. By studying $V\otimes V^*$, a new family of imaginary modules was described in \cite{BC23}, assuming $N={\rm rank}(\lie g)>2$. The pioneering example of imaginary modules from \cite{Lec02} is recovered by this construction. The main results of the present paper substantially extends this construction of imaginary modules by studying much more general tensor products of two prime snake modules. The assumption $N>2$ is implied by the assumptions of the construction. We remark that \cite{BC23} also gives a different construction specific to the case $N=2$ (and it is well known that all simple modules are real if $N=1$). 

Before continuing, let us recall that the isomorphism classes of simple modules of the category of (type 1) finite-dimensional $U_q(\tlie g)$-modules are classified in terms of Drinfeld polynomials. We denote by $\mathcal P^+$ the set of all Drinfeld polynomials, which is a multiplicative monoid in a natural way. For $\bs\pi\in\mathcal P^+$, we let $V(\bs\pi)$ be any element in the corresponding isomorphism class. The trivial Drinfeld polynomial will be denoted by $\bs 1$.

As a ``preliminary'' step towards our construction of imaginary modules, the first of our main results, \cref{t:soc}, describes explicitly the socle of the tensor products of two prime snake modules, assuming the corresponding snakes form what we termed a covering pair of ladders. Let us explain what this means. Without loss of generality, we restrain ourselves to the subcategory consisting of modules which have simple factors whose Drinfeld polynomials have roots in $q^{\mathbb Z}$. Let $\tilde\seg$ be a parameterizing set for the isomorphism classes of the corresponding fundamental modules and let $\seg=\tilde\seg\cup\{\emptyset\}$. In the main part of the text, we chose to work with $\tilde\seg$ being the set of certain ``segments''. For $s\in\tilde\seg$, let $\bs\omega_s$ denote the corresponding Drinfeld polynomial, while $\bs\omega_\emptyset=\bs 1$. Given $s,s'\in\tilde\seg$, we say $s$ covers $s'$, and write $s\rhd s'$, if 
\begin{equation}\tag{1}\label{e:covers}
	V(\bs\omega_s)\otimes V(\bs\omega_{s'}) \ \ \text{is reducible and highest-$\ell$-weight.}
\end{equation}  
Recall that a module $V$ is highest-$\ell$-weight if its set of weights contains a unique maximal element, the corresponding weight space is one-dimensional, and $V$  is cyclically generated by this weight space. In the language of segments, \eqref{e:covers} can be phrased in a completely intrinsic manner, with no mention to representation theory (see \cref{ss:cover}). It is well-known that, if \eqref{e:covers} is satisfied, then $V(\bs\omega_s)\otimes V(\bs\omega_{s'})$ has length $2$ and its socle is a simple module of the form 
\begin{equation*}
	V(\bs\omega_{s\cap s'})\otimes V(\bs\omega_{s\cup s'}) \cong V(\bs\omega_{s\diamond s'}) \ \ \text{where}\ \ 
	\bs\omega_{s\diamond s'} = \bs\omega_{s\cap s'}\,\bs\omega_{s\cup s'}
\end{equation*}
and $s\cap s',s\cup s'\in\seg$. In the language of segments, $s\cap s'$ is indeed the intersection of the corresponding segments and similarly for $s\cup s'$, thus the choice of notation. A prime snake of length $l$ corresponds to a sequence $\bs s = s_1,s_2,\dots, s_l$ of elements in $\tilde\seg$ such that
\begin{equation*}
	s_{k+1}\rhd s_k \ \ \text{for all}\ \ 1\le k<l. 
\end{equation*}
Such sequences are also called ladders. In that case, $V(\bs\omega_{s_l})\otimes\cdots\otimes V(\bs\omega_{s_1})$ is highest-$\ell$-weight and the associated prime snake module is the simple quotient of this tensor product. Setting
\begin{equation*}
	\bs\omega_{\bs s} = \prod_{k=1}^l\bs\omega_{s_k},
\end{equation*}
it follows that this simple quotient is isomorphic to $V(\bs\omega_{\bs s})$. If $\bs s,\bs s'$ are ladders of length $l$, we say $\bs s$ covers $\bs s'$ and write $\bs s\rhd\bs s'$ if 
\begin{equation*}
	s_k\rhd s'_k \ \ \text{for all}\ \ 1\le k\le l.
\end{equation*}
\cref{t:soc} says that
\begin{equation*}
	\bs s \rhd \bs s' \quad\Rightarrow\quad {\rm soc}(V(\bs\omega_{\bs s})\otimes V(\bs\omega_{\bs s'})) \cong V(\bs\omega_{\bs s\diamond\bs s'})\quad
	\text{where}\quad \bs\omega_{\bs s\diamond\bs s'} = \prod_{k=1}^l \bs\omega_{s_k\diamond s'_k}.
\end{equation*}
A closely related result was obtained in \cite[Corolary 6.16]{LM16} in the framework of complex smooth representations of $GL_n(\mathbb F)$ with $\mathbb F$ being an nonarchimedian field. Using a Schur-Weyl type duality \cite{CPHecke} (we refer the reader to \cite{Gur21a} for a deeper discussion), this result gives rise to an algorithm for computing the socle of a tensor product of a snake module with a fundamental module. In the setting of representations of $GL_n(\mathbb F)$, Lapid and M\'inguez \cite{LM18} proved a necessary and sufficient condition for an irreducible module $V(\bs\omega_{\bs s})$ to be imaginary, provided $\bs s$ is any sequence (not necessarily a ladder) satisfying a certain regularity condition.  The results of \cite{BC23} and, more generally,  \cref{t:imagfromsnakegen}, describe families of imaginary modules which do not satisfy this regularity condition.

For the next step of our construction, we introduce ``higher order'' versions of the covering relation $\bs s\rhd\bs s'$. Given a sequence $\bs s = s_1,\dots,s_l$ of elements of $\seg$ and $0\le m<n<l$, set
\begin{equation*}
	\bs s(m,n) = s_{m+1},\dots, s_{n}.
\end{equation*} 
We say $\bs s$ is a $p$-cover of $\bs s'$ and write $\bs s\rhd_p\bs s'$ if
\begin{equation*}
	\bs s(0,l-k)\rhd \bs s'(k,l) \quad\text{for all}\quad 0\leq k\leq p.
\end{equation*}
In particular, we can consider
\begin{equation*}
	\bs\varpi_k = \bs\omega_{\bs s(0,l-k)\diamond \bs s'(k,l)} \ \ \text{and}\ \ \bs\pi_k = \bs\omega_{\bs s'(0,k)}\, \bs\varpi_k\,\bs\omega_{\bs s(l-k,l)}
\end{equation*}
for all $0\le k\le p$. Our main conjecture is:
\begin{equation*}
	\bs s\rhd_p\bs s' \quad\Rightarrow\quad V(\bs\pi_k) \ \ \text{is imaginary for all}\ \ 1\le k\le p. 
\end{equation*}
Our main result, \cref{t:imagfromsnakegen}, confirms this holds for $k=1$ with a mild extra assumption. We do not believe this assumption is necessary. For instance, if either $l=2$ or both $V(\bs\omega_{\bs s})$ and $V(\bs\omega_{\bs s'})$ are KR modules, we prove the conjecture for $k=1$ even in the cases that the extra condition fails (and such failing cases exist).  We also remark this assumption is trivially satisfied in the context of \cite{BC23}, so \cref{t:imagfromsnakegen} is a proper generalization of the corresponding result from  \cite{BC23}.  Regarding $V(\bs\pi_0)\cong {\rm soc}(V(\bs\omega_{\bs s})\otimes V(\bs\omega_{\bs s'}))$, we obtained examples for which it is real as well as for which it is imaginary. These examples, alongside a more systematic study of the reality of $V(\bs\pi_0)$, will appear in a future publication. It is also interesting to mention that, in \cref{ex:pi1issoc}, we show that, if $\bs s\rhd_1\bs s'$ so that $V(\bs\pi_1)$ is defined, then $V(\bs\pi_1)$ can be obtained as the socle of a tensor product of snake modules in the context of \cref{t:soc}. We also leave a similar discussion for $V(\bs\pi_k), k>1$, for a future publication.

We also prove an intermediate result towards the proof of the conjecture for all $k$ which is relevant on its own right (\cref{p:keystepgen}). Namely, we prove there exists a chain of submodules
\begin{equation*}
	{\rm soc}(V(\bs\omega_{\bs s})\otimes V(\bs\omega_{\bs s'})) = M_0 \subseteq M_1\subseteq \cdots M_p \subseteq V(\bs\omega_{\bs s})\otimes V(\bs\omega_{\bs s'})
\end{equation*} 
where $M_k$ is a highest-$\ell$-weight module whose Drinfeld polynomial is $\bs\pi_k$. In fact, we prove these are the only highest-$\ell$-weight submodules whose highest $\ell$-weights are smaller than $\bs\pi_p$ with respect to the partial order on $\mathcal P^+$ induced by the simple $\ell$-roots. We conjecture
\begin{equation*}
	M_k/M_{k-1} \cong V(\bs\pi_k) \ \ \text{for all}\ \ 1\le k\le p
\end{equation*}
and prove this is true for $k=1$ in complete generality. In \cite{Gur21b}, Gurevich gives an algorithm for computing the simple factors of a tensor product of any two ladder representations of $GL_n(\mathbb F)$  where $\mathbb F$ is a nonarchimedean field. In principle, the result does not explicitly describe a composition series, only the simple factors.
Via the aforementioned Schur–Weyl type duality,  this can be used to compute the simple factors of the tensor product of any two snake modules for $U_q(\tilde{\lie g})$ if $N\gg 0$. On the other hand, \cref{p:keystepgen}, together with the above two conjectures, captures part of the simple factors in a stronger manner: it gives part of a composition series for such tensor products. Moreover, our proof works for any $N>2$.

The paper is organized as follows. We set up the basic notation and background for the statements of the main results in Sections \ref{ss:cartan} to \ref{ss:cover}, and state them in the next two sections. The additional background required for the proofs are revised in Sections \ref{s:morebackg} and \ref{s:snakesandMY}. In particular, the latter is completely dedicated to reviewing the concept of MY paths and its application to studying $q$-characters of snake modules. Analysis of $\ell$-weight spaces via MY paths is the main tool of our proofs.  The remaining sections are dedicated to the proofs. \cref{t:soc} is proved in \cref{s:soc}. The main part of the proof is stated as an intermediary result, \Cref{p:tpsocmap}, which is proved in \cref{ss:tpsocmap}. \cref{p:keystepgen} is proved in \cref{s:JHs}, while \cref{t:imagfromsnakegen} is proved in \cref{s:imagproof}. The latter is rather lengthy, so the section has several subsections. In \cref{ss:thinwsp}, we show the $\ell$-weight spaces corresponding to $\bs\omega_{\bs s\diamond\bs s'}$ are one-dimensional both in
\begin{equation*}
	V(\bs\omega_{\bs s})\otimes V(\bs\omega_{\bs s'}) \quad\text{and}\quad (V(\bs\omega_{\bs s})\otimes V(\bs\omega_{\bs s'}))^{\otimes 2}. 
\end{equation*}
The logical structure of the proof of \cref{t:imagfromsnakegen} is explained in \cref{ss:imagfromsnakegen}, alongside several remarks about the extra assumption. The first main step of the proof, the claim \eqref{e:extrasq}, is proved in \cref{ss:extrasq}. The second main step, the claim \eqref{e:novarpivarpiW}, is proved in \cref{s:novarpivarpiW} after a crucial preparation in \cref{ss:crfact}. This is where the extra assumption is used. In fact, we give an example showing \eqref{e:novarpivarpiW} may fail without the extra assumption. Still, the module $V(\bs\pi_1)$ arising from this example is imaginary as a consequence of the extra cases proved in Sections \ref{ss:l=2} and \ref{ss:KRcase}. The former proves the conjectures for $k=1$ without the extra assumption in the case of length-$2$ ladders, while the latter proves the same replacing the length-$2$ assumption by the assumption that both snake modules are KR modules.
It is interesting to note that \cref{t:imagfromsnakegen} (with the extra assumption), together with diagram subalgebra arguments, is the main tool used in Sections \ref{ss:l=2} and \ref{ss:KRcase}. In \cref{s:count}, we characterize all pairs of  ladders $(\bs s,\bs s')$ satisfying $\bs s \rhd_1\bs s'$.

\section{Preliminaries and Main Statements}

Throughout the paper, let $\mathbb C,\mathbb Q$, and  $\mathbb Z$ denote the sets of complex, rational, and integer numbers, respectively. Let also $\mathbb Z_{\ge m} ,\mathbb Z_{< m}$, etc., denote the obvious subsets of $\mathbb Z$. Given $a,b\in\mathbb Q$, we will use the notation $b\le a$ to mean
\begin{equation*}
	a-b\in\mathbb Z_{\ge 0},
\end{equation*}
and similarly for $b<a$. 

The symbol $\cong$ means ``isomorphic to''. We shall use the symbol $\diamond$ to mark the end of remarks, examples, and statements of results whose proofs are postponed. The symbol \qedsymbol\ will mark the end of proofs as well as of statements whose proofs are omitted. 

\subsection{Cartan Data and Basic Notation}\label{ss:cartan}
	Let $\lie g$ be a simple Lie algebra of type $A_N$  over a characteristic zero algebraically closed field $\mathbb F$ and let $I$ be the set of nodes of its Dynkin diagram. We let $x_i^\pm, h_i, i\in I$, denote generators as in Serre's Theorem and let $\lie g=\lie n^-\oplus\lie h\oplus \lie n^+$ be the corresponding triangular decomposition. 
	We shall identify $I$ with the integer interval $[1,N]$ as usual. To shorten notation, we will often use the notation $h=N+1$ for the Coxeter number of $\lie g$. 
	For $i\in I$, let $i^*= w_0(i)$, where $w_0$ is the Dynkin diagram automorphism induced by the longest element of the Weyl group. More precisely, $i^*=h-i$. 
	
	We let $P$ denote the weight lattice of $\lie g$ and $P^+$ be the subset of dominant weights. The root system, set of positive roots, root lattice and the corresponding positive cone will be denoted, respectively, by $R, R^+, Q,Q^+$, while the fundamental weights and simple roots will be denoted by $\omega_i, \alpha_i,i\in I$. For convenience, we set $\omega_i=0$ for $i\in h\mathbb Z$.  The notation $V(\lambda),\lambda\in P^+$, denotes a simple $\lie g$-module of highest weight $\lambda$. 
	
    Consider the quantum affine (actually loop) algebra $U_q(\tlie g)$ over $\mathbb F$, where $q\in\mathbb F^\times$ is not a root of unity, where $\tlie g=\lie g\otimes \mathbb F[t,t^{-1}]$. We use the presentation in terms of generators and relations and the Hopf algebra structure as in \cite{M11}. In particular, the generators are denoted by $x_{i,r}^\pm, k_i^{\pm 1}, h_{i,s}, i\in I, r,s\in\mathbb Z, s\ne 0$. 
	
\subsection{Finite-Dimensional Modules}\label{ss:fdm}
    For  $i\in I$, $a\in \mathbb Z$, we let $\bs\varpi_{i,a}$ denote the corresponding fundamental $\ell$-weight, which is the Drinfeld polynomial whose unique non constant entry is the $i$-th which is equal to $1-q^au\in\mathbb F[u]$. For convenience, given $i\in h\mathbb Z$, we let $\bs\varpi_{i,a}$ denote the tuple $\bs 1$ whose entries are all constant.
	We let $\mathcal P^+$ denote the monoid multiplicatively generated by such elements and let $\mathcal P$ be the corresponding abelian group. We also let $\mathcal C$ be the full subcategory of that of fintie-dimensional $U_q(\tlie g)$-modules whose simple factors have highest $\ell$-weights in $\mathcal P^+$ and, hence, $\ell$-weights in $\mathcal P$. It was proven in \cite{HL10} that $\mathcal C$ is a rigid tensor category.
    For $\bs\pi\in\mathcal P^+$, $V(\bs\pi)$ will denote a simple $U_q(\tlie g)$-module whose highest $\ell$-weight is $\bs\pi$, while $W(\bs\pi)$ will stand for the corresponding (local) Weyl module. In particular, if $V$ is a finite-dimensional highest-$\ell$-weight $U_q(\tlie g)$-module with highest $\ell$-weight $\bs\pi$, we have an epimorphism
	\begin{equation*}
		W(\bs\pi) \to V. 
	\end{equation*}

	For an object $V\in\mathcal C$ and $\bs\omega\in\mathcal P$, $V_{\bs\omega}$ will denote the associated $\ell$-weight space and we set
	\begin{equation*}
		\wt_\ell(V) = \{\bs\omega\in\mathcal P:V_{\bs\omega}\ne 0\} \quad\text{and}\quad 	\wt_\ell^\pm(V) = 	\wt_\ell(V) \cap(\mathcal P^+)^{\pm 1}. 
	\end{equation*}
	It was proved in \cite{FR99} that, if $V'$ is another object of $\cal C$, then 
\begin{equation}
	\wt_\ell(V\otimes V') = \wt_\ell(V)\wt_\ell(V').
\end{equation}
We also let $V^*$ and ${}^* V$ be the dual modules such that the evaluation maps
	\begin{equation*}
		V^*\otimes V \to \mathbb F \quad\text{and}\quad V\otimes {}^*V\to\mathbb F
	\end{equation*}
	are module homomorphisms (cf. \Cref{ss:Hopf}), where $\mathbb F$ is regarded as the trivial representation $V(\bs 1)$.

\subsection{Multisegments and $\ell$-weights}
	
 It will be convenient to use a different index set for the simple objects of $\cal C$. Given $i,j\in\frac{1}{2}\mathbb Z$ with $i\le j$, let 
\begin{equation*}
	[i,j]=\{k\in\mathbb Q: i\le k\le j \}. 
\end{equation*}
Equivalently, $k\in[i,j]$ if, and only if, $j-k, k-i\in\mathbb Z_{\ge 0}$. We shall say $[i,j]$ is an $N$-segment (or simply a segment for short) if $j-i\leq h$. If $0< j-i< h$, we will say it is a proper segment. We denote by $\seg$ and $\tilde\seg$ the sets of segments and proper segments, respectively.  Letting $\mathcal F=\{\bs\varpi_{i,a}:i\in I,a \in \mathbb Z\}$, there exists a bijective map
	\begin{equation}\label{e:fundtoint}
		\mathcal F \to \tilde{\seg}, \quad \bs\varpi_{i,a}\mapsto [(a-i)/2,(a+i)/2]
	\end{equation}
	whose inverse is given by
	\begin{equation}\label{e:inttofund}
		\tilde{\seg} \to \mathcal F, \quad [i,j]\mapsto \bs\varpi_{j-i,i+j}.
	\end{equation}
	Set
	\begin{equation*}
		\bs\omega_{i,j} = \bs\varpi_{j-i,i+j} \quad\text{for}\quad [i,j]\in\seg, 
	\end{equation*}
	and note $\bs\omega_{i,i}=\bs\bomega_{i,i+h}=\bold 1$. We shall also use the notation 
	\begin{equation*}
		\bs\omega_s = \bs\omega_{i,j} \quad\text{if}\quad s = [i,j]\in\seg. 
	\end{equation*}
	It follows that $\mathcal P$ is the free abelian group generated by $\bs\omega_{i,j}$ with $[i,j]\in\tilde{\seg}$. Hence,  the isomorphism classes of simple objects in $\mathcal C$ are in bijection with the free abelian group generate by $\tilde{\seg}$.

	Set 
	\begin{equation*}
		\mathbb M = \bigcup_{l\ge 0} \seg^l \quad\text{and}\quad \tilde{\mathbb M} = \bigcup_{l\ge 0} \tilde{\seg}^l.
	\end{equation*}
	Here, for convenience, we have set $\tilde{\seg}^0= {\seg}^0=\{\emptyset\}$. We shall refer to an element  $\bs s\in\seg^l$ as a multisegment of length $\ell(\bs s)=l$. It will be said to be a proper multisegment if $\bs s\in\tilde{\seg}^l$. Given $\bs s=(s_1,\dots,s_l)\in\mathbb M$, set also 
	\begin{equation*}
		\bs\omega_{\bs s} = \prod_{k=1}^l \bs\omega_{s_k},
	\end{equation*}
	where we understand $\bs\omega_{\emptyset}=\bs 1$. One easily sees that, if $\bs s,\bs s'\in\tilde{\mathbb M}$, then $\bs\omega_{\bs s} = \bs\omega_{\bs s'}$ if and only if $\bs s'$ is obtained from $\bs s$ by a permutation of its entries. 
	Hence, the isomorphism classes of simple objects in $\mathcal C$ are in bijection with the equivalence classes of the corresponding equivalence relation $\bs s\sim\bs s'$. 
	
	Given $s=[i,j]\in \seg$, set
	\begin{equation}\label{e:dualint}
		s^* = [j-h,i] \quad\text{and}\quad {}^*\!s = [j,i+h]. 
	\end{equation}
	One easily checks
	\begin{equation*}
		\quad s^* \in \seg \quad {\rm and} \quad  {}^*\!s \in \seg.
	\end{equation*}
	If $\bs s=(s_1,\dots,s_l)\in \seg^l$, set
	\begin{equation*}
		\bs s^* = (s_1^*,\dots,s_l^*),\quad {}^*\!\bs s = ({}^*\!s_1,\dots,{}^*\!s_l),
	\end{equation*}
	and
	\begin{equation}\label{e:s(k,m)}
		\bs s(k,m)= (s_{k+1},\dots, s_m) \ \ \text{for}\ \  0\leq k<m \leq l.
	\end{equation}
	The Hopf algebra structure on $U_q(\tlie g)$ is chosen so that, if $V=V(\bs\omega_{\bs s})$, then $V^*\cong V(\bs\omega_{\bs s^*})$ and  ${}^*V(\bs\omega_{\bs s})\cong V(\bs\omega_{{}^*\!\bs s})$. 	We also recall that the lowest $\ell$-weight of $W(\bs\omega_{\bs s})$ is $(\bs\omega_{{}^*\!\bs s})^{-1}$.

	Given $s=[i,j]\in \tilde{\seg}$, the following elements are known as simple $\ell$-roots:
	\begin{equation}\label{e:slroots}
		\bs\alpha_s=\bs\alpha_{i,j} = \bs\omega_{i,j}\bs\omega_{i+1,j+1}(\bs\omega_{i+1,j}\bs\omega_{i,j+1})^{-1}.
	\end{equation}
	The subgroup of $\mathcal P$ generated by them is called the $\ell$-root lattice of $U_q(\tlie g)$ and will be denoted by $\mathcal Q$. It is well-known that $\mathcal Q$ is freely generated by the simple $\ell$-roots. Let also $\cal Q^+$ be the submonoid generated by the simple $\ell$-roots.
    Consider the partial order on $\mathcal P$ given by
	\begin{equation*}
		\bs\varpi\le\bs\omega \quad {\rm if}\quad \bs\omega\bs\varpi^{-1}\in\cal Q^+.
	\end{equation*}

	Inspired by \eqref{e:inttofund}, let us introduce the following terminology and notation. If $s=[i,j]\in\tilde{\seg}$, by the (Dynkin) support and the spectral parameter of $s$ we mean, respectively, the numbers
	
	\begin{equation}
		\supp(s) = j-i \quad\text{and}\quad \spar(s) = i+j. 
	\end{equation}
	For $\bs s\in \tilde{\mathbb M}$, set also
	\begin{equation}\label{e:supspar}
		\supp(\bs s) = \{\sup(s_k): 1\le k\le\ell(\bs s)\} \quad\text{and}\quad \spar(\bs s) = \{\spar(s_k):1\le k\le \ell(\bs s)\}. 
	\end{equation}

\subsection{Coverings and Ladders} \label{ss:cover}
\newcommand{\lad}{\mathbb L}		
Two segments $s_1, s_2$ will be said to be overlapping if
\begin{equation}
	s_1\cap s_2\ne \emptyset \quad\text{and}\quad s_m \ne s_1\cup s_2 \ \text{for}\ m\in\{1,2\}.  
\end{equation}
If $s_1=[i_1,j_1]$ and $s_2=[i_2,j_2]$, this is equivalent to saying that there exists $k,l\in\{1,2\}, k\ne l$, such that
	\begin{equation*}
		i_k<i_l\leq j_k<j_l.
	\end{equation*}
In that case, note
\begin{equation*}
	s_1\cup s_2 = [i_k,j_l] \quad\text{and}\quad s_1\cap s_2 = [i_l,j_k].
\end{equation*}
It is important to note that, if $i_2-i_1\notin \mathbb Z$ or, equivalently, $j_2-j_1\notin \mathbb Z$, then $s_1$ and $s_2$ do not overlap since $s_1\cap s_2=\emptyset$.

We will say $(s_1,s_2)$ is a covering pair 
if it is formed by an overlapping pair of segments such that 
\begin{equation}
	s_1\cup s_2\in\seg \quad\text{and}\quad \spar(s_2)>\spar(s_1). 
\end{equation}
In that case, we may also say  $s_2$ covers $s_1$ and write $s_2\rhd s_1$. Moreover, $s_1$ and $s_2$ are necessarily proper segments and $s_1\cap s_2\in\seg$. One also easily checks
\begin{equation}\label{e:nabig}
	s_2\rhd s_1 \quad\Rightarrow\quad \spar(s_1)< \spar(s_1\cap s_2),\, \spar(s_1\cup s_2) <\spar(s_2)
\end{equation}
as well as
\begin{equation}\label{e:dualcover}
	s_2\rhd s_1 \quad\Rightarrow\quad (1+{}^*\!s_1)\rhd s_2,
\end{equation}
where $(1+s) = [i+1,j+1]$ if $s=[i,j]$. 
Furthermore,
\begin{equation}\label{e:concacudual}
	s_2\rhd s_1 \ \ \Leftrightarrow\ \ s_2^*\rhd s_1^*, \quad (s_1\cap s_2)^* = s_1^*\cup s_2^*, \quad (s_1\cup s_2)^* = s_1^*\cap s_2^*,
\end{equation}
and similarly for left duality. 

We shall say the set $\{s_1,s_2\}$ is $h$-connected (or simply connected for short) if
\begin{equation}
	\text{either}\ \ s_1\rhd s_2\ \ \text{or}\ \ s_2\rhd s_1
\end{equation}
or, equivalently, if $s_1$ and $s_2$ are overlapping intervals such that $s_1\cup s_2\in\seg$. 
We say $\bs s=(s_1,\cdots, s_l)\in\mseg$ covers $\bs s'=(s'_1,\dots,s'_l)\in\mseg$ and write $\bs s\rhd\bs s'$, if 
\begin{equation}
	s_k\rhd s'_k  \ \ \text{for all}\ 1\le k\le l.
\end{equation} 
The multisegment $\bs s$ is said to be a ladder if, writing $s_k=[i_k,j_k]$, we have
\begin{equation}\label{e:deflad}
	i_1<\cdots <i_l \ \ {\rm and} \ \  j_1<\cdots< j_l.
\end{equation}
In that case, it is straightforward from the definition that
\begin{equation}
	1\le k_1<k_2<\cdots< k_r \le l \quad\Rightarrow\quad (s_{k_1},\dots,s_{k_r}) \ \text{is a ladder.}
\end{equation} 
It is not difficult to check that \eqref{e:deflad} is equivalent to
\begin{equation}\label{e:defladsusp}
	\spar(s_{k_2})-\spar(s_{k_1}) > |\supp(s_{k_2})-\supp(s_{k_1})| \quad\text{for all}\quad 1\le k_1<k_2\le l.
\end{equation}
A connected ladder is a ladder satisfying
\begin{equation}
	s_{k+1}\rhd s_k \quad\text{for all}\quad 1\le k<l.
\end{equation} 
We shall denote by $\lad^l$ the set of ladders of length $l$ and by $\lad$ the set of all ladders. The following lemma is easily established.

\begin{lem}\label{l:propdiamond}
	If $(s_1,s_2)\in\lad$ and $s\rhd s_k, k=1,2$, then $\{s\cap s_2,s\cup s_2\}\subseteq\tilde\seg$.\qed 
\end{lem}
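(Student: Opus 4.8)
The plan is to work directly with endpoints. Write $s=[i,j]$ and $s_k=[i_k,j_k]$ for $k=1,2$, so that the ladder hypothesis $(s_1,s_2)\in\lad$ means $i_1<i_2$ and $j_1<j_2$. The first step is to extract positional information from the two covering relations. Since $s\rhd s_k$, the pair $(s,s_k)$ is overlapping, so up to relabeling either $i<i_k\le j<j_k$ or $i_k<i\le j_k<j$; the former is incompatible with $\spar(s)>\spar(s_k)$ (it would force $i+j<i_k+j_k$), so the latter holds for both $k=1,2$. Thus
\begin{equation*}
	i_1<i\le j_1<j \qquad\text{and}\qquad i_2<i\le j_2<j,
\end{equation*}
and, by the formula for unions and intersections of overlapping segments, $s\cap s_2=[i,j_2]$, $s\cup s_2=[i_2,j]$, and $s\cup s_1=[i_1,j]$. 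Note also that $s,s_1,s_2$ are proper segments, since each belongs to a covering pair.

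The second step is to check the two inequalities that make $s\cap s_2$ and $s\cup s_2$ proper. For $s\cap s_2=[i,j_2]$: since $i\le j_1<j_2$ we get $\supp(s\cap s_2)=j_2-i\ge j_2-j_1>0$, and since $j_2<j$ we get $\supp(s\cap s_2)<j-i=\supp(s)<h$; hence $s\cap s_2\in\tilde\seg$. For $s\cup s_2=[i_2,j]$: clearly $\supp(s\cup s_2)=j-i_2\ge j_2-i_2=\supp(s_2)>0$, while from $s\cup s_1=[i_1,j]\in\seg$ we have $j-i_1\le h$, so the ladder inequality $i_1<i_2$ gives $\supp(s\cup s_2)=j-i_2<j-i_1\le h$; hence $s\cup s_2\in\tilde\seg$.

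There is no serious obstacle here; the argument is short once one is careful about two points. The first is using $\spar(s)>\spar(s_k)$ to pin down that $s$ is the ``upper'' segment of each overlapping pair, which is exactly what makes $s\cap s_2$ and $s\cup s_2$ computable from the endpoints. The second is recognizing that the two ladder inequalities play distinct and essential roles: $j_1<j_2$ prevents $s\cap s_2$ from degenerating to a single point, while $i_1<i_2$ prevents $\supp(s\cup s_2)$ from reaching the Coxeter number $h$. Without the ladder hypothesis one would only recover $s\cap s_2,s\cup s_2\in\seg$, which is already part of the standing facts about covering pairs recorded above.
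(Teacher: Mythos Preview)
Your proof is correct. The paper omits the proof entirely (the lemma is stated as ``easily established'' and marked with \qedsymbol), so there is nothing to compare against; your direct endpoint computation is exactly the kind of short verification the authors had in mind.
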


We shall need the following (cf. \cite[Lemma 5.1.3]{MS24}).

\begin{lem}\label{l:complete}
	If $\bs s=(s_1,\dots,s_l)$ is a ladder and $s_l\rhd s_1$,  then $\bs s$ is a connected ladder.
\end{lem}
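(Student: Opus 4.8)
The statement to prove is \cref{l:complete}: if $\bs s = (s_1,\dots,s_l)$ is a ladder and $s_l \rhd s_1$, then $s_{k+1}\rhd s_k$ for all $1\le k<l$, i.e., $\bs s$ is a connected ladder. My plan is to reduce everything to a statement about the spectral parameters and supports of the $s_k$, using the characterization \eqref{e:defladsusp} of the ladder condition and the intrinsic description of the covering relation in terms of overlapping segments with $s_1\cup s_2\in\seg$ and $\spar(s_2)>\spar(s_1)$. Write $s_k=[i_k,j_k]$, so $d_k := \supp(s_k) = j_k - i_k \in [0,h]$ (actually in $(0,h)$ once we know the $s_k$ are proper) and $p_k := \spar(s_k) = i_k+j_k$, with $p_1 < p_2 < \dots < p_l$ strictly increasing by \eqref{e:deflad}/\eqref{e:defladsusp}.

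\emph{Step 1: translate the hypotheses.} Since $s_l \rhd s_1$, the pair $(s_1,s_l)$ is an overlapping covering pair, so in particular $s_1$ and $s_l$ are proper segments, $s_1\cup s_l \in \seg$, and using $i_1<i_l\le j_1<j_l$ (from overlapping together with $i_1<i_l$, $j_1<j_l$, which hold by \eqref{e:deflad}) we get $s_1\cup s_l = [i_1,j_l]$, hence $\supp(s_1\cup s_l) = j_l - i_1 \le h$. For an intermediate index $1\le k<l$, I want to show $s_{k+1}\rhd s_k$. From \eqref{e:deflad} we already have $i_k<i_{k+1}$ and $j_k<j_{k+1}$ and $\spar(s_{k+1})>\spar(s_k)$, so the only things to check are: (a) $s_k\cap s_{k+1}\ne\emptyset$ and $s_k,s_{k+1}$ are not equal to $s_k\cup s_{k+1}$ (the overlapping condition), and (b) $s_k\cup s_{k+1}\in\seg$, i.e. $j_{k+1}-i_k\le h$.

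\emph{Step 2: the key monotonicity estimate.} The crucial observation is that $j_{k+1} \le j_l$ and $i_k \ge i_1$, so $j_{k+1} - i_k \le j_l - i_1 = \supp(s_1\cup s_l) \le h$; this gives (b) for free, and in fact shows $\supp(s_k\cup s_{k+1})\le h$ for \emph{every} $k$. For (a), overlapping of $s_k$ and $s_{k+1}$ amounts (given $i_k<i_{k+1}$, $j_k<j_{k+1}$) to $i_{k+1}\le j_k$, i.e. the two segments actually intersect. Here I would use the fact that $s_l\rhd s_1$ forces, via \eqref{e:nabig} or directly, a constraint relating $i_1,j_1,i_l,j_l$; combined with strict monotonicity of the endpoints one gets $i_{k+1}\le j_k$. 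The cleanest route: since $s_1$ and $s_l$ overlap with $i_1<i_l\le j_1$, and since $i_1<i_2<\dots<i_l$ with all $i_k\in\mathbb Z$ (they lie in $\frac12\mathbb Z$ but consecutive overlapping forces the same half-integer class, by the remark that non-integer differences kill overlapping), we have $i_{k+1}\le i_l \le j_1 \le j_k$, where $j_1\le j_k$ because $k\ge 1$. That is exactly $i_{k+1}\le j_k$, so $s_k\cap s_{k+1}\supseteq [i_{k+1},j_k]\ne\emptyset$. Finally $s_k\ne s_k\cup s_{k+1}$ since $j_k<j_{k+1}$, and $s_{k+1}\ne s_k\cup s_{k+1}$ since $i_k<i_{k+1}$. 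Thus $(s_k,s_{k+1})$ is an overlapping pair with $s_k\cup s_{k+1}\in\seg$ and $\spar(s_{k+1})>\spar(s_k)$, i.e. $s_{k+1}\rhd s_k$.

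\emph{Step 3: properness and cleanup.} It remains to confirm the $s_k$ are proper segments (so that $\bs s$ is genuinely a connected ladder in the defined sense): $\supp(s_k) = j_k - i_k$ satisfies $0 < j_k - i_k$ because $s_k\cap s_{k+1}$ (or $s_{k-1}\cap s_k$) being a nontrivial overlap forces strict inequality, and $j_k - i_k \le j_l - i_1 \le h$ with equality excluded since $s_1,s_l$ are proper and the endpoint chain is strict; alternatively one invokes that covering pairs consist of proper segments (stated right after the definition of covering pair). The main obstacle I anticipate is bookkeeping around the half-integer ambiguity — making sure all endpoints involved lie in a single coset of $\mathbb Z$, which is what legitimizes the inequalities $i_{k+1}\le i_l\le j_1\le j_k$ — but this is handled once and for all by the remark that overlapping segments have integer-difference endpoints, propagated along the chain $s_1,\dots,s_l$. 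Everything else is a short sequence of inequalities among the $i_k,j_k$, so no serious difficulty is expected beyond assembling Steps 1–2 carefully.
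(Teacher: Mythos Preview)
Your proof is correct and follows essentially the same approach as the paper's: both arguments deduce $i_{k+1}\le i_l\le j_1\le j_k$ from the chain of endpoint inequalities together with $i_1<i_l\le j_1<j_l$ coming from $s_l\rhd s_1$, which gives the overlap, and both bound $j_{k+1}-i_k$ by $j_l-i_1\le h$ to get $s_k\cup s_{k+1}\in\seg$. Your Step~3 worries (half-integer cosets, properness) are harmless but unnecessary: since each $s_k$ is a segment, $j_k-i_k\in\mathbb Z_{\ge 0}$, and the ladder condition already forces all endpoints into a single coset of $\mathbb Z$ in $\frac12\mathbb Z$, so the inequalities go through without further comment.
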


\begin{proof}
	We need to show $s_{k+1}\rhd s_k$ for each $1\leq k<l$. Write $s_k=[i_k,j_k]$ and note the assumptions that $s_l\rhd s_1$  implies
	\begin{equation*}
		i_1<i_l\le j_1<j_l.
	\end{equation*}
	In particular, since $j_1\le j_k<j_l$ and $i_1<i_{k+1}\le i_l$ for all $1\le k<l$, we get
	\begin{equation*}
		0\le j_1-i_l\le j_k-i_{k+1}<j_l-i_1
	\end{equation*}
	and, hence,
	\begin{equation*}
		s_k\cap s_{k+1} = [i_{k+1},j_k]\ne\emptyset \quad\text{and}\quad s_k\cup s_{k+1} = [i_k,j_{k+1}]\subseteq s_1\cup s_l.
	\end{equation*}
	It immediate follows that $s_k$ and $s_{k+1}$ overlap and, since $s_1\cup s_l\in\seg$, we also have $s_k\cup s_{k+1}\in\seg$. Moreover, the assumption that $\bs s$ is a ladder implies $\spar(s_k)<\spar(s_{k+1})$ and, hence, $s_{k+1}\rhd s_k$ as desired.
\end{proof}

	If $\bs s=(s_1,\dots,s_l)$ and $\bs s' = (s'_1,\dots,s'_m)$, set
\begin{equation}
	\bs s\vee\bs s' = (s_1,\dots,s_l,s'_1,\dots,s'_m). 
\end{equation}
If $\bs s\rhd\bs s'$, let $\bs s\cap\bs s'$ and $\bs s \cup\bs s'$ be the multisegments whose $k$-th entries are, respectively,
\begin{equation*}
	s_k\cap s'_k \quad\text{and}\quad s_k\cup s'_k. 
\end{equation*}
One easily checks
	\begin{equation}\label{e:capcuplad}
		\bs s,\bs s'\in\lad \ \ \text{and}\ \ \bs s\rhd\bs s' \quad\Rightarrow\quad \bs s\cup\bs s',\bs s\cap\bs s'\in\lad.
	\end{equation}
Set also
\begin{equation}
	\bs s\diamond \bs s' = (\bs s\cap\bs s')\vee(\bs s \cup\bs s'). 
\end{equation}

\subsection{Coverings and Socles of Certain Snake Tensor Products}\label{ss:soc} 
A necessary and sufficient condition for a tensor product of fundamental modules to be reducible is well-known. It can be stated in terms of segments as follows. 
	\begin{equation}
		V(\bs\omega_{s_1})\otimes V(\bs\omega_{s_2}) \ \ {\rm is\ reducible}\ \  \Leftrightarrow \ \ \{s_1,s_2\} \ \ {\rm is\ connected}.
	\end{equation}
	Moreover, in that case, 
	\begin{equation}
		V(\bs\omega_{s_1\cap s_2})\otimes V(\bs\omega_{s_1\cup s_2})\cong V(\bs\omega_{s_1\cap s_2}\,\bs\omega_{s_1\cup s_2})  
	\end{equation}
	and, if $s_2\rhd s_1$, we have the following short exact sequences
	\begin{equation}\label{e:tpfundnhw}
		\begin{aligned}
			0\to V(\bs\omega_{s_1\cap s_2})\otimes V(\bs\omega_{s_1\cup s_2})\to V(\bs\omega_{s_2})\otimes V(\bs\omega_{s_1})\to V(\bs\omega_{s_1}\bs\omega_{s_2})\to 0,\\
		0\to V(\bs\omega_{s_1}\bs\omega_{s_2}) \to V(\bs\omega_{s_1})\otimes V(\bs\omega_{s_2})\to V(\bs\omega_{s_1\cap s_2})\otimes V(\bs\omega_{s_1\cup s_2})\to 0.
		\end{aligned}
	\end{equation}

	If $\bs s$ is a ladder, the module $V(\bs\omega_{\bs s})$ is known as a snake module. In that case, $V(\bs\omega_{\bs s})$ is prime if and only if $\bs s$ is a connected ladder. 	
The main result of this section is the following generalization of \eqref{e:tpfundnhw}.
We let $\soc V$ and ${\rm hd} V$ stand for the socle and head of the module $V$.

\begin{thm}\label{t:soc}
	If $\bs s,\bs s'\in\mathbb L^l$ are such that $\bs s\rhd\bs s'$, then 
	\begin{equation}\label{e:soc}
		\soc (V(\bs\omega_{\bs s})\otimes V(\bs\omega_{\bs s'}))\cong V(\bs\omega_{\bs s\diamond\bs s'})\cong {\rm hd}(V(\bs\omega_{\bs s'})\otimes V(\bs\omega_{\bs s})).
	\end{equation}\endd
\end{thm}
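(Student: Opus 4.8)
The plan is to induct on the total length $2l$ of the pair of ladders, using the short exact sequences \eqref{e:tpfundnhw} at the base level and a ``peeling off the extremal segment'' argument for the inductive step. First I would observe that the two isomorphisms in \eqref{e:soc} are equivalent by dualizing: applying $*$ to $V(\bs\omega_{\bs s})\otimes V(\bs\omega_{\bs s'})$ turns its socle into the head of $(V(\bs\omega_{\bs s'}))^*\otimes(V(\bs\omega_{\bs s}))^*\cong V(\bs\omega_{(\bs s')^*})\otimes V(\bs\omega_{\bs s^*})$, and by \eqref{e:concacudual} the pair $((\bs s)^*,(\bs s')^*)$ is again a covering pair of ladders with $(\bs s\diamond\bs s')^* = (\bs s)^*\diamond(\bs s')^*$ (the roles of $\cap$ and $\cup$ being swapped entry-wise). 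Hence it suffices to prove the first isomorphism. The base case $l=1$ is exactly the first sequence in \eqref{e:tpfundnhw}, which identifies the unique proper submodule $V(\bs\omega_{s_1\cap s_2})\otimes V(\bs\omega_{s_1\cup s_2})\cong V(\bs\omega_{s_1\diamond s_2})$ as the socle.

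For the inductive step, write $\bs s = (s_1,\dots,s_l)$, $\bs s' = (s_1',\dots,s_l')$, and peel off the top segments $s_l, s_l'$. Using that $V(\bs\omega_{\bs s})$ embeds in $V(\bs\omega_{s_l})\otimes V(\bs\omega_{\bs s(0,l-1)})$ as the socle (the submodule generated by the top $\ell$-weight vector) — equivalently $V(\bs\omega_{\bs s})$ is a highest-$\ell$-weight quotient of $V(\bs\omega_{s_l})\otimes V(\bs\omega_{\bs s(0,l-1)})$ while dually it is the socle of $V(\bs\omega_{\bs s(0,l-1)})\otimes V(\bs\omega_{s_l})$ — I would set up a chain of tensor factors and commute factors past each other via the intertwiners coming from \eqref{e:tpfundnhw}, controlling at each step which composition factors can appear. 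Concretely: any composition factor of $V(\bs\omega_{\bs s})\otimes V(\bs\omega_{\bs s'})$ has highest $\ell$-weight $\le \bs\omega_{\bs s}\bs\omega_{\bs s'}$, and I claim the only one that can sit in the socle is $V(\bs\omega_{\bs s\diamond\bs s'})$. To see $V(\bs\omega_{\bs s\diamond\bs s'})$ does occur in the socle: since $\bs s\rhd\bs s'$ we have $\bs s\cap\bs s', \bs s\cup\bs s'\in\lad$ by \eqref{e:capcuplad}, so $V(\bs\omega_{\bs s\cap\bs s'})\otimes V(\bs\omega_{\bs s\cup\bs s'})$ is itself of highest-$\ell$-weight type with simple quotient/subquotient $V(\bs\omega_{\bs s\diamond\bs s'})$, and I would produce a nonzero map from $V(\bs\omega_{\bs s\diamond\bs s'})$ into $V(\bs\omega_{\bs s})\otimes V(\bs\omega_{\bs s'})$ by iterating the embeddings in \eqref{e:tpfundnhw} entry by entry (this is essentially where \Cref{p:tpsocmap}, referenced in the introduction, does the real work). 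For uniqueness, I would argue that any simple submodule $V(\bs\pi)$ of the tensor product has $\bs\pi$ a highest $\ell$-weight of both $V(\bs\omega_{\bs s})\otimes V(\bs\omega_{\bs s'})$ and — by rigidity and a duality/Frobenius-reciprocity argument — of the dual, forcing $\bs\pi$ to lie in a narrow window of $\ell$-weights; combinatorial analysis of MY paths for the snakes $\bs s$, $\bs s'$ then pins down $\bs\pi = \bs\omega_{\bs s\diamond\bs s'}$, and shows the corresponding $\ell$-weight space is one-dimensional in the tensor product so the socle is multiplicity-free.

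The main obstacle, I expect, is the uniqueness/multiplicity-one half: showing that $V(\bs\omega_{\bs s\diamond\bs s'})$ is the \emph{only} simple submodule and that it appears with multiplicity one in the socle. Existence of the map is a relatively formal consequence of iterating \eqref{e:tpfundnhw} (modulo checking that the various $\cap$/$\cup$ operations stay within $\seg$, which is handled by \Cref{l:propdiamond,l:complete,l:complete} and \eqref{e:capcuplad}), but ruling out other submodules requires genuine control of the $q$-character. I would handle this by passing to the MY-path description of $\wt_\ell$ (as the introduction indicates the proof ultimately does via \Cref{p:tpsocmap}): a submodule must contain a highest-$\ell$-weight vector whose $\ell$-weight is dominant and $\le\bs\omega_{\bs s}\bs\omega_{\bs s'}$, and one checks from the path combinatorics of a covering pair of ladders that the only dominant $\ell$-weight strictly below the top one that is again of the required ``product of a sub-snake and a super-snake'' shape is $\bs\omega_{\bs s\diamond\bs s'}$ — all other dominant $\ell$-weights in the interval fail to generate a submodule because the relevant lowering operators do not annihilate the corresponding vector. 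Combined with the one-dimensionality of the $\bs\omega_{\bs s\diamond\bs s'}$ $\ell$-weight space in the tensor product, this yields that the socle is exactly $V(\bs\omega_{\bs s\diamond\bs s'})$.
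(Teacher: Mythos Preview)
Your proposal has the right ingredients in spirit but misses the key structural shortcut the paper uses, and your uniqueness argument contains a claim that is actually false.

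The paper's proof is short precisely because it outsources the ``simple socle'' statement entirely to the fact that snake modules are \emph{real} and then invokes \Cref{simsoc} (from \cite{KKKO15}): if $V(\bs\omega_{\bs s})$ is real, then $V(\bs\omega_{\bs s})\otimes V(\bs\omega_{\bs s'})$ automatically has simple socle, and that socle coincides with the head of $V(\bs\omega_{\bs s'})\otimes V(\bs\omega_{\bs s})$. This simultaneously gives the second isomorphism in \eqref{e:soc} and reduces the problem to merely \emph{identifying} the socle. For that, the paper proves the narrow claim \eqref{e:soc'}: if $\Hom(W(\bs\varpi),V(\bs\omega_{\bs s})\otimes V(\bs\omega_{\bs s'}))\ne 0$ with $\bs\varpi\le\bs\pi$, then $\bs\varpi=\bs\pi$ (an MY-path induction on $l$, \eqref{e:socfc}). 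Combined with \Cref{p:tpsocmap} and \Cref{l:subsandq}, this pins down the socle as $V(\bs\pi)$.

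Your approach tries to avoid reality and \Cref{simsoc} and instead argue directly that no other simple submodule can occur. But your supporting claim---that $\bs\omega_{\bs s\diamond\bs s'}$ is ``the only dominant $\ell$-weight strictly below the top one'' of the relevant shape---is wrong. \Cref{p:keystepgen} exhibits an entire chain $\bs\pi_0<\bs\pi_1<\cdots<\bs\pi_p$ of dominant $\ell$-weights, each the highest $\ell$-weight of a genuine highest-$\ell$-weight \emph{submodule} $M_k$ of $V(\bs\omega_{\bs s})\otimes V(\bs\omega_{\bs s'})$ whenever $\bs s\rhd_p\bs s'$. So ``the lowering operators do not annihilate the corresponding vector'' fails for these, and path combinatorics alone will not single out $\bs\pi_0$ among all dominant $\ell$-weights below $\bs\omega_{\bs s}\bs\omega_{\bs s'}$. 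The paper's claim \eqref{e:soc'} is deliberately restricted to $\bs\varpi\le\bs\pi_0$; the simplicity of the socle comes from reality, not from the combinatorics. Without invoking reality and \Cref{simsoc} (or reproving an equivalent statement), your uniqueness step has a genuine gap. Your duality reduction of the head statement to the socle statement is also different from---and more delicate than---what the paper does: the paper gets the head/socle identification for free from \Cref{simsoc}, whereas passing to $(\bs s^*,(\bs s')^*)$ changes the algebra side and requires extra bookkeeping you have not carried out.
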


It follows from \eqref{e:tpfundnhw} that, if $l=\ell(\bs s)$, we have a maps
\begin{equation}
	V(\bs\omega_{s_k\cap s'_k})\otimes V(\bs\omega_{s_k\cup s'_k})\to V(\bs\omega_{s_k})\otimes V(\bs\omega_{s'_k}) \quad\text{for all}\quad 1\le k\le l.
\end{equation}
The most intricate part of the proof of \Cref{t:soc} is the following proposition whose proof will be given in \Cref{ss:tpsocmap}.

\begin{prop}\label{p:tpsocmap}
	There exists a non-zero map 
	\begin{equation*}
		V(\bs\omega_{\bs s'})\otimes V(\bs\omega_{\bs s})\to V(\bs\omega_{\bs s\cup \bs s'})\otimes V(\bs\omega_{\bs s\cap \bs s'}).
	\end{equation*}\endd
\end{prop}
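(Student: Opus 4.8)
The plan is to produce the desired map by exhibiting it on each ladder entry and then splicing the pieces together via the braiding of the tensor category $\mathcal C$, making sure that the composition of elementary moves realizes precisely the reordering $\bs s'\vee\bs s \rightsquigarrow (\bs s\cup\bs s')\vee(\bs s\cap\bs s')$. For each $k$, the covering pair $s_k\rhd s'_k$ gives by the second short exact sequence in \eqref{e:tpfundnhw} a nonzero map $f_k\colon V(\bs\omega_{s'_k})\otimes V(\bs\omega_{s_k})\to V(\bs\omega_{s'_k\cap s_k})\otimes V(\bs\omega_{s'_k\cup s_k})$, and since $\spar(s_k\cup s'_k)<\spar(s_k)$ and $\spar(s'_k)<\spar(s_k\cap s'_k)$ by \eqref{e:nabig}, a further application of the appropriate exact sequence together with \Cref{l:propdiamond} shows both $V(\bs\omega_{s_k\cap s'_k})\otimes V(\bs\omega_{s_k})$ and $V(\bs\omega_{s'_k})\otimes V(\bs\omega_{s_k\cup s'_k})$ are highest-$\ell$-weight (in the right order), so the target can be rearranged freely. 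The upshot is a nonzero map $V(\bs\omega_{s'_k})\otimes V(\bs\omega_{s_k})\to V(\bs\omega_{s_k\cup s'_k})\otimes V(\bs\omega_{s_k\cap s'_k})$ of the shape demanded, but now for a single entry.

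Next I would tensor these $l$ maps together and conjugate by braidings to move from the ordering $(s'_1,s_1,s'_2,s_2,\dots)$ interlaced to the ordering $(s'_1,\dots,s'_l,s_l,\dots,s_1)$ on the source — note $V(\bs\omega_{\bs s})\otimes V(\bs\omega_{\bs s'})$ and $V(\bs\omega_{\bs s'})\otimes V(\bs\omega_{\bs s})$ are each isomorphic to the tensor product of the one-segment modules in whatever order, because each is highest-$\ell$-weight with the relevant $\ell$-weight space one-dimensional, so any reordering isomorphism is determined up to scalar; the point is to choose the reorderings coherently so that the composite is nonzero. On the target side, the entries $s_k\cup s'_k$ assemble into the ladder $\bs s\cup\bs s'$ and the entries $s_k\cap s'_k$ into $\bs s\cap\bs s'$ by \eqref{e:capcuplad}, and $V(\bs\omega_{\bs s\cup\bs s'})\otimes V(\bs\omega_{\bs s\cap\bs s'})$ is again highest-$\ell$-weight (both are prime or at least highest-$\ell$-weight snake tensor factors with $\bs s\cup\bs s'\rhd$-compatible with $\bs s\cap\bs s'$ in the needed sense), so it equals, up to the same kind of canonical reordering, the tensor product of all the $V(\bs\omega_{s_k\cup s'_k})$ and $V(\bs\omega_{s_k\cap s'_k})$ in any order. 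Composing, we obtain the asserted map $V(\bs\omega_{\bs s'})\otimes V(\bs\omega_{\bs s})\to V(\bs\omega_{\bs s\cup\bs s'})\otimes V(\bs\omega_{\bs s\cap\bs s'})$.

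The genuine obstacle is \emph{nonvanishing} of the composite: a tensor product of nonzero maps, pre- and post-composed with reordering isomorphisms, can easily be zero, because the reordering isomorphisms are not simply permutations of tensor factors at the level of the individual fundamental modules — they mix $\ell$-weight spaces. To control this I would track a single distinguished vector: start from the highest-$\ell$-weight vector of $V(\bs\omega_{\bs s'})\otimes V(\bs\omega_{\bs s})$, which has $\ell$-weight $\bs\omega_{\bs s}\bs\omega_{\bs s'}=\bs\omega_{\bs s\cup\bs s'}\bs\omega_{\bs s\cap\bs s'}$, and show its image under the constructed composite is the highest-$\ell$-weight vector of $V(\bs\omega_{\bs s\cup\bs s'})\otimes V(\bs\omega_{\bs s\cap\bs s'})$ up to a nonzero scalar. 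Equivalently — and this is probably the cleanest route — I would argue that the composite is a highest-$\ell$-weight map by showing the highest-$\ell$-weight space $(V(\bs\omega_{\bs s\cup\bs s'})\otimes V(\bs\omega_{\bs s\cap\bs s'}))_{\bs\omega_{\bs s}\bs\omega_{\bs s'}}$ is one-dimensional and that our map does not kill the corresponding weight space of the source; the one-dimensionality is a thinness statement amenable to the MY-path analysis reviewed in \Cref{s:snakesandMY}, and the survival of the weight space follows by an inductive argument on $l$, peeling off the last ladder entry and invoking the single-segment case together with the known structure \eqref{e:tpfundnhw}. An alternative, perhaps more robust, is to avoid braidings entirely and instead realize the map as the composite of elementary "$R$-matrix" maps $V(\bs\omega_t)\otimes V(\bs\omega_{t'})\to V(\bs\omega_{t'})\otimes V(\bs\omega_t)$ for noncovering (hence generically irreducible) adjacent pairs, interleaved with the $l$ covering maps $f_k$, in a fixed reduced word realizing the permutation; then normalized $R$-matrix functoriality guarantees each elementary map is an isomorphism except at the covering steps, and nonvanishing reduces to checking that no two covering steps "collide", which is exactly the ladder hypothesis $i_1<\cdots<i_l$, $j_1<\cdots<j_l$ ruling out the obstructing configurations. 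I expect the bulk of the work, and the place where the ladder hypothesis is really used, to be precisely this collision/nonvanishing bookkeeping.
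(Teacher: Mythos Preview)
Your approach has a structural gap that is not just a matter of bookkeeping. You write that ``$V(\bs\omega_{\bs s})\otimes V(\bs\omega_{\bs s'})$ and $V(\bs\omega_{\bs s'})\otimes V(\bs\omega_{\bs s})$ are each isomorphic to the tensor product of the one-segment modules in whatever order, because each is highest-$\ell$-weight with the relevant $\ell$-weight space one-dimensional.'' This is false once $l>1$: $V(\bs\omega_{\bs s})$ is the \emph{simple} snake module, a proper subquotient of the ordered tensor product of fundamentals, not the tensor product itself. Consequently there is in general no map from $V(\bs\omega_{\bs s'})\otimes V(\bs\omega_{\bs s})$ to the interlaced product $\bigotimes_k\big(V(\bs\omega_{s'_k})\otimes V(\bs\omega_{s_k})\big)$ on which you could evaluate $\bigotimes_k f_k$, and likewise no map from $\bigotimes_k\big(V(\bs\omega_{s_k\cup s'_k})\otimes V(\bs\omega_{s_k\cap s'_k})\big)$ to the target $V(\bs\omega_{\bs s\cup\bs s'})\otimes V(\bs\omega_{\bs s\cap\bs s'})$. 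The ``alternative'' with normalized $R$-matrices has the same defect: those $R$-matrices live on tensor products of fundamentals, not on the simple snakes appearing in the source and target; and the nonvanishing cannot be reduced to a ``no collisions'' check, since adjacent transpositions across covering pairs have nontrivial kernels that can well meet the image of previous steps.

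The paper's proof takes a completely different route and never attempts a direct splice of the $f_k$. It first reduces to two extreme cases, $\supp(s_k\cap s'_k)=0$ for all $k$ and $\supp(s_k\cup s'_k)=h$ for all $k$, via the auxiliary ladder $\bs s'\lrcorner\bs s$ together with Frobenius reciprocity \eqref{e:frobrec} and \Cref{l:subsandq}. The first extreme case is then proved by induction on $l$: one peels off $s_l$ using the length-two exact sequence of \Cref{l:sessnake}, produces a candidate map from a larger module, and shows that the unwanted submodule in the relevant short exact sequence maps to zero by proving $\Hom\big(W(\bs\pi),V(\bs\omega_{\bs s^\diamond})\otimes V(\bs\omega_{\bs s'})\big)=0$ via an MY-path $\ell$-weight argument. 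The second extreme case follows from the first by duality. So the heart of the argument is not an $R$-matrix nonvanishing check but a sequence of $\Hom$-vanishing statements proved with path combinatorics; your proposal does not supply a substitute for that step.
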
	

\subsection{A Construction of Imaginary Modules} 
Let $\bs s,\bs s' \in \seg^l$ and recall \eqref{e:s(k,m)}.  For $0\leq p <l$, we say that $\bs s$ is a $p$-cover of $\bs s'$, and write $\bs s \rhd_p \bs s'$,  if 
\begin{equation*}
	\bs s(0,l-k)\rhd \bs s'(k,l) \quad\text{for all}\quad 0\leq k\leq p.
\end{equation*}
In that case, for each $0\leq k\leq p$, define 
\begin{equation*}
	\bs s\cup_k\bs s' = \bs s(0,l-k)\cup\bs s'(k,l), \ \ \ \bs s\cap_k\bs s' = \bs s(0,l-k)\cap\bs s'(k,l),
\end{equation*}
and $\bs s\diamond_k \bs s' = (\bs s\cap_k\bs s')\vee(\bs s\cup_k\bs s')$. To shorten notation, set 
\begin{equation}\label{pikdef}
	\bs\pi_k = \bs\omega_{\bs s'(0,k)}\, \bs\varpi_k\,\bs\omega_{\bs s(l-k,l)} \quad\text{with}\quad \bs\varpi_k = \bs\omega_{\bs s\diamond_k\bs s' }.
\end{equation}
In particular, $\bs\omega_{\bs s\diamond\bs s'} = \bs\pi_0 = \bs\varpi_0$.

\begin{prop}\label{p:keystepgen} Let $\bs s,\bs s'\in \mathbb L^l$, suppose $\bs s\rhd_p \bs s'$ for some $0\leq p<l$, and set 
$$V= V(\bs\omega_{\bs s})\otimes V(\bs\omega_{\bs s'}).$$
If $\bs\pi\in \mathcal P^+$ and $\bs\pi\le \bs\pi_p$,  then
	\begin{equation}\label{e:hlwsubs}
		\Hom(W(\bs\pi),V)\neq 0 \quad\Leftrightarrow\quad \bs\pi \in \{\bs\pi_k: 0\le k\leq  p\}.
	\end{equation}
	Moreover, there exist a chain of  highest $\ell$-weight submodules
	\begin{equation*}
		{\rm soc}\left(V\right) = M_0\subseteq M_1\subseteq \cdots \subseteq M_p \subseteq V
	\end{equation*}
	such that the highest $\ell$-weight of $M_k$ is $\bs\pi_k, 0\le k\leq p$. 
	\endd
\end{prop}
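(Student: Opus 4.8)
\textbf{Proof plan for \cref{p:keystepgen}.}

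The plan is to reduce the claim about homomorphisms from Weyl modules into $V$ to a statement about the $\ell$-weight space $V_{\bs\pi}$ together with a $\mathfrak{g}$-type/highest-weight constraint, and then to construct the submodules $M_k$ explicitly as the images of suitable compositions of the canonical maps coming from \eqref{e:tpfundnhw}. First I would recall the standard fact that $\Hom(W(\bs\pi),V)\neq 0$ if and only if $V$ contains a highest-$\ell$-weight vector of $\ell$-weight $\bs\pi$, which amounts to finding a nonzero $v\in V_{\bs\pi}$ annihilated by all the raising operators $x_{i,r}^+$. So the real content of \eqref{e:hlwsubs} is: among the $\ell$-weights $\bs\pi\le\bs\pi_p$, the only ones supporting a highest-$\ell$-weight vector are exactly the $\bs\pi_k$, $0\le k\le p$. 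The inclusion "$\supseteq$" will be handled by the explicit construction of the $M_k$ below; the harder inclusion "$\subseteq$" is where I expect to spend most of the effort.

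For the construction of the chain, the idea is to iterate \cref{t:soc}/\eqref{e:tpfundnhw} in the spirit of the $p$-cover definition. Since $\bs s\rhd_p\bs s'$, for each $0\le k\le p$ the pair $\bs s(0,l-k)\rhd\bs s'(k,l)$ is a covering pair of ladders, so by \cref{t:soc} the tensor product $V(\bs\omega_{\bs s'(k,l)})\otimes V(\bs\omega_{\bs s(0,l-k)})$ has head $V(\bs\omega_{\bs s\diamond_k\bs s'}) = V(\bs\varpi_k)$. Tensoring on the appropriate sides by $V(\bs\omega_{\bs s'(0,k)})$ and $V(\bs\omega_{\bs s(l-k,l)})$ and using associativity of the tensor product, together with the factorization of $V$ as an iterated tensor product of fundamental modules dictated by the ladders $\bs s$ and $\bs s'$, one obtains a highest-$\ell$-weight quotient (or, dually, a highest-$\ell$-weight submodule) of $V$ with Drinfeld polynomial $\bs\pi_k$. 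I would define $M_k$ to be the submodule of $V$ generated by a highest-$\ell$-weight vector of $\ell$-weight $\bs\pi_k$ (whose existence follows from this argument); the ordering $\bs\pi_0\le\bs\pi_1\le\cdots\le\bs\pi_p$ in $\mathcal P^+$ — which should be a direct computation with the simple $\ell$-roots \eqref{e:slroots}, comparing $\bs\pi_k$ and $\bs\pi_{k+1}$ — together with the fact that $M_0 = \soc V$ is simple (by \cref{t:soc}, since $\bs s\rhd\bs s' = \bs s\rhd_0\bs s'$) and contained in every nonzero submodule, would then give $M_0\subseteq M_1$; for $M_{k-1}\subseteq M_k$ I would argue that $M_{k-1}$, being a highest-$\ell$-weight module with highest $\ell$-weight $\bs\pi_{k-1}\le\bs\pi_k$, must sit inside $M_k$ because (by the "$\subseteq$" half of \eqref{e:hlwsubs} applied inside $M_k$, or a socle argument) the only highest-$\ell$-weight submodules of $M_k$ with highest $\ell$-weight $\le\bs\pi_k$ are the $M_j$, $j\le k$.

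The main obstacle is the "$\subseteq$" direction of \eqref{e:hlwsubs}: showing that no $\ell$-weight $\bs\pi\le\bs\pi_p$ other than the $\bs\pi_k$ carries a highest-$\ell$-weight vector in $V$. The tool for this is the analysis of $\ell$-weight spaces of $V = V(\bs\omega_{\bs s})\otimes V(\bs\omega_{\bs s'})$ via Mukhin--Young paths (as advertised in the introduction): the $\ell$-weights of each snake module factor are parameterized by tuples of non-overlapping MY paths, and the $\ell$-weights of the tensor product by \eqref{e:fundtoint}-style products, i.e.\ by pairs of such path-tuples. One then needs the combinatorial fact that the relation $\bs\pi\le\bs\pi_p$ forces the path data to lie in a restricted region, and that within this region the only configurations which can be "highest" — i.e.\ cannot be raised by any $x_{i,r}^+$, equivalently are not dominated in the relevant sense by another $\ell$-weight with a path-move available — are precisely those giving $\bs\pi_0,\dots,\bs\pi_p$. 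Concretely I would: (i) write $\bs\pi_p\,\bs\pi^{-1}\in\mathcal Q^+$ as a sum of simple $\ell$-roots and read off which "overlaps" between the two path-tuples are allowed; (ii) show that if $\bs\pi\ne\bs\pi_k$ for all $k$ then the corresponding configuration admits a strictly larger $\ell$-weight in $\wt_\ell(V)$ obtained by an elementary path exchange, hence $V_{\bs\pi}$ contains no highest-$\ell$-weight vector by the usual $\mathfrak{sl}_2$-string argument in the relevant $i$-color; and (iii) conversely verify that each $\bs\pi_k$ does occur with a highest-$\ell$-weight vector, which is exactly the content of the construction in the previous paragraph. Step (ii) is the delicate combinatorial heart of the argument and is where the hypothesis $\bs s\rhd_p\bs s'$ (as opposed to a weaker covering) is genuinely used.
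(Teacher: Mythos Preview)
Your high-level outline matches the paper's in spirit---MY-path combinatorics for the ``only if'' half of \eqref{e:hlwsubs}, and an explicit construction via \cref{t:soc} plus side-tensoring for the ``if'' half and for the chain---but there are two places where your proposed mechanism does not actually close the argument.

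\textbf{The ``only if'' direction.} You plan to show that for $\bs\pi\le\bs\pi_p$ with $\bs\pi\notin\{\bs\pi_k\}$, the weight space $V_{\bs\pi}$ contains no highest-$\ell$-weight vector by exhibiting an elementary path move and invoking an $\mathfrak{sl}_2$-string argument. This is not what the paper does, and in a tensor product of non-thin modules such raising arguments are delicate: $x_{i,r}^+$ does not act by a simple path move on $V_{\bs\pi}$, and the relevant weight space need not be one-dimensional. The paper bypasses this entirely via \cref{l:existhwv}, which converts $\Hom(W(\bs\pi),V)\ne 0$ into the purely combinatorial constraint $\bs\pi=\bs\omega_{\bs s}\,\bs\omega(\bs f)$ for some $\bs f\in\mathbb P_{\bs s'}$ with $\bs\omega(\bs f)\le\bs\omega(\bs g^p)$, \emph{and simultaneously} the dual constraint $\bs\omega_{\bs s'}=\bs\pi\,\bs\omega(\bs f')^*$ for some $\bs f'\in\mathbb P_{\bs s}$. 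The first constraint already forces each $f_m$ to be either $p_{s'_m}$ or $p_{s'_m}^{s_{\phi(m)}}$ for an increasing $\phi$, but this alone does not pin $\phi$ down; the paper then runs a recursive argument alternating between $\bs f$ and $\bs f'$ (the flow \eqref{e:theflow}) to force $\phi(k+1+m)=m+1$. Your step~(ii) does not account for the need to use the second (lowest-weight) constraint from \cref{l:existhwv}, and without it the argument does not conclude.

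\textbf{The inclusions $M_{k-1}\subseteq M_k$.} You propose to define each $M_k$ as the submodule generated by \emph{some} highest-$\ell$-weight vector of weight $\bs\pi_k$, and then deduce $M_{k-1}\subseteq M_k$ from \eqref{e:hlwsubs}. But \eqref{e:hlwsubs} only tells you which $\bs\pi$ admit a highest-$\ell$-weight vector in $V$; it says nothing about whether a given such vector lies in a given $M_k$, nor do you know $\dim V_{\bs\pi_{k-1}}=1$ for $k>1$. The paper avoids this by building the inclusions into the construction: by induction on $p$ there is a nested chain $M'_0\subseteq\cdots\subseteq M'_{p-1}$ inside $V'=V(\bs\omega_{\bs s(0,l-1)})\otimes V(\bs\omega_{\bs s'(1,l)})$, and one sets $M_{k+1}=f\big(V(\bs\omega_{s_l})\otimes M'_k\otimes V(\bs\omega_{s'_1})\big)$ for a fixed epimorphism $f$ onto $V$. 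Then $M_k\subseteq M_{k+1}$ is immediate from $M'_{k-1}\subseteq M'_k$, and the highest $\ell$-weight is $\bs\pi_{k+1}$ by \eqref{e:keystepgeninc}. Your post-hoc containment argument is the genuine gap here.
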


The first part of \Cref{p:keystepgen} is  proved in \Cref{ss:keystepgen}, while the second is proved in \Cref{ss:keystepgeninc}. In the notation of \Cref{p:keystepgen}, we pose the following conjecture.

\begin{con} For all $1\le k\le p$,
		\begin{equation}\label{e:cjimag}
		M_{k}/M_{k-1} \ \ \text{is simple and imaginary.}
	\end{equation}\endd
\end{con}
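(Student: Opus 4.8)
The plan is to establish the Conjecture by induction on $k$, combining the chain of submodules from \cref{p:keystepgen} with a careful analysis of $\ell$-weight multiplicities via MY paths. Fix $\bs s\rhd_p\bs s'$ and write $V=V(\bs\omega_{\bs s})\otimes V(\bs\omega_{\bs s'})$, with $M_0\subseteq M_1\subseteq\cdots\subseteq M_p\subseteq V$ the chain of highest-$\ell$-weight submodules whose highest $\ell$-weights are $\bs\pi_0,\dots,\bs\pi_p$. The base case $k=1$ is exactly \cref{t:imagfromsnakegen} (under the mild extra assumption, with the unconditional cases handled in Sections~\ref{ss:l=2} and~\ref{ss:KRcase}), so I would take that as given. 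For the inductive step, I would first show $M_k/M_{k-1}$ is \emph{simple}: since $M_k$ is highest-$\ell$-weight of highest $\ell$-weight $\bs\pi_k$, the head of $M_k$ is $V(\bs\pi_k)$, so it suffices to prove $M_{k-1}$ equals the (unique) maximal submodule $\mathrm{rad}(M_k)$. Here one uses \eqref{e:hlwsubs}: the only highest-$\ell$-weight submodules of $V$ with highest $\ell$-weight $\le\bs\pi_p$ are the $M_j$, and a standard argument shows $\mathrm{rad}(M_k)$ is generated by highest-$\ell$-weight vectors whose $\ell$-weights are $<\bs\pi_k$; combined with an analysis showing $\bs\pi_{k-1}$ is the unique maximal element of $\{\bs\pi_j:j<k\}$ below $\bs\pi_k$ in the $\ell$-root order, this forces $\mathrm{rad}(M_k)=M_{k-1}$.

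For the imaginary part, the key reduction is to the $k=1$ case already proved. I would argue that $V(\bs\pi_k)$ is, up to the combinatorics of segments, ``the same'' module as a $V(\bs\pi_1)$ arising from a $1$-cover of a suitably truncated pair of ladders: concretely, $\bs\pi_k=\bs\omega_{\bs s'(0,k)}\,\bs\varpi_k\,\bs\omega_{\bs s(l-k,l)}$, and one expects (in the spirit of \cref{ex:pi1issoc}) that $V(\bs\pi_k)$ is a tensor factor of, or is isotypically controlled by, a $V(\bs\pi_1)$-type construction for the ladders $\bs s(0,l-k+1)$ and $\bs s'(k-1,l)$, or alternatively that the diagram-subalgebra / truncation-functor arguments used in Sections~\ref{ss:l=2}--\ref{ss:KRcase} apply. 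The cleanest route is probably: (i) show $V(\bs\pi_k)\otimes V(\bs\pi_k)$ contains, in the $\ell$-weight space indexed by $\bs\varpi_k\,\bs\varpi_k$ (or an appropriate product), a nontrivial extension detected by an MY-path count exactly as in the proof of \eqref{e:novarpivarpiW}; the relevant multiplicity computation is \emph{local} to the overlapping region of the ladders and hence reduces to the already-handled $p=1$ situation after deleting the ``outer'' segments $\bs s'(0,k-1)$ and $\bs s(l-k+1,l)$, which contribute only real tensor factors that do not affect reality.

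The main obstacle I anticipate is precisely controlling the interaction between the ``inner'' imaginary core and the ``outer'' real tensor factors when $k>1$: unlike the $k=1$ case, the relevant $\ell$-weight space of $V(\bs\pi_k)^{\otimes 2}$ sits inside a tensor product of \emph{several} snake modules, and one must rule out that multiplicities coming from the outer factors conspire to make the space one-dimensional (which would make $V(\bs\pi_k)$ potentially real). This is where I would expect to need the full strength of the MY-path machinery of \cref{s:snakesandMY}: one would set up a bijection between the MY paths contributing to $(V(\bs\pi_k)^{\otimes 2})_{\bs\omega}$ for the critical $\bs\omega$ and a disjoint union indexed by the inner $p=1$ data, times a \emph{fixed} contribution from the outer factors, and then invoke the $k=1$ result to conclude the count is at least $2$. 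A secondary difficulty is that the extra assumption of \cref{t:imagfromsnakegen} may need to be re-derived or circumvented for the truncated inner pair; I would handle this as in the existing text, checking it is inherited by truncation or falling back on the length-$2$ / KR special cases. Throughout, simplicity of $M_k/M_{k-1}$ should be the easier half, with imaginarity — and in particular the multiplicity lower bound for the tensor square — being the crux.
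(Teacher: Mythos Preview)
The statement you are attempting to prove is a \emph{conjecture} in the paper, not a theorem: the authors explicitly leave it open for $k>1$ and only establish the case $k=1$ (as \cref{t:imagfromsnakegen}, under \eqref{almostdual'} or the special hypotheses of Sections~\ref{ss:l=2}--\ref{ss:KRcase}). Your proposal therefore cannot be compared against a proof in the paper, and its value stands or falls on whether the sketched induction actually goes through. It does not.

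For simplicity, the assertion that ``a standard argument shows $\rad(M_k)$ is generated by highest-$\ell$-weight vectors whose $\ell$-weights are $<\bs\pi_k$'' is not standard and is the entire difficulty. The paper's argument for $k=1$ (see \eqref{e:m=1ses}) works because $M_0=\soc(V)$ is \emph{simple} and the $\bs\pi_0$-weight space of $V$ is one-dimensional (\cref{l:head}); this forces any simple submodule of $M_1/M_0$ to have highest $\ell$-weight $\ne\bs\pi_0$, whence a nonzero map $W(\bs\omega)\to V$ with $\bs\omega<\bs\pi_1$, $\bs\omega\ne\bs\pi_0$, contradicting \eqref{e:hlwsubs}. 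For $k>1$, $M_{k-1}$ is no longer simple, and you would need an analogue of \cref{l:head} showing that each $\bs\pi_j$-weight space of $V$ ($j<k$) has dimension exactly one, together with a reason why a simple subquotient of $M_k/M_{k-1}$ must correspond to a genuine highest-$\ell$-weight \emph{submodule} of $V$. Neither is supplied, and neither is routine.

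For imaginarity, the proposed reduction to $k=1$ is purely heuristic. You suggest stripping off the ``outer'' factors $\bs\omega_{\bs s'(0,k-1)}$ and $\bs\omega_{\bs s(l-k+1,l)}$ as real tensor factors, but $V(\bs\pi_k)$ is not a priori a tensor product of $V(\bs\varpi_k)$ with these outer modules, so reality or imaginarity of an ``inner core'' says nothing directly. The paper's proof for $k=1$ hinges on the delicate vanishing statement \eqref{e:novarpivarpiW}, which already fails without \eqref{almostdual'} (\cref{exweylfails}); there is no indication that the analogous $\ell$-weight-space computation for $k>1$ localizes to a $p=1$ situation, and no bijection of MY paths of the kind you describe is set up. In short, both halves of the inductive step remain at the level of hopes rather than arguments, which is precisely why the authors state this as a conjecture.
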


The following is our main result. The proof is given in \cref{s:imagproof}.

\begin{thm}\label{t:imagfromsnakegen}
	The first  claim in \eqref{e:cjimag} is true for $k=1$ and, moreover, there exists a nonzero module map $\psi:M_1^{\otimes 2}\to V(\bs\omega)$ for some $\bs\omega\ne\bs\pi_1^2$. If, in addition, 
	\begin{equation}\label{almostdual'}
		s_2\cap s_1' = \emptyset \quad\text{and}\quad {}^*\!s_2\cap {}^*\!s_1'= \emptyset,
	\end{equation}
	then $\psi$ induces a nonzero map $V(\bs\pi_1)^{\otimes 2}\to V(\bs\omega)$. In particular, the second claim in \eqref{e:cjimag} is also true for $k=1$ in this case.\endd
\end{thm}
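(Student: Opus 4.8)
The plan is to analyze the $\ell$-weight structure of $M_1$ and its tensor square using MY paths, following the strategy already used to prove \Cref{t:soc} and \Cref{p:keystepgen}, and to isolate a specific ``extremal'' $\ell$-weight of $M_1^{\otimes 2}$ lying strictly below $\bs\pi_1^2$ whose weight space is one-dimensional and generated by a vector that survives to any proper quotient. First I would use \Cref{p:keystepgen} with $p=1$ to realize $M_1\subseteq V=V(\bs\omega_{\bs s})\otimes V(\bs\omega_{\bs s'})$ as the highest-$\ell$-weight submodule with highest $\ell$-weight $\bs\pi_1$; by \Cref{ss:thinwsp} the $\ell$-weight space of $V$ (and of $V^{\otimes 2}$) at $\bs\omega_{\bs s\diamond\bs s'}=\bs\pi_0$ is one-dimensional. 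The first claim that $M_1/M_0$ is simple should follow from \Cref{p:keystepgen}: since the only highest-$\ell$-weight submodules with highest $\ell$-weight $\le\bs\pi_1$ are $M_0$ and $M_1$, any proper nonzero submodule of $M_1/M_0$ would produce an intermediate highest-$\ell$-weight submodule, forcing simplicity once one checks $\bs\pi_1$ occurs with multiplicity one in $V$ (again an MY-path count); hence $M_1/M_0\cong V(\bs\pi_1)$.

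Next I would construct the map $\psi$. The natural candidate for $\bs\omega$ is a product $\bs\pi_0\,\bs\pi_2^{\,\flat}$ type expression — concretely the $\ell$-weight $\bs\omega_{\bs s'(0,1)}^2\,\bs\varpi_1^{\,2}\,\bs\omega_{\bs s(l-1,l)}^2$ ``folded'' one more step, i.e.\ the highest $\ell$-weight of a copy of $M_0$ sitting inside $M_1\otimes M_1$ in the manner of \Cref{ex:pi1issoc}. Concretely: $M_1\otimes M_1$ is highest-$\ell$-weight with highest $\ell$-weight $\bs\pi_1^2$, and inside it the submodule generated by the $\bs\pi_0\cdot(\text{something})$-weight space gives a highest-$\ell$-weight quotient $V(\bs\omega)$ with $\bs\omega<\bs\pi_1^2$; nonvanishing of $\psi$ amounts to showing the relevant $\ell$-weight space of $M_1^{\otimes 2}$ is nonzero and is not killed when passing to the cyclic submodule generated by it. This is where the MY-path bookkeeping from \Cref{s:snakesandMY} does the work: one writes down the distinguished path tuple realizing $\bs\omega$ in $V^{\otimes 2}$, checks it lies in $M_1^{\otimes 2}$, and checks the weight space is one-dimensional there (a ``thin'' computation analogous to \Cref{ss:thinwsp}, which is precisely claim \eqref{e:extrasq}).

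The heart of the argument — and the main obstacle — is the claim \eqref{e:novarpivarpiW}: that $\psi$ factors through $V(\bs\pi_1)^{\otimes 2}$, i.e.\ that $\psi$ vanishes on $R\otimes M_1 + M_1\otimes R$ where $R=\ker(M_1\twoheadrightarrow V(\bs\pi_1))=\operatorname{rad}M_1$. Equivalently one must show the $\bs\omega$-weight space of $M_1^{\otimes 2}$ meets $R\otimes M_1+M_1\otimes R$ trivially. This is a statement about which $\ell$-weights appear in $R$ versus in $V(\bs\pi_1)$, and it is false in general — this is exactly why the hypothesis \eqref{almostdual'} enters. Under \eqref{almostdual'} the ``crossing factor'' analysis of \Cref{ss:crfact} shows the problematic $\ell$-weights $\bs\varpi\bs\varpi'$ (products of a weight of $R$ with a weight of $M_1$) that could equal $\bs\omega$ simply do not occur: the conditions $s_2\cap s_1'=\emptyset$ and ${}^*s_2\cap{}^*s_1'=\emptyset$ kill the two ``extreme'' overlap configurations that would otherwise contribute. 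So the plan for this step is: (i) identify, via \Cref{p:keystepgen} applied to $M_1$ itself or by a direct MY-path argument, the maximal $\ell$-weights of $R=\operatorname{rad}M_1$; (ii) enumerate the ways a weight of $R$ times a weight of $M_1$ can equal the target $\bs\omega$; (iii) use \eqref{almostdual'} and the segment combinatorics of \Cref{ss:cover} (especially \Cref{l:propdiamond} and \eqref{e:nabig}) to rule all of them out. Once \eqref{e:novarpivarpiW} holds, $\psi$ descends to a nonzero $V(\bs\pi_1)^{\otimes 2}\to V(\bs\omega)$ with $\bs\omega\ne\bs\pi_1^2$, so $V(\bs\pi_1)^{\otimes 2}$ is not simple, i.e.\ $V(\bs\pi_1)$ is not real; since it is not one-dimensional (its highest $\ell$-weight is nontrivial), it is imaginary. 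Finally I would note the two escape routes when \eqref{almostdual'} fails — $l=2$ and the KR case — are handled separately in \Cref{ss:l=2} and \Cref{ss:KRcase} using diagram subalgebra restriction together with the already-established case of \Cref{t:imagfromsnakegen}.
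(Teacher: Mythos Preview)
Your treatment of the first claim (simplicity of $M_1/M_0$) is essentially what the paper does: \Cref{p:keystepgen} together with the thinness of the $\bs\pi_0$-weight space in $V$ (\Cref{l:head}) rules out any intermediate highest-$\ell$-weight factor.

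However, your construction of $\psi$ diverges from the paper in a way that leaves a genuine gap. You assert that $M_1\otimes M_1$ is highest-$\ell$-weight with highest $\ell$-weight $\bs\pi_1^2$ and then look for a specific $\bs\omega<\bs\pi_1^2$ occurring thinly inside it. The paper does \emph{not} claim $M_1^{\otimes 2}$ is highest-$\ell$-weight; what it uses instead (via \Cref{p:bc1.3.3}) is only that $M_1^{\otimes 2}$ is cyclic on the $\ell$-weight $({}^*\bs\pi_1)^{-1}\bs\pi_1$. The existence of $\bs\omega\ne\bs\pi_1^2$ is obtained indirectly: the paper builds the map $\Phi:V^{\otimes 2}\to V^\#=V(\bs\omega_{\bs s})\otimes V(\bs\pi_0)\otimes V(\bs\omega_{\bs s'})$ (from the head projection $V(\bs\omega_{\bs s'})\otimes V(\bs\omega_{\bs s})\to V(\bs\pi_0)$), sets $N=\Phi(M_1^{\otimes 2})$, and then shows $\bs\pi_1^2\notin\wt_\ell(V^\#)$ (this is \eqref{e:novarpisq}, proved by noting $(\bs\omega_{s_1'})^2$ cannot appear in any $\ell$-weight of $V(\bs\omega_{\bs s'})$). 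Hence \emph{every} simple quotient of $N$ has highest $\ell$-weight $\ne\bs\pi_1^2$, and that is $\psi$. Your ``distinguished path tuple realizing $\bs\omega$'' approach never identifies this intermediate module $V^\#$, and without it you have no mechanism for forcing $\bs\omega\ne\bs\pi_1^2$.

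Your descent step also aims at the wrong weight. You propose checking that the $\bs\omega$-weight space of $M_1^{\otimes 2}$ misses $R\otimes M_1+M_1\otimes R$. The paper instead argues: since $V(\bs\omega)$ is simple and is a quotient of the cyclic module $M_1^{\otimes 2}$, the cyclic weight $({}^*\bs\pi_1)^{-1}\bs\pi_1$ must lie in $\wt_\ell V(\bs\omega)$; so if $\psi$ failed to descend, $L=M_1\otimes V(\bs\pi_0)+V(\bs\pi_0)\otimes M_1$ would have to contain this weight. One then shows $(V(\bs\pi_0)\otimes V(\bs\pi_0))_{({}^*\bs\pi_1)^{-1}\bs\pi_1}=0$ (easy, by spectral-parameter bounds) and $(V(\bs\pi_0)\otimes V(\bs\pi_1))_{({}^*\bs\pi_1)^{-1}\bs\pi_1}=0$. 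The latter is deduced from the stronger statement \eqref{e:novarpivarpiW}, namely $W(\bs\pi_0\bs\pi_1)_{({}^*\bs\pi_1)^{-1}\bs\pi_1}=0$, which is about a \emph{Weyl module} weight space and is where the lengthy case analysis of \Cref{s:novarpivarpiW} (and the crucial factorization of \Cref{ss:crfact} into $\mathcal P_\le$ and $\mathcal P_>$) actually lives. Your description of \eqref{e:novarpivarpiW} as ``which $\ell$-weights appear in $R$ versus in $V(\bs\pi_1)$'' misses that the target weight is the fixed $({}^*\bs\pi_1)^{-1}\bs\pi_1$, not the unknown $\bs\omega$, and that the argument proceeds through MY paths for five specific fundamental factors $\bs\omega_{s_1'},\bs\omega_{s_1\diamond s_1'},\bs\omega_{s_1\diamond s_2'}$ rather than through $R$ directly. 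Your final remark on the $l=2$ and KR escape routes is correct.
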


We remark that \eqref{almostdual'} is equivalent to
\begin{equation}\label{almostdual}
	j_1'<i_2 \quad\text{and}\quad j_2-i_1'>h.
\end{equation}
This theorem was proved in \cite{BC23} under more restrictive assumptions: $\bs s' = \bs s^*, |\supp(\bs s)|=1$, and $\supp(\bs s)\cap\{1,N\}=\emptyset$. The last assumption guarantees $\bs s\rhd_1\bs s^*$. Assumption \eqref{almostdual} holds in that case since $\bs s' = \bs s^*$ implies that $j_1'= i_1$ and $j_1=h+i_1'$ and, hence, using that $\bs s\in \mathbb L$, we have
\begin{equation*}
	j_1'=i_1<i_2 \ \ {\rm and}\ \ j_2>j_1=h+i_1'.
\end{equation*}
As mentioned in the introduction, it was obtained in \cite{LM18} a necessary and sufficient condition for an irreducible module $V(\bs\omega_{\bs s})$ to be imaginary, provided $\bs s$ is regular, i.e., if 
\begin{equation*}
	i_k\ne i_m \ \ \text{and}\ \ j_k\ne j_m \ \ \text{for all}\ \ 1\leq k\neq m\leq l.
\end{equation*}
Clearly, \cref{t:imagfromsnakegen} gives rise to families of imaginary modules which do not satisfy this  regularity condition.

The logical structure of the proof of \cref{t:imagfromsnakegen} is explained in \cref{ss:imagfromsnakegen}. In particular, \eqref{almostdual'} is used as a technical assumption to guarantee \eqref{e:novarpivarpiW} holds. In fact, \cref{exweylfails} below shows \eqref{e:novarpivarpiW} may fail if \eqref{almostdual'} fails. However, we also show that the second part of \eqref{e:cjimag} holds for $k=1$ for this example.  More precisely, we show in \cref{ss:l=2} that  \eqref{e:cjimag} holds for $k=1$ in complete generality when $l=2$.  Alternatively, the validity of \eqref{e:cjimag} for \cref{exweylfails} also follows from the discussion in \cref{ss:KRcase}, where we show \eqref{e:cjimag} holds for $k=1$ if $V(\bs\omega_{\bs s})$ and $V(\bs\omega_{\bs s'})$ are Kirillov-Reshetikhin modules without the assumption \eqref{almostdual'}. So far, we have not found a general argument in the case \eqref{e:novarpivarpiW} fails.

In \cref{s:count}, we will characterize all pairs of connected ladders $(\bs s,\bs s')$ satisfying $\bs s \rhd_1\bs s'$. In the next example, we show how to construct  ladders satisfying all the assumptions in \cref{t:imagfromsnakegen}, including \eqref{almostdual'}. 
	
\begin{ex}\label{ex:imagfromsnakegen}  
For any choice of ladders $\bs s,\bs s'$ such that $\bs s \rhd_1 \bs s'$, we will check that there exist $s,s'\in\tilde\seg$ such that replacing $(\bs s,\bs s')$ by $(\tilde{\bs s},\tilde{\bs s}')$ where
\begin{equation*}
	\tilde{\bs s} = s\vee \bs s \quad\text{and}\quad \tilde{\bs s}' = s'\vee \bs s',
\end{equation*}
all the assumptions of the theorem are satisfied. Indeed, one easily checks the conditions are satisfied provided 
	\begin{equation}\label{e:imagfromsnake}
		s_1\rhd s \rhd s_1'\rhd s', \ \ \  s\rhd s', \ \ \  j'<i_1, \ \ \ j_1-i'>h,
	\end{equation}
	where $s=[i,j], s'=[i',j']$. Let us check that such $s$ and $s'$ do exist.  Since $\bs s\rhd_1\bs s'$ we have $s_1\rhd s_2'\rhd s_1'$ and hence 
	$\{s\in \tilde\seg : s_1\rhd s\rhd s_1'\}\neq \emptyset$. Let $s=[i,j]$ be any element of this set. Since $j_1-i_1'\leq h$ the following inequalities hold
	\begin{equation*}
		j-h<i_1'<i<i_1 \leq j_1'<j< j_1.
	\end{equation*}
	In particular, there exists $i',j'$ such that  
	\begin{equation*}
		j-h\leq i'<j_1-h \leq i_1'< i \leq j'<i_1\leq j_1'<j,
	\end{equation*}
	which proves the existence of $s$ and $s'$ satisfying \eqref{e:imagfromsnake}, as desired. \endd
\end{ex}

We now give an example connecting \cref{t:imagfromsnakegen} with \cref{t:soc}. 
	
\begin{ex}\label{ex:pi1issoc}
	Let $\bs s,\bs s'\in \mathbb L^l$ be such that $\bs s\rhd_1\bs s'$ and let $\bs\pi_1$ be as in \eqref{pikdef}. 	Let us show that there exist $\tilde{\bs s}, \tilde{\bs s}'\in \mathbb L$ such that 
	\begin{equation}\label{imagsocle}
		\bs\omega_{\tilde{\bs s}\diamond\tilde{\bs s}'}=\bs\pi_1.
	\end{equation}
	In particular, \cref{t:soc} implies
	\begin{equation*}
		V(\bs\pi_1)\cong \soc (V(\bs\omega_{\tilde{\bs s}})\otimes V(\bs\omega_{\tilde{\bs s}'})).
	\end{equation*}
	In other words, any $\bs\pi_1$ defined as in \eqref{pikdef} is the Drinfeld polynomial of the socle of a tensor product of snake modules. In particular, in light of \cref{ex:imagfromsnakegen}, this construction provides families of examples of tensor products of snake modules whose socles are imaginary.
	
	Using the fact that $\bs s\rhd_1\bs s'$ and $\bs s,\bs s'\in \mathbb L^l$, we have
	\begin{equation*}
		i_1'<i_2'<i_1\leq j_1'<j_1 \ \ {\rm and}\ \ i_{l}'<i_l \leq j_{l}'< j_{l-1}<j_l.
	\end{equation*}
	In particular, there exists $a,b$ such that 
	\begin{equation}\label{absnake}
		i_1'<a<i_1\leq j_1'<j_2' \ \ {\rm and} \ \  i_{l}'<i_l\leq j_l'<b<j_l.
	\end{equation}
	Setting
	\begin{equation*}
		\tilde{\bs s} = ([a,j_1'])\vee \bs s(0,l-1)\vee ([b,j_l]), \ \ \tilde{\bs s}' = ([i_1',a])\vee \bs s'(1,l)\vee ([i_l,b]),
	\end{equation*}
	a straightforward checking using  \eqref{absnake}  shows that $\tilde{\bs s},\tilde{\bs s}'\in \mathbb L^{l+1}$, $\tilde{\bs s}\rhd \tilde{\bs s}$, and
	\eqref{imagsocle} holds.\endd
\end{ex}

\section{Further Background and Technical Results}\label{s:morebackg}

\subsection{General Hopf Algebra Results}\label{ss:Hopf} 
Given a Hopf algebra $\mathcal H$ over $\mathbb F$, its category $\mathcal C$ of finite-dimensional representations is an abelian monoidal category and we denote the dual of a module  $V$ by $V^*$. More precisely, the action of $\mathcal H$ of $V^*$ is given by
\begin{equation}\label{rightdual}
	(hf)(v) = f(S(h)v) \quad\text{for}\quad h\in\mathcal H, f\in V^*, v\in V.
\end{equation}
It is well known that 
\begin{equation*}
	\operatorname{Hom}_{\mathcal H}(\mathbb F, V\otimes V^*) \ne 0 \qquad\text{and}\qquad \operatorname{Hom}_{\mathcal H}(V^*\otimes V, \mathbb F)\ne 0.
\end{equation*}
If the antipode is invertible, one can define a second notion of dual module by replacing $S$ by $S^{-1}$ in \eqref{rightdual}, which will be  denoted by $^*V$. The following are also well-known:
\begin{equation}\label{e:rldualcancel}
	^*(V^*)\cong (^*V)^* \cong V
\end{equation}
and, given $\mathcal H$-modules $V_1,V_2,V_3$, 
\begin{equation}\label{e:frobrec}
	\begin{aligned}
		\operatorname{Hom}_{\mathcal C}(V_1\otimes V_2, V_3)\cong \operatorname{Hom}_{\mathcal C}(V_1, V_3\otimes V_2^*), \\
		\operatorname{Hom}_{\mathcal C}(V_1, V_2\otimes V_3)\cong \operatorname{Hom}_{\mathcal C}(V_2^*\otimes V_1, V_3),
	\end{aligned}
\end{equation}
and 
\begin{equation}\label{e:dualtp}
	(V_1\otimes V_2)^*\cong V_2^*\otimes V_1^*.
\end{equation}

\begin{lem}[{\cite[Lemma 2.6.2]{MS24}}]\label{l:nonzeromorph}
	Let $V_1,V_2,V_3, L_1,L_2$ be $\mathcal H$-modules and assume  $V_2$ is simple. If
	\begin{equation*}
		\varphi_1:L_1\rightarrow V_1\otimes V_2\quad\textrm{and}\quad\varphi_2: V_2\otimes V_3\rightarrow L_2
	\end{equation*}
	are nonzero homomorphisms, the composition
	$$L_1\otimes V_3 \xrightarrow{\varphi_1\otimes \operatorname{id}_{V_3}} V_1\otimes V_2\otimes V_3 \xrightarrow{\operatorname{id}_{V_1}\otimes \varphi_2} V_1\otimes L_2	$$
	does not vanish. Similarly, if 
	\begin{equation*}
		\varphi_1:V_1\otimes V_2\rightarrow L_1\quad\textrm{and}\quad\varphi_2:L_2\rightarrow V_2\otimes V_3
	\end{equation*}
	are nonzero homomorphisms, the composition
	$$V_1\otimes L_2\xrightarrow{\operatorname{id}_{V_1}\otimes\varphi_2} V_1\otimes V_2\otimes V_3 \xrightarrow{\varphi_1\otimes\operatorname{id}_{V_3}} L_1\otimes V_3$$
	does not vanish.\qed
\end{lem}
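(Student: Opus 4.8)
This is a general fact about rigid monoidal abelian categories, and the plan is to prove it by turning the given morphisms --- which go \emph{into} and \emph{out of} the simple object $V_2$ --- into morphisms that factor \emph{through} $V_2$ via Frobenius reciprocity, so that simplicity of $V_2$ forces one to be surjective and the other injective. I will prove the first statement; the second follows by the mirror-image argument, interchanging the roles of $V_1$ and $V_3$ and of sources and targets, and using left duality ${}^*(-)$ in place of $(-)^*$.

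First I would set $\Phi=(\id_{V_1}\otimes\varphi_2)\circ(\varphi_1\otimes\id_{V_3})\colon L_1\otimes V_3\to V_1\otimes L_2$, and let $\psi_1\colon V_1^*\otimes L_1\to V_2$ be the image of $\varphi_1$ under the second isomorphism in \eqref{e:frobrec}, and $\psi_2\colon V_2\to L_2\otimes V_3^*$ the image of $\varphi_2$ under the first isomorphism in \eqref{e:frobrec}. Both $\psi_1,\psi_2$ are nonzero module maps since $\varphi_1,\varphi_2$ are, so simplicity of $V_2$ makes $\psi_1$ surjective and $\psi_2$ injective; hence $\psi_2\circ\psi_1\colon V_1^*\otimes L_1\to L_2\otimes V_3^*$ has image $\psi_2(V_2)\ne0$, i.e.\ is nonzero. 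The crux is then to identify $\psi_2\circ\psi_1$ with the image of $\Phi$ under the iterated Frobenius reciprocity isomorphism $\Hom(L_1\otimes V_3,V_1\otimes L_2)\cong\Hom(V_1^*\otimes L_1,L_2\otimes V_3^*)$; since that isomorphism is bijective, this yields $\Phi\ne0$.

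I expect that identification to be the only step that is not purely formal, and I see two ways to carry it out. One is a string-diagram computation: substitute the definition of $\Phi$ into the explicit formula for the adjunction (built from $\operatorname{ev}_{V_1}\colon V_1^*\otimes V_1\to\mathbb F$ and $\operatorname{coev}_{V_3}\colon\mathbb F\to V_3\otimes V_3^*$), then use the interchange law and the triangle identities to absorb the internal identities $\id_{V_1}$ and $\id_{V_2}$; this is routine bookkeeping. The other --- which I would actually write up, as it avoids coherence entirely --- uses that $\mathbb F$ is a field, so that $V_1\otimes-$ and $-\otimes V_3$ are exact and therefore $\Phi=0$ is equivalent to $(\operatorname{im}\varphi_1)\otimes V_3\subseteq V_1\otimes(\ker\varphi_2)$ inside $V_1\otimes V_2\otimes V_3$. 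A short computation with a basis of $V_1$ shows that, for subspaces $A\subseteq V_1\otimes V_2$ and $B\subseteq V_2\otimes V_3$, one has $A\otimes V_3\subseteq V_1\otimes B$ if and only if $W\otimes V_3\subseteq B$, where $W\subseteq V_2$ is the smallest subspace with $A\subseteq V_1\otimes W$. Taking $A=\operatorname{im}\varphi_1$ and $B=\ker\varphi_2$, the point is that $W=\operatorname{im}\psi_1$ is a \emph{submodule} of $V_2$ (the image of the module map $\psi_1$) and is nonzero because $\varphi_1\ne0$, hence $W=V_2$ by simplicity; so $\Phi=0$ would force $V_2\otimes V_3\subseteq\ker\varphi_2$, i.e.\ $\varphi_2=0$, a contradiction, and therefore $\Phi\ne0$.

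For the second statement I would run the analogous exactness argument for $\Psi=(\varphi_1\otimes\id_{V_3})\circ(\id_{V_1}\otimes\varphi_2)\colon V_1\otimes L_2\to L_1\otimes V_3$: exactness gives $\Psi=0$ iff $V_1\otimes(\operatorname{im}\varphi_2)\subseteq(\ker\varphi_1)\otimes V_3$, which by the analogous coordinate lemma (now contracting the $V_3$ slot) holds iff $V_1\otimes W'\subseteq\ker\varphi_1$, where $W'\subseteq V_2$ is smallest with $\operatorname{im}\varphi_2\subseteq W'\otimes V_3$. Here $W'$ is the image of the module map $L_2\otimes{}^*V_3\to V_2$ adjoint to $\varphi_2$ (the left-dual analogue of \eqref{e:frobrec}, available by rigidity), hence a nonzero submodule of $V_2$, hence all of $V_2$; so $\Psi=0$ would force $\varphi_1=0$, a contradiction. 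The main obstacle throughout is just the bookkeeping needed to see that the naive ``support'' subspace $W$ really is a submodule of $V_2$ --- which is precisely where rigidity (to form the adjoint map whose image it is) and the simplicity hypothesis enter.
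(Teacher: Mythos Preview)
The paper does not prove this lemma; it is quoted from \cite[Lemma 2.6.2]{MS24} and marked with \qedsymbol, which the paper explicitly uses for statements whose proofs are omitted. So there is no proof here to compare against.

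Your argument is correct. The essential point---that the minimal subspace $W\subseteq V_2$ with $\operatorname{im}\varphi_1\subseteq V_1\otimes W$ coincides with the image of the Frobenius-adjoint module map $\psi_1\colon V_1^*\otimes L_1\to V_2$, hence is a \emph{sub}module of $V_2$---is exactly what lets simplicity bite, and your basis computation establishing this is clean. The reduction $\Phi=0\Leftrightarrow(\operatorname{im}\varphi_1)\otimes V_3\subseteq V_1\otimes(\ker\varphi_2)$ via exactness of $V_1\otimes-$ and $-\otimes V_3$ is also fine.

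One small point: for the second statement you use the left-dual adjunction $\Hom(L_2,V_2\otimes V_3)\cong\Hom(L_2\otimes{}^*V_3,V_2)$, which is not among the two isomorphisms listed in \eqref{e:frobrec} (those are the right-dual versions). It does hold---the category is rigid and the antipode is invertible, so left duals exist and satisfy the mirror adjunction---but in a formal write-up you should either state it or derive it from \eqref{e:frobrec} applied to dual modules via \eqref{e:rldualcancel}.
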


\subsection{Tensor Products and Highest-$\ell$-weight Modules}\label{ss:hwc} 

The following result will play an important role in this paper (see \cite[Theorem 7.5 and Remark 7.6]{CM05} for a proof and further related references).

\begin{prop}\label{p:weylper}
	Given $l\ge 1$, let $\bs s = (s_1,\cdots , s_l)\in \tilde\seg^l$, and assume  $\spar(s_k)\geq \spar(s_{k+1})$ for $1\leq k<l$. Then, 
	\begin{equation*}
		W(\bs\omega_{\bs s})\cong V(\bs\omega_{s_1})\otimes \cdots \otimes V(\bs\omega_{s_l}).
	\end{equation*}
	In particular, if $V_1$ and $V_2$ are quotients of $W(\bomega_{\bs s(0,k)})$ and $W(\bomega_{\bs s(k,l)})$, respectively, for some $1\leq k< l$, then $V_1\otimes V_2$  is  highest-$\ell$-weight. \qed
\end{prop}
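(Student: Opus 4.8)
The plan is to realize $W(\bs\omega_{\bs s})$ as the given ordered tensor product of fundamental modules by producing a surjection and then matching dimensions. First I would record that, since $s_k\in\tilde\seg$, each $\bs\omega_{s_k}=\bs\varpi_{\supp(s_k),\spar(s_k)}$ is a fundamental $\ell$-weight, and that in type $A$ the fundamental modules coincide with their local Weyl modules, so $W(\bs\omega_{s_k})\cong V(\bs\omega_{s_k})$ for every $k$. Write $T=V(\bs\omega_{s_1})\otimes\cdots\otimes V(\bs\omega_{s_l})$. The hypothesis $\spar(s_k)\ge\spar(s_{k+1})$ is exactly the ordering of spectral parameters (relative to the coproduct of \cite{M11}) under which the standard cyclicity criterion for tensor products of fundamental modules applies; hence $T$ is highest-$\ell$-weight with highest $\ell$-weight $\prod_k\bs\omega_{s_k}=\bs\omega_{\bs s}$, and the universal property of the local Weyl module gives an epimorphism $W(\bs\omega_{\bs s})\twoheadrightarrow T$.

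Next I would close the argument with a dimension count. By the dimension formula for local Weyl modules of $U_q(\tlie g)$, $\dim W(\bs\pi)$ depends only on the classical weight $\wt(\bs\pi)$; applied to the factorization $\bs\omega_{\bs s}=\prod_k\bs\omega_{s_k}$ this yields $\dim W(\bs\omega_{\bs s})=\prod_k\dim W(\bs\omega_{s_k})=\prod_k\dim V(\bs\omega_{s_k})=\dim T$, so the epimorphism above is an isomorphism, which is the first assertion. This dimension statement is the only nonformal ingredient and is where I expect the real content to sit: it rests on the structure theory of Weyl modules — e.g. the freeness of the global Weyl module over its endomorphism ring, or equivalently the sharp upper bound on $\dim W(\bs\pi)$ — and I would invoke it from the literature rather than reprove it.

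Finally, for the ``in particular'' clause I would fix $1\le k<l$ and note that the sub-multisegments $\bs s(0,k)=(s_1,\dots,s_k)$ and $\bs s(k,l)=(s_{k+1},\dots,s_l)$ again have non-increasing spectral parameters, so the first assertion gives $W(\bs\omega_{\bs s(0,k)})\cong\bigotimes_{j\le k}V(\bs\omega_{s_j})$ and $W(\bs\omega_{\bs s(k,l)})\cong\bigotimes_{j>k}V(\bs\omega_{s_j})$, whence $W(\bs\omega_{\bs s})\cong W(\bs\omega_{\bs s(0,k)})\otimes W(\bs\omega_{\bs s(k,l)})$. Tensoring the given surjections $W(\bs\omega_{\bs s(0,k)})\twoheadrightarrow V_1$ and $W(\bs\omega_{\bs s(k,l)})\twoheadrightarrow V_2$ produces a surjection from $W(\bs\omega_{\bs s})$ onto $V_1\otimes V_2$; since $W(\bs\omega_{\bs s})$ is highest-$\ell$-weight and quotients of highest-$\ell$-weight modules are highest-$\ell$-weight, $V_1\otimes V_2$ is highest-$\ell$-weight, as claimed. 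The only delicate point throughout is the first-paragraph cyclicity input together with the dimension formula; everything else is bookkeeping with the bijection between segments and fundamental $\ell$-weights.
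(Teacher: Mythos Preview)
Your proposal is correct and follows the standard approach: the paper itself does not prove this proposition but cites \cite[Theorem 7.5 and Remark 7.6]{CM05}, and your sketch---cyclicity of the ordered tensor product of fundamental modules together with the dimension formula for local Weyl modules---is precisely the argument one finds there. The only remark is that both the cyclicity criterion and the dimension identity $\dim W(\bs\omega_{\bs s})=\prod_k\dim V(\bs\omega_{s_k})$ are being quoted rather than proved, which is appropriate here since the paper treats the whole proposition as background.
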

	Recall from \cite{FR99} that 
	\begin{equation}\label{e:subgpweight}
		W(\bs\omega_{\bs s})_{\bs\varpi}\ne 0 \quad\Rightarrow\quad \bs\varpi\in \left\langle \bomega_s : \spar(s)\geq \min\{\spar(\bs s)\}\right\rangle\subset \cal P. 
	\end{equation}

\begin{lem}[{\cite[Proposition 4.1.2]{MS24}}]\label{l:subsandq}
	Let $\bs\pi,\bs\pi'\in\mathcal P^+$, $V=V(\bs\pi)\otimes V(\bs\pi')$, and $W=V(\bs\pi')\otimes V(\bs\pi)$. Then, $V$ contains a submodule isomorphic to $V(\bs\varpi), \bs\varpi\in\mathcal P^+$, if and only if there exists an epimorphism $W\to V(\bs\varpi)$.\qed
\end{lem}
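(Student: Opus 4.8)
The final statement to prove is \cref{l:subsandq}: for $\bs\pi,\bs\pi'\in\mathcal P^+$, the tensor product $V=V(\bs\pi)\otimes V(\bs\pi')$ contains a submodule isomorphic to $V(\bs\varpi)$ if and only if the ``flipped'' tensor product $W=V(\bs\pi')\otimes V(\bs\pi)$ admits an epimorphism onto $V(\bs\varpi)$. This is cited as \cite[Proposition 4.1.2]{MS24}, so my task is to reconstruct its proof from the Hopf-algebraic toolbox assembled in \cref{ss:Hopf}, especially the Frobenius reciprocity isomorphisms \eqref{e:frobrec}, the duality facts \eqref{e:rldualcancel} and \eqref{e:dualtp}, and the fact that $\mathcal C$ is rigid so that every simple object has a well-behaved dual $V(\bs\pi)^*\cong V(\bs\pi^*)$ which is again simple.

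\textbf{Plan.} The proof is a formal manipulation with adjunctions plus one representation-theoretic input: the dual of a simple highest-$\ell$-weight module is again simple (equivalently, irreducibility is preserved under $(-)^*$ in $\mathcal C$), and more specifically a nonzero map into a simple module is automatically surjective while a nonzero map out of a module with simple socle $V(\bs\varpi)$ detects $V(\bs\varpi)$ as a submodule. First I would record the key reformulations: ``$V$ contains a submodule $\cong V(\bs\varpi)$'' is equivalent to $\Hom_{\mathcal C}(V(\bs\varpi),V)\ne 0$, because $V(\bs\varpi)$ is simple, so any nonzero map from it is injective; dually, ``there is an epimorphism $W\to V(\bs\varpi)$'' is equivalent to $\Hom_{\mathcal C}(W,V(\bs\varpi))\ne 0$, again because the target is simple. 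So the claim becomes the symmetric statement
\begin{equation*}
	\Hom_{\mathcal C}\big(V(\bs\varpi),\,V(\bs\pi)\otimes V(\bs\pi')\big)\ne 0 \quad\Longleftrightarrow\quad \Hom_{\mathcal C}\big(V(\bs\pi')\otimes V(\bs\pi),\,V(\bs\varpi)\big)\ne 0.
\end{equation*}

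\textbf{Key steps.} Starting from the left-hand $\Hom$, I apply the second line of \eqref{e:frobrec} with $V_1=V(\bs\varpi)$, $V_2=V(\bs\pi)$, $V_3=V(\bs\pi')$ to get
\begin{equation*}
	\Hom_{\mathcal C}\big(V(\bs\varpi),V(\bs\pi)\otimes V(\bs\pi')\big)\cong \Hom_{\mathcal C}\big(V(\bs\pi)^*\otimes V(\bs\varpi),\,V(\bs\pi')\big),
\end{equation*}
and then the first line of \eqref{e:frobrec} (or a direct rigidity/adjunction argument) to move $V(\bs\pi')$ back across and land on $\Hom_{\mathcal C}\big(V(\bs\pi)^*\otimes V(\bs\varpi)\otimes {}^*V(\bs\pi'),\mathbb F\big)$, which by the characterization of the evaluation/coevaluation pairing is equivalent to $\Hom_{\mathcal C}\big(\mathbb F, {}^*(V(\bs\pi)^*\otimes V(\bs\varpi)\otimes {}^*V(\bs\pi'))\big)$ and hence, using \eqref{e:rldualcancel} and \eqref{e:dualtp} to rewrite duals of tensor products, to a $\Hom$ of the form $\Hom_{\mathcal C}\big(V(\bs\pi'),\,V(\bs\varpi)^?\otimes V(\bs\pi)^?\big)$. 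Rather than track every left/right-dual decoration by hand, the cleanest route is: (i) note that $\Hom_{\mathcal C}(A,B)\ne 0 \iff \Hom_{\mathcal C}(B^*,A^*)\ne 0$ since $(-)^*$ is a (contravariant) equivalence on $\mathcal C$ — this is where rigidity is used; (ii) apply this to $A=W=V(\bs\pi')\otimes V(\bs\pi)$, $B=V(\bs\varpi)$, getting $\Hom_{\mathcal C}(W,V(\bs\varpi))\ne 0 \iff \Hom_{\mathcal C}(V(\bs\varpi)^*, W^*)\ne 0$, and use $W^*\cong V(\bs\pi)^*\otimes V(\bs\pi')^*$ from \eqref{e:dualtp}; (iii) since $V(\bs\varpi)^*$, $V(\bs\pi)^*$, $V(\bs\pi')^*$ are again simple objects of $\mathcal C$ of the form $V(\bs\varpi^*)$, $V(\bs\pi^*)$, $V(\bs\pi'^*)$, the right-hand condition is exactly ``$V(\bs\pi^*)\otimes V(\bs\pi'^*)$ contains a submodule $\cong V(\bs\varpi^*)$''. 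So the asserted equivalence is nothing but the left-to-right reading of the statement with $(\bs\pi,\bs\pi',\bs\varpi)$ replaced by $(\bs\pi^*,\bs\pi'^*,\bs\varpi^*)$ — in other words, one implication of the lemma for all Drinfeld tuples already implies the converse, because $(-)^*$ swaps ``submodule'' with ``quotient'' and reverses the tensor order. Therefore it suffices to prove one direction, say: if $W\to V(\bs\varpi)$ is a nonzero (hence surjective) map, then $V$ contains a copy of $V(\bs\varpi)$.

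\textbf{The remaining direction.} Given a nonzero $\varphi:V(\bs\pi')\otimes V(\bs\pi)\to V(\bs\varpi)$, I dualize: $\varphi^*:V(\bs\varpi)^*\to (V(\bs\pi')\otimes V(\bs\pi))^*\cong V(\bs\pi)^*\otimes V(\bs\pi')^*$ is nonzero, and its source is simple, hence $\varphi^*$ is injective, i.e.\ $V(\bs\pi)^*\otimes V(\bs\pi')^*$ contains a submodule $\cong V(\bs\varpi)^*$. Now re-index: write $V(\bs\pi)^*\cong V(\bs\mu)$, $V(\bs\pi')^*\cong V(\bs\mu')$, $V(\bs\varpi)^*\cong V(\bs\nu)$; so $V(\bs\mu)\otimes V(\bs\mu')$ has a submodule $\cong V(\bs\nu)$. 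I then need to get back to the original labels. Applying the \emph{already-known} half of \eqref{e:frobrec} in the other direction, or equivalently applying the same dualization argument once more (note $(-)^{**}\cong\id$ on $\mathcal C$, since $\mathcal C$ is rigid and here we may use that the square of the antipode is inner so ${}^*(V^*)\cong V\cong (^*V)^*$, which gives $V^{**}\cong V$ for objects via the twist), I recover that $V(\bs\pi)\otimes V(\bs\pi')\cong V(\bs\mu)^*\otimes V(\bs\mu')^*$ contains a submodule $\cong V(\bs\nu)^*\cong V(\bs\varpi)$. Thus the implication ``epimorphism $W\to V(\bs\varpi)$ $\Rightarrow$ submodule of $V$'' is proved, and by the symmetry argument above the reverse holds too, completing the proof.

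\textbf{Main obstacle.} The genuine content — and the only place something beyond formal nonsense is needed — is the fact that $(-)^*$ (and $^*(-)$) are exact contravariant auto-equivalences of $\mathcal C$ sending simples to simples, with $V(\bs\pi)^*\cong V(\bs\pi^*)$ and $V^{**}\cong V$; this rests on rigidity of $\mathcal C$ (proved in \cite{HL10}, as recalled in \cref{ss:fdm}) together with the standard fact that $S^2$ is implemented by conjugation, giving \eqref{e:rldualcancel}. Once that is in hand, the proof is a short chain of the adjunctions \eqref{e:frobrec} and the identifications \eqref{e:rldualcancel}, \eqref{e:dualtp}, together with the elementary observations that a nonzero map to (resp.\ from) a simple is epi (resp.\ mono). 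I would present the argument in the compact form: (1) restate via $\Hom$-spaces; (2) observe the full statement is self-dual under $(-)^*$, so only one implication needs proof; (3) prove that implication by dualizing a given surjection and using that the dual of a simple is simple; (4) conclude. The bookkeeping of left- versus right-duals is the only fiddly part, and it is harmless because on objects of $\mathcal C$ both duals are realized by the same Drinfeld-tuple involution $\bs\pi\mapsto\bs\pi^*$ up to an overall isomorphism, so for the purpose of detecting nonvanishing of $\Hom$-spaces the distinction does not matter.
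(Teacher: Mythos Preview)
The paper does not prove this lemma; it is cited from \cite[Proposition 4.1.2]{MS24} and marked \qedsymbol. So the comparison is between your reconstruction and what a correct proof must contain.

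Your symmetry observation is correct: under $(-)^*$, the two directions of the lemma for the triple $(\bs\pi,\bs\pi',\bs\varpi)$ exchange with those for $(\bs\pi^*,\bs\pi'^*,\bs\varpi^*)$, so proving one implication for all triples yields the other. However, your proof of ``the remaining direction'' is circular. You start with a surjection $W=V(\bs\pi')\otimes V(\bs\pi)\twoheadrightarrow V(\bs\varpi)$ and dualize to obtain an injection $V(\bs\varpi)^*\hookrightarrow V(\bs\pi)^*\otimes V(\bs\pi')^*$. This target is $V(\bs\pi^*)\otimes V(\bs\pi'^*)$, \emph{not} $V(\bs\pi)\otimes V(\bs\pi')$; you have only produced the left-hand side of the lemma for the starred triple. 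Your next step --- ``apply the same dualization once more, using $V^{**}\cong V$'' --- fails for two independent reasons. First, dualizing an injection yields a surjection with tensor factors reversed, so you land back on a surjection $V(\bs\pi')^{**}\otimes V(\bs\pi)^{**}\twoheadrightarrow V(\bs\varpi)^{**}$, which (modulo the double-dual issue) is exactly where you began. Second, $V^{**}\not\cong V$ in $\mathcal C$: from \eqref{e:dualint} one computes $(\bs\varpi_{i,a})^{**}=\bs\varpi_{i,a-2h}$, so the double right dual shifts the spectral parameter and never returns the same simple object. Pure rigidity and the adjunctions \eqref{e:frobrec} only move you among equivalent reformulations of the same direction; they cannot by themselves cross from one side of the equivalence to the other.

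What is missing is a genuine input specific to $U_q(\tlie g)$. The standard device is an $\mathbb F$-linear algebra anti-involution $\psi$ of $U_q(\tlie g)$ that is simultaneously a coalgebra anti-morphism (so $(\psi\otimes\psi)\circ\Delta=\Delta^{\rm op}\circ\psi$) and has the property that the $\psi$-twisted dual of $V(\bs\pi)$ is again $V(\bs\pi)$ (not $V(\bs\pi^*)$). Such a $\psi$ exists --- essentially the Chevalley anti-involution combined with the Dynkin diagram automorphism $i\mapsto i^*$ --- and the associated contravariant exact functor then sends the inclusion $V(\bs\varpi)\hookrightarrow V(\bs\pi)\otimes V(\bs\pi')$ directly to a surjection $V(\bs\pi')\otimes V(\bs\pi)\twoheadrightarrow V(\bs\varpi)$. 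This is the content of the proof in \cite{MS24}; your argument omits precisely this ingredient, and without it the implication cannot be established.
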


\begin{thm}\label{cyc}
	Let $S_1,\cdots, S_l$ be simple $U_q(\tlie g)$-modules and assume $S_i$ is real for $i\notin J$ for some $J\in\{\{1,2\},\{l-1,l\}\}$. If $S_i\otimes S_j$ is highest-$\ell$-weight for all $1\leq i< j\leq l$, then $S_1\otimes\cdots\otimes S_l$ is highest-$\ell$-weight.\qed
\end{thm}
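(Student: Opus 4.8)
\textbf{Proof proposal for \cref{cyc}.}

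The plan is to induct on $l$, the base case $l=2$ being the hypothesis itself. For the inductive step, assume $J=\{1,2\}$ (the case $J=\{l-1,l\}$ being entirely analogous, or obtained by applying duality, since $(V\otimes W)^*\cong W^*\otimes V^*$ and duality reverses the order of tensor factors while preserving highest-$\ell$-weightness of the reversed product). The idea is to peel off the last factor $S_l$: by the inductive hypothesis applied to $S_1,\dots,S_{l-1}$ (which still satisfies the hypotheses, with the same exceptional set $J$), the module $T:=S_1\otimes\cdots\otimes S_{l-1}$ is highest-$\ell$-weight, so it is a quotient of the Weyl module $W(\bs\pi)$ where $\bs\pi$ is the product of the highest $\ell$-weights of the $S_i$. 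We must show $T\otimes S_l$ is highest-$\ell$-weight.

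The main tool will be \cref{p:weylper} and \cref{l:subsandq}, combined with the fact that highest-$\ell$-weightness of a tensor product of two modules can be detected at the level of the simple modules involved via cyclicity/fusion arguments. Concretely: since $S_i\otimes S_l$ is highest-$\ell$-weight for every $i<l$, and the $S_i$ with $i\notin\{1,2\}$ are real, one should be able to reorder so that $S_l$ ``passes through'' to the front. The cleanest route is to use the characterization that a tensor product $A\otimes B$ of a highest-$\ell$-weight module $A$ (quotient of $W(\bs\pi_A)$) with a simple module $B=V(\bs\pi_B)$ is highest-$\ell$-weight whenever $W(\bs\pi_A\bs\pi_B)$ surjects onto $A\otimes B$; and by \cref{l:subsandq} applied iteratively (or by the standard argument that cyclicity of all pairwise products of the simple constituents, together with reality of all but a ``boundary pair'', propagates to the full product), one reduces to checking that $S_l\otimes S_i$ or $S_i\otimes S_l$ is highest-$\ell$-weight in the appropriate order — which is exactly the hypothesis. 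More precisely, I would first establish that $S_l\otimes S_i$ is highest-$\ell$-weight for all $i<l$ as well (not just $S_i\otimes S_l$): this follows from \cref{l:subsandq} since $S_i\otimes S_l$ highest-$\ell$-weight means $V(\bs\pi_i)\otimes V(\bs\pi_l)\to V(\bs\pi_i\bs\pi_l)$ is onto, equivalently $V(\bs\pi_l)\otimes V(\bs\pi_i)$ contains $V(\bs\pi_i\bs\pi_l)$ as a submodule — but I actually want the reverse, so I need to be careful: for real modules the tensor product is highest-$\ell$-weight in \emph{both} orders (since $S_i\otimes S_l$ and $S_l\otimes S_i$ are both quotients of the same Weyl module when one of them is real and the product is simple-headed), which is why the reality hypothesis on the interior factors is exactly what makes the reordering work. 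For the two exceptional factors $S_1,S_2$ sitting at the front, we never need to commute $S_l$ past both of them in a way that requires their reality; the induction keeps them at the front.

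I expect the main obstacle to be the bookkeeping of \emph{orders}: tensor products of quantum affine modules are not commutative, so ``highest-$\ell$-weight'' is order-sensitive, and one must track which order is needed at each step and verify that reality of the interior factors (but not of $S_1,S_2$) suffices to perform the necessary transpositions. The key lemma to make this rigorous is that if $S$ is real and $S\otimes S'$ is highest-$\ell$-weight then so is $S'\otimes S$ (both being quotients of the local Weyl module of the product highest $\ell$-weight, using \cref{p:weylper} to identify the Weyl module with an ordered tensor product and \cref{l:subsandq} to pass between orders) — once this is in hand, the induction runs smoothly: peel off $S_l$, use the inductive hypothesis on $T=S_1\otimes\cdots\otimes S_{l-1}$, then show $T\otimes S_l$ is a quotient of $W(\bs\pi)$ by writing $W(\bs\pi)$ (via \cref{p:weylper}, after ordering the constituents by decreasing spectral parameter) as an ordered tensor product of the $V(\bs\omega_{s})$'s and repeatedly commuting $S_l$'s constituents leftward using the real-transposition lemma applied to the already-processed real factors, landing in $T\otimes S_l$ with the required cyclic vector.
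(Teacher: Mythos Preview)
The paper does not prove this theorem itself: the statement is followed by a \qed{} and the subsequent remark explains that an analogous result was established in \cite{GM21} in the $p$-adic setting, that the proof carries over mutatis mutandis, and that the one nontrivial ingredient (their Lemma~2.2) is supplied in the quantum-affine setting by properties of the normalized $R$-matrix. So the benchmark to compare against is the Gurevich--M\'inguez argument with the $R$-matrix lemma plugged in.

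Your proposal has a genuine gap. The ``key lemma'' you formulate --- that if $S$ is real and $S\otimes S'$ is highest-$\ell$-weight then $S'\otimes S$ is also highest-$\ell$-weight --- is false. Take any connected pair of segments $s_2\rhd s_1$: both fundamental modules are real, and by \eqref{e:tpfundnhw} the product $V(\bs\omega_{s_2})\otimes V(\bs\omega_{s_1})$ is highest-$\ell$-weight, yet the reversed product $V(\bs\omega_{s_1})\otimes V(\bs\omega_{s_2})$ is not (its highest-$\ell$-weight vector generates only the simple socle $V(\bs\omega_{s_1}\bs\omega_{s_2})$, a proper submodule). The justification you give via \cref{p:weylper} and \cref{l:subsandq} cannot work: \cref{l:subsandq} converts \emph{quotients} in one order to \emph{submodules} in the other, not to quotients, so it does not transfer cyclicity across a swap. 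Consequently the ``commute $S_l$ leftward through the real factors'' step, on which your whole induction rests, collapses.

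What the argument actually needs --- and what the paper imports from the normalized $R$-matrix (cf.\ \cref{simsoc}) --- is a different statement: for $S$ real, the $R$-matrix gives a nonzero map $S\otimes S'\to S'\otimes S$ whose image is the simple head of the source (equivalently the simple socle of the target). In the inductive step one does not swap factors freely; rather one composes such $R$-matrices to build a nonzero map out of the ordered product, and uses the reality of the interior factors to control the image of each factor of the composite. The overall inductive scaffolding you sketched (peel off the last factor, $J$ stays at the boundary) is the right shape, but the transposition mechanism must be replaced by this $R$-matrix input; without it the reduction does not go through.
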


\begin{rem}
	An analogous statement was proved in \cite{GM21} in a different context. The proof  works essentially word by word, mutatis-mutandis, in the present context. In fact, the proof makes sense in a much broader abstract context, except the point where \cite[Lemma 2.2]{GM21} is used. However, in the context of the present paper, that lemma is known from properties of the normalized $R$-matrix.\endd
\end{rem}

We shall use the following in the proof of \cref{p:keystepgen}.

\begin{prop}\label{p:pontas}
	Let $\bs\pi_1,\bs\pi_2,\bs\varpi_1,\bs\varpi_2,\bs\omega\in\mathcal P^+$ be such that there exist nonzero homomorphisms
	\begin{equation*}
		V(\bs\varpi_1)\otimes V(\bs\pi_1)\to V(\bs\varpi_1\bs\pi_1), \quad V(\bs\pi_2)\otimes V(\bs\varpi_2)\to V(\bs\varpi_2\bs\pi_2), \quad V(\bs\omega)\to V(\bs\pi_1)\otimes V(\bs\pi_2).
	\end{equation*}
	Then, there exists a nonzero homomorphism
	\begin{equation*}
		V(\bs\varpi_1)\otimes V(\bs\omega)\otimes V(\bs\varpi_2)\to V(\bs\varpi_1\bs\pi_1)\otimes V(\bs\pi_2\bs\varpi_2). 
	\end{equation*}
	Moreover, if $V(\bs\varpi_1)\otimes V(\bs\omega),  V(\bs\omega)\otimes V(\bs\varpi_2)$, and $V(\bs\varpi_1)\otimes V(\bs\varpi_2)$ are highest-$\ell$-weight and either $V(\bs\varpi_1)$ or $V(\bs\varpi_2)$ is real, then there exists a nonzero homomorphism
	\begin{equation*}
		W(\bs\varpi_1\bs\omega\bs\varpi_2)\to V(\bs\varpi_1\bs\pi_1)\otimes V(\bs\pi_2\bs\varpi_2).
	\end{equation*}	
\end{prop}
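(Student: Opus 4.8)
The plan is to build the required homomorphisms by iterated applications of \Cref{l:nonzeromorph} together with the Frobenius-type reciprocity \eqref{e:frobrec} and the characterization of submodules/quotients in \Cref{l:subsandq}. For the first statement, I would first tensor the given map $V(\bs\omega)\to V(\bs\pi_1)\otimes V(\bs\pi_2)$ on the left by $\id_{V(\bs\varpi_1)}$ to obtain $V(\bs\varpi_1)\otimes V(\bs\omega)\to V(\bs\varpi_1)\otimes V(\bs\pi_1)\otimes V(\bs\pi_2)$, and on the right by $\id_{V(\bs\varpi_2)}$ analogously. Then I apply the first composition in \Cref{l:nonzeromorph} with the simple module in the ``middle'' position being $V(\bs\pi_1)$ (resp.\ $V(\bs\pi_2)$): using $\varphi_1:V(\bs\omega)\to V(\bs\pi_1)\otimes V(\bs\pi_2)$ and $\varphi_2: V(\bs\pi_1)\otimes$(nothing on its right)\,, one has to be a little careful about which factor plays the role of $V_2$. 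Concretely, write the given map as $V(\bs\varpi_1)\otimes V(\bs\omega)\otimes V(\bs\varpi_2)\to V(\bs\varpi_1)\otimes V(\bs\pi_1)\otimes V(\bs\pi_2)\otimes V(\bs\varpi_2)$, then compose with $\id\otimes f_1\otimes f_2$ where $f_1:V(\bs\varpi_1)\otimes V(\bs\pi_1)\to V(\bs\varpi_1\bs\pi_1)$ is precomposed appropriately (it acts on the two leftmost factors) and $f_2:V(\bs\pi_2)\otimes V(\bs\varpi_2)\to V(\bs\varpi_2\bs\pi_2)$ acts on the two rightmost factors. Nonvanishing of this composite follows from two uses of \Cref{l:nonzeromorph}: the simple module $V(\bs\pi_1)$ sits between $V(\bs\varpi_1)$ and the rest, so the first half of the lemma gives that $f_1$ applied in situ does not kill the image; likewise for $V(\bs\pi_2)$ on the other side. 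This yields the desired nonzero map to $V(\bs\varpi_1\bs\pi_1)\otimes V(\bs\pi_2\bs\varpi_2)$ (noting $\bs\varpi_2\bs\pi_2=\bs\pi_2\bs\varpi_2$ as Drinfeld polynomials).

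For the ``moreover'' part, the point is to upgrade the source $V(\bs\varpi_1)\otimes V(\bs\omega)\otimes V(\bs\varpi_2)$ to the Weyl module $W(\bs\varpi_1\bs\omega\bs\varpi_2)$. The natural strategy is to show $V(\bs\varpi_1)\otimes V(\bs\omega)\otimes V(\bs\varpi_2)$ is itself highest-$\ell$-weight with highest $\ell$-weight $\bs\varpi_1\bs\omega\bs\varpi_2$, so that it is a quotient of $W(\bs\varpi_1\bs\omega\bs\varpi_2)$; then the composite $W(\bs\varpi_1\bs\omega\bs\varpi_2)\to V(\bs\varpi_1)\otimes V(\bs\omega)\otimes V(\bs\varpi_2)\to V(\bs\varpi_1\bs\pi_1)\otimes V(\bs\pi_2\bs\varpi_2)$ is the desired map. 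To get that the triple tensor product is highest-$\ell$-weight I invoke \Cref{cyc} with $l=3$, $S_1=V(\bs\varpi_1)$, $S_2=V(\bs\omega)$, $S_3=V(\bs\varpi_2)$: by hypothesis the pairwise tensor products $V(\bs\varpi_1)\otimes V(\bs\omega)$, $V(\bs\omega)\otimes V(\bs\varpi_2)$, $V(\bs\varpi_1)\otimes V(\bs\varpi_2)$ are all highest-$\ell$-weight, and one of $V(\bs\varpi_1)$, $V(\bs\varpi_2)$ is real, so taking $J=\{1,2\}$ or $J=\{2,3\}$ (whichever leaves the non-real module free) satisfies the hypothesis of \Cref{cyc}. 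Hence $S_1\otimes S_2\otimes S_3$ is highest-$\ell$-weight; its highest $\ell$-weight is the product of the three highest $\ell$-weights by \eqref{e:subgpweight}-type reasoning (or simply because $\ell$-weights multiply under tensor products), which is $\bs\varpi_1\bs\omega\bs\varpi_2$.

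I expect the main technical obstacle to be the careful bookkeeping in the first part: \Cref{l:nonzeromorph} is stated for a three-fold tensor product $L_1\otimes V_3$ with $V_2$ in the middle, and here I have a four-fold tensor product with the two contraction maps $f_1,f_2$ acting on opposite ends, so I must apply the lemma twice and make sure the intermediate module produced by the first application is still of a form to which the second application applies (in particular that the relevant simple module $V(\bs\pi_2)$ remains an untouched tensor factor after $f_1$ has been applied to the leftmost pair). This is routine but needs to be written with explicit associativity isomorphisms suppressed and the roles of $V_1,V_2,V_3,L_1,L_2$ spelled out in each invocation. A secondary, minor point is to confirm that the target $V(\bs\varpi_1\bs\pi_1)\otimes V(\bs\pi_2\bs\varpi_2)$ is the right codomain — i.e.\ that the images of $f_1$ and $f_2$ are exactly the claimed simple modules, which is part of the hypotheses. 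Once the first part is in place, the ``moreover'' part is a clean two-line argument: highest-$\ell$-weightness of the triple product via \Cref{cyc}, then factor the map through the Weyl module.
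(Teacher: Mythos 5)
Your proposal is correct and follows essentially the same route as the paper: two successive applications of \Cref{l:nonzeromorph} (first contracting $V(\bs\varpi_1)\otimes V(\bs\pi_1)$ with $V(\bs\pi_1)$ as the simple middle factor, then contracting $V(\bs\pi_2)\otimes V(\bs\varpi_2)$ with $V(\bs\pi_2)$ in the middle), followed by \Cref{cyc} with $l=3$ to make the triple tensor product highest-$\ell$-weight and factor the map through $W(\bs\varpi_1\bs\omega\bs\varpi_2)$. The bookkeeping concern you flag is handled in the paper exactly as you anticipate, by taking $L_1=V(\bs\varpi_1)\otimes V(\bs\omega)$ in the second invocation so that $V(\bs\pi_2)$ remains an untouched tensor factor.
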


\begin{proof}
	Begin by applying \Cref{l:nonzeromorph} with $V_1 = V(\bs\varpi_1), V_2 = V(\bs\pi_1), V_3 = V(\bs\pi_2), L_1 = V(\bs\varpi_1\bs\pi_1)$, and $L_2 = V(\bs\omega)$ to obtain a nonzero map
	\begin{equation*}
		V(\bs\varpi_1)\otimes V(\bs\omega)\to V(\bs\varpi_1\bs\pi_1)\otimes V(\bs\pi_2). 
	\end{equation*}
	Then, apply \Cref{l:nonzeromorph} with $V_1 = V(\bs\varpi_1\bs\pi_1), V_2 = V(\bs\pi_2), V_3 = V(\bs\varpi_2), L_1 = V(\bs\varpi_1)\otimes V(\bs\omega), L_2 = V(\bs\pi_2\bs\varpi_2)$ to complete the proof of the first claim. The extra assumptions for the second claim, together with \cref{cyc}, implies $W(\bs\varpi_1\bs\omega\bs\varpi_2)$ is highest-$\ell$-weight, from where the second claim immediately follows.
\end{proof}

We shall use the following results.

\begin{prop}[{\cite[Proposition 1.3.3]{BC23}}]\label{p:bc1.3.3}
	Let $\bs\pi,\bs\varpi\in\mathcal P^+$ and suppose $M$ and $N$ are quotients of $W(\bs\pi)$ and $W(\bs\varpi)$, respectively. Then, $M\otimes N$ is cyclic on the $\ell$-weight-space $(^*\bs\pi)^{-1}\bs\varpi$.   In particular, if $\Hom(M\otimes N,V)\ne 0$ for some module $V$, then $V_{(^*\bs\pi)^{-1}\bs\varpi}\ne 0$. \qed
\end{prop}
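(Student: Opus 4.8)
The statement bundles a cyclicity assertion with a ``in particular'' consequence, and I would treat the consequence as formal and concentrate on the cyclicity. Set $\bs\omega_0=(^*\bs\pi)^{-1}\bs\varpi$. If $M\otimes N$ is generated by its $\ell$-weight space $(M\otimes N)_{\bs\omega_0}$ and $f\colon M\otimes N\to V$ is a nonzero homomorphism, then $f$ cannot kill $(M\otimes N)_{\bs\omega_0}$ (otherwise it kills a generating set), so $V_{\bs\omega_0}\supseteq f\bigl((M\otimes N)_{\bs\omega_0}\bigr)\ne 0$. Moreover $M\otimes N$ is a quotient of $W(\bs\pi)\otimes W(\bs\varpi)$, and the image of a generating $\ell$-weight space is again a generating $\ell$-weight space, so it suffices to prove that $W(\bs\pi)\otimes W(\bs\varpi)$ is generated by its $\bs\omega_0$-weight space. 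Let $w^-$ be a lowest $\ell$-weight vector of $W(\bs\pi)$, which exists and has $\ell$-weight $(^*\bs\pi)^{-1}$ by the property of Weyl modules recalled earlier, let $w^+$ be its highest $\ell$-weight vector, and let $v^+$ be the highest $\ell$-weight vector of $W(\bs\varpi)$. Then $w^-\otimes v^+$ lies in the $\bs\omega_0$-weight space, and it is enough to show it generates $W(\bs\pi)\otimes W(\bs\varpi)$.

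The heart of the argument is the claim that $W(\bs\pi)$ is also a \emph{lowest}-$\ell$-weight module, i.e. $W(\bs\pi)=U_q(\tlie n^+)\,w^-$. To prove this I would invoke \cref{p:weylper} to realize $W(\bs\pi)\cong V(\bs\omega_{s_1})\otimes\cdots\otimes V(\bs\omega_{s_l})$ with weakly decreasing spectral parameters. Since $\lie g$ is of type $A$, each fundamental tensor factor is minuscule, hence all its weight spaces are one-dimensional and its lowest-weight vector is annihilated by every $x^-_{i,r}$ (a lowering generator would push the $\lie g$-weight below the lowest weight of the module). Consequently $w^-$ is the tensor product of these lowest vectors; it is an $\ell$-weight vector, and it is annihilated by every $x^-_{i,r}$ of $U_q(\tlie g)$, because in the iterated coproduct of $x^-_{i,r}$ every summand has a homogeneous tensor slot of $\lie g$-weight not in $Q^+$ (the total weight $-\alpha_i$ is not in $Q^+$), and any element of $U_q(\tlie g)$ of $\lie g$-weight outside $Q^+$ kills a lowest-weight vector. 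The triangular decomposition of $U_q(\tlie g)$ in the Drinfeld presentation then gives $U_q(\tlie g)\,w^-=U_q(\tlie n^+)\,w^-$, and this submodule cannot be proper: any nonzero quotient of $W(\bs\pi)$ is a finite-dimensional module whose $\lie g$-character contains $\lambda=\wt(\bs\pi)$ and is Weyl-group invariant, hence contains the anti-dominant weight $-\lambda^*$ of that orbit; but the $(-\lambda^*)$-weight space of $W(\bs\pi)$ is one-dimensional (it is a tensor product of minuscule $\lie g$-modules) and is spanned by $w^-$, so $w^-$ lies in no proper submodule.

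Granting this, I would finish as follows. The submodule generated by $w^-\otimes v^+$ contains $U_q(\tlie n^+)\,(w^-\otimes v^+)$; since $v^+$ is killed by every raising generator and the Cartan generators act on it by scalars, while both $w^-$ and $v^+$ are $\ell$-weight vectors, this submodule contains $\bigl(U_q(\tlie n^+)\,w^-\bigr)\otimes v^+=W(\bs\pi)\otimes v^+$ by the previous claim. Finally, for any module $X$ and any highest-$\ell$-weight module $N'$ with highest $\ell$-weight vector $v^+$, the tensor product $X\otimes N'$ is generated by $X\otimes v^+$ (a standard consequence of the triangular decomposition: modulo terms that keep the right factor fixed up to a grouplike twist, the coproduct of a lowering generator acts by that generator on the right factor); applying this with $X=W(\bs\pi)$ and $N'=W(\bs\varpi)$ shows that $W(\bs\pi)\otimes v^+$ generates $W(\bs\pi)\otimes W(\bs\varpi)$, completing the proof.

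The step I expect to be the main obstacle is the bookkeeping with the Drinfeld coproduct in the two ``triangularity'' points of the last paragraph — that acting by $U_q(\tlie n^+)$ on $w^-\otimes v^+$ leaves the right factor proportional to $v^+$, and that $X\otimes N'$ is generated by $X\otimes v^+$. Both reduce to the fact that, modulo terms acting on one factor by the relevant nilpotent part and on the other by a grouplike element, $\Delta(x^\pm_{i,r})$ is ``primitive up to grouplikes''; this is routine but has to be carried out with the precise Hopf structure on $U_q(\tlie g)$ fixed in \cite{M11}. (One can of course simply cite \cite{BC23}; the above is the self-contained route I would take.)
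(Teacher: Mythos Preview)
The paper does not prove this proposition; it is quoted from \cite{BC23} and the \qedsymbol\ marks it as a cited result whose proof is omitted. There is therefore nothing to compare your argument against within the paper itself.

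That said, your self-contained route is sound and is essentially the standard one. The reduction to $W(\bs\pi)\otimes W(\bs\varpi)$ is correct (a quotient of a module cyclic on an $\ell$-weight space is again cyclic on that $\ell$-weight space), and the ``in particular'' is the formal consequence you describe. Your proof that $W(\bs\pi)$ is lowest-$\ell$-weight is correct: via \cref{p:weylper} it is a tensor product of type-$A$ fundamental modules, which are minuscule, so the lowest $\lie g$-weight space is one-dimensional; your weight argument shows the tensor of lowest vectors is killed by every $x^-_{i,r}$, and the one-dimensionality forces $w^-$ to survive in every nonzero quotient, hence to generate. The two triangularity facts you flag --- that acting by $U_q(\tlie n^+)$ on $w^-\otimes v^+$ moves only the left factor up to a scalar on the right, and that $X\otimes N'$ is generated by $X\otimes v^+$ for $N'$ highest-$\ell$-weight --- are exactly the approximate coproduct formulas for the Drinfeld generators (roughly $\Delta(x^+_{i,r})\equiv x^+_{i,r}\otimes(\text{grouplike})$ modulo $U_q(\tlie g)\otimes U_q(\tlie g)X^+$, and the analogous statement for $x^-_{i,r}$). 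These are standard (Damiani, Beck) and are in any case the same facts that underlie \cref{p:weylper}, so invoking them introduces no circularity.
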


\begin{lem}[{\cite[Lemma 1.3.4]{BC23}}]\label{l:existhwv} 
	The following holds for any $\bs\pi,\bs\pi_1,\bs\pi_2\in\cal P^+$: 
	\begin{equation*}
		\Hom(W(\bs\pi), V(\bs\pi_1)\otimes V(\bs\pi_2))\ne 0 \ \ \Rightarrow\ \ \bs\pi\bs\pi_1^{-1}\in\wt_\ell V(\bs\pi_2)\ \ {\rm{and}}\ \ {}^*\bs\pi_2({}^*\bs\pi)^{-1}\in\wt_\ell V(\bs\pi_1).
	\end{equation*}\qed
\end{lem}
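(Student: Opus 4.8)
The statement to be proved is \cref{l:existhwv}: if $\Hom(W(\bs\pi), V(\bs\pi_1)\otimes V(\bs\pi_2))\ne 0$, then $\bs\pi\bs\pi_1^{-1}\in\wt_\ell V(\bs\pi_2)$ and ${}^*\bs\pi_2({}^*\bs\pi)^{-1}\in\wt_\ell V(\bs\pi_1)$. The strategy is to extract $\ell$-weight information from the two ``endpoints'' of the tensor product $V(\bs\pi_1)\otimes V(\bs\pi_2)$: the highest $\ell$-weight of the first factor and the lowest $\ell$-weight of the second factor. A nonzero map $W(\bs\pi)\to V(\bs\pi_1)\otimes V(\bs\pi_2)$ must send the highest $\ell$-weight vector of $W(\bs\pi)$ to a nonzero vector, and its image generates a highest-$\ell$-weight submodule of $V(\bs\pi_1)\otimes V(\bs\pi_2)$ whose highest $\ell$-weight is $\bs\pi$. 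So it suffices to show that every highest-$\ell$-weight submodule of $V(\bs\pi_1)\otimes V(\bs\pi_2)$ has highest $\ell$-weight $\bs\pi$ satisfying the two displayed conditions.

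\textbf{First condition.} Since $V(\bs\pi_1)\otimes V(\bs\pi_2)$ is generated as a $U_q(\tlie g)$-module by the tensor product $v_1\otimes V(\bs\pi_2)$, where $v_1$ is a highest $\ell$-weight vector of $V(\bs\pi_1)$ (because $V(\bs\pi_1)$ is, in particular, a highest-$\ell$-weight module generated by $v_1$ and one moves across via the coproduct acting from the left), any highest $\ell$-weight vector $w$ of the tensor product lies in $v_1\otimes V(\bs\pi_2)$ plus lower terms; more precisely, writing the $\ell$-weight decomposition, the top $\ell$-weight component of $w$ with respect to the first factor is of the form $v_1\otimes u$ with $u\in V(\bs\pi_2)$ nonzero. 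By \eqref{e:subgpweight}-type bookkeeping (additivity of $\ell$-weights under tensor product, recorded in the excerpt as $\wt_\ell(V\otimes V')=\wt_\ell(V)\wt_\ell(V')$), the $\ell$-weight of $w$ equals $\bs\pi_1\cdot(\text{$\ell$-weight of }u)$, hence $\bs\pi=\bs\pi_1\,\bs\varpi$ with $\bs\varpi=\wt_\ell(u)\in\wt_\ell V(\bs\pi_2)$, i.e. $\bs\pi\bs\pi_1^{-1}\in\wt_\ell V(\bs\pi_2)$. Making the ``plus lower terms'' precise — that a highest-$\ell$-weight vector really does have a nonzero component of the form $v_1\otimes u$ — is where one uses that $V(\bs\pi_1)$ is cyclic on its top $\ell$-weight space and a standard triangularity argument for the coproduct; this is the one mildly technical point, though it is routine in this setting.

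\textbf{Second condition, via duality.} Apply the first condition to the dual situation. From $\Hom(W(\bs\pi),V(\bs\pi_1)\otimes V(\bs\pi_2))\ne 0$ we get, dualizing and using \eqref{e:dualtp} together with the fact that the dual of a quotient of $W(\bs\pi)$ surjects appropriately, a nonzero map whose target involves ${}^*V(\bs\pi_2)\otimes{}^*V(\bs\pi_1)$ (or $V(\bs\pi_2)^*\otimes V(\bs\pi_1)^*$); more efficiently, one passes to lowest $\ell$-weights directly. Recall from the excerpt that the lowest $\ell$-weight of $W(\bs\omega_{\bs s})$ is $(\bs\omega_{{}^*\!\bs s})^{-1}$, and in general the lowest $\ell$-weight of $V(\bs\sigma)$ is $({}^*\bs\sigma)^{-1}$. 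A nonzero map $W(\bs\pi)\to V(\bs\pi_1)\otimes V(\bs\pi_2)$ is in particular nonzero on the whole module, so the image — being a nonzero submodule — must meet the lowest $\ell$-weight space of the ambient module in the appropriate sense; running the mirror of the first-condition argument with the roles of ``highest'' and ``lowest'' swapped and the factors read from the right, one finds the lowest $\ell$-weight of the image is $({}^*\bs\pi_2)^{-1}\cdot(\text{lowest $\ell$-weight of a submodule of }V(\bs\pi_1))$, forcing ${}^*\bs\pi_2({}^*\bs\pi)^{-1}$ to be an $\ell$-weight of $V(\bs\pi_1)$. Concretely, I would: (i) use $\Hom(W(\bs\pi),V(\bs\pi_1)\otimes V(\bs\pi_2))\ne 0$ to produce, via \eqref{e:frobrec} and \Cref{l:subsandq}, a nonzero map out of $V(\bs\pi_2)\otimes V(\bs\pi_1)$ or into it with controlled highest/lowest $\ell$-weight; (ii) apply the first condition (and its left-handed mirror) to that rearranged tensor product; (iii) translate back using $s\mapsto{}^*s$, $\bs\pi\mapsto{}^*\bs\pi$ and \eqref{e:rldualcancel}. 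The main obstacle is purely organizational: setting up the duality so that the lowest-$\ell$-weight computation in the dual matches the claimed expression ${}^*\bs\pi_2({}^*\bs\pi)^{-1}$ on the nose — there is no deep content, only the need to track which of the four duals ($*$, ${}^*(\ )$, and their inverses on $\ell$-weights) appears and to invoke the recorded fact about the lowest $\ell$-weight of local Weyl modules correctly.
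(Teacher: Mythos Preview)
The paper does not prove this lemma; it is quoted from \cite{BC23} and marked with \qedsymbol, which in this paper signals that the proof is omitted. So there is no in-paper proof to compare against, and I can only assess your sketch on its own merits.

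Your argument for the first condition is the standard one and is correct: a highest-$\ell$-weight vector of $\ell$-weight $\bs\pi$ in $V(\bs\pi_1)\otimes V(\bs\pi_2)$, by the triangularity of the coproduct on the $x_{i,r}^+$, must have a nonzero component of the form $v_1\otimes u$ with $v_1$ the highest-$\ell$-weight vector of $V(\bs\pi_1)$; hence $\bs\pi=\bs\pi_1\cdot\wt_\ell(u)$. You correctly flag the coproduct triangularity as the one technical ingredient.

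For the second condition, the clean argument is exactly the mirror you allude to first: the image of $W(\bs\pi)$ is a highest-$\ell$-weight module and therefore has one-dimensional lowest weight space with $\ell$-weight $({}^*\bs\pi)^{-1}$; the triangularity of the coproduct on the $x_{i,r}^-$ forces a lowest-$\ell$-weight vector in the tensor product to have a nonzero component of the form $u\otimes v_2^-$ with $v_2^-$ the lowest-$\ell$-weight vector of $V(\bs\pi_2)$, giving ${}^*\bs\pi_2({}^*\bs\pi)^{-1}=\wt_\ell(u)\in\wt_\ell V(\bs\pi_1)$. Your phrasing that the image ``must meet the lowest $\ell$-weight space of the ambient module'' is not what you want (it need not), and your ``concrete'' plan via \Cref{l:subsandq} does not apply as written: that lemma passes simple submodules to simple quotients, but the image of $W(\bs\pi)$ is only a quotient of $W(\bs\pi)$, not necessarily $V(\bs\pi)$. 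Drop that detour and keep the direct lowest-$\ell$-weight argument; then both halves are fine.
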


\begin{thm}\cite[Theorem 3.12]{KKKO15} \label{simsoc} 
	Suppose $\bs\pi\in\cal P^+$ is such $V(\bs\pi)$ is real. Then, for all $\bs\pi'\in\cal P^+$, the module $V(\bs\pi)\otimes V(\bs\pi')$ has simple head and simple socle.  Moreover, the socle of $V(\bs\pi)\otimes V(\bs\pi')$ is the head of $V(\bs\pi')\otimes V(\bs\pi)$.\qed
\end{thm}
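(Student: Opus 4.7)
The plan is to deploy the formalism of normalized $R$-matrices for simple objects of $\mathcal C$. Standard constructions (rational $R$-matrices followed by suitable normalization) provide, for any pair of simple modules $V,W\in\mathcal C$, a unique-up-to-scalar nonzero intertwiner $r_{V,W}\colon V\otimes W\to W\otimes V$; its image $V\nabla W:=\Im(r_{V,W})$ is a simple module which is simultaneously a quotient of $V\otimes W$ and a submodule of $W\otimes V$. So the simple-head/simple-socle statement amounts to showing that every simple quotient of $V(\bs\pi)\otimes V(\bs\pi')$ (and dually every simple submodule) coincides with the respective $\nabla$-product.

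Realness of $V:=V(\bs\pi)$ enters as follows: since $V\otimes V\cong V(\bs\pi^2)$ is itself simple, the surjection $V\otimes V\twoheadrightarrow V\nabla V$ induced by $r_{V,V}$ must be an isomorphism, so $r_{V,V}$ is invertible. Hence triple tensor products of the form $V\otimes V\otimes W$ collapse through the identification $V\otimes V\cong V(\bs\pi^2)$, which is the essential leverage against arbitrary $W=V(\bs\pi')$. To prove $V\nabla W$ is the unique simple quotient of $V\otimes W$, I would assume for contradiction the existence of two non-isomorphic simple quotients $q_i\colon V\otimes W\twoheadrightarrow S_i$ and study the compositions
\begin{equation*}
V\otimes V\otimes W \xrightarrow{\,1\otimes q_i\,} V\otimes S_i, \qquad V\otimes V\otimes W \xrightarrow{\,\mu\otimes 1\,} V(\bs\pi^2)\otimes W,
\end{equation*}
where $\mu$ is the realness isomorphism, combining them with $r_{V,W}$ via uniqueness of the intertwiner and Frobenius reciprocity \eqref{e:frobrec} to force $S_1\cong S_2$. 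The dual simple-socle statement follows either by the same argument applied to left duals or via \cref{l:subsandq} together with realness of $V(\bs\pi)^*$.

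The identification $\soc(V(\bs\pi)\otimes V(\bs\pi'))\cong{\rm hd}(V(\bs\pi')\otimes V(\bs\pi))$ is then immediate, since both equal $V(\bs\pi')\nabla V(\bs\pi)=\Im(r_{V(\bs\pi'),V(\bs\pi)})$, which is simple by construction: the former realizes this image as a submodule of $V(\bs\pi)\otimes V(\bs\pi')$ and the latter as a quotient of $V(\bs\pi')\otimes V(\bs\pi)$. The main obstacle in the outline is the contradiction step, where one must carefully control the interplay of $r$ with the realness isomorphism across a triple tensor product; in practice this is where the full structural theory of affine/good modules (in the sense of Kashiwara and collaborators, and in the precise form used in \cite{KKKO15}) is brought to bear on the problem.
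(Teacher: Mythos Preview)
The paper does not prove this theorem at all: it is quoted from \cite{KKKO15} and marked with \qedsymbol, which in this paper's conventions signals that the proof is omitted. So there is no ``paper's own proof'' to compare against; you are sketching the argument from the original reference.

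As a sketch of the \cite{KKKO15} argument your outline is on the right track: the normalized $R$-matrix $r_{V,W}$, the simplicity of its image, and the observation that realness forces $r_{V,V}$ to be invertible are exactly the ingredients used there. However, what you have written is not a proof but a plan, and you acknowledge as much. The contradiction step you flag is indeed the heart of the matter, and carrying it out requires a precise multiplicativity/compatibility property of the $r$-matrices (essentially $r_{V\otimes V,W}=(r_{V,W}\otimes 1)\circ(1\otimes r_{V,W})$ up to scalar) together with a careful argument about how a simple quotient of $V\otimes W$ embeds, after tensoring with $V$, into the image picked out by $r$. None of this is spelled out in your proposal, and the phrase ``combining them with $r_{V,W}$ via uniqueness of the intertwiner and Frobenius reciprocity'' hides rather than resolves the difficulty. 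If you intend this as an actual proof rather than a citation, you would need to supply that step in full; otherwise, simply citing \cite{KKKO15} as the paper does is the honest option.
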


\subsection{Tensoring with Right and Left Subintervals}\label{ss:tpfund} 
Given $i\le k\le j$ with $[i,j]\in\seg$, in light of  \eqref{e:tpfundnhw}, we have
\begin{equation}\label{e:tpfundsint}
	V(\bs\omega_{i,k})\otimes V(\bs\omega_{k,j})\to V(\bs\omega_{i,j}) \quad\text{and}\quad V(\bs\omega_{i,j}) \to 	V(\bs\omega_{k,j})\otimes V(\bs\omega_{i,k}).
\end{equation}

Suppose $s_1,s_2\in\tilde\seg$ satisfy $s_2\rhd s_1$ or, equivalently  $(s_1,s_2)$ is a connected ladder. 
Writing $s_k= [i_k,j_k]$, set
\begin{equation}\label{e:rlint}
	s_1 \lrcorner s_2= [j_1,j_2] \quad\text{and}\quad s_1\llcorner s_2 = [i_1,i_2].
\end{equation} 
It follows that $s_1\cap s_2 = [i_2,j_1], s_1\cup s_2 = [i_1,j_2]\in\seg$ and $s_1 \lrcorner s_2,s_1\llcorner s_2\in\tilde\seg$. In particular, we also have $(s_1 \lrcorner s_2)^*, {}^*(s_1\llcorner s_2)\in\tilde\seg$.  
It then follows from \eqref{e:tpfundsint} and \eqref{e:frobrec} that there exist nonzero maps
\begin{equation}\label{e:tplrsi}
	\begin{aligned}
		V(\bs\omega_{s_1\cap s_2})\otimes V(\bs\omega_{s_1\lrcorner s_2})\to V(\bs\omega_{s_2})&, \quad V(\bs\omega_{s_1\lrcorner s_2}^*)\otimes V(\bs\omega_{s_1\cup s_2})\to V(\bs\omega_{s_1})\\
		V(\bs\omega_{s_1\llcorner s_2})\otimes V(\bs\omega_{s_1\cap s_2})\to V(\bs\omega_{s_1})&, \quad V(\bs\omega_{s_1\cup s_2})\otimes V({}^*\bs\omega_{s_1\llcorner s_2})\to V(\bs\omega_{s_2}).
	\end{aligned}
\end{equation}
We interpret $s_1 \lrcorner s_2$ and $s_1\llcorner s_2$ as the right and left subintervals of $s_1\cup s_2$, respectively. 

Suppose $\bs s,\bs s'\in\lad^l$ are such that $\bs s\rhd \bs s'$. It follows from the first line of \eqref{e:tplrsi} that 
\begin{equation}\label{e:tplrsim}
	V(\bs\omega_{s_k\cap s'_k})\otimes V(\bs\omega_{s'_k\lrcorner s_k})\to V(\bs\omega_{s_k}) \quad\text{and}\quad V(\bs\omega_{s'_k\lrcorner s_k}^*)\otimes V(\bs\omega_{s_k\cup s'_k})\to V(\bs\omega_{s'_k})
\end{equation}
for all $1\le k\le l$. We let
\begin{equation}\label{e:rlintm}
	\bs s'\lrcorner\bs s = (s'_1\lrcorner s_1,\dots,s'_l\lrcorner s_l)
\end{equation}
and we similarly  define $\bs s'\llcorner\bs s$. One easily checks 
\begin{equation}\label{e:cornlad}
	\bs s,\bs s'\in\lad^l \ \ \text{and}\ \ \bs s\rhd \bs s' \quad\Rightarrow\quad \bs s'\lrcorner\bs s,\, \bs s'\llcorner\bs s\in\lad^l. 
\end{equation}
We note however that they are not necessarily connected ladders even if $\bs s,\bs s'$ are.

\section{Snake Modules and Mukhin-Young Lattice Paths}\label{s:snakesandMY}
We  recall some  results of Mukhin and Young  established in \cite{MY12a, MY12} which explore the combinatorics of certain diagonal rank-2 lattice paths to describe results about the $\ell$-weights of snake modules as well as certain exact sequences arising from tensor products of such modules.

\subsection{MY Paths}\label{ss:myp} We reformulate in the language of intervals the results established in \cite{MY12a}. Let $\mathbb P$ be the set 
 of all functions $p:[0,h]\to \mathbb Z$ satisfying 
\begin{equation}\label{e:preMYpath}
	p(0)\in\mathbb Z \quad\text{and}\quad |p(k+1)-p(k)|=1 \quad\text{for}\quad 0\le k<h.
\end{equation}
In particular,
\begin{equation}\label{boundp}
	|p(k)-p(k')| \leq |k-k'|, \ \ k,k'\in [0,h],
\end{equation}
and $p(k)\pm k \in\mathbb Z$ for all $k\in[0,h]$. 
Given such $p$ and $k\in[1,N]$, consider the interval
\begin{equation}\label{spkint}
	s_{p,k} = [(p(k)-k)/2,(p(k)+k)/2],
\end{equation}
which is clearly a segment and, moreover, 
\begin{equation}\label{e:supspspk}
	\supp(s_{p,k}) = k \quad\text{and}\quad \spar(s_{p,k})=p(k).
\end{equation}
One can easily check using \eqref{boundp} that 
\begin{equation}\label{e:nocpinpath}
	\{s_{p,k}, s_{p,k'}\} \quad\text{is not connected for all}\quad k,k'\in [0,h].
\end{equation}

Given $s=[i,j]\in\seg$, let $\mathbb P_{i,j}=\mathbb P_s$ be the subset of $\mathbb P$ satisfying 
\begin{equation}
	p(0) = 2j \quad\text{and}\quad p(h) = h+2i,
\end{equation}
or, equivalently,
\begin{equation}\label{edgescond}
	p(0) = \spar(s)+\supp(s) \quad\text{and}\quad 
	p(h) = \spar(s)+\supp(s)^*.
\end{equation}
We refer to such a function $p$ as an MY path. Letting $k'\in\{0,h\}$ in \eqref{boundp}, one easily checks
\begin{equation}\label{boundp'}
	\max\{2j-k,k+2i\}\le p(k)\le \min\{k+2j,2h+2i-k\} \quad\text{for all}\quad k\in [0,h].
\end{equation}
In particular,
\begin{equation}\label{boundp''}
	p(k) + k^* \ge (k+2i) +(h-k) = p(h) \quad\text{for all}\quad k\in [0,h]
\end{equation}
and
\begin{equation}\label{boundp'''}
	p(k) + k \ge  p(0) \quad\text{for all}\quad k\in [0,h].
\end{equation}

Let 
\begin{equation*}
	E^-_p = \{k\in[1,N]: p(k\pm 1)=p(k) - 1\} \quad\text{and}\quad E^+_p = \{k\in[1,N]: p(k\pm 1)=p(k) + 1\}. 
\end{equation*}
In other words, $E^\pm_p$ are the points of local maxima and minima of $p$ in $[1,N]$. Let $E_p=E_p^+\cup E_p^-$.
It is straightforward to check that
\begin{equation}\label{e:bottonline}
	p(k) + k^* = p(h) \ \ \Rightarrow\ \ E_p\subseteq [1,k]  \quad\text{and}\quad 
	p(k) + k = p(0) \ \ \Rightarrow\ \ E_p\subseteq [k,N]. 
\end{equation}
Given $k\in E^\pm_p$, let $\tau_kp:[0,h]\to \mathbb Z$ be given by 
\begin{equation*}
	\tau_kp(m) = p(m) \ \ \text{if}\ \ m\ne k \ \ \text{and}\ \ \tau_kp(k) = p(k) \pm 2.
\end{equation*}
One easily checks
\begin{equation}\label{e:locmax<>min}
	p\in\mathbb P_s \ \ \text{and}\ \ k\in E^\pm_p \quad\Rightarrow\quad \tau_kp\in\mathbb P_s \ \ \text{and}\ \ k\in E^\mp_{\tau_kp}.
\end{equation}
Set also
\begin{equation*}
	\boc_{p}^{\pm} = \left\{s_{p,k}: 1\leq k\leq N, \ \ k\in E^\pm_p\right\} \quad\text{and}\quad \boc_p = \boc^-_p\cup\boc^+_p.
\end{equation*}
In light of \eqref{e:supspspk}, this can be rewritten as 
\begin{equation}\label{e:defboc}
	s'\in \boc^\pm_p \quad\Leftrightarrow\quad p(\supp(s')-1)= p(\supp(s')+1)=p(\supp(s'))\pm 1 = \spar(s')\pm 1.
\end{equation}
Note
\begin{equation*}
	\{\supp(s'): s'\in\boc_p^\pm\} = E_p^\pm. 
\end{equation*}
Using this and \eqref{e:locmax<>min}, it is easily seen that
\begin{equation}\label{uplowcorner}
	s'\in \boc_{p}^+\quad\Leftrightarrow\quad 1+s'\in\boc_{\tau_{\supp(s')}p}^-.
\end{equation}
Clearly, the knowledge of either $\boc_p^{+}$ or $\boc_p^{-}$ is sufficient to reconstruct an element $p\in \mathbb P_{s}$. One easily checks using \eqref{boundp'} that 
\begin{equation}\label{boundp-}
	s'\in\boc^-_p \text{ and } k=\supp(s') \ \Rightarrow\ \max\{2j-k,k+2i\}< p(k)
\end{equation}
while
\begin{equation}\label{boundp+}
	s'\in\boc^+_p \text{ and } k=\supp(s') \ \Rightarrow\ p(k)< \min\{k+2j,2h+2i-k\}.
\end{equation}

\begin{lem}\label{l:ladpmax}
	If $p$ is an MY path and $\bs s=(s_1,\dots,s_l)\in\lad$, then $|\{k: s_k\in\boc^\pm_p\}|\le 1$. 
\end{lem}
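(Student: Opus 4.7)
My plan is to argue by contradiction: suppose there exist indices $k_1<k_2$ with $s_{k_1},s_{k_2}\in\boc^\pm_p$, and derive a contradiction from the Lipschitz bound \eqref{boundp} on the MY path $p$ combined with the ladder inequality \eqref{e:defladsusp}.

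The key observation is that membership in $\boc^\pm_p$ pins down a segment completely in terms of its support. More precisely, from \eqref{e:defboc} and \eqref{spkint}, if $s'\in\boc^\pm_p$ and $k=\supp(s')$, then $s'=s_{p,k}$; in particular, $\spar(s')=p(\supp(s'))$. Applying this to both $s_{k_1}$ and $s_{k_2}$, I obtain
\begin{equation*}
\spar(s_{k_2})-\spar(s_{k_1}) = p(\supp(s_{k_2})) - p(\supp(s_{k_1})).
\end{equation*}
On the other hand, since $\bs s$ is a ladder and $k_1<k_2$, the equivalent formulation \eqref{e:defladsusp} of the ladder condition yields
\begin{equation*}
\spar(s_{k_2}) - \spar(s_{k_1}) > \bigl|\supp(s_{k_2}) - \supp(s_{k_1})\bigr|.
\end{equation*}
Combining these two displays gives a strict inequality for $p$ which directly contradicts \eqref{boundp}.

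A minor point to address is the degenerate case $\supp(s_{k_1}) = \supp(s_{k_2})$: here the same argument shows $\spar(s_{k_1}) = p(\supp(s_{k_1})) = p(\supp(s_{k_2})) = \spar(s_{k_2})$, contradicting the strict inequality of spectral parameters forced by \eqref{e:defladsusp} (or equivalently by $j_{k_1}<j_{k_2}$ and $i_{k_1}<i_{k_2}$). Since no substantive obstacle arises in either case, I expect the proof to be only a few lines, with the content concentrated entirely in recognizing that \eqref{e:defboc} together with \eqref{spkint} forces $\spar(s)=p(\supp(s))$ for any $s\in\boc^\pm_p$, so that the ladder condition becomes a direct violation of the unit-step constraint on $p$.
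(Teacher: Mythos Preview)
Your proposal is correct and follows essentially the same argument as the paper: assume two ladder entries lie in $\boc^\pm_p$, use \eqref{e:defboc}/\eqref{spkint} (equivalently \eqref{e:supspspk}) to get $\spar(s_{k_j})=p(\supp(s_{k_j}))$, and then observe that the Lipschitz bound \eqref{boundp} contradicts the ladder inequality \eqref{e:defladsusp}. Your separate treatment of the case $\supp(s_{k_1})=\supp(s_{k_2})$ is harmless but unnecessary, since it is already subsumed by the strict inequality in \eqref{e:defladsusp}.
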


\begin{proof}
	Suppose $1\le k_1<k_2\le l$ are such that $s_{k_1},s_{k_2}\in\boc_p^\pm$.  This means $s_{k_j}=s_{p,m_j}$ for some $m_j\in[1,N], j\in\{1,2\}$, and then \eqref{e:supspspk} implies 
	\begin{equation*}
		p(\supp(s_{k_j})) = \spar(s_{k_j}), \ \ j\in\{1,2\}. 
	\end{equation*}
	Since $p$ is an MY path, this, together with \eqref{boundp}, implies 
	\begin{equation*}
		|\spar(s_{k_1})-\spar(s_{k_2})| \leq |\supp(s_{k_1})-\supp(s_{k_2})|,
	\end{equation*}
	yielding a contradiction with \eqref{e:defladsusp}. 
\end{proof}

Let $p_{i,j}, p_{i,j}^* \in\mathbb P_{i,j}$ be defined by 
\begin{equation}\label{e:pp*mod}
	p_{i,j}(k) = i+j+|k+i-j| \quad\text{and}\quad p^*_{i,j}(k) = i+j+h - |h+i-j-k|.
\end{equation}	
This can be rewritten as
\begin{equation}\label{e:pp*mods}
	p_s(k) = \spar(s)+|k-\supp(s)| \quad\text{and}\quad p^*_s(k) = \spar(s)+h - |k-\supp(s)^*|.
\end{equation}	
In particular,
\begin{equation}\label{uniquecorner}
	\boc_{p}^- = \emptyset  \Leftrightarrow p =p_s \quad  {\rm and} \quad \boc_{p}^+= \emptyset   \Leftrightarrow p = p^*_s.
\end{equation}
Moreover,
\begin{equation}\label{e:pathlub}
	p_s(k)\leq p(k)\leq p^*_s(k) \quad\text{for all}\quad 1\leq k\leq h, \ \ p\in\mathbb P_s,
\end{equation}
and
\begin{equation}\label{e:pathbound}
	p \in \mathbb P_s\setminus\{p_s,p_s^*\} \ {\rm and} \ s'\in \boc_p^{\pm} \quad\Rightarrow\quad \spar(s)<\spar(s')<\spar({}^*s).
\end{equation}
 The elements $p_s$ and $p_s^*$ are referred to a the highest and lowest path of $\mathbb P_s$, respectively, in \cite{MY12}. 
Finally, set
\begin{equation*}
	\boc_{s}^{\pm} = \bigcup_{p\in \mathbb P_{s}}\boc_p^{\pm}. 
\end{equation*}
One easily checks
	\begin{equation*}
		\boc^-_s\cup\boc^+_s\ne\emptyset \quad\Leftrightarrow\quad s\in\tilde\seg. 
\end{equation*}

\begin{lem}\label{l:mysmallerspar}
	Let $p$ be an MY path and $s\in\seg\setminus \boc_p$. If $p(\supp(s))\le \spar(s)$,
	\begin{equation}\label{e:mysmallerspar}
		p(\supp (s)) - \supp(s)<p(0),  \quad\text{and}\quad p(\supp (s)) - \supp(s)^*<p(h),
	\end{equation}
	there exists $s'\in\boc^+_p$ such that $\spar(s')<\spar(s)$.
\end{lem}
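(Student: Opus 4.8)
The plan is to argue directly with the lattice path $p$, using the three hypotheses to pin down the behaviour of $p$ near the node $k:=\supp(s)$ and to extract from it a local minimum whose value lies strictly below $\spar(s)$. First I would observe that the hypotheses in \eqref{e:mysmallerspar} already force $k\in[1,N]$: if $k=0$ the first inequality becomes $p(0)<p(0)$, and if $k=h$ the second becomes $p(h)<p(h)$, both absurd. So $k$ is an interior node, and $p(k-1),p(k+1)$ are defined, each differing from $p(k)$ by $\pm 1$ by \eqref{e:preMYpath}. I would then split into two cases according to whether $p$ has a local minimum at $k$.

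If $p(k-1)=p(k+1)=p(k)+1$, then $k\in E_p^+$, so $s_{p,k}\in\boc_p^+$, with $\supp(s_{p,k})=\supp(s)$ and $\spar(s_{p,k})=p(k)$ by \eqref{e:supspspk}; since $s\notin\boc_p$ we have $s\ne s_{p,k}$, hence $\spar(s)\ne p(k)$, and combined with $p(\supp(s))\le\spar(s)$ this gives $\spar(s)>p(k)=\spar(s_{p,k})$, so $s'=s_{p,k}$ works. Otherwise $p$ decreases by $1$ at $k$ in at least one direction; say $p(k-1)=p(k)-1$, the case $p(k+1)=p(k)-1$ being symmetric with $p(h)$ and the last inequality of \eqref{e:mysmallerspar} replacing $p(0)$ and the middle one. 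Let $a$ be the least index such that $p$ strictly increases along $[a,k]$; then $a\le k-1$ and $p(a)=p(k)-(k-a)$. If $a=0$ this equals $p(k)-k$, contradicting $p(k)-k<p(0)$, so $a\ge 1$; by minimality of $a$ one has $p(a-1)=p(a)+1=p(a+1)$, so $a\in E_p^+$ and $s_{p,a}\in\boc_p^+$. Finally $\spar(s_{p,a})=p(a)\le p(k)-1<\spar(s)$, so $s'=s_{p,a}$ works.

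I do not expect a genuine obstacle here: the argument is essentially bookkeeping about monotone runs of $p$. The two points that require a little care are verifying that the hypotheses genuinely rule out $k\in\{0,h\}$ (so that the path has room on both sides of $k$), and checking that the endpoint $a$ of the maximal monotone run is an honest local minimum of $p$ inside $[1,N]$ rather than the boundary index $0$ — and it is precisely the strict inequalities in \eqref{e:mysmallerspar} that secure this. The hypothesis $s\notin\boc_p$ is used only in the first case, to force $s\ne s_{p,k}$.
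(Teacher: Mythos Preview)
Your proof is correct and follows the same strategy as the paper's: walk from $k=\supp(s)$ in a direction where $p$ decreases until a local minimum is reached, using the strict inequalities in \eqref{e:mysmallerspar} to guarantee that this minimum lies in $[1,N]$. Your case split is in fact slightly cleaner: the paper infers from $s\notin\boc_p$ that $p(i+1)-p(i-1)=\pm 2$, tacitly conflating $s\notin\boc_p$ with $\supp(s)\notin E_p$ (which is not quite justified when $p(\supp(s))<\spar(s)$), whereas your explicit treatment of the case $k\in E_p^+$ and absorption of $k\in E_p^-$ into the second case avoid this.
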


\begin{proof}
	To shorten notation, let $i=\supp(s)$. The assumptions in \eqref{e:mysmallerspar} imply $i\in[1,N]$. Since $s\notin\boc_p$, we have $p(i+1) - p(i-1)=\pm 2$. 
	Suppose it is $2$ and let $1\le r\le i$ be such that
	\begin{equation*}
		p(i-r) = p(i)-r.
	\end{equation*}
	The assumption $p(i) - i<p(0)$ implies $i-r> 0$. Hence, $i-r\in E^+_p$ and,
	letting $s'\in\seg$ be determined by $\supp(s') = i-r$ and $\spar(s')=p(i-r)$, it follows that $s'\in\boc^+_p$ and $\spar(s')<p(i)\le \spar(s)$. 
	
	If $p(i+1) - p(i-1)=- 2$, let $1\le r\le i^*$ be such that
	\begin{equation*}
		p(i+r) = p(i)-r.
	\end{equation*}
	The assumption $p(i) - i^*<p(h)$ implies $i+r<h$. Hence, $i+r\in E^+_p$ and,
	letting $s'\in\seg$ be determined by $\supp(s') = i+r$ and $\spar(s')=p(i+r)$, it follows that $s'\in\boc^+_p$ and $\spar(s')<p(i)\le \spar(s)$. 
\end{proof}

\begin{lem}[{\cite[Section 6.4]{MY12}}]\label{l:peakjb} 
	If $s_1\rhd s_2$, there exists unique  $p= p_{s_2}^{s_1}\in \mathbb P_{s_2}$ such that $\boc_p^- = \{s_1\}$. Moreover, in that case, $\boc_p^+ = \{s_1\cup s_2,s_1\cap s_2\}\cap \tilde \seg$.\qed
\end{lem}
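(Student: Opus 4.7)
Begin by writing $s_k=[i_k,j_k]$ for $k=1,2$. The hypothesis $s_1\rhd s_2$ together with the convention $\spar(s_1)>\spar(s_2)$ forces
$i_2<i_1\le j_2<j_1$ and $j_1-i_2\le h$, so that $s_1\cap s_2=[i_1,j_2]$ and $s_1\cup s_2=[i_2,j_1]$. An MY path in $\mathbb P_{s_2}$ must satisfy $p(0)=2j_2$ and $p(h)=h+2i_2$, and having a local maximum $s_1$ translates (via \eqref{e:defboc} and \eqref{spkint}) to the single requirement that $k_b:=\supp(s_1)=j_1-i_1$ lies in $E_p^-$ with $p(k_b)=\spar(s_1)=i_1+j_1$.

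For existence, I will construct $p$ piecewise as a concatenation of four monotone $\pm 1$-segments with turning points at
\[
k_a=j_2-i_1,\quad k_b=j_1-i_1,\quad k_c=j_1-i_2,
\]
and values $p(k_a)=i_1+j_2$, $p(k_b)=i_1+j_1$, $p(k_c)=i_2+j_1$: descending on $[0,k_a]$, ascending on $[k_a,k_b]$, descending on $[k_b,k_c]$, and ascending on $[k_c,h]$. In each of the four pieces, the absolute change in $p$ equals the number of steps (for the last piece this uses $j_1-i_2\le h$), so $p\in\mathbb P_{s_2}$. By construction $k_b\in E_p^-$ with $s_{p,k_b}=s_1$, and the only interior local minima occur at $k_a,k_c$ when they lie in $[1,N]$, in which case \eqref{spkint} identifies them with $s_1\cap s_2$ and $s_1\cup s_2$, respectively.

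For uniqueness, let $p'\in\mathbb P_{s_2}$ satisfy $\boc_{p'}^-=\{s_1\}$. The local-max condition at $k_b$ forces $p'(k_b-1)=p'(k_b+1)=i_1+j_1-1$, hence $p'|_{[0,k_b]}$ (resp.\ $p'|_{[k_b,h]}$) is a $\pm 1$-step path with no interior local maximum that ends with an increasing step (resp.\ starts with a decreasing step). A short inspection shows that any such path is either monotonic or $V$-shaped (has a unique interior local minimum). In the $V$-shaped case the position $k_*$ and value $p'(k_*)$ of the minimum are determined by the linear system
\[
p'(k_*)=p'(0)-k_*=p'(k_b)-(k_b-k_*),
\]
yielding $k_*=k_a$ and $p'(k_*)=i_1+j_2$; the analogous computation on $[k_b,h]$ yields $k_*=k_c$ and value $i_2+j_1$. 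The monotonic alternative occurs only in the degenerate cases $i_1=j_2$ or $j_1-i_2=h$, precisely when $s_1\cap s_2$ or $s_1\cup s_2$ fails to lie in $\tilde\seg$. In all cases $p'$ matches the path built above, proving uniqueness and, simultaneously, the characterization $\boc_p^+=\{s_1\cap s_2,s_1\cup s_2\}\cap\tilde\seg$.

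The main obstacle is the classification step in the uniqueness argument: one must recognize that the single-local-max restriction, combined with the forced behavior at $k_b$, reduces the shape of $p'$ on each half to one of only two possibilities, and that the degenerate (monotonic) subcase is exactly the one in which the corresponding interval fails to lie in $\tilde\seg$. Once this dichotomy is in hand, the remaining arguments are elementary endpoint arithmetic.
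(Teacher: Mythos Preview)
Your proof is correct. The paper omits the proof of this lemma, citing \cite[Section 6.4]{MY12} and marking it with \qedsymbol, so there is no in-paper argument to compare against directly. That said, your explicit piecewise construction of $p$ and the V-shape/monotone dichotomy for uniqueness exactly parallel the proof the paper does give for the companion statement \cref{l:valleyjb}, where the roles of $\boc_p^-$ and $\boc_p^+$ are swapped; so your approach is the expected one.
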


\begin{lem}\label{ellwttau}
	If $s_1,s_2\in\seg$, then $s_1\in\boc_{s_2}^-  $ if and only if $s_1\rhd s_2$. 
\end{lem}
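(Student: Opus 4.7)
The plan is to prove the biconditional by treating each direction separately, with one direction following immediately from the preceding lemma and the other reducing to unpacking the numerical constraints satisfied by an MY path at a local maximum.

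For the $(\Leftarrow)$ direction, I would simply invoke \lemref{l:peakjb}: if $s_1\rhd s_2$, then the path $p=p_{s_2}^{s_1}\in\mathbb P_{s_2}$ satisfies $\boc_p^-=\{s_1\}$, so $s_1\in\boc_p^-\subseteq \boc_{s_2}^-$ by definition of $\boc_{s_2}^-$ as the union over $p\in\mathbb P_{s_2}$.

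For the $(\Rightarrow)$ direction, suppose $s_1\in\boc_{s_2}^-$, so there exists $p\in\mathbb P_{s_2}$ with $s_1\in\boc_p^-$. Write $s_2=[i_2,j_2]$, and let $k=\supp(s_1)\in E_p^-\subseteq [1,N]$, so that by \eqref{spkint} we have $i_1=(p(k)-k)/2$ and $j_1=(p(k)+k)/2$. The edge conditions \eqref{edgescond} give $p(0)=2j_2$ and $p(h)=h+2i_2$. Now I would combine two inequalities: the lower bound from \eqref{boundp-}, which gives $\max\{2j_2-k,k+2i_2\}<p(k)$, and the upper bound from \eqref{boundp'}, which gives $p(k)\le\min\{k+2j_2,2h+2i_2-k\}$. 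Translating back through $i_1=(p(k)-k)/2$ and $j_1=(p(k)+k)/2$ yields
\begin{equation*}
    i_2<i_1\le j_2<j_1 \quad\text{and}\quad j_1-i_2\le h,
\end{equation*}
which is precisely the statement that $(s_2,s_1)$ is an overlapping pair with $s_1\cup s_2=[i_2,j_1]\in\seg$ and $\spar(s_1)=i_1+j_1>i_2+j_2=\spar(s_2)$, i.e.\ $s_1\rhd s_2$.

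I expect no real obstacle here: once one identifies the correct bookkeeping, both directions are automatic. The only subtlety worth noting is that the statement is vacuous unless $s_2\in\tilde\seg$, since if $s_2$ is improper then the unique element of $\mathbb P_{s_2}$ is monotone, so $\boc_{s_2}^-=\emptyset$; this justifies implicitly assuming $s_2$ is proper, and then the same argument ensures $\supp(s_1)=k\in[1,N]$, so $s_1\in\tilde\seg$ as well, which is needed for $s_1\rhd s_2$ to be meaningful.
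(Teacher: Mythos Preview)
Your proof is correct and follows essentially the same approach as the paper: both directions match, with $(\Leftarrow)$ via \lemref{l:peakjb} and $(\Rightarrow)$ via the combination of the strict lower bound \eqref{boundp-} and the upper bound \eqref{boundp'} to extract the four inequalities $i_2<i_1\le j_2<j_1\le h+i_2$. Your additional remark on the vacuous improper-segment case is a helpful clarification not made explicit in the paper.
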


\begin{proof}
	If $s_1\rhd s_2$, \cref{l:peakjb} implies $s_1\in \boc_{s_2}^-$.
	
	For the converse, letting $s=s_2=[i_2,j_2]$ in \eqref{boundp'} and taking $p\in\mathbb P_s$ and $s'=s_1=[i_1,j_1]$ in \eqref{boundp-}, since $\supp(s_1)=j_1-i_1$ and $p(\supp(s_1))=\spar(s_1)=i_1+j_1$, it follows that 
	\begin{equation*}
		\max\{2j_2 - j_1+i_1,\ j_1-i_1+2i_2\}< p(j_1-i_1)=i_1+j_1.
	\end{equation*}
	Hence, $i_2<i_1$ and $j_2<j_1$. Working with the second inequality in \eqref{boundp'}, we see
	\begin{equation*}
		j_1+i_1= p(j_1-i_1)\leq \min \{j_1-i_1+2j_2,\ 2h+2i_2 -j_1+i_1\}
	\end{equation*}
	and, therefore, $i_1\leq j_2$ and $j_1-i_2\leq h$, which shows $s_1\rhd s_2$ as desired.
\end{proof}

We remark the following consequences of the above two lemmas:
\begin{equation}\label{e:pathw!max}
	\{p\in\mathbb P_{s}: |\boc^-_p|=1\} = \{p^{s'}_s: s'\rhd s\},
\end{equation}
and, if $s=[i,j]$, then  
\begin{equation}\label{alluplowcor}
	\boc_s^+ =\{[m,n]: i\leq m<j\leq n<h+i\} \ \ {\rm and} \ \ \boc_s^- =\{[m,n]: i< m\leq j< n\leq h+i\}.
\end{equation}
One easily checks the latter set is exactly $\{s'\in \tilde \seg: s'\rhd s\}$ and, hence, equal to $\boc^-_s$ by \cref{ellwttau}. The above characterization of $\boc^+_s$ then easily follows from \eqref{uplowcorner}.

\begin{lem}\label{lowtoup}
		If $s=[i,j]\in \tilde\seg$, $p\in \mathbb P_{s}$, and $s'=[m,n]\in\boc_{p}^-\setminus\{{}^*s\}$, there exists $s''\in \boc^+_p$ such that $\spar(s'')=\spar(s')-|\supp(s'')-\supp(s')|$. More precisely, if $m<j$, $s''=[m,n']$ for some $m<n'<n$, while $s''= [m',n]$ for some $n-h<m'<m$ if $n<h+i$. 
	\end{lem}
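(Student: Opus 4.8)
The statement is about an MY path $p\in\mathbb P_s$ with a local maximum of $p$ at some $s'=[m,n]\in\boc_p^-$ which is not the ``extreme'' corner ${}^*s$, and I want to locate a neighbouring local minimum $s''\in\boc_p^+$ sitting on the diagonal line through $s'$ of slope dictated by $|\supp(s'')-\supp(s')|$. The natural approach is to walk along the path away from $s'$ in the appropriate direction until it first turns around. Concretely, set $i=\supp(s')=n-m$ and $v=\spar(s')=p(i)$; since $s'\in\boc_p^-$ we have $p(i\pm1)=v-1$. The claim that $s''=[m,n']$ with $m<n'<n$ corresponds to walking \emph{leftward} (decreasing $k$) and finding the first index $k<i$ with $p(k-1)=p(k)+1$, i.e.\ the first point of $E_p^+$ to the left of $i$; the descent $p(i-1)=v-1$, $p(i-2)=v-2,\dots$ must bottom out before reaching $k=0$, and that requires $p(i)-i<p(0)$, which is exactly the content of \eqref{boundp'''} together with the hypothesis $s'\neq{}^*s$. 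Symmetrically, walking \emph{rightward} finds a corner $s''=[m',n]$ with $n-h<m'<m$, valid when $p(i)-i^*<p(h)$, i.e.\ \eqref{boundp''} is strict, again guaranteed by $s'\neq{}^*s$.

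\textbf{Key steps, in order.} First I would translate everything from segments to the path language via \eqref{spkint} and \eqref{e:supspspk}, so that $s'\in\boc_p^-$ becomes $i:=\supp(s')\in E_p^-$ with $p(i)=\spar(s')$, and the desired conclusion becomes: there exists $k\in E_p^+$ with $p(k)=p(i)-|k-i|$, and moreover $k<i$ (giving $s''=[m,n']$, $n'=m+k<n$) in the case $m<j$, respectively $k>i$ (giving $s''=[m',n]$, $m'=n-h+\dots$) in the case $n<h+i$. Second, I would verify that at least one of $m<j$ or $n<h+i$ holds: since $s'\in\boc_p^-$ we have by \cref{l:peakjb}/\cref{ellwttau} that $s'\rhd s$, and $s'={}^*s=[j,i+h]$ is exactly the case $m=j$ \emph{and} $n=i+h$; so $s'\neq{}^*s$ forces $m<j$ or $n<h+i$. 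Third, in the case $m<j$: from \eqref{boundp'''} with $k=i$ we get $p(i)+i\ge p(0)$; I need the strict inequality $p(i)-i<p(0)$, equivalently $p(i)<p(0)+i$. Since $p(0)=2j$ and $p(i)=i+n=i+m+i= \spar(s')+ \dots$ — here I'd just compute: $m<j$ gives $p(i)=m+n = m+(m+i) < j + (j+i-?)$, cleanly $p(i)=2m+i<2j+i$, hence $p(i)-i<2j=p(0)$. Now the descent $p(i),p(i-1)=p(i)-1,\dots$ continues as long as we are decreasing; by \eqref{boundp'} it cannot reach below $\max\{2j-k,\,k+2i_2\}$, and in particular the argument in \lemref{l:mysmallerspar} (second paragraph, the case $p(i+1)-p(i-1)=\ldots$, adapted) shows there is $1\le r\le i$ with $p(i-r)=p(i)-r$ and $i-r\in E_p^+$; the bound $p(i)-i<p(0)$ guarantees $i-r>0$. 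Set $k=i-r$; then $s''=[m, n-r]$ works since $n-r>m$ iff $r<n-m=i$, which holds because $k=i-r\ge1$. Fourth, the case $n<h+i$ is entirely symmetric: use \eqref{boundp''} strictly (which $n<h+i$ gives), walk rightward to find $k=i+r\in E_p^+$ with $r\le i^*$ and $p(i)-i^*<p(h)$ forcing $k<h$, and $s''=[m+r,n]$ with $m+r<m+i^*$ … one checks $m'=m+r$ satisfies $n-h<m'<m$ appropriately — actually $m'>m-h$ and $m'<n$; I'd pin down the exact bounds by the path inequalities.

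\textbf{Main obstacle.} The essentially combinatorial heart — ``a path that starts descending from a local max eventually hits a local min'' — is routine and is in fact already half-done inside \lemref{l:mysmallerspar}; the real care is in (a) correctly converting the two cases $m<j$ versus $n<h+i$ into strict versions of \eqref{boundp'''} and \eqref{boundp''} respectively, and (b) extracting the precise position bounds $m<n'<n$ and $n-h<m'<m$ on $s''$, rather than just the existence of \emph{some} corner below. I expect step three's bookkeeping (translating $p(i)$, $p(0)$, $p(h)$ into the endpoints $i,j,m,n$ and checking the strict inequalities are exactly equivalent to $m<j$ and $n<h+i$) to be where a sign error is most likely, so I would write that conversion out carefully and then invoke the walk-to-turning-point argument, citing the mechanism of \lemref{l:mysmallerspar}, to finish. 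No deep new idea is needed; it is a localized refinement of \lemref{l:mysmallerspar} keeping track of which side the new corner lands on.
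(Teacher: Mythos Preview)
Your proposal is correct and follows essentially the same approach as the paper: translate to path language, observe that $s'\ne{}^*s$ forces $m<j$ or $n<h+i$, convert each of these into the strict form of \eqref{boundp'''} respectively \eqref{boundp''}, and then walk along the path from the local maximum at $\supp(s')$ until it first turns upward. The paper carries out exactly this walk-to-first-turning-point argument directly (without citing \cref{l:mysmallerspar}, though the mechanism is indeed the same), and extracts $s''=[m,n']$ with $n'=m+k$ in the leftward case and $s''=[m',n]$ with $m'=n-k$ in the rightward case. Your only slip is in step four, where you write $s''=[m+r,n]$; the correct formula is $m'=m-r$ (equivalently $m'=n-k$), which then gives $n-h<m'<m$ as required---but you already flag that passage as needing the bounds pinned down, so this is just the bookkeeping error you anticipated rather than a gap in the argument.
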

	
	\begin{proof}
		The assumption $s\in\tilde\seg$ implies that either $m<j$ or $n<h+i$ and, since $[m,n]\in\boc^-_p$, we have $p(\supp([m,n])) = \spar([m,n])$. Suppose  $m<j$ and note
	\begin{equation*}
		\spar([m,n])-\supp([m,n]) = (m+n) - (n-m)= 2m < 2j = p(0).
	\end{equation*}
	This implies 
	\begin{equation*}
		E_p^+\cap [1,n-m-1]\ne\emptyset.
	\end{equation*}
	Indeed, if this were not true, it would follow that
	\begin{equation}\label{e:lowtoup}
		p(a) =  p(\supp([m,n])) - (\supp([m,n])-a) = 2m+a \quad\text{for all}\quad a\in[0,m-n],
	\end{equation}
	yielding a contradiction when we plug $a=0$. Letting $k$ be the maximal element in the above set, it follows that \eqref{e:lowtoup} holds for all $a\in [k,m-n]$ and, therefore,
	\begin{equation*}
		p(k) = 2m+k = \spar([m,m+k]).	
	\end{equation*} 
	In other words, $s_{p,k} = [m,m+k]\in\boc_p^+$ and $n':=m+k$ satisfies the desired properties.
	
	Similarly, if $n<h+i$ we have
	\begin{equation*}
		\spar([m,n])- (h-\supp([m,n])) = 2n-h<h+2i = p(h)
	\end{equation*}
	and one concludes that there exists $\supp([m,n])<k<h$ such that 
	\begin{equation*}
		k\in E_p^+ \ \ {\rm and} \ \ p(k) = p(\supp([m,n]))- (k-\supp([m,n])) = 2n-k.
	\end{equation*}
	Therefore, $s_{p,k} = [n-k,n]\in\boc_p^+$ and $m':=n-k$ satisfies the desired properties.
\end{proof}

\begin{lem}\label{l:valleyjb}
	If $s_1\in\seg$  and $s_2\in \boc_{s_1}^+$, there exists a unique $p\in\mathbb P_{s_1}$ such that $\boc_p^+=\{s_2\}$. Moreover, in that case,  $\boc_{p}^- = \{s_2\cap {}^*  s_1,s_2\cup {}^* s_1\}\cap \tilde\seg   =\{s_1\lrcorner s_2,{}^*(s_1\llcorner s_2)\}\cap \tilde\seg$.
\end{lem}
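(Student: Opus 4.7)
The plan mirrors the strategy used for \cref{l:peakjb}, working directly with the combinatorics of $\pm 1$ lattice paths. Write $s_1 = [i_1, j_1]$ and $s_2 = [m,n]$; by \eqref{alluplowcor} the hypothesis $s_2 \in \boc^+_{s_1}$ is equivalent to $i_1 \le m < j_1 \le n < h + i_1$. Set $k = \supp(s_2) = n-m$. If $p \in \mathbb P_{s_1}$ satisfies $\boc^+_p = \{s_2\}$, then $p$ is forced to have $p(0) = 2j_1$, $p(h) = h + 2i_1$, $p(k) = m+n$, and $p(k\pm 1) = m+n+1$. Since $k$ is the unique local minimum, an elementary observation about $\pm 1$-walks shows $p$ is piecewise monotone on each of $[0,k]$ and $[k,h]$ with at most one local maximum in the interior of each piece (any additional local max on one side would force an additional local min on that side, contradicting $\boc^+_p = \{s_2\}$).

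The position of each such local maximum is pinned down by a direct count of up/down steps. On $[0,k]$, if no local max is present, then $p$ decreases monotonically from $2j_1$ to $m+n$, which forces $n = j_1$; if a local max is present, the ascent from $(0,2j_1)$ and descent to $(k,m+n)$ determine it uniquely as $(a,v)=(n-j_1,\,n+j_1)$, which lies in the interior iff $n > j_1$. Symmetrically, on $[k,h]$ the path either increases monotonically (iff $m = i_1$) or has a unique local max at $(b,w) = (h+i_1-m,\,h+i_1+m)$, interior iff $m > i_1$. In every case, $p$ is uniquely determined by $s_2$, yielding existence and uniqueness.

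Finally, using \eqref{spkint}, the left local max (when present) corresponds to the segment $[j_1,n]$, which one checks coincides with both $s_1\lrcorner s_2$ and $s_2 \cap {}^*s_1$; the right local max corresponds to $[m,\,h+i_1] = {}^*(s_1\llcorner s_2) = s_2 \cup {}^*s_1$. These segments lie in $\tilde\seg$ iff their lengths are positive, i.e., iff $n > j_1$ and $m > i_1$, respectively---exactly the conditions under which each local max exists. Thus intersecting with $\tilde\seg$ precisely captures the true set of local maxima, and the two alternative descriptions of $\boc^-_p$ agree by inspection. The only delicate point is this bookkeeping in the degenerate cases $n = j_1$ or $m = i_1$; everything else reduces to the simple arithmetic of counting $\pm 1$ steps.
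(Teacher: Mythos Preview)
Your proof is correct and follows essentially the same approach as the paper: both explicitly determine the unique piecewise-linear path $p$ by pinning down its local extrema from the boundary data and the single prescribed minimum, then read off $\boc_p^-$ by identifying the resulting local maxima with the claimed segments. Your uniqueness argument (constraining the path to be piecewise monotone with at most one local max on each side of $k$) is a bit more explicit than the paper's one-line appeal to ``$p$ is determined by $\boc_p^\pm$'', and your derivation of the positions $(n-j_1,\,n+j_1)$ and $(h+i_1-m,\,h+i_1+m)$ matches the paper's explicit piecewise formula for $p$.
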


\begin{proof}
	Uniqueness is clear since $p$ is determined by $\boc^\pm_p$. 
	Note \eqref{uplowcorner} implies $s_2+1\in \boc_{s_1}^-$ which, together with \cref{ellwttau}, implies $(s_2+1)\rhd s_1$. Writing $s_1=[i_1,j_1]$ and $s_2= [i_2,j_2]$, the latter implies $i_1\leq i_2<j_1\leq j_2$.
	Let $p:[0,h]\to \mathbb Z$ be defined by
	$$p(k)=\begin{cases}
		2j_1 + k,& 0\leq k\leq j_2-j_1,\\ 
		2j_2 -k, & j_2-j_1<k\leq j_2-i_2,\\
		2i_2+k, & j_2-i_2<k\leq h+i_1-i_2,\\
		2h+2i_1 -k, & h+i_1-i_2 < k\leq h.
	\end{cases}$$
	It is straightforward to check that $p\in\mathbb P_{s_1}$. Moreover, $E^+_p=\{j_2-i_2\}$ and $p(j_2-i_2)=j_2+i_2$, which implies $\boc^+_p = \{s_2\}$. In order to check the claim about $\boc^-_p$, note
	\begin{equation*}
		E^-_p = \{j_2-j_1,h+i_1-i_2\}\cap[1,N], \ \ p(j_2-j_1) = j_1+j_2, \ \ \text{and}\ \ p(h+i_1-i_2) = h+i_1+i_2.
	\end{equation*}
	If $j_1<j_2$, then $[j_1,j_2]=s_1\lrcorner s_2\in\boc^-_p$, while $[i_2,h+i_1] = {}^*(s_1\llcorner s_2)\in\boc^-_p$ provided $i_1<i_2$.  Since $(s_2+1)\rhd s_1$, in particular, $j_2+1-i_1\le h$ and, hence, \begin{equation*}
		[j_1,j_2] = s_2\cap{}^*s_1 \ \ \text{and}\ \ [i_2,h+i_1] = s_2\cup{}^*s_1.
	\end{equation*} 
\end{proof}

\begin{lem}\label{l:p*(dom)}
	If $s\rhd s'$, then $p_s^*(\supp({}^*\!s'))\ge \spar({}^*\!s')+2$ and $p^*_{s'}(\supp({}^*\!s))\le \spar({}^*\!s)-2$. 
\end{lem}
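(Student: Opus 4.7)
The statement is a purely combinatorial identity about the closed‑form lowest paths $p_s^*$ and $p_{s'}^*$, so my plan is a direct computation using \eqref{e:pp*mods}.

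First I would unpack the data. Write $s=[i,j]$ and $s'=[i',j']$. The covering $s\rhd s'$ together with $\spar(s)>\spar(s')$ forces the overlap pattern
\begin{equation*}
	i'<i\le j'<j,
\end{equation*}
and the observation made in \Cref{ss:cover} that overlapping segments must satisfy $i-i'\in\mathbb Z$ (hence also $j-j'\in\mathbb Z$) then gives
\begin{equation*}
	a:=i-i'\in\mathbb Z_{\ge 1},\qquad b:=j-j'\in\mathbb Z_{\ge 1}.
\end{equation*}
I would also record the duality formulas $\supp({}^*\!s')=h+i'-j'$, $\spar({}^*\!s')=h+i'+j'$, and similarly for ${}^*\!s$, together with $\supp(s)^*=h+i-j$ and $\supp(s')^*=h+i'-j'$.

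Next I would plug these into the formula $p_s^*(k)=\spar(s)+h-|k-\supp(s)^*|$ from \eqref{e:pp*mods}. Setting $k=\supp({}^*\!s')=h+i'-j'$, the absolute value simplifies:
\begin{equation*}
	p_s^*(\supp({}^*\!s'))=(i+j)+h-\bigl|(h+i'-j')-(h+i-j)\bigr|=(i+j)+h-|b-a|.
\end{equation*}
Comparing with $\spar({}^*\!s')+2=(h+i'+j')+2$, the required inequality becomes
\begin{equation*}
	(i-i')+(j-j')-|b-a|\ge 2,\qquad\text{i.e.,}\qquad a+b-|a-b|\ge 2.
\end{equation*}
Since $a+b-|a-b|=2\min(a,b)$ and $\min(a,b)\ge 1$, this holds. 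The second inequality is dual: plugging $k=\supp({}^*\!s)=h+i-j$ into $p_{s'}^*$ gives
\begin{equation*}
	p_{s'}^*(\supp({}^*\!s))=(i'+j')+h-|a-b|=\spar({}^*\!s)-\bigl(a+b-|a-b|\bigr)=\spar({}^*\!s)-2\min(a,b),
\end{equation*}
which is at most $\spar({}^*\!s)-2$ by the same bound $\min(a,b)\ge 1$.

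\textbf{Expected difficulty.} There is essentially no obstacle here; the only point requiring care is extracting the strict integer inequalities $a,b\ge 1$ from the definition of $s\rhd s'$, after which the rest is a two–line manipulation of absolute values. I would present the argument as the single identity $p_s^*(\supp({}^*\!s'))-\spar({}^*\!s')=2\min(a,b)=\spar({}^*\!s)-p_{s'}^*(\supp({}^*\!s))$, which transparently yields both claims at once.
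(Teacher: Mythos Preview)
Your approach is essentially the same as the paper's: a direct computation using the closed formula \eqref{e:pp*mods} together with the integer gaps $a=i-i'\ge 1$ and $b=j-j'\ge 1$. Your presentation via $a+b-|a-b|=2\min(a,b)$ is in fact a bit cleaner than the paper's case split.

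There is, however, a sign slip in your second computation. You correctly obtain
\[
p_{s'}^*(\supp({}^*\!s))=(i'+j')+h-|a-b|,
\]
but then write this as $\spar({}^*\!s)-(a+b-|a-b|)$. Since $\spar({}^*\!s)=h+i+j=h+i'+j'+a+b$, the correct rewriting is
\[
p_{s'}^*(\supp({}^*\!s))=\spar({}^*\!s)-\bigl(a+b+|a-b|\bigr)=\spar({}^*\!s)-2\max(a,b),
\]
not $-2\min(a,b)$. The conclusion $p_{s'}^*(\supp({}^*\!s))\le \spar({}^*\!s)-2$ is unaffected, since $\max(a,b)\ge 1$, but your advertised single identity linking the two inequalities is false: the two gaps are $2\min(a,b)$ and $2\max(a,b)$, respectively, and these coincide only when $a=b$.
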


\begin{proof}
 Write $s=[i,j]$ and $s'=[i',j']$, so ${}^*s' = [j,i+h], \supp({}^*\!s') = h+i'-j'$, and $\spar({}^*\!s') = i'+j'+h=h+\spar(s')$. Then, since $i'<i\le j'<j$, we get
	\begin{align*}
		p_s^*(\supp({}^*\!s')) & \overset{\eqref{e:pp*mod}}{=} h+i+j - |(h+i-j)-(h+i'-j')|\\
		& \ge h + \min\{2i-i'+j',2j-j'+i'\}\ge h+i'+j'+2,
	\end{align*}
as desired. Similarly,
	\begin{align*}
		p_{s'}^*(\supp({}^*\!s))&\overset{\eqref{e:pp*mod}}{=} h+i'+j' - |(h+i-j)-(h+i'-j')|\\
		&\le h + \max\{2i'-i+j,2j'-j+i\}\le h+i+j -2 = \spar(^*\!s)-2.
	\end{align*}
\end{proof}

\subsection{On Tuples of MY Paths} 
We now collect some combinatorial facts about certain tuples of MY paths. 	
Given $\bs s = (s_1,\dots,s_l)\in\mseg$, let $\mathbb P_{\bs s}$ be the following subset of $\mathbb P_{s_1}\times\cdots\times \mathbb P_{s_l}$. An element $\bs p = (p_1,\dots,p_l)\in \mathbb P_{\bs s}$ if and only if
\begin{equation}\label{pbomega}
	p_m(k)<p_n(k) \quad\text{for all}\quad  k\in[0,h],\   1\le m<n\le l.
\end{equation}  
The following is easily established. 

\begin{lem}\label{weights1} 
	Let $\bs s$ be a ladder of length $l$ and $0\le m< n\le l$.
	\begin{enumerate}[(a)]
		\item If $(p_1,\dots,p_l)\in\mathbb P_{\bs s}$, then $(p_{m+1},\dots,p_n)\in \mathbb P_{\bs s(m,n)}$.
		\item If $(p_{m+1},\cdots, p_n)\in\mathbb  P_{\bs s(m,n)}$, then $(p_{s_1},\dots, p_{s_m}, p_{m+1},\dots, p_n, p_{s_{n+1}}^*,\dots, p_{s_l}^*)\in\mathbb P_{\bs s}$. \qed
	\end{enumerate}
\end{lem}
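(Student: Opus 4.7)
The plan is straightforward once one isolates the following ``ladder monotonicity'' property of the extremal paths: if $\bs s=(s_1,\dots,s_l)$ is a ladder, then for all $1\le i<j\le l$ and all $k\in[0,h]$,
\begin{equation*}
  p_{s_i}(k) < p_{s_j}(k) \quad\text{and}\quad p^*_{s_i}(k) < p^*_{s_j}(k).
\end{equation*}
Both strict inequalities are read off from the explicit piecewise-linear formulas \eqref{e:pp*mods} together with the reverse triangle inequality $\bigl||k-\supp(s_j)|-|k-\supp(s_i)|\bigr|\le |\supp(s_j)-\supp(s_i)|$ (and the analogous bound with $\supp$ replaced by $\supp^*$), which give
\begin{equation*}
  p_{s_j}(k)-p_{s_i}(k)\ \ge\ \spar(s_j)-\spar(s_i)-|\supp(s_j)-\supp(s_i)|\ >\ 0
\end{equation*}
by the characterization \eqref{e:defladsusp} of a ladder, and the same bound for $p^*_{s_j}-p^*_{s_i}$.

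Part (a) is immediate from the definition \eqref{pbomega}: if $(p_1,\dots,p_l)\in\mathbb P_{\bs s}$, the required strict inequalities for $(p_{m+1},\dots,p_n)$ are just a subset of those satisfied by the full tuple, and membership $p_k\in\mathbb P_{s_k}$ is unchanged.

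For part (b), write the constructed tuple as $(\tilde p_1,\dots,\tilde p_l)$, with $\tilde p_k=p_{s_k}$ for $k\le m$, $\tilde p_k=p_k$ for $m<k\le n$, and $\tilde p_k=p^*_{s_k}$ for $k>n$. Each $\tilde p_k$ lies in $\mathbb P_{s_k}$ by construction, so it suffices to verify $\tilde p_i(k)<\tilde p_j(k)$ for every $1\le i<j\le l$ and every $k\in[0,h]$. This splits into six cases according to how $\{i,j\}$ meet $[1,m]$, $(m,n]$ and $(n,l]$. Cases lying entirely in $[1,m]$ or entirely in $(n,l]$ are handled by the ladder monotonicity above; the case $m<i<j\le n$ is the hypothesis $(p_{m+1},\dots,p_n)\in\mathbb P_{\bs s(m,n)}$. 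The three mixed cases are handled by chaining ladder monotonicity with the universal sandwich \eqref{e:pathlub}. For instance, if $i\le m<j\le n$, then $\tilde p_i(k)=p_{s_i}(k)<p_{s_j}(k)\le p_j(k)=\tilde p_j(k)$; if $m<i\le n<j$, then $\tilde p_i(k)=p_i(k)\le p^*_{s_i}(k)<p^*_{s_j}(k)=\tilde p_j(k)$; and if $i\le m$ and $n<j$, then $\tilde p_i(k)=p_{s_i}(k)\le p^*_{s_i}(k)<p^*_{s_j}(k)=\tilde p_j(k)$. In all three, strictness comes from the ladder monotonicity step.

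There is no real obstacle here; the entire content is the single inequality coming from \eqref{e:defladsusp} and \eqref{e:pp*mods}, and the remainder is bookkeeping across the six cases, glued together by \eqref{e:pathlub}.
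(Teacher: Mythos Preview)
Your proof is correct and fills in exactly the details the paper omits (the lemma is stated with a \qedsymbol\ and no proof, prefaced only by ``The following is easily established''). Your ladder monotonicity inequality via \eqref{e:defladsusp} and \eqref{e:pp*mods}, combined with the sandwich \eqref{e:pathlub}, is the natural and intended argument.
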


The next lemma gives a characterization of $\mathbb P_{\bs s}$ in terms of local maxima.

\begin{lem}\label{l:nonol}
	Let $\bs s = (s_1,\dots, s_l)$ be a ladder and let $\bs p = (p_1,\cdots, p_l)\in \mathbb P_{s_1}\times\cdots\times\mathbb P_{s_l}$. Then $\bs p \in \mathbb P_{\bs s}$ if and only if
	\begin{equation}\label{lowcornotov}
		p_{m+1}(\supp(s))>p_m(\supp(s)) \quad\text{for all}\quad s\in \boc_{p_m}^-, \ \ 1\leq m<l.
	\end{equation}
\end{lem}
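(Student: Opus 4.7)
The forward implication is immediate from \eqref{pbomega}: if $\bs p \in \mathbb P_{\bs s}$, then $p_m(k) < p_{m+1}(k)$ for every $k \in [0,h]$, in particular at $k = \supp(s)$ whenever $s \in \boc_{p_m}^-$. So the substance is the reverse direction, which I plan to prove by contradiction using maximal ``bad intervals''.

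Assume \eqref{lowcornotov}. By transitivity of strict inequality, it suffices to establish $p_m(k) < p_{m+1}(k)$ for every $k \in [0,h]$ and every $1 \le m < l$; fix such $m$ and suppose for contradiction that the set $B = \{k : p_m(k) \ge p_{m+1}(k)\}$ is nonempty. First I would use the ladder condition \eqref{e:deflad} together with \eqref{edgescond} to get $p_m(0) = 2j_m < 2j_{m+1} = p_{m+1}(0)$ and $p_m(h) = h+2i_m < h+2i_{m+1} = p_{m+1}(h)$, forcing $B \subseteq [1,h-1] = [1,N]$. I would then pick a maximal integer subinterval $[a,b] \subseteq B$.

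The heart of the argument is to extract a local maximum of $p_m$ from $[a,b]$. At the left boundary, $p_{m+1}(a-1) - p_m(a-1) \ge 1$ while $p_{m+1}(a) - p_m(a) \le 0$; since each of $p_m, p_{m+1}$ changes by $\pm 1$, the difference $p_{m+1} - p_m$ changes by an element of $\{-2,0,2\}$, and dropping from $\ge 1$ to $\le 0$ forces a drop of exactly $2$. Thus $p_m(a) = p_m(a-1)+1$ and $p_{m+1}(a) = p_{m+1}(a-1)-1$; in particular $p_m(a) > p_m(a-1)$. The symmetric crossing analysis at $b$ yields $p_m(b) > p_m(b+1)$. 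Consequently, any point $k^* \in [a-1,b+1]$ at which $p_m$ attains its maximum on that interval must lie in $[a,b] \subseteq [1,N]$, and the $\pm 1$-step constraint upgrades $p_m(k^* \pm 1) \le p_m(k^*)$ to $p_m(k^* \pm 1) = p_m(k^*) - 1$. Hence $k^* \in E^-_{p_m}$, so $s_{p_m,k^*} \in \boc^-_{p_m}$, and $k^* \in B$ yields $p_{m+1}(\supp(s_{p_m,k^*})) \le p_m(\supp(s_{p_m,k^*}))$, contradicting \eqref{lowcornotov}.

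The main (and essentially only) obstacle is this crossing analysis at the boundary of $B$. It is delicate precisely because it must simultaneously (i) pin down the maximum $k^*$ strictly inside $[a,b]$---not at $a-1$ or $b+1$, where it would fail to produce a local maximum of $p_m$---and (ii) convert the weak maximality $p_m(k^* \pm 1) \le p_m(k^*)$ into the strict inequality required for $k^* \in E^-_{p_m}$; both conclusions rely on the $\pm 1$-step structure of MY paths.
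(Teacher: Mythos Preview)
Your proof is correct, and it takes a genuinely different route from the paper's. The paper argues the contrapositive: assuming $\bs p\notin\mathbb P_{\bs s}$, it selects a pair $(m,n)$ with $n-m$ minimal among those witnessing a failure, invokes \cref{weights1}(b) to guarantee $p_m\ne p_{s_m}$ and hence $\boc_{p_m}^-\ne\emptyset$, then ``climbs'' $p_m$ from an arbitrary bad point $k$ (following a direction of strict increase until it stops) to reach a local maximum $k+r$ at which $p_m(k+r)\ge p_n(k+r)$; a separate argument is then needed to exclude the possibility $n>m+1$. Your approach bypasses all of this: by transitivity you reduce at once to consecutive indices, and the maximal-bad-interval plus boundary-crossing analysis simultaneously locates the local maximum of $p_m$ and places it inside $B$. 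The gain is that your argument is entirely self-contained---it needs neither \cref{weights1} nor the minimal-gap bookkeeping---while the paper's climbing step (``follow the ascent of $p_m$ and use $|p_n(k)-p_n(k')|\le|k-k'|$ to carry the inequality along'') is perhaps a more portable device in other contexts. Both proofs ultimately exploit the same two ingredients: the ladder condition forces strict inequality at the endpoints $0,h$, and the $\pm 1$ step structure lets one promote a bad point to a bad local maximum.
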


\begin{proof}
	If $\bs p\in \mathbb P_{\bs s}$ then \eqref{lowcornotov} holds by definition. 
	Consider 
	\begin{equation*}
		S=\{(m,n): 1\le m<n\le l \text{ and }\exists\ k\in[0,h] \text{ such that } p_m(k)\ge p_n(k)\}.
	\end{equation*}
	If $\bs p\notin \mathbb P_{\bs s}$, then $S\ne\emptyset$ and we need to show
	\begin{equation}\label{e:nonol}
		\exists\ 1\le m<l \ \ \text{and}\ \ s\in\boc_{p_m}^- \ \ \text{with}\ \ p_m(\supp(s))\ge p_{m+1}(\supp(s)). 
	\end{equation}
	Choose $(m,n)\in S$ such that
	\begin{equation}\label{e:nonolm}
		n-m =\min\{n'-m': (m',n')\in S\}.
	\end{equation}
	The minimality of $n-m$ implies $(p_{m+1},\dots,p_n)\in\mathbb P_{\bs s(m,n)}$. It then follows from \cref{weights1}(b) that  $p_{s_m}(k)<p_n(k)$ for all $k\in [0,h]$ and, since $(m,n)\in S$, we conclude $p_m\neq  p_{s_m}$. An application of \eqref{uniquecorner} then implies $\boc_{p_m}^-\neq \emptyset$.  We claim 
	\begin{equation}\label{e:nonols}
		 \exists\ s\in \boc_{p_m}^- \ \ \text{such that}\ \ p_{m}(\supp(s))\geq p_{n}(\supp(s)).
	\end{equation} 
	If $n=m+1$, this implies \eqref{e:nonol}, thus completing the proof. Let us check we cannot have $n>m+1$. Indeed, in that case, \eqref{e:nonolm} implies $(m,m+1)\notin S$ and, hence,  $p_m(\supp(s))<p_{m+1}(\supp(s))$. But then, 
	\begin{equation*}
		p_n(\supp(s))\leq p_m(\supp(s))<p_{m+1}(\supp(s)),
	\end{equation*}
	showing $(m+1,n)\in S$ and yielding a contradiction with  \eqref{e:nonolm}.
		
To prove the claim, let $k\in [0,h]$ such that $p_m(k)\geq p_n(k)$. Note that, since $\bs s\in\lad$, 
	\begin{equation}\label{e:nonol0h}
		p_m(0)<p_n(0) \quad\text{and}\quad p_m(h)<p_n(h),
	\end{equation}
	so $k\in[1,N]$. If $k=\supp(s)$ for some $s\in\boc_{p_m}^-$ there is nothing to prove. Otherwise, i.e., if $k$ is not a point of local maximum for $p_m$, there exists $r\in \mathbb Z\setminus\{0\}$ such that $|r|$ is maximal with respect to the following property:
	\begin{equation*}
		k+r\in [0,h] \quad\text{and}\quad p_m(k+r)=p_m(k)+|r|.
	\end{equation*}
	In particular,  $p_m(k+r) \geq p_n(k+r)$ and \eqref{e:nonol0h} implies $k+ r \in [1,N]$. The maximality of $|r|$ then implies
	\begin{equation*}
		p_m(k+r\pm 1) = p_m(k+r) -1,
	\end{equation*}
	thus showing $s= s_{p_m, k+ r}\in \boc_{p_m}^-$ and completing the proof of \eqref{e:nonols}.
\end{proof}

\begin{lem}\label{l:pjbial}
	If $\bs s,\bs s' \in\mathbb L^l$ and $\bs s\rhd \bs s'$, then $(p_{s'_1}^{s_1},\dots,p_{s'_l}^{s_l})\in\mathbb P_{\bs s'}$. 
\end{lem}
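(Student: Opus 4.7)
The plan is to apply \cref{l:nonol}: since $(p_{s'_m}^{s_m})_{m=1}^l$ already lies in $\mathbb{P}_{s'_1}\times\cdots\times\mathbb{P}_{s'_l}$ by construction, verifying membership in $\mathbb{P}_{\bs s'}$ reduces to checking, for each $1\le m<l$, that $p_{s'_{m+1}}^{s_{m+1}}(\supp(s))>p_{s'_m}^{s_m}(\supp(s))$ for every $s\in\boc^-_{p_{s'_m}^{s_m}}$. By \cref{l:peakjb}, this set is just $\{s_m\}$, and by (\ref{e:defboc}) we have $p_{s'_m}^{s_m}(\supp(s_m))=\spar(s_m)$. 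Hence the whole statement reduces to the single family of scalar inequalities
\begin{equation*}
p_{s'_{m+1}}^{s_{m+1}}(\supp(s_m))>\spar(s_m),\qquad 1\le m<l.
\end{equation*}

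To handle these, I would first extract an explicit piecewise-linear formula for $p_{s'}^s$ whenever $s\rhd s'$. Writing $s=[i,j]$ and $s'=[i',j']$, \cref{l:peakjb} says that this path has a unique interior local maximum at $(j-i,i+j)$ and at most two interior local minima, located at $k=j'-i$ with value $i+j'$ and at $k=j-i'$ with value $i'+j$. These data, together with the boundary conditions (\ref{edgescond}) and the slope-$\pm 1$ constraint (\ref{e:preMYpath}), force $p_{s'}^s$ to have a down--up--down--up shape, linear on the four intervals $[0,j'-i]$, $[j'-i,j-i]$, $[j-i,j-i']$, $[j-i',h]$, with values $2j'-k$, $k+2i$, $2j-k$, and $k+2i'$ respectively. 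The degenerate cases $i=j'$ and $j-i'=h$ (which are precisely the cases in which one of $s\cap s', s\cup s'$ fails to lie in $\tilde\seg$) correspond to one of the outer pieces having length zero, and the formula remains valid under the usual conventions.

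With this explicit description in hand, the desired inequality becomes a case analysis in $k=\supp(s_m)=j_m-i_m$. Depending on which of the four linear pieces of $p_{s'_{m+1}}^{s_{m+1}}$ contains $k$, the required inequality unpacks, respectively, to $j'_{m+1}>j_m$, $i_{m+1}>i_m$, $j_{m+1}>j_m$, or $i'_{m+1}>i_m$. The middle two are immediate from $\bs s,\bs s'\in\mathbb L$; the outer two follow by combining the defining inequality of the relevant piece with ladder monotonicity (for example, $j_m-i_m\le j'_{m+1}-i_{m+1}$ together with $i_m<i_{m+1}$ forces $j_m<j'_{m+1}$; symmetrically at the other end using $j_m<j_{m+1}$).

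The only mildly delicate point I anticipate is confirming the explicit formula for $p_{s'}^s$, together with properly handling the two endpoint degeneracies; once that is in place, the reduction via \cref{l:nonol} together with the four-way case analysis is purely mechanical, and there is no serious obstacle.
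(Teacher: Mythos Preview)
Your proposal is correct and begins exactly as the paper does: both reduce via \cref{l:nonol} and \cref{l:peakjb} to the single family of inequalities $p_{s'_{m+1}}^{s_{m+1}}(\supp(s_m))>\spar(s_m)$. The difference is only in how this inequality is verified. You derive the explicit four-piece piecewise-linear formula for $p_{s'}^s$ and run a case analysis (which, as you correctly anticipate, reduces in each piece to one of the ladder inequalities $i_m<i_{m+1}$, $j_m<j_{m+1}$, or a combination thereof). The paper instead argues by contradiction without ever writing the formula: if the inequality failed, then since $p_{k+1}(\supp(s_{k+1}))=\spar(s_{k+1})$, the difference $p_{k+1}(\supp(s_{k+1}))-p_{k+1}(\supp(s_k))$ would be at least $\spar(s_{k+1})-\spar(s_k)$, while \eqref{boundp} bounds it by $|\supp(s_{k+1})-\supp(s_k)|$, contradicting the ladder condition \eqref{e:defladsusp}. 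The paper's route is shorter and avoids the mild care around the degenerate endpoint cases you flagged; your route has the advantage of making the structure of $p_{s'}^s$ completely explicit, which could be reused elsewhere.
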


\begin{proof}
	Write $s_k = [i_k,j_k], 1\le k\le l$ and, for ease of notation set, $p_k = p_{s'_k}^{s_k}$. By definition of $p_k$ (\cref{l:peakjb}),  $\boc_{p_k}^- = \{s_k\}$ and, hence, the last equality of \eqref{e:defboc} implies
	\begin{equation*}
		p_k(j_k-i_k)=j_k+i_k  \quad\text{for all}\quad 1\le k\le l.
	\end{equation*} 
	By Lemma \ref{l:nonol}, it suffices to check that 
	\begin{equation*}
		p_{k+1}(j_k-i_k)>j_k+i_k \quad\text{for all}\quad 1\le k<l.
	\end{equation*}
	Suppose this were not the case for some $k$. Since $s_{k+1}\in \boc_{p_{k+1}}^-$, it would follow that
	\begin{align*}
		p_{k+1}(j_{k+1}-i_{k+1}) - p_{k+1}(j_k-i_k) & \geq  (j_{k+1}+ i_{k+1}) -(j_k +i_k) \\
			& = (j_{k+1}- j_{k}) +(i_{k+1} -i_k) >0,
	\end{align*}
	where the last inequality follows since $\bs s$ is a ladder. On the other hand, \eqref{boundp} implies
	\begin{equation*}
		|p_{k+1}(j_{k+1}-i_{k+1}) - p_{k+1}(j_k-i_k)|\leq |(j_{k+1}-i_{k+1}) - (j_k -i_k)|.
	\end{equation*}
	One easily checks the combination of these two inequalities contradicts the assumption that $\bs s$ is a ladder. 
\end{proof}

\begin{lem}\label{l:pjbialn}
    Let $\bs s\in \seg^l$ and $\bs p \in \mathbb P_{s_1}\times \cdots \times \mathbb P_{s_l}$. If one of the following conditions are satisfied, then $\bs p\notin\mathbb P_{\bs s}$.
    \begin{enumerate}[(a)]
		\item There exist $1\leq k<l$ and $s\in\seg$ such that $p_k = p_{s_k}^s$, $p_{k+1}=p_{s_{k+1}}$ and $\spar(s)-\spar(s_{k+1})\ge |\supp(s)-\supp(s_{k+1})|$. 
		\item There exist $1\le k<m\le l$ and $(s,s')\in \mathbb L$ such that $p_{k}=p_{s_k}^{s'}$ and $p_m = p_{s_m}^{s}$.
	\end{enumerate}
\end{lem}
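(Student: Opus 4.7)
The plan is to prove both parts by a direct evaluation argument: for a single carefully chosen point $x\in[0,h]$, I will exhibit $p_k(x)\ge p_m(x)$ (or $p_{k+1}(x)\le p_k(x)$ in part (a)), which directly violates the strict inequality \eqref{pbomega} defining $\mathbb P_{\bs s}$. The underlying idea in both parts is that the superscript in the peak path $p_{s_\bullet}^{\bullet}$ pins down the value of that path at a specific coordinate via \cref{l:peakjb} and \eqref{e:defboc}, and one then compares against the other path either by its explicit formula or by the Lipschitz bound \eqref{boundp}.

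For part (a), I would evaluate at $x=\supp(s)$. Since $p_k=p_{s_k}^s$, \cref{l:peakjb} gives $\boc_{p_k}^-=\{s\}$, and then \eqref{e:defboc} yields $p_k(\supp(s))=\spar(s)$. Since $p_{k+1}=p_{s_{k+1}}$ is the ``highest'' V-shaped path, the explicit formula \eqref{e:pp*mods} gives
\[p_{k+1}(\supp(s))=\spar(s_{k+1})+|\supp(s)-\supp(s_{k+1})|.\]
The hypothesis $\spar(s)-\spar(s_{k+1})\ge |\supp(s)-\supp(s_{k+1})|$ now reads exactly as $p_{k+1}(\supp(s))\le p_k(\supp(s))$, contradicting \eqref{pbomega} for the pair $(m,n)=(k,k+1)$.

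For part (b), I would evaluate at $x=\supp(s')$. Because $p_k=p_{s_k}^{s'}$ has its unique local maximum at $\supp(s')$ with value $\spar(s')$, the same reasoning as in part (a) yields $p_k(\supp(s'))=\spar(s')$. Likewise $p_m=p_{s_m}^s$ satisfies $p_m(\supp(s))=\spar(s)$, so applying the Lipschitz-type bound \eqref{boundp} to $p_m$ gives
\[p_m(\supp(s'))\le p_m(\supp(s))+|\supp(s')-\supp(s)|=\spar(s)+|\supp(s')-\supp(s)|.\]
The ladder hypothesis $(s,s')\in\mathbb L$, rewritten via \eqref{e:defladsusp}, is precisely the strict inequality $\spar(s)+|\supp(s')-\supp(s)|<\spar(s')$. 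Chaining the two estimates gives $p_m(\supp(s'))<\spar(s')=p_k(\supp(s'))$, the required failure of \eqref{pbomega} for $k<m$.

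I do not anticipate any genuine obstacle: each part reduces to a one-line comparison once the right evaluation point is chosen. The only subtlety is that in part (b) the ladder condition is needed in exactly the sharp form \eqref{e:defladsusp}, since one must trade the Lipschitz inequality $|\Delta p_m|\le|\Delta x|$ for a strict drop of $p_m$ below $\spar(s')$; the weaker inequality $\spar(s)<\spar(s')$ alone would not suffice.
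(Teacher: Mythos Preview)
Your proof is correct and follows essentially the same approach as the paper. The only cosmetic difference is in part (b): you evaluate at $\supp(s')$ and apply the Lipschitz bound \eqref{boundp} to $p_m$, whereas the paper evaluates at $\supp(s)$ and applies \eqref{boundp} to $p_k$; these are dual choices leading to the same contradiction.
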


\begin{proof}
	For part (a), since $s\in\boc^-_{p_k}$ by \cref{l:peakjb}, we have $\spar(s) \overset{\eqref{e:supspspk}}{=} p_k(\supp(s))$. An application of \eqref{pbomega}  then gives
	\begin{align*}
		\spar(s) < p_{k+1}(\supp(s)) \overset{\eqref{e:pp*mods}}{=} \spar(s_{k+1})+|\supp(s)-\supp(s_{k+1})|,
	\end{align*}
	yielding a contradiction. 
	
	For part (b), \eqref{pbomega}  would imply $p_{k}(\supp(s))<p_{m}(\supp(s))=\spar(s)$ and, hence,
	\begin{equation*}
		p_{k}(\supp(s'))-p_{k}(\supp(s)) = \spar(s') - p_{k}(\supp(s))  > \spar(s')-\spar(s).
	\end{equation*}
	Since $(s,s')\in\lad$, it then follows from \eqref{e:defladsusp} that
	\begin{equation*}
		p_{k}(\supp(s'))-p_{k}(\supp(s))> |\supp(s')-\supp(s)|,
	\end{equation*} 
	contradicting \eqref{boundp}. 
\end{proof}

\subsection{MY Paths and $\ell$-weights}\label{ss:myweight}
Given $s\in \seg$ and $p\in \mathbb P_s$, define
\begin{equation}\label{omegapdef}
	\bs\omega(p) = \left(\prod_{s'\in \bs{\rm c}_p^+}\bs\omega_{s'}\right)\left( \prod_{s'\in \bs{\rm c}_p^-}\bs\omega_{s'}^{-1}\right)\in \cal P.
\end{equation}
In particular, 	
\begin{equation}\label{e:lwmainpaths}
	\bs\omega(p_{s})=\bs\omega_{s}, \quad \bs\omega(p_{s}^*)=({}^*\bs\omega_{s})^{-1},\quad \text{and}\quad \bs\omega(p_s^{s'}) = \bs\omega_{s'}^{-1}\bs\omega_{s\diamond s'}.
\end{equation}
Given $\bs s= (s_1,\dots, s_l)\in\seg^l$ and $\bs p = (p_1,\dots, p_l)\in \mathbb P_{s_1}\times \cdots\times\mathbb P_{s_l}$, define 
\begin{equation}\label{e:omegatuplepaths}
	\bs\omega(\bs p)= \prod_{k=1}^l \bs\omega(p_k).
\end{equation}

\begin{rem}\label{r:reduced}
	If $\bs s$ is a ladder and $\bs p\in \mathbb P_{\bs s}$, the conditions in \eqref{pbomega} guarantee that the expression for $\bs\omega(\bs p)$ given by \eqref{omegapdef} and \eqref{e:omegatuplepaths} is a reduced word on the fundamental $\ell$-weights regarded as free generators of $\mathcal P$.  Moreover, if $\bomega_{i,j}^{a}$ occurs in $\bomega(\bs p)$ for some $a\in \mathbb Z\setminus\{0\}$, then $a\in \{-1,1\}$.\endd
\end{rem}

Let $\bs s\in\lad^l$. If $\bs s$ is not connected, there exists $1\leq k< l$ minimal such $\bs s(k-1,k+1)$ is not connected, i.e., $s_{k+1}\ntriangleright s_k$. Let $\bs s_1 = \bs s(0,k)$, which is a connected ladder, and note $\bs s = \bs s_1\vee \bs s (k,l)$. Iterating this process, we obtain a sequence of connected ladders $\bs s_1, \dots, \bs s_m$,  such that 
\begin{equation*}
	\bs s = \bs s_1\vee\cdots \vee \bs s_m. 
\end{equation*}
The next result was proved in \cite{MY12a,MY12}.

\begin{prop}\label{mysnake}  
	Let $\bs s\in\lad^l$. 
	\begin{enumerate}[(a)]
		\item For all $\bs\pi\in\mathcal P$, we have $\dim V(\bs\omega_{\bs s })_{\bs\pi}\le 1$. Moreover, $\wt_\ell (V(\bs\omega_{\bs s})) =\{ \bs\omega(\bs p): \bs p\in\mathbb P_{\bs s}\}$, $\wt_\ell^+ (V(\bs\omega_{\bs s}))=\{\bs\omega_{\bs s}\}$, and $\wt_\ell^-( V(\bs\omega_{\bs s}))=\{({}^*\bs\omega_{\bs s})^{-1}\}$.
		\item If $\bs p, \bs p'\in \mathbb P_{\bs s}$, then   
		\begin{equation*}
			\bs\omega(\bs p')\le \bs\omega(\bs p)  \quad\Leftrightarrow\quad  p_r(k)\le  p_r'(k) \ \ {\rm{for\ all }} 		\ \ r\in[1,l], \ k\in [0,h].
		\end{equation*}
		\item For $\bs s'\in\lad$  such that $\bs s \vee\bs s'\in\lad$ and we have that $V(\bs\omega_{\bs s})\otimes V(\bs\omega_{\bs s'})$ is reducible if and only if $s_1'\rhd s_l$. \qed
	\end{enumerate}
\end{prop}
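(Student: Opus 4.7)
The plan is to handle the three parts separately, reducing everywhere to prime snake modules (connected ladders) and exploiting the combinatorics of MY paths established above.

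For part (a), I would first reduce to the case of a connected ladder. Granting part (c), the decomposition $\bs s = \bs s_1 \vee \cdots \vee \bs s_m$ into maximal connected components yields
\[
V(\bs\omega_{\bs s}) \cong V(\bs\omega_{\bs s_1}) \otimes \cdots \otimes V(\bs\omega_{\bs s_m}),
\]
whose $\ell$-weight multiplicities multiply. Matched with \cref{weights1}(b), which concatenates path tuples across the blocks, this reduces the claim to the prime case. For a connected ladder of length $l$, the plan is to induct on $l$: the base case $l=1$ is a Kirillov--Reshetikhin module, for which the MY path description of the $q$-character is classical. For the inductive step, one can use the short exact sequences \eqref{e:tpfundnhw} combined with the Frenkel--Reshetikhin $T$-system to build $V(\bs\omega_{\bs s})$ from snake modules of shorter length, and translate this into the combinatorial description via \cref{l:nonol}: a tuple in $\mathbb P_{\bs s}$ is precisely a tuple in $\mathbb P_{\bs s(0,l-1)}$ paired with a compatible path in $\mathbb P_{s_l}$. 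Thinness propagates through the induction because \cref{l:ladpmax} prevents two paths in a ladder from simultaneously attaining a local extremum at the same support, ruling out monomial collisions across distinct tuples. The highest and lowest $\ell$-weights are then identified via \eqref{uniquecorner} and \eqref{e:lwmainpaths}.

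For part (b), the plan is an elementary computation. Given $\bs p, \bs p'\in \mathbb P_{\bs s}$ with $p_r(k)\le p_r'(k)$ for all $r,k$, produce a sequence of elementary raising moves $\bs p \to \cdots \to \bs p'$: at each step, locate some $p_r$ with a local minimum $k\in E_{p_r}^+$ where $p_r(k)<p_r'(k)$, and replace $p_r$ by $\tau_k p_r$, scheduling the moves in an order (e.g., top index downward) that preserves the strict inequalities \eqref{pbomega}. A direct verification using \eqref{omegapdef} and \eqref{e:slroots} shows that such a single move contributes exactly the simple $\ell$-root $\bs\alpha_{s_{p_r,k}}$ to the ratio $\bs\omega(\bs p)\,\bs\omega(\tau_k\bs p)^{-1}$. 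Iterating gives $\bs\omega(\bs p)\bs\omega(\bs p')^{-1}\in\mathcal Q^+$. For the converse, the reducedness of the expressions $\bs\omega(\bs p)$ in the free abelian group generated by fundamental $\ell$-weights (see \cref{r:reduced}) together with part (a) forces the ratio to be realizable only by such an increasing chain, yielding the pointwise inequality.

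For part (c), I would compute $q$-characters. If $s_1' \ntriangleright s_l$, \cref{l:nonol} shows every tuple in $\mathbb P_{\bs s\vee\bs s'}$ factors as a concatenation of tuples in $\mathbb P_{\bs s}$ and $\mathbb P_{\bs s'}$, so part (a) gives the multiplicative identity of $q$-characters, forcing $V(\bs\omega_{\bs s})\otimes V(\bs\omega_{\bs s'}) \cong V(\bs\omega_{\bs s\vee\bs s'})$ and hence irreducibility. If $s_1'\rhd s_l$, incorporating the ``peaking'' path $p_{s_l}^{s_1'}$ of \cref{l:peakjb} into a tuple produces an $\ell$-weight monomial appearing in the $q$-character of $V(\bs\omega_{\bs s})\otimes V(\bs\omega_{\bs s'})$ but absent from that of $V(\bs\omega_{\bs s\vee\bs s'})$, witnessing reducibility. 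The main obstacle lies in the inductive step of part (a): propagating thinness and the exact path parametrization through the $T$-system requires careful bookkeeping of which monomials cancel, which is the genuinely substantive combinatorial content of \cite{MY12}.
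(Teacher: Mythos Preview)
The paper does not prove this proposition; it is stated as a result of Mukhin--Young and marked with \qedsymbol, indicating the proof is omitted and the reader is referred to \cite{MY12a,MY12}. So there is no ``paper's own proof'' to compare against, and your sketch is effectively an outline of the arguments in those references.

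That said, your outline is broadly faithful to the Mukhin--Young strategy, and the computation you indicate for part (b) is correct: a single raising move $\tau_k$ at $k\in E_{p_r}^+$ with $s_{p_r,k}=[i,j]$ changes $\bs\omega(\bs p)$ by exactly $\bs\alpha_{i,j}$. One point to be careful about is the logical dependence between (a) and (c). As you have written it, the reduction of (a) to the connected case invokes (c), while your argument for the irreducible direction of (c) invokes (a) for the combined ladder $\bs s\vee\bs s'$. This is circular unless you restructure: prove (a) for connected ladders first by induction on length via the extended $T$-system; then for the irreducible direction of (c), avoid appealing to (a) for $\bs s\vee\bs s'$ and instead argue directly that the product $q$-character has a unique dominant monomial (which follows combinatorially from \cref{l:nonol} and the reducedness in \cref{r:reduced}), forcing irreducibility of the highest-$\ell$-weight tensor product. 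Only then deduce (a) for disconnected ladders. Your acknowledgement that the substantive content lies in the inductive step of (a) is accurate; the cancellation bookkeeping in the $T$-system recursion is where the real work in \cite{MY12a,MY12} occurs.
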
	

Suppose $s_2\rhd s_1$, let $\bs s =(s_1,s_2)$, and recall the definition of $p_{s_1}^{s_2}$ from Lemma \ref{l:peakjb}. Comparing with \eqref{e:tpfundnhw}, one easily checks we have
a short exact sequence
\begin{equation}\label{e:tpfundpath}
	0\to V(\bs\omega(p_{s_1}^{s_2})\, \bs\omega_{s_2}) \to V(\bs\omega_{s_2})\otimes V(\bs\omega_{s_1})\to V(\bs\omega_{\bs s})\to 0. 
\end{equation}
More generally, we have the following. 

\begin{prop}\label{l:sessnake}
	If $\bs s\in\lad^l$ with $l>1$, then $V(\bs\omega_{s_l})\otimes V(\bs\omega_{\bs s(0,l-1)})$ is reducible if and only if $s_l\rhd s_{l-1}$. 
	Moreover, if $s_l\rhd s_{l-1}$, there exists a short exact sequence
	\begin{equation}\label{e:sessnake}
		0\to V(\bs\omega_{\bs s'}) \to  V(\bs\omega_{s_l})\otimes V(\bs\omega_{\bs s(0,l-1)})\to V(\bs\omega_{\bs s})\to 0,
	\end{equation}
	where $\bs s'=\bs s(0,l-2)\vee (s_l\diamond s_{l-1})$. 
\end{prop}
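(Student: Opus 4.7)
The reducibility dichotomy is immediate from \cref{mysnake}(c) applied to the pair $(\bs s(0,l-1),(s_l))$ (noting $\bs s(0,l-1)\vee(s_l)=\bs s\in\lad$), together with the observation that $V\otimes W$ and $W\otimes V$ share composition factors (via commutativity of $q$-character multiplication) and are thus simultaneously (ir)reducible.

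Assume now $s_l\rhd s_{l-1}$. By \cref{p:weylper}, $V:=V(\bs\omega_{s_l})\otimes V(\bs\omega_{\bs s(0,l-1)})$ is highest-$\ell$-weight with highest weight $\bs\omega_{\bs s}$, giving a canonical surjection $\pi:V\twoheadrightarrow V(\bs\omega_{\bs s})$ with kernel $K$. To identify $K\cong V(\bs\omega_{\bs s'})$, I will proceed in two stages. First, I will construct a nonzero map landing in $K$: tensor the $l=2$ sequence \eqref{e:tpfundnhw} for the pair $(s_{l-1},s_l)$ with $V(\bs\omega_{\bs s(0,l-2)})$ on the right, and then compose the resulting injection with the canonical surjection of the Weyl module $W(\bs\omega_{\bs s})\cong V(\bs\omega_{s_l})\otimes V(\bs\omega_{s_{l-1}})\otimes V(\bs\omega_{\bs s(0,l-2)})$ onto $V$ (arising from the head map $V(\bs\omega_{s_{l-1}})\otimes V(\bs\omega_{\bs s(0,l-2)})\twoheadrightarrow V(\bs\omega_{\bs s(0,l-1)})$). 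Uniqueness up to scalar of module maps $W(\bs\omega_{\bs s})\to V(\bs\omega_{\bs s})$ forces this composite to factor through $K=\ker\pi$, producing a morphism $\varphi:V(\bs\omega_{s_l\cap s_{l-1}})\otimes V(\bs\omega_{s_l\cup s_{l-1}})\otimes V(\bs\omega_{\bs s(0,l-2)})\to K$ whose image is a highest-$\ell$-weight submodule of highest weight $\bs\omega_{\bs s'}$ (using that $V(\bs\omega_{s_l\cap s_{l-1}})\otimes V(\bs\omega_{s_l\cup s_{l-1}})\cong V(\bs\omega_{s_l\diamond s_{l-1}})$ because the two nested segments are non-overlapping).

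Second, I will match $q$-characters: by \cref{mysnake}(a), \cref{weights1}(a), and multiplicativity of $\chi_q$, the complement in $\mathbb P_{\bs s(0,l-1)}\times\mathbb P_{s_l}$ of the embedded $\mathbb P_{\bs s}$ --- the ``bad'' pairs where $p$ crosses $q_{l-1}$ in the sense of \cref{l:nonol} --- accounts exactly for $\chi_q(V)-\chi_q(V(\bs\omega_{\bs s}))$. Exchanging the restrictions of $p\in \mathbb P_{s_l}$ and $q_{l-1}\in \mathbb P_{s_{l-1}}$ past their first common value yields $(\hat p,\hat q)\in \mathbb P_{s_l\cap s_{l-1}}\times \mathbb P_{s_l\cup s_{l-1}}$, and the plan is to show that this ``minimal-crossing swap'' is a weight-preserving bijection between the bad pairs and the appropriate MY-path data for $V(\bs\omega_{\bs s'})$. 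The main obstacle is the combinatorics of this bijection: one must verify that the non-crossing conditions encoded in $\bs q\in \mathbb P_{\bs s(0,l-1)}$ translate into the correct conditions for $(\hat p,\hat q)$ against $q_1,\ldots,q_{l-2}$, and that the resulting index set matches the $q$-character of the simple module $V(\bs\omega_{\bs s'})$ (which is delicate since $\bs s'$ is not itself a ladder, so \cref{mysnake}(a) does not apply directly). Once the character equality is established, the image of $\varphi$ must exhaust $K$, yielding $K\cong V(\bs\omega_{\bs s'})$ and completing \eqref{e:sessnake}.
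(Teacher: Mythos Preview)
Your approach is a genuine alternative to the paper's, but it is substantially harder, and the hard part is exactly the step you flag as an obstacle and do not carry out.

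The paper's proof is short because it outsources the structural work: reducibility is cited from \cite[Lemma 6.2.1]{Naoi24}, and the crucial fact that $V(\bs\omega_{s_l})\otimes V(\bs\omega_{\bs s(0,l-1)})$ has \emph{length two} is cited from \cite[Proposition 4.7]{KKOP}. Given length two, the kernel $K$ is automatically simple, so one only needs to identify its highest $\ell$-weight. The paper does this via \cref{l:existhwv}: any highest-$\ell$-weight submodule has highest weight of the form $\bs\omega_{s_l}\bs\omega(\bs g)$ for some $\bs g\in\mathbb P_{\bs s(0,l-1)}$, and dominance forces $\boc^-_{g_k}=\{s_l\}$ for a unique $k$ with $g_m=p_{s_m}$ for $m\ne k$. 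A short path inequality (using that $s_l\in\boc^-_{p_{s_l}}$ and \eqref{pbomega}) rules out $k<l-1$. No character computation and no construction of a map into $K$ is needed.

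Your route, by contrast, tries to replace the length-two input by a direct $q$-character identity. Your first stage (building $\varphi$ and showing its image lands in $K$) is fine --- the nonvanishing is exactly an instance of \cref{l:nonzeromorph}, and the factoring through $K$ follows since the image of $\iota$ is a proper submodule of $W(\bs\omega_{\bs s})$. The logical skeleton of the second stage is also sound: if you knew $\chi_q(K)=\chi_q(V(\bs\omega_{\bs s'}))$, then $\mathrm{Im}(\varphi)\subseteq K$ is highest-$\ell$-weight with simple head $V(\bs\omega_{\bs s'})$, hence has character $\ge\chi_q(V(\bs\omega_{\bs s'}))=\chi_q(K)$, forcing $\mathrm{Im}(\varphi)=K\cong V(\bs\omega_{\bs s'})$.

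The genuine gap is that you have no independent description of $\chi_q(V(\bs\omega_{\bs s'}))$ to match your bijection against: as you note, $\bs s'=\bs s(0,l-2)\vee(s_l\diamond s_{l-1})$ is \emph{not} a ladder (neither ordering of $s_l\cap s_{l-1}$ and $s_l\cup s_{l-1}$ satisfies \eqref{e:deflad}), so \cref{mysnake}(a) does not apply. Your crossing-swap bijection naturally computes the character of the \emph{tensor product} $V(\bs\omega_{s_l\cap s_{l-1}})\otimes V(\bs\omega_{s_l\cup s_{l-1}})\otimes V(\bs\omega_{\bs s(0,l-2)})$, not of its simple quotient $V(\bs\omega_{\bs s'})$; showing these coincide (i.e.\ that this triple tensor product is simple) is itself nontrivial and essentially equivalent in difficulty to what you are trying to prove. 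This is the circle that the paper breaks by invoking the length-two result.
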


\begin{proof}
	The first statement is immediate from \cite[Lemma 6.2.1]{Naoi24}. Since $\bs s\in\lad$, \cref{p:weylper} implies the tensor product in \eqref{e:sessnake} is highest-$\ell$-weight. Thus, if $s_l\rhd s_{l-1}$, it is also reducible and \cite[Proposition 4.7]{KKOP} implies it has length two. Therefore, there exists a short exact sequence 
	\begin{equation*}
		0\to V(\bs\omega) \to  V(\bs\omega_{s_l})\otimes V(\bs\omega_{\bs s(0,l-1)})\to V(\bs\omega_{\bs s})\to 0,
	\end{equation*}
	for some $\bs\omega\in \cal P^+$ and we are left to show $\bs\omega =\bs\omega_{\bs s'}$. In particular, 
	\begin{equation*}
		\Hom(W(\bs\omega), V(\bs\omega_{s_l})\otimes V(\bs\omega_{\bs s(0,l-1)}))\ne 0
	\end{equation*}
	and, hence, \Cref{l:existhwv}, together with \cref{mysnake}(a), implies
	\begin{equation*}
		\bs\omega = \bs\omega_{s_l}\,\bs\omega(\bs g) \ \ {\rm for\ some} \ \ \bs g\in \mathbb P_{\bs s(0,l-1)}.
	\end{equation*}
	It follows from Remark \ref{r:reduced} and \eqref{uniquecorner} that there exists a unique $1\leq k\leq l-1$ such that 
	\begin{equation*}
		\boc_{g_k}^-=\{s_l\}
	\end{equation*}
	and, moreover,
	\begin{equation*}
	g_m=p_{s_m} \quad\text{for}\quad m\neq k.
	\end{equation*}
	We need to show $k=l-1$, which is clear if $l=2$. Otherwise, if it were $k<l-1$, then $g_{l-1}=p_{l-1}$ and 
	\begin{equation*}
		\spar(s_l)=p_{s_l}(\supp(s_l))\overset{\eqref{pbomega}}{>}p_{s_{l-1}}(\supp(s_l))=g_{l-1}(\supp(s_l))\overset{\eqref{pbomega}}{>}g_k(\supp(s_l))=\spar(s_l),
	\end{equation*}
	yielding a contradiction. 
\end{proof}

\section{Proof of Theorem \ref{t:soc} and Proposition \ref{p:tpsocmap}}\label{s:soc}

In this section we fix $\bs s,\bs s'$ as in Theorem \ref{t:soc}.  Set also
\begin{equation*}
	l=\ell(\bs s)=\ell(\bs s') \quad\text{and}\quad \bs\pi = \bs\omega_{\bs s\diamond\bs s'}.
\end{equation*}

\subsection{Proof of \Cref{p:tpsocmap}}\label{ss:tpsocmap}
 We begin by showing that the general case of \Cref{p:tpsocmap} follows assuming we have proved it if one of the following holds:
\begin{enumerate}[(i)]
	\item\label{i:tpsocmapi} $\supp(s_k\cap s'_k)=0$, $1\leq k\leq l$, 
	\item\label{i:tpsocmapii} $\supp(s_k\cup s'_k)= h$, $1\leq k\leq l$.
\end{enumerate}
Indeed, by \eqref{e:capcuplad} and \eqref{e:cornlad},  $(\bs s'\lrcorner\bs s, \bs s\cap\bs s')$ is a pair of ladders and one easily checks that $\bs s'\lrcorner\bs s\rhd\bs s\cap\bs s'$ and (i) holds. Similarly, $(\bs s\cup\bs s',(\bs s'\lrcorner\bs s)^*)$ is a pair of ladders such that $\bs s\cup\bs s'\rhd (\bs s'\lrcorner\bs s)^*$ and (ii) holds (recall \eqref{e:dualint}). Thus, applications of \Cref{p:tpsocmap} for each of these pairs shows, respectively, that there exist nonzero maps 
\begin{equation*}
	V(\bs\omega_{\bs s\cap \bs s'})\otimes V(\bs\omega_{\bs s\lrcorner\bs s'})\to V(\bs\omega_{\bs s}) \quad\text{and}\quad  V(\bs\omega_{\bs s\lrcorner\bs s'}^*)\otimes V(\bs\omega_{\bs s\cup \bs s'})\to V(\bs\omega_{\bs s'}), 
\end{equation*}
thus generalizing \eqref{e:tplrsim}. An application of \eqref{e:frobrec} shows there also exist nonzero maps
\begin{equation*}
	V(\bs\omega_{\bs s\lrcorner\bs s'})\to V({}^*\bs\omega_{\bs s\cap \bs s'})\otimes V(\bs\omega_{\bs s})\quad\text{and}\quad V(\bs\omega_{\bs s\cup \bs s'})\otimes V({}^*\bs\omega_{\bs s'})\to V(\bs\omega_{\bs s\lrcorner\bs s'}),
\end{equation*}
while \Cref{l:subsandq} then gives nonzero maps
\begin{equation*}
	V(\bs\omega_{\bs s})\otimes V({}^*\bs\omega_{\bs s\cap \bs s'}) \to V(\bs\omega_{\bs s\lrcorner\bs s'}) \quad\text{and}\quad V(\bs\omega_{\bs s\lrcorner\bs s'})\to V({}^*\bs\omega_{\bs s'})\otimes  V(\bs\omega_{\bs s\cup \bs s'}).
\end{equation*}
By composing them, we get a nonzero map
\begin{equation*}
	V(\bs\omega_{\bs s})\otimes V({}^*\bs\omega_{\bs s\cap \bs s'})\to V({}^*\bs\omega_{\bs s'})\otimes  V(\bs\omega_{\bs s\cup \bs s'}),
\end{equation*}
and another application of \eqref{e:frobrec} completes the proof of \Cref{p:tpsocmap} in general.

\subsubsection{} Let us prove  \Cref{p:tpsocmap} assuming that \eqref{i:tpsocmapi} holds, i.e., $\bs\omega_{\bs s \cap \bs s'}=\bold 1$ and, hence, $\bs\pi =\bs\omega_{\bs s\diamond\bs s'}= \bs\omega_{\bs s\cup \bs s'}$ and we need to show there exists a nonzero map
\begin{equation}\label{p:head}
	V(\bs\omega_{\bs s'})\otimes V(\bs\omega_{\bs s})\to V(\bs \pi).
\end{equation}
Note the assumption in (i) is equivalent to saying that $|s_k\cap s'_k|=1$ which, together with the assumptions that $\bs s\in\lad$ and $\bs s\rhd \bs s'$, implies
\begin{equation}\label{farappart}
	s'_k\cap s_m =\emptyset \quad\text{for all}\quad 1\le k<m\le l.
\end{equation} 

We proceed by induction on $l\geq 1$, noting that \eqref{e:tpfundnhw} establishes the first step of induction. 
For $l>1$, we claim there exists a nonzero map 
\begin{equation}\label{e:midstepmap}
	V(\bs\omega_{s_l\cup s'_l}\,\bs\omega_{\bs s'(0,l-1)})\otimes V(\bs\omega_{\bs s(0,l-1)})\to V(\bs\pi).
\end{equation}
Assuming this, we complete the proof of \eqref{p:head} as follows. Let 
\begin{equation*}
	V =  V(\bs\omega_{\bs s'})\otimes V(\bs\omega_{s_l}).
\end{equation*}
Since $\bs s'\vee s_l$ is a ladder and $s_l\rhd s'_l$, \Cref{l:sessnake}, together with \cref{l:subsandq}, implies there exists a surjective map  
\begin{equation*}
	V \to V(\bs\omega_{s_l\cup s'_l}\,\bs\omega_{\bs s'(0,l-1)})
\end{equation*}
and, therefore, there also exists a surjective map  
\begin{equation*}
	V\otimes V(\bs\omega_{\bs s(0,l-1)})\to V(\bs\omega_{s_l\cup s'_l}\,\bs\omega_{\bs s'(0,l-1)})\otimes V(\bs\omega_{\bs s(0,l-1)}).
\end{equation*}
Together with \eqref{e:midstepmap}, this implies we have a non-zero map 
\begin{equation}\label{e:maxsep1}
	V\otimes V(\bs\omega_{\bs s(0,l-1)})\to V(\bs\pi).
\end{equation}
On the other hand, if $s_l\rhd s_{l-1}$, \Cref{l:sessnake} gives a short exact sequence 
\begin{equation}\label{e:maxsep1o}
	0\to  V(\bs\omega_{\bs s(0,l-2)}\,\bs\omega)\to V(\bs\omega_{s_l})\otimes V(\bs\omega_{\bs s(0,l-1)})\to  V(\bs\omega_{\bs s}) \quad\text{with}\quad \bs\omega = \bs\omega_{s_l\diamond s_{l-1}}.
\end{equation}
If $s_l\ntriangleright s_{l-1}$, we have  $V(\bs\omega_{s_l})\otimes V(\bs\omega_{\bs s(0,l-1)})\cong  V(\bs\omega_{\bs s})$ by Proposition \ref{mysnake}(c) and, hence, \eqref{e:maxsep1o} remains valid by setting $V(\bs\omega_{\bs s(0,l-2)}\,\bs\omega)=0$. 
To shorten notation, set $\bs s^\diamond= \bs s(0,l-2)\vee (s_{l-1}\diamond s_l)$ so that \begin{equation*}
	\bs\omega_{\bs s(0,l-2)}\,\bs\omega = \bs\omega_{\bs s^\diamond}.
\end{equation*}
It follows that there also exists a short exact sequence
\begin{equation}\label{e:maxsep2}
	0\to V(\bs\omega_{\bs s'})\otimes V(\bs\omega_{\bs s^\diamond})\to V\otimes V(\bs\omega_{\bs s(0,l-1)})\to V(\bs\omega_{\bs s'})\otimes V(\bs\omega_{\bs s})\to 0.
\end{equation}

In light of \eqref{e:maxsep1} and \eqref{e:maxsep2}, \eqref{p:head} follows if we show 
\begin{equation*}
	\Hom (V(\bs\omega_{\bs s'})\otimes V(\bs\omega_{\bs s^\diamond}), V(\bs\pi))=0,
\end{equation*}
or, equivalently,
\begin{equation*}
	\Hom (W(\bs\pi), V(\bs\omega_{\bs s^\diamond})\otimes V(\bs\omega_{\bs s'}))=0.
\end{equation*}
Evidently, there is nothing to do if $V(\bs\omega_{\bs s^\diamond}) = 0$, so assume this is not the case. 
To prove the latter, note \Cref{l:existhwv} implies it suffices to show that 
\begin{equation}\label{e:nohwvms}
	\bs\pi\notin \bs\omega_{\bs s^\diamond} \wt_\ell V(\bs\omega_{\bs s'}).
\end{equation}
We prove this by induction on $l\ge 2$. To seek a contradiction, suppose $\bs g\in \mathbb P_{\bs s'}$ is such that 
\begin{equation}\label{e:nonohwvms}
	\bs\pi=\bs\omega_{\bs s^\diamond}\,\bs\omega(\bs g)
\end{equation}
and note that, since $\bs s,\bs s'\in\lad$ and $\bs s\rhd\bs s'$, $\bs\pi$ and $\bs\omega_{\bs s^\diamond}$ are in the subgroup of $\mathcal P$ generated by 
\begin{equation*}
	\bs\omega_r \quad\text{with}\quad \spar(r)>\spar(s'_1). 
\end{equation*}
In particular, \eqref{e:nonohwvms} implies we cannot have $g_1=p_{s'_1}$ and, hence, there exists $r\in \boc_{g_1}^-$. This implies $\bs\omega_{r}^{-1}$ occurs in the reduced expression of $\bs\omega(\bs g)$ and it follows  from \eqref{e:nonohwvms} that 
\begin{equation*}
	r\in \{s_1,\dots,s_{l-2}, s_{l-1}\cap s_{l}, s_{l-1}\cup s_l\}.
\end{equation*}
Since $r\in \boc_{g_1}^-\subseteq\mathbb P_{s_1'}$, \cref{ellwttau} implies $r\rhd s_1'$. Therefore, one of the following must be true:
\begin{equation*}
	(s_{l-1}\cap s_{l})\rhd s_1', \quad (s_{l-1}\cup s_{l})\rhd s_1', \quad  s_k\rhd s_1' \ \ \text{for some}\ \ 1\le k\le l-2.
\end{equation*}
Suppose first that $l>2$ and note \eqref{farappart} then implies 
\begin{equation*}
	(s_{l-1}\cap s_{l})\cap s_1'= (s_{l-1}\cup s_{l})\cap s_1'= s_k\cap s_1'=\emptyset \ \ \text{for all}\ \ k>1.
\end{equation*}
Hence, $r=s_1$ and $\boc^-_{g_1}=\{s_1\}$. An application of \Cref{l:peakjb}, together with \eqref{omegapdef}, then implies $g_1 = p_{s_1'}^{s_1}$ and  $\bs\omega(g_1)=\bs\omega_{s_1\cup s_1'}\,\bs\omega_{s_1}^{-1}$. Plugging this back in \eqref{e:nonohwvms}, we get
\begin{equation*}
	\bs\omega_{(\bs s\cup\bs s')(1,l)} = \bs \pi\bs\omega_{s_1\cup s_1'}^{-1} =  \bs\omega_{\bs s(1,l-2)}\,\bs\omega\, \bs\omega(g_2)\cdots \bs\bomega(g_l).
\end{equation*}
But this, together with inductive assumption on $l$, yields a contradiction with \eqref{e:nohwvms} applied to $\bs s(1,l)$ and $\bs s'(1,l)$ in place of $\bs s$ and $\bs s'$, respectively. 

If $l=2$,  since $s_1'\cap(s_1\cap s_2)=\emptyset$ as before, we must have $r=s_1\cup s_2$. An application of \Cref{l:peakjb} then implies $g_1 = p_{s_1'}^{s_1\cup s_2}$ and  $\bs\omega(g_1)=\bs\omega_{s_1'\cup s_1 \cup s_2 }\,\bs\omega_{s_1\cup s_2}^{-1}$. In particular $s_1'\cup s_1\cup s_2\in \seg$ and, hence, $s_1\cup s_1'$, $s_2\cup s_2'$, $s_1\cup s_2\in \tilde\seg$. Plugging this back in \eqref{e:nonohwvms}, we get
\begin{equation*}
	\bs\omega_{s_1\cup s_1'}\,\bs\omega_{s_2\cup s_2'} =  \bs\omega_{s_1\cap s_2}\,\bs\omega_{s_1 \cup s_2 \cup s_1'}\, \bs\omega(g_2).
\end{equation*}
Since $s_j \cup s_j'\in\tilde\seg$, $j=1,2$, it follows that $\bs\omega_{s_1\cup s_1'}\bs\omega_{s_2\cup s_2'}$ must occur in a reduced expression of $\bs\omega(g_2)$, yielding a contradiction with \cref{l:ladpmax} since $(s_1\cup s_1', s_2\cup s_2')$ is a ladder. This completes the proof of \eqref{p:head}.

\subsubsection{}
It remains to prove \eqref{e:midstepmap}. The induction assumption on $l$ allows us to use \eqref{p:head} with $\bs s(0,l-1),\bs s'(0,l-1)$ in place of $\bs s,\bs s'$, respectively, to obtain a nonzero map 
\begin{equation*}
	V(\bs\omega_{\bs s'(0,l-1)})\otimes V(\bs\omega_{\bs s(0,l-1)})\to V(\bs\omega_{(\bs s\cup\bs s')(0,l-1)}).
\end{equation*} 
Thus, letting 
\begin{equation*}
	W = V(\bs\omega_{s_l\cup s'_l})\otimes V(\bs\omega_{\bs s'(0,l-1)})\otimes V(\bs\omega_{\bs s(0,l-1)}),
\end{equation*}
it follows that there also exists a nonzero map
\begin{equation}\label{e:maxsep3}
	W\to V(\bs\omega_{\bs s\cup \bs s'})=V(\bs\pi).
\end{equation}
In particular, \eqref{e:midstepmap} follows if $V(\bs\omega_{s_l\cup s'_l})\otimes V(\bs\omega_{\bs s'(0,l-1)})$ is irreducible. Otherwise, noting that $\bos'(0,l-1)\vee (s_l\cup s_l')\in \mathbb L$, \Cref{l:sessnake}  implies that $(s_l\cup s'_l)\rhd s'_{l-1}$.  Letting 
\begin{equation*}
	\bs s'' = \bs s'(0,l-2)\vee (s'_{l-1}\diamond  (s_l\cup s'_l)), 
\end{equation*}
it follows from  \Cref{l:sessnake} that there exists a short exact sequence
\begin{equation}\label{e:maxsep4}
	0\to V(\bs\omega_{\bs s''})\otimes V(\bs\omega_{\bs s(0,l-1)}) \to W \to V(\bs\omega_{\bs s'(0,l-1)\vee (s_l\cup s'_l)})\otimes V(\bs\omega_{\bs s(0,l-1)})\to 0.
\end{equation}
As in the proof of \eqref{p:head} (cf. paragraph after \eqref{e:maxsep2}), in light of \eqref{e:maxsep3} and \eqref{e:maxsep4},  \eqref{e:midstepmap} follows if we show 
\begin{equation}\label{e:midstepmaphom}
	\Hom (W(\bs\pi),V(\bs\omega_{\bs s(0,l-1)})\otimes V(\bs\omega_{\bs s''}))=0.
\end{equation}

If this is not the case,  \Cref{l:existhwv}, together with \Cref{mysnake}(a), implies
\begin{equation}\label{e:nonohwvmsb}
	{}^*\bs\omega_{\bs s''}={}^*\bs\pi\,\bs\omega(\bs g) \ \ \text{for some}\ \ \bs g\in \mathbb P_{\bs s(0,l-1)}.
\end{equation}
One easily checks that 
\begin{equation*}
	\max\spar(\bs s'')\leq \max(\spar(s'_{l-1}\diamond  (s_l\cup s'_l)))< \max\spar(s_l\cup s'_l),
\end{equation*}
which implies $({}^*\bomega_{s_l\cup s'_l})^{-1}$ must occur in $\bomega(\bs g)$. In other words, writing $\bs g= (g_1,\cdots, g_{l-1})$, we must have 
\begin{equation*}
	{}^*(s_l\cup s'_l)\in \boc_{g_m}^{-1}\ \ \text{for some}\ \ 1\leq m\leq l-1.
\end{equation*} 
\cref{ellwttau} then implies  ${}^*(s_l\cup s'_l)\rhd s_m$ for some $1\leq m\leq l-1$. Writing $s_k=[i_k,j_k]$ and $s'_k = [i'_k,j'_k]$, we have
\begin{equation*}
	{}^*(s_l\cup s'_l) = [j_l,h+i'_l]
\end{equation*} 
and, therefore, we must have $j_l\le j_m$ for some $m<l$, yielding a contradiction since $\bs s\in\lad$. 
This completes the proof of \Cref{p:tpsocmap} in the case that \eqref{i:tpsocmapi} holds.

\subsubsection{}
Finally, assume \eqref{i:tpsocmapii} holds, i.e., $\bs\omega_{\bs s \cup \bs s'}=\bold 1$ and, hence, $\bs\pi =\bs\omega_{\bs s\diamond\bs s'}= \bs\omega_{\bs s\cap \bs s'}$. 
Therefore, we need to check \eqref{p:head} holds for this $\bs\pi$. 
One easily checks ${}^*\bs s$ and ${}^*\bs s'$ satisfy \eqref{i:tpsocmapi}. Since \Cref{p:tpsocmap} has been proved in this case, we have an epimorphism
\begin{equation}\label{e:tpsocmapii}
	V(\bs\omega_{{}^*\bs s'})\otimes V(\bs\omega_{{}^*\bs s}) \to V(\bs\omega_{{}^*\bs s\cup {}^*\bs s'}).
\end{equation}
Noting that  $({}^* \bs s\cup {}^*\bs s')^* = \bs s\cap \bs s'$, it follows from \eqref{e:dualtp} and \eqref{e:tpsocmapii} that we have a monomorphism
\begin{equation*}
	V(\bs\omega_{\bs s\cap \bs s'})\to V(\bs\omega_{\bs s})\otimes V(\bs\omega_{\bs s'}).
\end{equation*}
An application of \Cref{l:subsandq} completes the prof of \eqref{p:head} in this case and, therefore, also that of \Cref{p:tpsocmap}.

\subsection{Proof of \Cref{t:soc}}\label{ss:pfsoc} 
Comparing with \eqref{e:tpfundpath} and recalling \Cref{l:pjbial}, note
\begin{equation}\label{e:mypsoc}
	\bs\pi = \bs\omega_{\bs s}\, \bs\omega(\bs p) \quad\text{with}\quad \bs p=(p^{s_1}_{s'_1},\dots, p^{s_l}_{s'_l})\in\mathbb P_{\bs s'}. 
\end{equation}
To shorten notation, set $p_r = p^{s_r}_{s'_r}$.  In particular, recalling \cref{l:peakjb}, we have
\begin{equation}\label{lowcorsoc}
	\boc_{p_r}^- =\{s_r\}, \ \ 1\leq r\leq l.
\end{equation}
We claim
\begin{equation}\label{e:soc'}
	\Hom(W(\bs\varpi),V(\bomega_{\bs s})\otimes V(\bomega_{\bs s'}))\neq 0 \ \text{ with }\ \bs\varpi \leq \bs\pi \quad\Rightarrow\quad \bs\varpi = \bs\pi.
\end{equation}
Assuming this claim, Proposition \ref{p:tpsocmap}, together with \Cref{l:subsandq}, gives a non-zero map 
\begin{equation*}
	V(\bs\pi)\to V(\bs\omega_{\bs s})\otimes V(\bs\omega_{\bs s'}).
\end{equation*}
Since $V(\bs\omega_{\bs s})$ is real by \cite{KKKO15},  it follows from \Cref{simsoc} that 
$V(\bs\omega_{\bs s})\otimes V(\bs\omega_{\bs s'})$ has simple socle and 
\begin{equation*}
	\soc (V(\bs\omega_{\bs s})\otimes V(\bs\omega_{\bs s'}))\cong  {\rm hd}(V(\bs\omega_{\bs s'})\otimes V(\bs\omega_{\bs s})),
\end{equation*}
which completes the proof.

To prove \eqref{e:soc'}, note its first assumption, together with \Cref{l:existhwv}, implies 
\begin{equation}\label{e:soc''}
	\bs\varpi =\bs\omega_{\bs s}\, \bs\omega(\bs g) \quad\text{for some}\quad  \bs g\in \mathbb P_{\bs s'},
\end{equation}
while, since $\bs p \in \mathbb P_{\bs s'}$, we have $\bs\omega(\bs g)\leq \bs\omega(\bs p)$ by the second assumption. 
Together with \Cref{mysnake}(b), this implies 
\begin{equation}\label{e:varp<p}
	p_r(k)\leq  g_r(k), \ \ 1\leq r\leq l, \ k\in [0,h].
\end{equation}
We shall proceed by induction on $l$ to prove that 
\begin{equation}\label{e:socfc}
	\bs\omega_{\bs s}\, \bs\omega(\bs g) \in\mathcal P^+, \quad \bs\omega(\bs g)\le\bs\omega(\bs p) \qquad\Rightarrow\qquad \bs g=\bs p,
\end{equation}
which completes the proof. 

Since $s_r\in\bs{\rm c}^-_{p_r}$, \eqref{e:varp<p} implies $g_r\ne p_{s'_r}$ and it follows from \eqref{uniquecorner} that
\begin{equation}\label{e:musthavemax}
	\bs{\rm c}^-_{g_r}\ne\emptyset \quad\text{for all}\quad 1\le r\le l.
\end{equation}
By the first assumption in \eqref{e:socfc} and \Cref{r:reduced}, it follows that 
\begin{equation*}
	\bigcup_{r=1}^l \bs{\rm c}_{g_r}^-\subseteq \{s_1,\dots, s_l\}.
\end{equation*}
Moreover, if $s_m\in  \boc_{g_r}^-$ for some $r$, the last equality in \eqref{e:defboc} and \eqref{lowcorsoc} imply
\begin{equation*}
	g_r(\supp(s_m))= \spar(s_m) = p_m(\sup(s_m)).
\end{equation*}
In particular, for $r=l$, we must have $m=l$ since, otherwise, \eqref{e:varp<p} would imply
\begin{equation*}
	g_l(\supp(s_m))=p_m(\supp(s_m))\leq g_m(\supp(s_m)),
\end{equation*}
contradicting $\bs g\in \mathbb P_{\bs s'}$ (recall \eqref{pbomega}). But then, \eqref{e:musthavemax} implies $\bs{\rm c}^-_{g_l}=\{s_l\}$ and, hence, $g_l=p_l$ by \cref{l:peakjb}. In particular, induction starts when $l=1$. If $l>1$, setting $\bs p'= (p_1,\cdots, p_{l-1})$ and $\bs g'=(g_1,\cdots, g_{l-1})$, one easily checks 
\begin{equation*}
	\bs\omega_{\bs s(0,l-1)}\,\bs\omega(\bs g') \in\mathcal P^+ \quad\text{and}\quad \bs\omega(\bs g')\le\bs\omega(\bs p').
\end{equation*}
Since $\bs p',\bs g'\in \mathbb P_{\bs s'(0,l-1)}$ by \cref{weights1}, the induction hypothesis  implies $\bs g'=\bs p'$, thus completing the proof.

\section{Proof of \Cref{p:keystepgen}}\label{s:JHs}

\subsection{Proof of \eqref{e:hlwsubs}}\label{ss:keystepgen}
Let us start with the ``if'' part, i.e, 
\begin{equation}\label{e:colapontas}
	{\rm Hom} (W(\bs\pi_k),V) \ne 0  \quad\text{for all}\quad 0\le k\leq p. 
\end{equation}
For $p=0$ this follows from \Cref{t:soc}. Thus, assume $p>0$. Let us check that \eqref{e:colapontas} follows from an application of \Cref{p:pontas}. It follows from Proposition \ref{p:weylper} that we have epimorphisms
\begin{equation*}
	V(\bs\omega_{\bs s(l-k,l)})\otimes V(\bs\omega_{\bs s(0,l-k)}) \to V(\bs\omega_{\bs s}) \quad\text{and}\quad 	V(\bs\omega_{\bs s'(k,l)})\otimes V(\bs\omega_{\bs s'(0,k)}) \to V(\bs\omega_{\bs s'}),
\end{equation*}
thus showing the first two assumptions in \Cref{p:pontas} hold.  The third assumption follows by taking $\bs\omega= \bs\omega_{\bs s(0,l-k)\diamond \bs s'(k,l)}$, since \Cref{p:tpsocmap} implies
\begin{equation*}
	V(\bs\omega) = {\rm soc}\left( V(\bs\omega_{(\bs s(0,l-k)})\otimes V(\bs\omega_{(\bs s'(k,l)})\right). 
\end{equation*}
Noting that 
\begin{align*}
	\max \spar (\bs s'(0,l-k)) & \leq \min \spar(\bs s(0,l-k)\diamond \bs s'(k,l))\\
	& \leq \max \spar(\bs s(0,l-k)\diamond \bs s'(k,l))\leq \min \spar(\bs s(l-k,l))
\end{align*}
and using that $V(\bs\omega_{\bs s''})$ is real if $\bs s''\in \mathbb L$ (c.f. \cite[Theorem 3.4]{DLL19}), 
another application of \cref{p:weylper} implies the last three assumptions in \Cref{p:pontas} are satisfied. The conclusion of \Cref{p:pontas} then gives \eqref{e:colapontas}. 

Let us turn to the ``only if'' part. Thus, assume $\Hom(W(\bs\pi),V)\neq 0$ for some  $\bs\pi\in \wt_\ell^+W(\bs\pi_p)$. For $0\leq k\leq p$ let $\bs g^k \in \mathbb P_{\bs s'}$ be given by 
\begin{equation}\label{e:nottopfromm}
	g^k_m = p_{s'_m}, \ \ 1\leq m\leq k, \ \ \ g^k_m = p_{s'_m}^{s_{m-k}}, \ \ k<m\leq l.
\end{equation}
It follows from \cref{weights1} that $(g^k_{k+1},\cdots ,g^k_l)\in \mathbb P_{\bs s'(k,l)}$. Also (cf. \eqref{e:mypsoc}),  
\begin{equation*}
	\bs\pi_k = \bs\omega_{\bs s}\,\bs\omega(\bs g^k).
\end{equation*}
The assumption  $\bs\pi\in \wt_\ell^+W(\bs\pi_p)$ implies $\bs\pi\le\bs\pi_p$. This, together with the assumption $\Hom(W(\bs\pi),V)\neq 0$ alongside \Cref{l:existhwv}, implies
\begin{equation}\label{e:onlyif1}
	\bs\pi = \bs\omega_{\bs s}\, \bs\omega(\bs f) \quad\text{for some}\quad \bs f\in\mathbb P_{\bs s'} \quad\text{with}\quad \bs\omega(\bs f)\le \bs\omega(\bs g^p).
\end{equation}	
In light of \Cref{mysnake}(b), the latter condition is equivalent to
\begin{equation}
	g_m^p(j)\le f_m(j) \quad\text{for all}\quad j \in [0,h], m\in[1,l]. 
\end{equation}
Thus, showing that $\bs\pi=\bs\pi_k$ for some $k$ is equivalent to showing that  
\begin{equation}\label{e:keystepgen=>}
	\exists\ 0\le k\le p \text{ such that } f_m = p_{s_m'} \text{ if } 1\leq m\le k \text{ and } f_m = p_{s_m'}^{s_{m-k}} \text{ for } k<m\leq l. 
\end{equation}

Begin by noting that, if $m>p$,  \eqref{e:nottopfromm} alongside \eqref{e:pathlub} implies $f_m\ne p_{s'}$ and \eqref{uniquecorner} then implies ${\rm \bs c}^-_{f_m}\ne\emptyset$. More generally, if $\boc_{f_m}^-\neq \emptyset$, then \eqref{e:onlyif1} implies  
${\rm \bs c}^-_{f_m} \subseteq \{s_i : 1\leq i\leq l\}$ and it then follows from \cref{l:ladpmax} that
\begin{equation*}
	|{\rm \bs c}^-_{f_m}|\le 1 \quad\text{for all}\quad 1\le m\le l.
\end{equation*}
Thus, by \eqref{uniquecorner} and \eqref{e:pathw!max}, either $f_m = p_{s_m'}$ or $f_m = p_{s_m'}^{s_r}$ for some $1\le r\le l$. 
Moreover, an application of \Cref{l:pjbialn}(b) gives
\begin{equation}\label{e:maxareord}
	f_m = p_{s_m'}^{s_r} \ \ \text{and}\ \ f_n=p_{s_n'}^{s_t} \ \ \text{with}\ \  n>m \quad\Rightarrow\quad t>r.
\end{equation}
Let us check 
\begin{equation}\label{e:nogaplocmax}
	{\rm \bs c}^-_{f_m}\ne \emptyset \quad\Rightarrow\quad {\rm \bs c}^-_{f_{m+1}}\ne \emptyset, 
\end{equation}
which implies 
\begin{equation}\label{e:maxsbefores}
	s_r \in {\rm \bs c}^-_{f_m} \quad\Rightarrow\quad r\in\{1,\cdots, m\}.
\end{equation}
Note \eqref{e:nogaplocmax} is vacuous for $m\geq p$ since ${\rm \bs c}^-_{f_m}\ne \emptyset$ for all $m>p$. This latter fact, together with \eqref{e:maxareord}, also implies
\begin{equation*}
	s_n\in\boc^-_{f_m} \ \ \text{and}\ \ m<p \quad\Rightarrow\quad n\le p. 
\end{equation*}
Thus, fix $1\le m<p$ with $\boc_{f_m}^-=\{s_n\}$ for some $1\le n\le p$. 
The assumption $\bs s\rhd_p \bs s'$ implies $s_1\rhd s'_{p+1}$. In particular,
\begin{equation*}
	(s'_m,s'_{m+1},s'_{p+1},s_1,s_n)\in\lad.
\end{equation*}
An application of \eqref{e:defladsusp} to this ladder then says
\begin{equation*}
	\spar(s_n)-\spar(s'_{m+1})>|\supp(s_n)-\supp(s'_{m+1})|
\end{equation*}
and \Cref{l:pjbialn}(a) would lead to a contradiction if \eqref{e:nogaplocmax} failed.

With the above paragraph in mind, let us check \eqref{e:keystepgen=>} holds with $k$ defined by 
\begin{equation*}
	k = \min\{m: \bs{\rm c}^-_{f_m}\ne \emptyset\}-1.
\end{equation*}
If $k=0$, \eqref{e:maxsbefores} implies $\bs f = \bs g^0$, so \eqref{e:keystepgen=>} holds. Thus, assume $k>0$ and let $\phi:[k+1, l]\to [1,l]$ be such that 
\begin{equation*}
	\boc_{f_m}^-=\{s_{\phi(m)}\}\quad\text{for all}\quad k<m\le l.
\end{equation*}
Note that, for this choice of $k$, \eqref{e:keystepgen=>} is equivalent to
\begin{equation}\label{e:keystepgen=>phi'}
	\phi(k+1+m)=m+1  \quad\text{for all}\quad 0\le m< l-k.
\end{equation}
We will use a recursive procedure on $m$ for proving \eqref{e:keystepgen=>phi'}.
Set $\bs s_\phi = (s_{\phi(k+1)}, \cdots, s_{\phi(l)})$ and note $\bs s_\phi\in\lad$ since $\phi$ is strictly increasing by \eqref{e:maxareord}. As part of the inductive procedure, we shall prove
\begin{equation}\label{e:weighcomb}
	\bs\omega_{\bs s'(k+m,l)}= \bs\omega_{\bs s'(k+m,l)\diamond \bs s_\phi(m,l-k)}\,\bs\omega(\bs f'(m,l))^*\prod_{r\notin \Im\phi}\bs\omega_{s_r}, \ \ \  0\leq m<l-k,
\end{equation}
for some $\bs f'\in\mathbb P_{\bs s}$, where $\bs f'(m,l) = (f'_{m+1},\dots,f_l')$, as well as
\begin{equation}\label{e:claimn=1gen}
	\boc_{f_{m+1}'}^+ = \{{}^*s_{k+m+1}'\}.
\end{equation}
The flow of the recursive argument is:
\begin{equation}\label{e:theflow}
	\eqref{e:weighcomb} \text{ for } m \ \ \Rightarrow\ \ \eqref{e:claimn=1gen} \ \ \Rightarrow\ \ \eqref{e:keystepgen=>phi'} \ \ \Rightarrow\ \ \eqref{e:weighcomb} \text{ for } m +1,
\end{equation}
with the last implication valid provided $m<l-k-1$.

Let us start by checking that \eqref{e:weighcomb} holds for $m=0$. It follows from the description of $\bs f$ given in the previous paragraphs, together with \eqref{e:lwmainpaths}, that
\begin{equation*}
	\bs\omega(\bs f) = \bs\omega_{\bs s'(0,k)}\, \bs\omega_{\bs s_\phi}^{-1}\, \bs\omega_{\bs s'(k,l)\diamond \bs s_\phi}. 
\end{equation*}
Plugging this back in \eqref{e:onlyif1}, it follows that 
\begin{equation*}
	\bs\pi = \bs\omega_{\bs s'(0,k)}\,\bs\omega_{\bs s'(k,l)\diamond \bs s_\phi}\prod_{r\notin \Im\phi}\bs\omega_{s_r}.
\end{equation*}
Similarly to \eqref{e:onlyif1}, the second part of  \Cref{l:existhwv} implies 
\begin{equation*}
	\bs\omega_{\bs s'} = \bs\pi\, \bs\omega(\bs f')^* \quad\text{for some}\quad \bs f'\in\mathbb P_{\bs s}.
\end{equation*}	
Combining the above, we get
\begin{equation}\label{weighcomb}
	\bs\omega_{\bs s'(k,l)}= \bs\omega_{\bs s'(k,l)\diamond \bs s_\phi}\,\bs\omega(\bs f')^*\prod_{r\notin \Im\phi}\bs\omega_{s_r},
\end{equation}
which is \eqref{e:weighcomb} with $m=0$. Thus, assume henceforth that \eqref{e:weighcomb} is valid for all $0\le m\le n$ for some $0\le n< l-k$ and that \eqref{e:claimn=1gen} and  \eqref{e:keystepgen=>phi'} are valid for $0\le m<n$. In particular,
\begin{equation}\label{e:Imphiind}
	[1,n]\subseteq \Im\phi
\end{equation}

Before checking that \eqref{e:claimn=1gen} holds for $m=n$, let us assume this is true as well and check the last two implications in \eqref{e:theflow}. Note first  that \eqref{e:claimn=1gen}, together with Lemmas \ref{l:valleyjb} and \ref{l:propdiamond} (since $s_{n+1}\rhd s'_j, j=n+1,k+n+1$), implies  
\begin{equation}\label{e:c+fn+1}
	\boc_{f_{n+1}'}^- = \{{}^*s_{n+1}\cap {}^*(s_{k+n+1}'),\ {}^*s_{n+1}\cup {}^*(s_{k+n+1}')\}\subseteq \tilde\seg.
\end{equation}
In particular $\bs\omega_{s_{n+1}\cap s'_{k+n+1}}^{-1}$ occurs in $\bs\omega(\bs f'(n,l))^*$. Since $\spar(s_r)>\spar(s_{n+1}\cap s'_{k+n+1})$ for all $r>n$, \eqref{e:Imphiind} and \eqref{e:weighcomb} with $m=n$ imply this term must occur in $\bs\omega_{\bs s'(k+n,l)\diamond \bs s_\phi(n,l-k)}$. But
\begin{equation}\label{e:s'diamsphi}
	\begin{aligned}
		s'(k+n,l)\diamond \bs s_\phi(n,l-k)  = \, & (s'_{k+n+1}\cap s_{\phi(k+n+1)},\dots,s'_l\cap s_{\phi(l)})\\
		&\vee (s'_{k+n+1}\cup s_{\phi(k+n+1)},\dots,s'_l\cup s_{\phi(l)})
	\end{aligned}
\end{equation}
and, since $\bs s,\bs s'\in\lad$, we must have $s_{n+1}\cap s'_{k+n+1} =  s'_{k+n+1}\cap s_{\phi(k+n+1)}$ and $\phi(k+n+1)=n+1$. This proves \eqref{e:keystepgen=>phi'} holds for $m=n$. Plugging \eqref{e:keystepgen=>phi'} with $m=n$ back in \eqref{e:weighcomb} with $m=n$ and keeping \eqref{e:c+fn+1} as well as \eqref{e:s'diamsphi} in mind, we obtain \eqref{e:weighcomb} with $m=n+1$, thus proving the last implication.

It remains to check \eqref{e:claimn=1gen} holds for $m=n$. 
Since $k\leq p, n<l-k$, and $\bs s\rhd_p \bs s'$, we have $s_{n+1}\rhd s_{k+n+1}'$. In particular,  
\begin{equation*}
	\spar(s_r)>\spar(s_{k+n+1}') \ \ \text{for all}\ \ n< r\leq l,  
\end{equation*}
and 
\begin{equation*}
	\min\spar(\bs s'(k+n,l)\diamond \bs s_\phi(n,l-k))>\spar(s_{k+n+1}'), 
\end{equation*}
where we also used \eqref{e:nabig} for the latter. Therefore, since $\bs\omega_{s_{k+n+1}'}$ occurs on the left hand side of \eqref{e:weighcomb} with $m=n$, it must also occur in $\bs\omega(\bs f'(n,l))^*$ or, equivalently, $^*\!s_{k+n+1}'\in \boc_{f_t'}^+$, for some $n< t\leq l$. Note this implies  $\boc_{f_{t}'}^+=\{{}^*\!s_{k+n+1}'\}$ since, otherwise, \eqref{e:weighcomb}  would imply that ${}^*\!s_r'\in \boc_{f_{r+1}'}^+$ for some $k+n+1<r\leq l$, contradicting \cref{l:ladpmax}. Thus, checking \eqref{e:claimn=1gen} is equivalent to showing $t=n+1$. 

Suppose for a contradiction that $t>n+1$ and note that, since ${}^*\!s_{k+n+1}'\in \boc_{f_t'}^+$,  \eqref{uplowcorner} implies
\begin{equation*}
	1+{}^*\!s'_{k+n+1}\in\boc^-_{\tau_{\supp({}^*\!s'_{k+n+1})}f'_t}\subseteq \boc^-_{s_t}
\end{equation*}
and, hence, $1+{}^*\!s'_{k+n+1}\rhd s_t$ by \cref{l:peakjb}. On the other hand, since $s_{n+1}\rhd s_{k+n+1}'$, it follows from \eqref{e:dualcover} that  $1+{}^*\!s_{k+n+1}'\rhd s_{n+1}$ and an application of \cref{l:complete} to the ladder $(s_{n+1},s_t,1+{}^*\!s_{k+n+1}')$ then implies $s_t\rhd s_{n+1}$.  Let us check $f'_{n+1}\ne p^*_{s_{n+1}}$. 
Indeed, \cref{l:p*(dom)} implies 
\begin{equation*}
	p^*_{s_{n+1}}(\supp({}^*s_{k+n+1}'))> \spar({}^*s_{k+n+1}') = f_t(\supp({}^*s_{k+n+1}')),
\end{equation*}
where the last equality follows from \eqref{e:supspspk} using that ${}^*s_{k+n+1}'\in \boc_{f_t'}^+$ once more. Since we are assuming $t>n+1$, we reach a contradiction with \eqref{pbomega}. 

Since $f_{n+1}'\neq p_{s_{n+1}}^*$,  \eqref{uniquecorner} implies there exists $s\in \boc_{f_{n+1}'}^+$. It then follows that $\bs\omega_{s^*}$ occurs on the left-hand-side of \eqref{e:weighcomb} , i.e., $s = {}^*\!s_r'$ for some $k+n+1<r\leq l$. Therefore, using that ${}^*\!s_r'\in \boc_{f_{n+1}'}^+$, $(s_{n+k+1}',s_r')\in \mathbb L, {}^*\!s_{k+n+1}'\in \boc_{f_t'}^+$, and $f'_t$ is an MY path, respectively, we see
\begin{eqnarray*}
	f_{n+1}'(\supp({}^*\!s_r')) \overset{\eqref{e:supspspk}}{=} \spar({}^*\!s_r')&\overset{\eqref{e:defladsusp}}{>}& \spar({}^*\!s_{k+n+1}') + |\supp({}^*\!s_r') - \supp({}^*\!s_{k+n+1}')|\\ 
	&\overset{\eqref{e:supspspk}}{=}&f_t'({}^*\!s_{k+n+1}')+ |\supp({}^*\!s_r') - \supp({}^*\!s_{k+n+1}')|\\
	&\overset{\eqref{boundp}}{\geq} & f_t'(\supp({}^*\!s_{r}')),
\end{eqnarray*}
which yields a contradiction with \eqref{pbomega} once more if $t>n+1$. Therefore, $t=n+1$, thus completing the proof of  \eqref{e:claimn=1gen}.

\subsection{Proof of \Cref{p:keystepgen} - The inclusions}\label{ss:keystepgeninc} 

Setting $M_0={\rm soc}(V)$, the comment following \eqref{pikdef} and \Cref{t:soc} imply $M_0 \cong V(\bs\pi_0)$. Moreover, by the first part of the proposition, $V$ contains highest-$\ell$-weight submodules with highest-$\ell$ weight $\bs\pi_k$ for each $0\le k\le p$. In particular, since $M_0$ is contained in any submodule of $V$, it follows that we have an inclusion $M_0\subseteq M_1$ for any choice highest-$\ell$-weight submodule $M_1$ with highest-$\ell$ weight $\bs\pi_1$. If $p=1$, this completes the proof of the proposition.  Thus, assume $p>1$. 

Since $\bs s\rhd_k \bs s'$ for $0\leq k\leq p$, we have $\bs s(0,l-1)\rhd_{k}\bs s'(1,l)$ for all $0\leq k< p$. Set 
\begin{equation*}
	V' = V(\bs\omega_{\bs s(0,l-1)})\otimes V(\bs\omega_{\bs s'(1,l)}) 
\end{equation*}
and define $\bs\pi'_k$ by making $\bs s(0,l-1)$ and $\bs s'(1,l)$ play the roles of $\bs s$ and  $\bs s'$ in definition \eqref{pikdef}, respectively. Note
\begin{equation}\label{e:keystepgeninc}
	\bs\pi_{k+1} = \bs\pi'_k\, \bs\omega_{s_1'}\, \bs\omega_{s_l} \quad\text{for all}\quad 0\le k<p.
\end{equation}
An inductive argument on $p$ implies there exist submodules $M'_k$ of $V'$, $0\le k<p$, which are highest-$\ell$-weight with highest $\ell$-weight $\bs\pi_k'$ and such that $M'_k\subseteq M'_{k+1}$. On the other hand, there exists an epimorphism
\begin{equation*}
	f:V(\bs\omega_{s_l})\otimes V' \otimes V(\bs\omega_{s_1'}) \to V.
\end{equation*} 
Let $N_k = V(\bs\omega_{s_l})\otimes M_k' \otimes V(\bs\omega_{s_1'})$ and $M_{k+1} = f(N_k)$. Since $N_k$ is highest-$\ell$-weight by \Cref{p:weylper}, it follows that $M_{k+1}$ is also highest-$\ell$-weight and the highest $\ell$-weight is $\bs\pi_{k+1}$ by \eqref{e:keystepgeninc}. This completes the proof.

\section{Proof of \Cref{t:imagfromsnakegen}}\label{s:imagproof}

\subsection{The Diamond Weight Space is Thin}\label{ss:thinwsp} We shall need the following generalization of \cite[Lemma 4.3.2]{BC23} for proving \Cref{t:imagfromsnakegen}. Henceforth, we set $V= V(\bomega_{\bs s})\otimes V(\bomega_{\bs s'})$.

\begin{lem}\label{l:head}
	If $\bs s,\bs s'\in\mathbb L^l$ are such that $\bs s \rhd \bs s'$, then
	\begin{equation}\label{e:socwsdim}
		\dim(V)_{\bs\omega_{\bs s\diamond\bs s'}} = 1 = \dim(V\otimes V)_{(\bs\omega_{\bs s\diamond\bs s'})^2}. 
	\end{equation}
\end{lem}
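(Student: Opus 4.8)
The plan is to analyze the $\ell$-weight $\bs\omega_{\bs s\diamond\bs s'}$ using the Mukhin--Young path description. Recall from \Cref{mysnake}(a) that $\wt_\ell(V(\bs\omega_{\bs s})) = \{\bs\omega(\bs p):\bs p\in\mathbb P_{\bs s}\}$ and $\wt_\ell(V(\bs\omega_{\bs s'})) = \{\bs\omega(\bs q):\bs q\in\mathbb P_{\bs s'}\}$, with all $\ell$-weight spaces being one-dimensional. Since $\wt_\ell(V) = \wt_\ell(V(\bs\omega_{\bs s}))\wt_\ell(V(\bs\omega_{\bs s'}))$ by \eqref{e:subgpweight}-type results from \cite{FR99}, and since the $\ell$-weight spaces of a tensor product decompose as direct sums over the factorizations, $\dim(V)_{\bs\omega_{\bs s\diamond\bs s'}}$ equals the number of pairs $(\bs p,\bs q)\in\mathbb P_{\bs s}\times\mathbb P_{\bs s'}$ with $\bs\omega(\bs p)\,\bs\omega(\bs q) = \bs\omega_{\bs s\diamond\bs s'}$. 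So the first task is to show this factorization is unique, namely $\bs p = (p_{s_1},\dots,p_{s_l})$ (all highest paths, so $\bs\omega(\bs p) = \bs\omega_{\bs s}$) and $\bs q = (p^{s_1}_{s_1'},\dots,p^{s_l}_{s_l'})$, the latter being exactly the tuple from \eqref{e:mypsoc} which satisfies $\bs\omega(\bs q) = \bs\omega_{\bs s}^{-1}\bs\omega_{\bs s\diamond\bs s'}$.

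For the uniqueness, I would argue as follows. Suppose $\bs\omega(\bs p)\bs\omega(\bs q) = \bs\omega_{\bs s\diamond\bs s'} = \bs\omega_{\bs s}\,\bs\omega_{\bs s\cap\bs s'}^{-1}\cdot(\text{something})$... more precisely, expand $\bs\omega_{\bs s\diamond\bs s'} = \bs\omega_{\bs s}\,\bs\omega(\bs q_0)$ where $\bs q_0 = (p^{s_1}_{s_1'},\dots)$ as in \eqref{e:mypsoc}. By \Cref{r:reduced}, the reduced expression of $\bs\omega(\bs q_0)$ in terms of the free generators $\bs\omega_r$ involves only $\bs\omega_r^{\pm1}$ and each $\bs\omega_r^{-1}$ appearing corresponds to exactly the segments $s_k$ (from \eqref{lowcorsoc}, $\boc^-_{p^{s_k}_{s_k'}} = \{s_k\}$). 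Now $\bs\omega(\bs p)$ only contributes $\bs\omega_r^{\pm1}$ with $r$'s indexed by local minima/maxima of the $p_k$'s, all of which have spectral parameter in the range $[\spar(s_k),\spar({}^*s_k)]$, while $\bs\omega(\bs q)$ contributes similarly for the $s_k'$'s. I would use the extremal nature of the involved segments: the highest-$\ell$-weight direction forces $\bs\omega_{\bs s}$ to appear "at the top", and any nontrivial $\boc^-$ in some $p_k$ would produce a negative exponent that cannot be cancelled appropriately, because $\bs\omega_{\bs s\diamond\bs s'}$ as a dominant monomial has a very constrained factorization. Concretely, I would show by induction on $l$ (mirroring the proof of \eqref{e:socfc} in \Cref{ss:pfsoc}) that $\bs p$ must be the all-highest-path tuple and $\bs q = \bs q_0$. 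This uniqueness gives $\dim(V)_{\bs\omega_{\bs s\diamond\bs s'}} = 1$.

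For the second equality, $\dim(V\otimes V)_{(\bs\omega_{\bs s\diamond\bs s'})^2} = 1$, I would similarly count factorizations $\bs\omega(\bs p^{(1)})\bs\omega(\bs q^{(1)})\bs\omega(\bs p^{(2)})\bs\omega(\bs q^{(2)}) = (\bs\omega_{\bs s\diamond\bs s'})^2$ over four tuples $\bs p^{(i)}\in\mathbb P_{\bs s}$, $\bs q^{(i)}\in\mathbb P_{\bs s'}$. The key point is that $(\bs\omega_{\bs s\diamond\bs s'})^2 = \bs\omega_{\bs s}^2\,\bs\omega(\bs q_0)^2$ and, grouping the $\bs s$-factors and $\bs s'$-factors, we need $\bs\omega(\bs p^{(1)})\bs\omega(\bs p^{(2)}) = \bs\omega_{\bs s}^2$ within $\wt_\ell(V(\bs\omega_{\bs s})^{\otimes2})$ and $\bs\omega(\bs q^{(1)})\bs\omega(\bs q^{(2)}) = \bs\omega(\bs q_0)^2$ — but this decoupling itself requires justification, since a priori the $\bs s$- and $\bs s'$-contributions could mix. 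Here I would invoke the partial order: $\bs\omega(\bs p^{(i)})\le\bs\omega_{\bs s}$ and $\bs\omega(\bs q^{(i)})\le\bs\omega_{\bs s'}$ are not quite what we have, rather we only know the product is fixed; but since $(\bs\omega_{\bs s\diamond\bs s'})^2$ is the product and $\bs\omega_{\bs s\diamond\bs s'}$ is itself maximal among products $\bs\omega(\bs p)\bs\omega(\bs q)$ that are dominant (this is essentially \eqref{e:soc'}), a counting/maximality argument forces $\bs\omega(\bs p^{(1)})\bs\omega(\bs q^{(1)}) = \bs\omega(\bs p^{(2)})\bs\omega(\bs q^{(2)}) = \bs\omega_{\bs s\diamond\bs s'}$, reducing to the first part. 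I expect \textbf{the main obstacle} to be precisely this decoupling step for the tensor square: ruling out "cross" factorizations where a path tuple contributing to the first copy combines with one from the second copy in a way that is individually non-dominant but whose product is $(\bs\omega_{\bs s\diamond\bs s'})^2$. I would handle it by a careful reduced-word bookkeeping using \Cref{r:reduced} and \Cref{l:ladpmax} (each fundamental $\ell$-weight appears with bounded multiplicity along any ladder), combined with the inductive structure on $l$ already developed in \Cref{ss:pfsoc}.
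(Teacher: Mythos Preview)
Your setup is correct: the first equality amounts to showing the factorization $\bs\omega(\bs p)\,\bs\omega(\bs q) = \bs\omega_{\bs s\diamond\bs s'}$ with $\bs p\in\mathbb P_{\bs s},\,\bs q\in\mathbb P_{\bs s'}$ is unique. However, the appeal to \eqref{e:socfc} is a mismatch. That statement has two hypotheses: the $\bs s$-factor is \emph{fixed} to be $\bs\omega_{\bs s}$ (so only $\bs g\in\mathbb P_{\bs s'}$ varies), and the partial order constraint $\bs\omega(\bs g)\le\bs\omega(\bs p)$ holds. Here neither is given: $\bs p$ ranges over all of $\mathbb P_{\bs s}$, and there is no order bound on $\bs q$. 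The paper's proof does not invoke \eqref{e:socfc}; instead it works on the $\bs s'$-side first, showing directly that $\boc_{q_1}^-=\{s_1\}$ (hence $q_1=p_{s_1'}^{s_1}$) by proving that $s_1\cap s_1'$ and $s_1\cup s_1'$, when proper, must lie in $\boc_{q_1}^+$ via spectral-parameter and \eqref{alluplowcor}-type arguments, and then pinning down the unique local maximum between them. Only after fixing $q_1$ does one deduce $p_1=p_{s_1}$ and induct on~$l$. Your sketch never explains how to rule out a nontrivial $p_1$ whose negative corners get cancelled by positive corners of some $q_k$; this is exactly the content of the paper's corner analysis.

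For the tensor square, your decoupling strategy is not what the paper does and is the genuine weak point. You want to argue that a product of four path-tuples equalling $\bs\pi^2$ forces each pair to contribute $\bs\pi$; but nothing in the partial order or in \Cref{l:ladpmax} forbids one pair from contributing something strictly smaller and the other something strictly larger, so long as the product is right. The paper instead proves directly, by a more elaborate version of the same corner argument, that both $\bs s'$-tuples must have first component equal to $p_{s_1'}^{s_1}$ (equation \eqref{e:s1lowcor2}), using \Cref{lowtoup} and \Cref{l:mysmallerspar} to control the interaction between corners of the two $\bs s'$-tuples. The induction on $l$ then proceeds as before. Your bookkeeping-and-\Cref{l:ladpmax} plan does not supply this, since \Cref{l:ladpmax} only bounds corners along a single path tuple and says nothing about cross-cancellation between two tuples in $\mathbb P_{\bs s'}$.
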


\begin{proof} To shorten notation, let $\bs \pi = \bs\omega_{\bs s \diamond \bs s'}$. Let also $\bs p\in \mathbb P_{\bs s'}$ as in \eqref{e:mypsoc}, i.e., $\bs p= (p_{s_1'}^{s_1},\cdots, p_{s_l'}^{s_l})$.  By \Cref{mysnake}(a), $\bs\pi\in \wt_\ell V$ and, therefore,  the equalities in \eqref{e:socwsdim} follow if we prove $\dim V_{\bs\pi}\leq 1$ and $\dim (V\otimes V)_{\bs\pi^2}\leq 1$. 

Let $\bs g\in \mathbb P_{\bs s}$ and $\bs g'\in \mathbb P_{\bs s'}$ be such that $\bs\omega(\bs g)\bs\omega(\bs g')=\bs\pi$. Since $V(\bs\omega_{\bs s'})$ is thin (its $\ell$-weight spaces are one-dimensional) by \Cref{mysnake}(a), it suffices to show 
	\begin{equation}\label{e:head1stclaim}
		\bs\omega(\bs g)\bs\omega(\bs g')=\bs\pi\quad\Rightarrow\quad \bs\omega(\bs g) = \bs\omega_{\bs s}.
	\end{equation}
Noting that $\bs\pi$ is in the subgroup of $\mathcal P$ generated by $\bs\omega_{s}$ with $s\in \seg$ such that 
	\begin{equation*}
		\spar(s_1')< \spar(s) <  \spar(s_l),
	\end{equation*}
it follows that $\bs\omega_{s_1'}$ does not divide the polynomial $\bs\pi$. Then  $g_1'\neq p_{s_1'}$ and, hence, $\boc_{g_1'}^-\neq\emptyset$, by \eqref{uniquecorner}. We claim 
	\begin{equation}\label{s1lowcor}
		\boc_{g_1'}^- = \{s_1\},
	\end{equation} 
	which is equivalent to saying that $g_1' = p^{s_1}_{s_1'}$ by \cref{l:peakjb}. This claim  implies $g_1=p_{s_1}$ since, otherwise, by \eqref{e:subgpweight} $\bs\omega(\bs g)$ would be in the subgroup of $\cal P$ generated by $\bs\omega_{s}$ with $\spar(s)>\spar(s_1)$ and this would imply that $\bs\omega_{s_1}^{-1}$ occurs in $\bs\pi$, yielding a contradiction.  In particular, \eqref{e:head1stclaim} follows if $l=1$.  If $l>1$, letting 
	\begin{equation*}
		\bs{\tilde\pi} = \bs\omega_{\bs s(1,l)\diamond\bs s'(1,l)},
	\end{equation*}
	$\bs{\tilde g} = (g_2,\dots,g_l)$, and similarly for $\bs{\tilde g'}$, it follows that
	\begin{equation*}
		\bs{\tilde \pi} = \bs\omega(\bs{\tilde g}) \bs\omega(\bs{\tilde g'}),
	\end{equation*}
	and an obvious inductive argument on $l$ completes the proof.
	
	Before proceeding with the proof of \eqref{s1lowcor}, let us check
	\begin{equation}\label{e:cauppropinc+}
		s\in\{s_1\cap s_1',s_1\cup s_1'\}\cap \tilde\seg \quad\Rightarrow\quad s\in \boc^+_{g_1'}. 
	\end{equation}
	We write down the argument for $s_1\cap s_1'$. The other case can be proved similarly or by using this case and duality (cf. \eqref{e:concacudual}). In this case, $\bs 1\neq \bs\omega_{s_1\cap s_1'}$ divides $\bs\pi$ and, since  $\spar(s_1\cap s_1')<\spar(s_1)$, it follows that $\bs\omega_{s_1\cap s_1'}$ occurs in a reduced expression of $\bs\omega(\bs g')$. More precisely,  $s_1\cap s_1'\in \boc^+_{g_m'}$ for some $1\leq m\leq l$. Since $j_1'<j_m'$ for $m>1$ and $s_1\cap s_1'= [i_1,j_1']$, it easily follows from \eqref{alluplowcor} it must be $m=1$. Let us also note that
	\begin{gather}\notag 
			s_1\cap s_1'\in\tilde\seg \ \ \Rightarrow\ \ E_{g_1'}\subset [\supp(s_1\cap s_1'),N]\\ \label{Eg1'} \quad\text{while}\quad\\ \notag
			s_1\cup s_1'\in\tilde\seg \ \ \Rightarrow\ \ E_{g_1'}\subset [1,\supp(s_1\cup s_1')].
		\end{gather}	
		Indeed, the former follows since
		\begin{equation*}
			g_1'(0)-g_1'(\supp(s_1\cap s_1'))= j_1'- i_1 = \supp (s_1\cap s_1'), 
		\end{equation*}
		while the latter follows because
		\begin{equation*}
			g_1'(h)-g_1'(\supp(s_1\cup s_1'))= h+i_1'- j_1 = h-\supp (s_1\cup s_1'). 
		\end{equation*}
	
    We are ready to proceed with the proof of \eqref{s1lowcor}. 
    Assume first $s_1\cap s_1', s_1\cup s_1'\notin  \tilde\seg$, or, equivalently, $s_1 = {}^*\!s_1'$. Since $g_1'\neq p_{s_1'}$, there exists $s\in \boc_{g_1'}^-$. If $s\neq s_1$, then $g_1'\neq p_{s_1'}^*=p^{s_1}_{s_1'}$ and, hence, $\spar(s)<\spar(s_1)$ by \eqref{e:pathbound}. Therefore $\bs\omega_s^{-1}$ occurs in $\bs\omega(\bs g)\bs\omega(\bs g')$, yielding a contradiction. Hence, we must have $s=s_1$, thus completing the proof  \eqref{s1lowcor}.
	
	We write down the argument for the case $s_1\cap s_1'\in \tilde\seg$. 	The case $s_1\cup s_1'\in \tilde \seg$ analogous (and follows from this case by duality), so we omit details. It follows from  \eqref{e:cauppropinc+} and \eqref{Eg1'}, together with $\boc_{g_1'}^-\neq \emptyset$, that there exists $r\ge 1$ maximal such that 
	\begin{equation*}
		g_1'(\supp(s_1\cap s_1') + r) = \spar(s_1\cap s_1') + r.
	\end{equation*}
	The maximality of $r$ implies $\supp(s_1\cap s_1')+r\in E_{g_1'}^-$ and it follows from \eqref{spkint} that
	\begin{equation*}
		s:= [i_1,j_1'+r]\in \boc_{g_1'}^-.
	\end{equation*}
	Moreover, \eqref{alluplowcor} implies $j_1'+r\leq h+i_1'$. In particular, $\bs\omega_s^{-1}$ occurs in $\bs\omega(\bs g')$ and, hence, we must have $s\in \boc_{g_n}^+$ for some $1\leq n\leq l$. Since $i_n>i_1$  if $n>1$, a further application of \eqref{alluplowcor} implies $n=1$ as well as 
	\begin{equation}\label{boudnsg1'}
		j_1\leq j_1'+r\leq h+i_1'.
	\end{equation}
	
	If $s_1\cup s_1'\notin \tilde \seg$, then $j_1 = h+i_1'$ and \eqref{boudnsg1'} gives $s_1=s\in \boc_{g_1'}^-$. Since 
	\begin{equation*}
		g_1'(\supp(s_1))-g_1'(h) = i_1 -i_1' = h- \supp(s_1),
	\end{equation*}
	it follows that $E_{g_1'}^-\subseteq [1,\supp(s)]$ which,	along with \eqref{Eg1'}, implies $E_{g_1'}^- = \{\supp(s_1)\}$, thus proving \eqref{s1lowcor} in this case. Otherwise, if $s_1\cup s_1'\in \tilde \seg$, then $s_1\cup s_1'\in \boc_{g_1'}^+$ by \eqref{e:cauppropinc+}. Moreover, if $j_1<j_1'+r$ 
	we would have $s\rhd s_1\cup s_1'$, which contradicts \eqref{e:nocpinpath}.  Hence, $j_1=j_1'+r$ and we again conclude $s_1=s\in \boc_{g_1'}^-$. 	
	The discussion so far gives 
	\begin{equation*}
		s_1\cap s_1', s_1\cup s_1'\in \boc_{g_1'}^+ \quad {\rm and}\quad  s_1\in \boc_{g_1'}^-.
	\end{equation*}
	In light of \eqref{Eg1'}, \eqref{s1lowcor} follows after noting that 
	\begin{equation*}
		g_1'(\supp(s_1))-g_1'(\supp(s_1\cup s_1')) = i_1-i_1' =\supp(s_1\cup s_1') - \supp(s_1),
	\end{equation*}
	and
	\begin{equation*}
		g_1'(\supp(s_1))-g_1'(\supp(s_1\cap s_1')) = j_1-j_1'= \supp(s_1)-\supp(s_1\cap s_1').
	\end{equation*}

	To prove the second equality in \eqref{e:socwsdim}, we proceed in a similar fashion. Let  $\bs g, \tilde {\bs g}\in \mathbb P_{\bs s}$ and $\bs g', \tilde {\bs g}'\in \mathbb P_{\bs s'}$ such that 
\begin{equation}\label{e:pi2gg'tilgg'}
	\bs\omega(\bs g)\bs\omega(\tilde {\bs g})\bs\omega(\bs g')\bs\omega(\tilde {\bs g}') = \bs\pi^2.
\end{equation}
As before, we will prove
\begin{equation}\label{e:s1lowcor2}
	g_1'=\tilde g_1'= p^{s_1}_{s_1'},
\end{equation}

Assume first $s_1\cap s_1',s_1\cup s_1'\notin \tilde \seg$, i.e., $s_1= {}^*\!s_1'$, and note \eqref{e:s1lowcor2} is equivalent to $g_1'=\tilde g_1'= p^*_{s_1'}$.
If either $g_1'\neq p_{s_1'}^*$ or $\tilde {g_1}'\neq p_{s_1'}^*$, by \eqref{uniquecorner}, we can choose $s\in \boc_{g_1'}^+\cup\boc_{\tilde g_1'}^+$ such that 
\begin{equation*}
	\spar(s) = \min\{\spar(r): r\in \boc^+_{g_1'}\cup\boc^+_{\tilde g_1'}\}.
\end{equation*}
Without loss of generality, suppose $s\in \boc_{g_1'}^+$ and recall from \eqref{boundp+} that this implies
\begin{equation}\label{bounupcor} 
	\spar(s)-\supp(s)< g_1'(0) \quad\text{and} \quad \spar(s)-\supp(s)^*< g_1'(h).
\end{equation}
Note also that
\begin{equation*}
	s\notin \boc^-_{\tilde g_1'}
\end{equation*}
since, otherwise, \cref{lowtoup} would imply there exist $s'\in \boc_{\tilde g_1'}^+$ with $\spar(s')<\spar(s)$, contradicting our choice of $s$. Let us check that
\begin{equation}\label{g1'scond}
	\tilde g_1'(\supp(s))\ge \spar(s) \quad\text{and, moreover,}\quad  
	\tilde g_1'(\supp(s))= \spar(s) \ \ \Leftrightarrow\ \ s\in \boc_{\tilde g_1'}^+.
\end{equation}
To do this, it suffices to prove that we reach a contradiction by assuming 
\begin{equation*}
	\tilde g_1'(\supp(s))\leq \spar(s) \quad\text{and}\quad s\notin \boc^+_{\tilde g_1'}.
\end{equation*} Indeed,  since $\tilde g_1'(0) = g_1'(0)$ and $\tilde g_1'(h)=g_1'(h)$, the first of these assumptions and \eqref{bounupcor} imply
\begin{equation*}
	\tilde g_1'(\supp (s)) - \supp(s)^*<\tilde g_1'(h) \quad\text{and}\quad \tilde g_1'(\supp (s)) - \supp(s)<\tilde g_1'(0). 
\end{equation*}
The assumption $s\notin \boc^+_{\tilde g_1'}$ then implies we can use \cref{l:mysmallerspar} to conclude there exists $s'\in\boc^+_{\tilde g_1'}$ such that $\spar(s')<\spar(s)$, contradicting our choice of $s$.

Next, we show
\begin{equation}\label{notcorbel} 
	s \notin \{s_k\cup s_k', \ s_k\cap s_k': 2\leq k\leq l\}.
\end{equation}
Indeed, \eqref{alluplowcor} implies $s=[m,n]$ for some $i_1'\leq m<j_1'\leq n <h+i_1'$. On the other hand, since $s_1 = {}^*\!s_1'$, it follows that $j_1'<i_2$ and $j_2-i_1'>h$.   A straightforward inspection then proves \eqref{notcorbel}. 

It follows from \eqref{notcorbel} that $\bs\omega_s$ does not divide $\bs\pi$. 
Since $\spar(s)<\spar(s_1)$, to avoid contradiction we must have $s\in \boc_{\tilde g_k'}^-$ for some $1\leq k\leq l$. \cref{lowtoup} then implies there exists $s'\in \boc_{\tilde g_k'}^+$, such that 
\begin{equation}\label{ss'dist}\spar(s)-\spar(s') = |\supp(s)-\supp(s')|.
\end{equation}
Since $\tilde{\bs g}'\in \mathbb P_{\bs s'}$, it follows from \eqref{pbomega} that
\begin{equation*}
	\tilde g_1'(\supp (s'))\leq \tilde g_k'(\supp (s'))= \spar(s') = \spar(s) - |\supp(s)-\supp(s')|,
\end{equation*}
with equality holding only if $k=1$. Combined with \eqref{g1'scond}, unless $k=1$ and $\tilde g_1'(\supp (s)) = \spar(s)$, we get
\begin{equation*}
	\tilde g_1'(\supp(s))- \tilde g_1'(\supp(s'))> |\supp(s)-\supp(s')|,
\end{equation*}
which contradicts \eqref{boundp}. Hence, $k=1$ and $\tilde g_1'(\supp (s)) = \spar(s)$. Together with the last part of \eqref{g1'scond}, we conclude $s, s'\in \boc_{\tilde g_1'}^+$, which contradicts \eqref{ss'dist}, thus completing the proof in this case. 

	As in \eqref{e:cauppropinc+}, we conclude 
\begin{equation}\label{e:cauppropinc+2}
	s\in\{s_1\cap s_1',s_1\cup s_1'\}\cap\tilde\seg \ \ \Rightarrow\ \ s\in \boc_{g_1'}^+\cap \boc_{\tilde g_1'}^+.
\end{equation} 
As usual, we write down the argument for the case $s_1\cap s_1'\in \tilde\seg$. Let $r,\tilde r\geq 1$ be maximal such that 
$$s=[i_1,j_1'+r]\in \boc_{g_1'}^-, \ \ \ \tilde s=[i_1,j_1'+\tilde r]\in \boc_{\tilde g_1'}^-,$$
and assume without loss of generality that $r\leq \tilde r$. By \eqref{alluplowcor} we have 
$$j_1'+r \leq j_1'+\tilde r\leq h+i_1'.$$
An argument similar to the one leading to \eqref{boudnsg1'} shows that $s,\tilde s\in \boc_{g_1}^+\cup \boc_{\tilde g_1}^+$ and
\begin{equation}\label{boudnsg1'2}
	j_1\leq j_1'+r \leq j_1'+\tilde r\leq h+i_1'.
\end{equation}

If $s_1\cup s_1'\notin\tilde\seg$, then  $j_1'+r= j_1$ and we conclude $s=\tilde s=s_1$. The exact same argument used for the proof of \eqref{s1lowcor} in this case also completes the proof of \eqref{e:s1lowcor2}. Otherwise, if $s_1\cup s_1'\in\tilde\seg$, then 
\begin{equation*}
	s_1\cup s_1'\in \boc_{g_1'}^+\cap  \boc_{\tilde g_1'}^+,
\end{equation*}
by \eqref{e:cauppropinc+2} and the same argument  used for the proof of \eqref{s1lowcor} in this case also completes the proof of \eqref{e:s1lowcor2}.
\end{proof}

\subsection{Proof of \Cref{t:imagfromsnakegen}}\label{ss:imagfromsnakegen}

In the notation  of \Cref{p:keystepgen}, 
we have inclusions $V(\bs\pi_0)\cong M_0\hookrightarrow M_1\hookrightarrow V$ with $M_1$ being a quotient of $W(\bs\pi_1)$. 
By abuse of notation, we may write $V(\bs\pi_0)$ in place of $M_0$. 
Proceeding as in  \cite[Lemma 4.3.3]{BC23}, we conclude
\begin{equation}\label{e:m=1ses}
	M_1/V(\bs\pi_0) \cong V(\bs\pi_1).
\end{equation}
Indeed, if this were not the case, there should exist $\bs\omega\in\mathcal P^+$ with $\bs\omega<\bs\pi_1$ such that $V(\bs\omega)$ is a simple submodule of $M_1/V(\bs\pi_0)$. The first part of \eqref{e:socwsdim} implies $\bs\omega\ne\bs\pi_0$, which implies $0\ne \Hom(W(\bs\omega),M_1)\subseteq \Hom(W(\bs\omega),V)$, contradicting the ``only if'' part of \Cref{p:keystepgen}. 

It follows from \eqref{e:m=1ses} that 
\begin{equation}\label{e:m=12ftp}
	V(\bs\pi_1)^{\otimes 2}\cong M_1^{\otimes 2}/ \left(M_1\otimes V(\bs\pi_0) + V(\bs\pi_0)\otimes M_1\right). 
\end{equation}
Let 
\begin{equation*}
	V^\# = V(\bs\omega_{\bs s})\otimes V(\bs\pi_0)\otimes V(\bs\omega_{\bs s'}), \quad V' = V(\bs\omega_{\bs s'}) \otimes V(\bs\omega_{\bs s}),
\end{equation*}
and consider the map $\Phi:V\otimes V\to V^\#$ given by 
\begin{equation*}
	V\otimes V =  V(\bs\omega_{\bs s})\otimes V'\otimes V(\bs\omega_{\bs s'}) \stackrel{1\otimes \phi\otimes 1}{\longrightarrow} V^\#,
\end{equation*}
where $\phi$ is the projection $V'\to {\rm hd}(V')\cong V(\bs\pi_0)$. In order to prove \Cref{t:imagfromsnakegen}, we shall prove in \Cref{ss:extrasq} the following fact about $N:=\Phi(M_1\otimes M_1)$ (cf. \cite[Proposition 4.3.4]{BC23}).
\begin{equation}\label{e:extrasq}
	\Hom(N,V(\bs\omega))\ne 0 \quad\text{for some}\quad \bs\omega\ne \bs\pi_1^2. 
\end{equation}
For any $\bs\omega$ satisfying the first part of \eqref{e:extrasq}, there exists a nonzero map $f:N \to V(\bs\omega)$ and, hence, a nonzero map
\begin{equation*}
	M_1^{\otimes 2} \stackrel{\Phi}{\to} N \stackrel{f}{\to} V(\bs\omega).
\end{equation*}
If, in addition, 
\begin{equation}\label{e:fPhi=0}
	f\circ\Phi \left(L\right) = 0 \quad\text{with}\quad L:=M_1\otimes V(\bs\pi_0) + V(\bs\pi_0)\otimes M_1,
\end{equation}
it follows from \eqref{e:m=12ftp} that we have an induced nonzero map
\begin{equation*}
	V(\bs\pi_1)^{\otimes 2} \to V(\bs\omega).
\end{equation*}
Thus, if $\bs\omega\ne\bs\pi_1^2$, which is true if we choose $\bs\omega$ satisfying the second part of \eqref{e:extrasq} as well, it follows that  $V(\bs\pi_1)$ is imaginary, as desired. 

Before proceeding with the argument, let us remark the following case.
If $\bs s'=\bs s^*$ so that $\bs\pi_0=\bs 1$, analogous arguments to those used in \cite[Proposition 4.3.4]{BC23} imply that there exists $\bs\omega$ as in  \eqref{e:extrasq} such that  
\begin{equation}\label{e:extrasq'}
	\bs\omega\notin \left\{\bs 1,\, \bs\pi_1\right\}.
\end{equation} 
Thus, in this case, \eqref{e:fPhi=0} is clear from \eqref{e:m=1ses}, which finishes the proof of \Cref{t:imagfromsnakegen}. 

In general, i.e., if $\bs\omega$ satisfies the first part of \eqref{e:extrasq}, it follows from \Cref{p:bc1.3.3} that $({}^*\bs\pi_1)^{-1}\bs\pi_1\in \wt_\ell(V(\bs\omega))$. If \eqref{e:fPhi=0} were false, since $V(\bs\omega)$ is irreducible, we would have 
\begin{equation*}
	f\circ\Phi(L)=V(\bs\omega) \quad\text{and, hence}, \quad L_{({}^*\bs\pi_1)^{-1}\bs\pi_1}\ne 0.
\end{equation*}
Using that $V(\bs\pi_1)$ and $V(\bs\pi_0)$ are the simple factor of $M_1$ by \eqref{e:m=1ses}, we get a contradiction provided
\begin{equation}\label{e:novarpidvarpi}
	(V(\bs\pi_0)\otimes V(\bs\pi_0))_{({}^*\bs\pi_1)^{-1}\bs\pi_1} = 0 = (V(\bs\pi_0)\otimes V(\bs\pi_1))_{({}^*\bs\pi_1)^{-1}\bs\pi_1}.
\end{equation}
The first equality holds with no further assumption since  $\bs\omega_{s'_1}$ occurs in the reduced expression of $({}^*\bs\pi_1)^{-1}\bs\pi_1$ and any $\ell$-weight of $V(\bs\pi_0)$ is in the subgroup of $\mathcal P$ generated by $\bs\omega_{s}, s\in \tilde\seg$, $\spar(s)>\spar(s'_1)$, by \eqref{e:nabig} and \eqref{e:subgpweight}.

In the case that $\bs s' = \bs s^*$, proceeding as in \cite{BC23}, it can be shown that
\begin{equation}\label{e:novarpivarpiW}
	W(\bs\pi_0\bs\pi_1)_{({}^*\bs\pi_1)^{-1}\bs\pi_1}=0,
\end{equation}
which, in particular, implies the second part of \eqref{e:novarpidvarpi}. However, as seen in \cref{exweylfails} below, \eqref{e:novarpivarpiW} is not true in general. Still, we believe \Cref{t:imagfromsnakegen} is true without the assumption that \eqref{almostdual'} holds.  In particular, the theorem is true for \cref{exweylfails} as a consequence of the discussion in \cref{ss:l=2}. In the next sections, we prove  \eqref{e:novarpivarpiW} holds under the extra assumption \eqref{almostdual'} of \Cref{t:imagfromsnakegen}. 

\begin{ex}\label{exweylfails}
	Let $N=6$, $\bs s= ([1,4], [2,5])$ and $\bs s' = ([-1,2],[0,3])$, so that 
	\begin{equation*}
		\bs\pi_1 = \bs\omega_{-1,2}\, \bs\omega_{1,3}\,\bs\omega_{0,4}\, \bs\omega_{2,5}, \ \ 
		\bs\pi_0 = \bs\omega_{-1,4}\, \bs\omega_{1,2}\,\bs\omega_{0,5}\, \bs\omega_{2,3},
	\end{equation*}
	and
	\begin{equation*}
		({}^*\bs\pi_1)^{-1}\bs\pi_1 = \bs\omega_{-1,2}\, \bs\omega_{1,3}\,\bs\omega_{0,4}\, \bs\omega_{2,5}
		(\bs\omega_{2,6}\, \bs\omega_{3,8}\,\bs\omega_{4,7}\, \bs\omega_{5,9})^{-1}.
	\end{equation*}
	Then, we see $({}^*\bs\pi_1)^{-1}\bs\pi_1\in \wt_\ell W(\bs\pi_0\bs\pi_1)$ 	by noting that
	\begin{gather*}
		\bs\omega_{-1,2}\in \wt_{\ell} W(\bs\omega_{-1,2}),\ \, \bs\omega_{2,3}\in \wt_{\ell} W(\bs\omega_{2,3}), \\
		\bs\omega_{3,8}^{-1}\in \wt_\ell W(\bs\omega_{1,3}), \ \ \bs\omega_{4,7}^{-1}\in \wt_{\ell} W(\bs\omega_{0,4}), \ \ \bs\omega_{5,9}^{-1}\in \wt_{\ell} W(\bs\omega_{2,5}),\\
		\bs\omega_{0,4}\,\bs\omega_{0,6}^{-1}\in \wt_{\ell} W(\bs\omega_{-1,4}),\ \ \bs\omega_{2,3}^{-1}\,\bs\omega_{1,3}\in \wt_\ell W(\bs\omega_{1,2}), \ \ \bs\omega_{2,6}^{-1}\,\bs\omega_{2,5}\,\bs\omega_{0,6}\in \wt_{\ell} W(\bs\omega_{0,5}).
	\end{gather*}
	Indeed, the claims on the first line are obvious, the ones on the second line are of the form 
	\begin{equation*}
		(^*\!\bs\omega_s)^{-1} = \bs\omega(p_s^*) \in\wt_\ell(W(\bs\omega_s)),
	\end{equation*}
	 while the ones on the last line are of the form
	 \begin{equation*}
	 	\bs\omega_s^{-1}\, \bs\omega_{s\diamond s'}=\bs\omega(p^s_{s'})\in\wt_\ell(W(\bs\omega_{s'})).
	 \end{equation*}  \endd
\end{ex}

\subsection{Proof of \eqref{e:extrasq}}\label{ss:extrasq}

It follows from \eqref{e:socwsdim} and \eqref{e:m=1ses} that
\begin{equation}\label{e:socwsdim'}
	\dim(V)_{\bs\pi} = 1 = \dim(V^{\otimes 2})_{\bs\pi^2}  = \dim(M^{\otimes 2})_{\bs\pi^2}. 
\end{equation}
In particular, since $\Phi$ is surjective by definition, we have $\bs\pi^2\in\wt_\ell(N)$, so $N$ is not zero.
Thus, \eqref{e:extrasq} follows if we show 
\begin{equation}
	\Hom(N,V(\bs\pi_1^2))= 0.
\end{equation}
For this it suffices to check that 
\begin{equation}\label{e:novarpisq}
	\bs\pi_1^2\notin\wt_\ell(V^\#). 
\end{equation}
Let $\cal P_{s_1'}^+$ be the submonoid of $\mathcal P^+$ generated by $\bs\omega_{s}$ with $s\in \tilde\seg$, $\spar(s)>\spar(s_1')$, and let $\cal P_{s_1'}$ the subgroup of $\cal P$ generated by $\cal P_{s_1'}^+$. Recall from \eqref{pikdef} that
\begin{equation*}
	\bs\pi_1= \bs\omega_{s_1'}\bs\pi'
\end{equation*}
with $\bs\pi'\in \cal P_{s_1'}^+$, and that
\begin{equation*}
	V^\# = L\otimes V(\bs\omega_{\bs s'}) \quad\text{with}\quad L = V(\bs\omega_{\bs s})\otimes V(\bs\pi).
\end{equation*}
In particular, $\wt_\ell(L) \subset \mathcal P_{s_1'}$. Therefore, \eqref{e:novarpisq} follows if we show the reduced expression of any $\ell$-weight of $V(\bs\omega_{\bs s'})$ does not contain $(\bs\omega_{s_1'})^2$. But this is clear from \Cref{mysnake} together with \Cref{r:reduced}.

\subsection{A Crucial Factorization}\label{ss:crfact}

We continue assuming $\bs s,\bs s'$ are as in \cref{p:keystepgen}. Let $\cal P_{\le}$ be the subgroup of $\cal P$ generated by $\bs\omega_{s}$ for all $s\in\seg$ such that 
\begin{equation}\label{e:novarpivarpiW<}
	  \spar(s) \le \spar({}^*\!s_1')- |\supp(s) - \supp({}^*\!s_1')|
\end{equation}
and let $\cal P_{>}$ be the one generated by $\bs\omega_{s}$ for all $s\in\seg$ such that \eqref{e:novarpivarpiW<} fails. 

Clearly 
\begin{equation}\label{supPleq}
	\bs\omega_{s_1'}, \bs\omega_{^*\!s_1'}\in \cal P_{\leq} \ \ \ {\rm and} \ \ \  \bs\omega_{s}\in \cal P_{\leq }\ \Rightarrow\ \spar(s)\leq \spar(^*\!s_1').
\end{equation}

Write $\bs\pi_1\bs\pi_0 = \bs\omega_1\bs\omega_2$ with $\bs\omega_1\in\cal P_\leq$ and $\bs\omega_2\in \cal P_>$. The following fact about this factorization of $\bs\pi_1\bs\pi_0$ will be crucial for the proof of \eqref{e:novarpivarpiW}. Namely, if \eqref{almostdual} holds, then
\begin{equation}\label{e:smallpart}
	\bs\omega_1 = \bs\omega_{s_1'}\,\bs\omega_{s_1\diamond s_1'}\,\bs\omega_{s_1\diamond s_2'}.
\end{equation}
Since
\begin{equation}\label{e:pikdef01}
	\bs\pi_0 = \prod_{r=1}^l \bs\omega_{s_r\diamond s'_r} \quad\text{and}\quad \bs\pi_1 = \bs\omega_{s_1'}\bs\omega_{s_l}\prod_{r=1}^{l-1} \bs\omega_{s_r\diamond s'_{r+1}},
\end{equation}
by \eqref{pikdef}, the following lemma clearly implies \eqref{e:smallpart}.

\begin{lem}\label{Pleqfund}
	We have $\bs\omega_{s_1\diamond s'_1}, \bs\omega_{s_1\diamond s'_2} \in \cal P_{\leq}$. Moreover, if \eqref{almostdual} holds, then 
	\begin{equation*}
		\bs\omega_{s_l}, \bs\omega_{s_r\cap s'_r}, \bs\omega_{s_r\cup s'_r}, \bs\omega_{s_r\cap s'_{r+1}}, \bs\omega_{s_r\cup s'_{r+1}}\in \cal P_{>} \quad\text{for all}\quad 2\le r\le l.
	\end{equation*}
\end{lem}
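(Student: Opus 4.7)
The plan is to first translate membership in $\cal P_\le$ and $\cal P_>$ into explicit endpoint inequalities, and then to verify each required membership by a short case analysis. Writing ${}^*\!s_1'=[j_1',i_1'+h]$, so $\spar({}^*\!s_1')=i_1'+j_1'+h$ and $\supp({}^*\!s_1')=h+i_1'-j_1'$, expanding the defining inequality $\spar(s)+|\supp(s)-\supp({}^*\!s_1')|\le\spar({}^*\!s_1')$ for $s=[i,j]$ shows that $\bs\omega_s\in\cal P_\le$ is equivalent to
\begin{equation*}
\bigl(\supp(s)\ge\supp({}^*\!s_1')\ \text{and}\ j\le i_1'+h\bigr)\ \text{or}\ \bigl(\supp(s)\le\supp({}^*\!s_1')\ \text{and}\ i\le j_1'\bigr),
\end{equation*}
the two alternatives coinciding when the supports are equal. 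Dually, $\bs\omega_s\in\cal P_>$ holds if and only if the corresponding strict reverse inequality holds in the active case.

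For the first claim, I will verify that each of $s_1\cap s_1'=[i_1,j_1']$, $s_1\cup s_1'=[i_1',j_1]$, $s_1\cap s_2'=[i_1,j_2']$, and $s_1\cup s_2'=[i_2',j_1]$ simultaneously satisfies $i\le j_1'$ and $j\le i_1'+h$, hence lies in $\cal P_\le$ irrespective of the support dichotomy. These follow from $s_1\rhd s_1'$ (giving $i_1\le j_1'$ and $j_1\le i_1'+h$), from $s_1\rhd s_2'$ via $\bs s\rhd_1\bs s'$ (giving $i_2'<i_1$ and $j_2'<j_1$), and from $s_1'\in\seg$ (giving $j_1'\le i_1'+h$), with the ladder property combining endpoints as needed.

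For the second claim, I first observe that \eqref{almostdual} combined with $\bs s,\bs s'\in\lad$ yields $i_r\ge i_2>j_1'$ and $j_r\ge j_2>i_1'+h$ for every $r\ge 2$, from which $s_l$ immediately lies in $\cal P_>$ since both $i_l>j_1'$ and $j_l>i_1'+h$ hold. For the ``mixed'' segments only one of the two base inequalities holds universally, but the active dichotomy condition supplies the other: for $s_r\cap s_r'=[i_r,j_r']$ the inequality $i_r>j_1'$ settles the small-support case, while in the large-support case the inequality $j_r'-i_r\ge h+i_1'-j_1'$ together with $i_r>j_1'$ gives $j_r'>i_1'+h$; dually, for $s_r\cup s_r'=[i_r',j_r]$, $j_r>i_1'+h$ handles large support, and rearranging $j_r-i_r'\le h+i_1'-j_1'$ under small support yields $i_r'\ge j_r-h-i_1'+j_1'>j_1'$. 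The arguments for $s_r\cap s_{r+1}'$ and $s_r\cup s_{r+1}'$ (for $r\le l-1$) are identical after replacing $s_r'$ by $s_{r+1}'$ and invoking the ladder property of $\bs s'$.

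The main obstacle is the bookkeeping: neither $i>j_1'$ nor $j>i_1'+h$ holds universally for the mixed segments, so one must track carefully which inequality is supplied directly by \eqref{almostdual} and which is extracted from the support condition. Once the dichotomy is set up correctly, however, each subcase reduces to a one-line inequality chase, so no substantial difficulty remains beyond careful enumeration.
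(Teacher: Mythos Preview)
Your proof is correct and follows essentially the same approach as the paper: direct verification of the defining inequality for each segment via a case split on the sign of $\supp(s)-\supp({}^*\!s_1')$. Your explicit reformulation of membership in $\cal P_\le$/$\cal P_>$ as the endpoint conditions $i\le j_1'$ (small support) or $j\le i_1'+h$ (large support) is a clean way to organize the case analysis; the paper does the same computations in slightly different form. The one minor difference is that for $\bs\omega_{s_1\diamond s_1'}$ the paper takes a detour through MY-path bounds (using that any $s\in\boc^\pm_{s_1'}$ satisfies $\spar(s)=p(\supp(s))\le p^*_{s_1'}(\supp(s))$), whereas you verify $i\le j_1'$ and $j\le i_1'+h$ directly from $s_1\rhd s_1'$ and $s_1'\in\seg$; your route is more elementary and uniform with the $s_1\diamond s_2'$ case.
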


\begin{proof}
	We begin by noting the second part of \eqref{e:pp*mods} can be rewritten as
	\begin{equation*}
		p^*_s(k) = \spar(^*\!s) - |k-\supp(^*\!s)| \quad\text{for all}\quad s\in\seg, k\in[0,h].
	\end{equation*}	
	In particular, 
	\begin{equation*}
		\spar(^*\!s_1') = p^*_{s_1'}(\supp(s)) + |\supp(s)-\supp(^*\!s_1')| \quad\text{for all}\quad s\in\seg.
	\end{equation*}
	On the other hand, if $s\in\boc^\pm_{s_1'}$, say  $s\in\boc^\pm_p$ with $p\in\mathbb P_{s_1'}$, \eqref{e:supspspk} implies $\spar(s) = p(\supp(s))$. 
	Hence, given $s\in\boc^\pm_{s_1'}$ and letting $p\in\mathbb P_{s_1'}$ be such that $s\in\boc^\pm_p$, we have 
	\begin{align*}
		\spar(s) - \spar(^*\!s'_1) & = p(\supp(s)) - p^*_{s_1'}(\supp(s))- |\supp(s)-\supp(^*\!s_1')|\\
		& \overset{\eqref{e:pathlub}}{\le} -|\supp(s)-\supp(^*\!s_1')|,
	\end{align*}
	thus showing that $\bs\omega_s\in\mathcal P_{\le}$. In particular,  by \cref{l:peakjb}, $\bs\omega_{s_1\cap s_1'}$ and  $\bs\omega_{s_1\cup s_1'}$ are in $\cal P_\le$. Turning to $\bs\omega_{s_1\diamond s'_2}$,  we need to show \eqref{e:novarpivarpiW<} is satisfied for $s\in\{s_1\cap s_2', s_1\cup s_2'\}$. For $s=s_1\cap s_2'=[i_1,j_2']$, \eqref{e:novarpivarpiW<} is the same as
	\begin{equation*}
		i_1+j_2' \le i_1'+j_1' + h - |j_2'-i_1+j_1'-i_1'-h|.
	\end{equation*}
	If the term within $|\dots|$ is nonnegative, one easily checks the above is equivalent to $j_2'-i_1'\le h$, which is true since $s_2'\rhd s_1'$. Otherwise, the above is equivalent to $i_1\le j_1'$, which holds since $s_1\rhd s_1'$. For $s=s_1\cup s_2' = [i_2',j_1]$, \eqref{e:novarpivarpiW<} is the same as
	\begin{equation*}
		i_2'+j_1' \le i_1'+j_1' + h - |j_1-i_2'+j_1'-i_1'-h|.
	\end{equation*}
	If the term within $|\dots|$ is nonnegative, the above is equivalent to $j_1-i_1'\le h$, which is true since $s_1\rhd s_1'$. Otherwise, the above is equivalent to $i_2'\le j_1'$, which holds since $s_2'\rhd s_1'$.
	
	Since $\bs s\in\lad$, \eqref{almostdual} implies $i_l>j_1'$ and $j_l >i_1'+h$, which gives $\spar(s_l)-\spar({}^*\!s_1')>0$ and, hence, $\bs\omega_{s_l}\in \cal P_>$.  
	
	For the segments $s_r\cap s_{t}' = [i_r,j'_t], t\in\{r,r+1\}$, suppose first that $\supp(s_r\cap s_t')-\supp({}^*s_1')< 0$. In that case,
	\begin{equation*}
		\spar(s_r\cap s_{t}')- \spar (^*\!s_1') - \supp(s_r\cap s_{t}')+\supp(^*\!s_1')= 2i_r - 2j_1'>0,
	\end{equation*}
	with the last inequality following from the first part of \eqref{almostdual} together with $\bs s\in\lad$. Otherwise, if $\supp(s_r\cap s_{t}')-\supp(^*\!s_1')\geq 0$, we have
	\begin{equation*}
		\spar(s_r\cap s_{t}')- \spar (^*\!s_1') + \supp(s_r\cap s_{t}')-\supp(^*\!s_1') = 2j_{t}' - 2h -2i_1'.
	\end{equation*}
	Using again that $i_r\geq i_2>j_1'$,  
	\begin{equation*}
		j_{t}'-h -i_1' >  j_{t}'-i_r -(h+i_1'-j_1') = \supp(s_r\cap s_{t}') - \supp({}^*s_1'),
	\end{equation*}
	which is nonnegative by assumption. Thus, $\bs\omega_{s_r\cap s_{t}'}\in \cal P_>$ as desired. 
	
	For the segments $s_r\cup s_{t}' = [i'_t,j_r], t\in\{r,r+1\}$ suppose first that $\supp(s_r\cup s_t')-\supp({}^*s_1')<0$ or, equivalently, 
	\begin{equation*}
		i_t'-j_1'>j_r-h-i_1'>0,
	\end{equation*}
	where we have used the second part of \eqref{almostdual}, together with $\bs s\in\lad$, for the second inequality. Then
	\begin{equation*}
		\spar(s_r\cup s_{t}')- \spar (^*\!s_1') + \supp(^*\!s_1') - \supp(s_r\cup s_{t}')= 2i_{t}'-2j_1'> 0.
	\end{equation*}
	On the other hand, if $\supp(s_r\cup s_t')-\supp({}^*s_1')\geq 0$, then 
	\begin{equation*}
		\spar(s_r\cup s_{t}')- \spar (^*\!s_1') + \supp(s_r\cup s_{t}')-\supp(^*\!s_1') = 2j_r - 2h -2i_1'> 0,
	\end{equation*}
	with the last inequality following from \eqref{almostdual} as before. 
\end{proof}

\subsection{Proof of \eqref{e:novarpivarpiW}}\label{s:novarpivarpiW}
Throughout the remainder of this section we assume that \eqref{almostdual} holds.

	Write $\bs\pi_1\bs\pi_0=\bs\omega_1\bs\omega_2$ with $\bs\omega_j$ as in \cref{ss:crfact}. Suppose \eqref{e:novarpivarpiW} does not hold, i.e.,
	$$\bs\pi_1({}^*\bs \pi_1)^{-1}\in \wt_\ell W(\bs\pi_1\bs\pi_0) = \wt_\ell W(\bs\omega_1)\wt_\ell W(\bs\omega_2),$$
	and let $\bs\varpi_j \in \wt_\ell W(\bs\omega_j)$, $j=1,2$, $\bs\mu_1\in \mathcal P_\le$, and $\bs\mu_2\in\mathcal P_>$ be such that 
	\begin{equation*}
		\bs\pi_1({}^*\bs \pi_1)^{-1}= \bs\varpi_1\bs\varpi_2 = \bs\mu_1\bs\mu_2.
	\end{equation*}
	
	Begin by noting that 
	\begin{equation}\label{e:pi1*>}
		\bs\omega_{^*\!s_1'}({}^*\bs \pi_1)^{-1}\in\mathcal P_>.
	\end{equation}
	Indeed, let $s$ be a segment among the ones in $s_l, s_r\diamond s'_{r+1}, r\ge 1$. In particular, since $\bs s,\bs s'\in\lad$ and $\bs s\rhd\bs s'$, it follows from \eqref{e:nabig} that
	\begin{equation*}
		\spar(s_1')< \spar(s). 
	\end{equation*}
	If it were $\bs\omega_{^*\!s}^{-1}\in\mathcal P_\le$, \eqref{supPleq} would imply $\spar(^*\!s)\le \spar(^*\!s'_1)$, yielding a contradiction. Therefore, \Cref{Pleqfund} and \eqref{e:pi1*>} implies that 
	\begin{equation}\label{e:p1c3mu1}
		\bs\mu_1 = \bs\omega_{s_1'}\bs\omega_{s_1\diamond s_2'}\bs\omega_{{}^*\!s_1'}^{-1}.
	\end{equation}
	Since $\bs\omega_2\in\mathcal P_>$, it follows from \eqref{e:subgpweight} that $\wt_\ell W(\bs\omega_2)\subseteq\mathcal P_>$. Thus, 
$\bs\mu_1$ is part of a reduced expression for $\bs\varpi_1$. In other words, $\bs\varpi_1 = \bs\mu_1\bs\mu$ for some $\bs\mu\in\mathcal P_>$. On the other hand, since $\spar(s_1')<\min\{\spar(s_1\diamond s_1'), \spar(s_1\diamond s_2')\}$,  Proposition \ref{p:weylper} implies that
\begin{equation*}
	W(\bs\omega_1)\cong W(\bs\omega_{s_1\diamond s_1'}\bs\omega_{s_1\diamond s_2'})\otimes W(\bs\omega_{s_1'})
\end{equation*}
and then $\bs\omega_{s_1\diamond s_2'}\bs\omega_{{}^*\!s_1'}^{-1}$ occurs in a reduced expression of $\bs\varpi_1\bs\omega_{s_1'}^{-1}\in\wt_\ell W(\bs\omega_{s_1\diamond s_1'}\bs\omega_{s_1\diamond s_2'})$. Therefore,
\begin{equation*}
	\bs\omega_{s_1\diamond s_2'}\bs\omega_{^*\!s_1'}^{-1}\bs\mu \in \wt_\ell W(\bs\omega_{s_1\diamond s_1'}\bs\omega_{s_1\diamond s_2'})
\end{equation*}
and, hence, there exists $(g_1,g_2,g_3,g_4)\in \mathbb P_{s_1\cap s_1'}\times \mathbb P_{s_1\cup s_1'}\times \mathbb P_{s_1\cap s_2'}\times \mathbb P_{s_1\cup s_2'}$ such that 
\begin{equation}\label{e:thegs}
	\bs\omega(g_1)\bs\omega(g_2)\bs\omega(g_3)\bs\omega(g_4)=\bs\omega_{s_1\cap s_2'}\bs\omega_{s_1\cup s_2'}\bs\omega_{{}^*\!s_1'}^{-1}\bs\mu.
\end{equation}
In particular, $s_1\cap s_2'\in\boc_{g_k}^+$ for some $1\le k\le 4$ and similarly for $s_1\cup s_2'$. In fact, 
\begin{gather*}
	s_1\cap s_2'\in \boc_{g_k}^+ \Rightarrow k\in \{1,3\} \quad \text{while}\quad  
	s_1\cup s_2'\in \boc_{g_k}^+ \Rightarrow k\in \{2,4\}.
\end{gather*}
Indeed, if $s_1\cap s_2'\in \boc_{g_k}^+$, \eqref{uplowcorner} implies $1+(s_1\cap s_2')\in \boc_{g_k}^-$. But then, if $k\in\{2,4\}$, it would follow from \cref{ellwttau} that $1+(s_1\cap s_2')\rhd (s_1\cup s_{k/2}')$, which implies $j_1<j_2'+1$, yielding a contradiction with $s_1\rhd s_2'$. Similarly, if $s_1\cup s_2'\in \boc_{g_k}^+$, \eqref{uplowcorner} implies $1+(s_1\cup s_2')\in \boc_{g_k}^-$. But then, if $k\in\{1,3\}$, it would follow from \cref{ellwttau} that $1+(s_1\cup s_2')\rhd (s_1\cap s_{(k+1)/2}')$, which implies $i_1<i_2'+1$, yielding a contradiction with $s_1\rhd s_2'$ again.

We now perform a case-by-case analysis of the possible pairs $(k,k')$ such that $s_1\cap s_2'\in \boc^+_{g_k}$ and $s_1\cup s_2'\in\boc^+_{g_{k'}}$.  The proof is completed by showing that \eqref{e:thegs} yields a contradiction in all cases. The fact that $\bs\mu\in\mathcal P_>$ is a key point for reaching the contradiction.

\subsubsection{\bf{Case $(3,4)$}} Suppose $s_1\cap s_2'\in \boc_{g_3}^+$ and $s_1\cup s_2'\in \boc_{g_4}^+$, which implies $\bs\omega(g_3)= \bs\omega_{s_1\cap s_2'}$, $\bs\omega(g_4)=\bs\omega_{s_1\cup s_2'}$, and, therefore, \eqref{e:thegs} becomes
\begin{equation}\label{case34}
	\bs\omega(g_1)\bs\omega(g_2)= \bs\omega_{{}^*\!s_1'}^{-1}\bs\mu.
\end{equation}
In particular, ${}^*\!s_1'\in \boc_{g_1}^-\cup\boc_{g_2}^-$ and, hence,
\begin{equation*}
	\{s\in \boc_{g_1}^-\cup\boc_{g_2}^-: \bs\omega_s\in\mathcal P_\le\}\ne\emptyset. 
\end{equation*}
Choose $s$ in this set such that $\spar(s)$ is minimal and let $k\in\{1,2\}$ be such that $s\in \boc_{g_k}^-$. Note
\begin{equation*}
	\spar(s)\le \spar({}^*\!s_1') < \min \spar(^*\!(s_1\diamond s_1')),
\end{equation*}
where the first inequality follows from \eqref{e:novarpivarpiW<} since $\bs\omega_s\in\mathcal P_\le$. In particular, $g_k$ is not the lowest path and, hence, by Lemma \ref{lowtoup} there exists $s'\in \boc_{g_k}^+$ such that 
\begin{equation*}
	\spar(s')= \spar(s)-|\supp(s)-\supp(s')|<\spar(s).
\end{equation*} 
Since $\bs\omega_s\in\mathcal P_\le$, this implies $\bs\omega_{s'}\in \cal P_{\leq}$. The minimality of $\spar(s)$ implies  $s'\notin \boc_{g_{k'}}^-$, where $\{k,k'\}=\{1,2\}$. This shows $\bs\omega_{s'}\in \cal P_{\le}$ and occurs in a reduced expression of $\bs\omega(g_1)\bs\omega(g_2)$, yielding a contradiction with \eqref{case34} since $\bs\mu\in\mathcal P_>$. 

\subsubsection{\bf{Case $(3,2)$}} Suppose $s_1\cap s_2'\in \boc_{g_3}^+$ and $s_1\cup s_2'\in \boc_{g_2}^+\setminus \boc_{g_4}^+$, which implies $\bs\omega(g_3)= \bs\omega_{s_1\cap s_2'}$ and $\bs\omega_{s_1\cup s_2'}$ occurs in a reduced expression for $\bs\omega(g_2)$. Letting $\bs\omega'(g_2) = \bs\omega(g_2)\bs\omega_{s_1\cup s_2'}^{-1}$, \eqref{e:thegs} becomes
\begin{equation}\label{case32p}
	\bs\omega(g_1)\bs\omega'(g_2)\bs\omega(g_4)= \bs\omega_{{}^*\!s_1'}^{-1}\bs\mu.
\end{equation}

Let us check
\begin{equation}\label{e:*s1'nig2}
	^*\!s_1'\in\boc^-_{g_1}\cup \boc^-_{g_4}.
\end{equation}
Indeed, since we are assuming that $s_1\cup s_2'\in \boc_{g_2}^+$, we have $s_1\cup s_1'\in \tilde\seg$ and hence 
\begin{equation}\label{e:case32j1<h+i1'}
	j_1-i_1'<h.
\end{equation} 
In particular we have $i_2'\le j_1'< j_1<i_1'+h$, i.e., ${}^*\!s_1'\rhd (s_1\cup s_2')$. It then follows from \eqref{e:nocpinpath}  that $^*\!s_1'\notin\boc^-_{g_2}$, thus proving \eqref{e:*s1'nig2}. We now analyze each possibility in \eqref{e:*s1'nig2} separately. 

Suppose first $^*\!s_1'\in\boc_{g_1}^-$ which implies $\bs\omega_{^*\!s_1'}^{-1}$ occurs in a reduced expression for $\bs\omega(g_1)$. Letting $\bs\omega'(g_1) = \bs\omega(g_1)\bs\omega_{^*\!s_1'}$, \eqref{case32p} becomes
\begin{equation}\label{case321}
	\bs\omega'(g_1)\bs\omega'(g_2)\bs\omega(g_4)= \bs\mu.
\end{equation}
Since $^*\!s_1'\neq {}^*(s_1\cap s_1')$, \eqref{uniquecorner} implies that $\boc_{g_1}^+\neq \emptyset$. An application of  \cref{lowtoup} with $s_1\cap s_1', g_1$, and $^*\!s_1'=[j_1',i_1'+h]$ in place of $s,p$, and $[m,n]$, respectively, implies 
\begin{equation}\label{e:case32s}
	\exists\ i_1'<m<j_1' \ \ \text{such that}\ \ s:=[m,h+i_1']\in \boc_{g_1}^+.
\end{equation}
One easily checks equality holds in \eqref{e:novarpivarpiW<} for this $s$ and, hence,  $\bomega_s\in\cal P_{\leq}$. In light of  \eqref{case321} and the fact that $\bs\mu\in\mathcal P_>$, we must have $s\in \boc_{g_2}^-\cup \boc_{g_4}^-$. In fact, let us show  $s\in \boc_{g_4}^-$. Suppose for a contradiction that $s\in \boc_{g_2}^-$. Since we also have $s_1\cup s_2'\in \boc_{g_2}^+$, it follows that $s_1\cup s_2'$ and $s$ cannot be connected. This, alongside \eqref{e:case32j1<h+i1'} and the fact that $m<j_1$, implies $i_1'<m\leq i_2'$ and, hence,
\begin{equation*}
	\spar(s) + \supp(s)^* = h+2m\leq h +2i_2'< h +2i_1 = g_{1}(h),
\end{equation*}
where the last equality follows by definition of $g_1\in\mathbb P_{s_1\cap s_1'}$. This contradicts \eqref{boundp''}, thus showing $s\in \boc_{g_4}^-$ as claimed. 

It then follows from \cref{ellwttau} that $s\rhd (s_1\cup s_2')$, or equivalently, 
\begin{equation}\label{e:case32s4-}
	i_2'<m<j_1<h+i_1'.
\end{equation}
An application of the first part of  \cref{lowtoup} with $s_1\cup s_2', g_4$, and the above $s$ in place of $s,p$, and $[m,n]$, respectively, implies 
\begin{equation}\label{e:case32s'}
	\exists\ m<n<h+i_1' \ \ \text{such that}\ \ s':=[m,n]\in \boc_{g_4}^+.
\end{equation}
Let us check that $\bs\omega_{s'}\in \cal P_{\leq}$. On one hand, the second inequalities in \eqref{e:case32s} and \eqref{e:case32s'} imply
\begin{equation*}
	\spar(^*\!s_1')-\spar(s') = h+i_1'+j_1' -m-n   > \max\{ j_1'-m, h+i_1'-n\}.  
\end{equation*}
Thus, in order to check $s'$ satisfies \eqref{e:novarpivarpiW<}, it suffices to show
\begin{equation*}
	|\supp(s')-\supp({}^*\!s_1')| \le\max\{ j_1'-m, h+i_1'-n\}.
\end{equation*}
But $\supp(s')-\supp({}^*\!s_1') = (n-h-i_1') + (j_1'-m)$, and the above easily follows by using the second inequality in \eqref{e:case32s'} if this is nonnegative or the second inequality in \eqref{e:case32s} otherwise.
Writing $\bs\omega(g_4) = \bs\omega'(g_4)\bs\omega_{s'}$,   \eqref{case321} becomes
\begin{equation}\label{case32s'}
	\bs\omega'(g_1)\bs\omega'(g_2)\bs\omega'(g_4)\bs\omega_{s'}= \bs\mu.
\end{equation}
In order to reach a contradiction, it now suffices to show 
\begin{equation}\label{case32s'fail}
	s'\notin \boc_{g_1}^-\cup\boc_{g_2}^-.
\end{equation}

If $s'\in \boc_{g_1}^-$, \cref{ellwttau} and \eqref{e:case32s} imply $i_1<m< j_1'<n$. But then, \eqref{e:case32s'} implies $m<j_1'<n <i_1'+h$. In particular, ${}^*\!s_1\rhd s'$, yielding a contradiction with  \eqref{e:nocpinpath} since $s', {}^*\! s_1'\in \boc_{g_1}^-$. If $s'\in \boc_{g_2}^-$, \cref{ellwttau}  implies $i_1'<m\leq j_1<n$. On the other hand, since $i_2'<m$ by \eqref{e:case32s4-}, it follows that  $s'\rhd (s_1\cup s_2')$ and, since $s_1\cup s_2'\in \boc_{g_2}^+$ by the initial assumption of this (3,2) case, we reach a contradiction with \eqref{e:nocpinpath} again. This completes the proof in the case ${}^*\!s_1'\in \boc_{g_1}^-$.

We are left to deal with the case ${}^*\!s_1'\in \boc_{g_4}^-$. Letting $\bs\omega'(g_4) = \bs\omega(g_4)\bs\omega_{^*\!s_1'}$, this time \eqref{case32p} becomes
\begin{equation}\label{case324}
	\bs\omega(g_1)\bs\omega'(g_2)\bs\omega'(g_4)= \bs\mu.
\end{equation}
Since $^*\!s_1'\neq {}^*(s_1\cup s_2')$, \eqref{uniquecorner} implies that $\boc_{g_4}^+\neq \emptyset$. An application  of  \cref{lowtoup} with $s_1\cup s_2', g_4$, and $^*\!s_1'$ in place of $s,p$, and $[m,n]$, respectively, implies 
\begin{equation}\label{e:case32s}
	\exists\ j_1'<n<h+i_1' \ \ \text{such that}\ \ s:=[j_1',n]\in \boc_{g_4}^+.
\end{equation}
As in the previous case,  $\bomega_s\in\cal P_{\leq}$ which, this time, implies $s\in \boc_{g_2}^-\cup \boc_{g_1}^-$, and we show $s\in \boc_{g_1}^-$. If this were not the case, exactly as before, we conclude $s_1\cup s_2'$ and $s$ cannot be connected and, hence, $i_2'<j_1'<n\leq j_1$. On the other hand, since $s\in \boc_{g_2}^-$, \cref{ellwttau} implies $s\rhd (s_1\cup s_1')$, i.e., $i_1'<j_1'\leq j_1 <n$, yielding a contradiction. 

Then $s\in \boc^-_{g_1}$ and, since $n<h+i_1'<h+i_1$ use can \cref{lowtoup} with $s_1\cap s_1'$, $g_1$ and $s$ in the place of $s, p$ and $[m,n]$, respectively, to conclude 
\begin{equation*}
	\exists\ n-h<m<j_1' \ \ \text{such that}\ \ s':=[m,n]\in \boc_{g_1}^+.
\end{equation*}
One easily checks that $\bs\omega_{s'}\in\cal P_{\leq}$ and, hence, writing $\bs\omega(g_1)=\bs\omega_{s'}\bs\omega'(g_1)$, \eqref{case324} becomes
\begin{equation*}
	\bs\omega'(g_1)\bs\omega'(g_2)\bs\omega'(g_4)\bs\omega_{s'}= \bs\mu.
\end{equation*}
As in the previous case, in order to reach a contradiction, it suffices to show 
$$s'\notin \boc_{g_4}^-\cup\boc_{g_2}^-.$$
Recall that, by definition, $m<j_1'<n<h+i_1'$ and, therefore, ${}^*\!s_1'\rhd s'$. Since ${}^*\!s_1'\in\boc_{g_4}^-$ by assumption,  \eqref{e:nocpinpath} implies $s'\notin \boc_{g_4}^-$.  
Similarly, if it were $s'\in\boc_{g_2}^-$ we would have $i_1'<m\leq j_1<n$. Moreover, since $s'\in \boc_{g_1}^+$, \eqref{uplowcorner} and  \cref{ellwttau} further imply that $i_1\leq m$. In particular,  $i_2'<i_1\leq m\leq j_1<n$, thus showing $s'\rhd (s_1\cup s_2')$, which contradicts \eqref{e:nocpinpath} since $s',(s_2'\cup s_1)\in \boc_{g_2}^+\cup\boc_{g_2}^-$. This completes the proof of case (3,2).

\subsubsection{{\bf Case $(1,4)$}}
One easily checks that the pair $({}^*\!s,{}^*\!s')$ satisfies the same assumptions satisfied by the pair $(\bs s,\bs s')$, including \eqref{almostdual}. In light of the two last parts of \eqref{e:concacudual}, it is clear that case (1,4) for  $(\bs s,\bs s')$ is equivalent to case (3,2) for $({}^*\!s,{}^*\!s')$. Thus, there is nothing to do.

\subsubsection{{\bf Case $(1,2)$}} Suppose $s_1\cap s_2'\in \boc_{g_1}^+$ and $s_1\cup s_2'\in \boc_{g_2}^+$. Letting $\bomega'(g_1)= \bomega(g_1)\bomega_{s_1\cap s_2'}^{-1}$ and  $\bomega'(g_2)= \bomega(g_2)\bomega_{s_1\cup s_2'}^{-1}$, \eqref{e:thegs} becomes
\begin{equation}\label{case12p}
	\bs\omega'(g_1)\bs\omega'(g_2)\bs\omega(g_3)\bs\omega(g_4)= \bs\omega_{{}^*\!s_1'}^{-1}\bs\mu.
\end{equation}
We shall need the following observation arising from the assumptions of this case:
\begin{equation}\label{gcorn12}
	E_{g_1} \subset [1,\supp(s_1\cap s_2')] \ \ {\rm and} \ \ E_{g_2} \subset [\supp(s_1\cup s_2'),N].
\end{equation}
Indeed, 
$$g_1(\supp(s_1\cap s_2'))+(\supp(s_1\cap s_2'))^*=\spar(s_1\cap s_2')+ h-\supp(s_1\cap s_2')=  h+2i_1 =g_1(h),$$
and $g_2(\supp(s_1\cup s_2')) + \supp(s_1\cup s_2') =\spar(s_1\cup s_2') + \supp(s_1\cup s_2')=  2j_1 = g_2(0)$
and we are done by \eqref{e:bottonline}. 

Note also that ${}^*\!s_1'\rhd (s_1\cap s_2')$ and ${}^*\!s_1'\rhd (s_1\cup s_2')$ (cf. \eqref{e:case32j1<h+i1'}). Then, an application of \eqref{e:nocpinpath} gives  
\begin{equation*}
	{}^*\!s_1'\in \boc_{g_3}^-\cup \boc_{g_4}^-.
\end{equation*}
 The case ${}^*\!s_1'\in \boc_{g_4}^-$ follows from the case ${}^*\!s_1'\in \boc_{g_4}^-$ by duality in the same spirit of the argument used for Case (1,4).
Thus, henceforth, we assume ${}^*\!s_1'\in \boc_{g_3}^-$ and write $\bs\omega(g_3) = \bs\omega'(g_3)\bs\omega_{^*\!s_1'}^{-1}$ so that \eqref{case12p} becomes
\begin{equation}\label{case123}
	\bs\omega'(g_1)\bs\omega'(g_2)\bs\omega'(g_3)\bs\omega(g_4)= \bs\mu.
\end{equation}

 An application of the second part of Lemma \ref{lowtoup} with $s_1\cap s_2', g_3$ and ${}^*\!s_1'$ in place of $s,p$, and $[m,n]$, respectively, implies 
$$\exists\ i_1'< m<j_1' \ \  {\rm such\ that } \ s:= [m,h+i_1']\in \boc_{g_3}^+$$
and one easily checks equality holds in \eqref{e:novarpivarpiW<}. Thus, $\bs\omega_s\in \cal P_{\leq}$ and, hence, \eqref{case123} implies 
$s\in \boc_{g_1}^-\cup \boc_{g_2}^-\cup\boc_{g_4}^-.$
In fact, we show that $$s\in \boc_{g_4}^-.$$ If it were $s\in \boc_{g_1}^-$, we claim $m\leq i_1$. Indeed, if it were $i_1<m$, since  $m<j_1'<j_2'<h+i_1'$, it would follow that $s\rhd(s_1\cap s_2')$. As $s_1\cap s_2'\in \boc_{g_1}^+$, this contradicts \eqref{e:nocpinpath}. The inequality $m\le i_1$, together with the fact that $j'_2\le h+i'_1$ coming from $s_2'\rhd s_1'$, implies 
\begin{equation}\label{12g1case}
	\supp(s)= h+i_1'- m >j_2'-i_1 = \supp(s_1\cap s_2'),
\end{equation}
which contradicts \eqref{gcorn12}.  
Similarly, if $s\in \boc_{g_2}^-$, \eqref{e:nocpinpath} alongside with the fact that $(s_1\cup s_2')\in \boc_{g_2}^+$ and $m<j_1'<j_1<h+i_1'$ forces $m\leq i_2'$. In particular, 
\begin{equation}\label{12g2case}
	g_3(\supp(s))+ \supp(s)^* = h+2m < h+2i_1 = g_3(h),
\end{equation}
which contradicts \eqref{boundp''} and shows that $s\in \boc_{g_4}^-$. 

Another application of the first part of Lemma \ref{lowtoup} with $s_1\cup s_2'$, $g_4$, and $s$ in place of $s$, $p$, and $[m,n]$, respectively, implies 
$$\exists \ m<n<h+i_1' \ \ {\rm such \ that} \ \ s':= [m,n]\in \boc_{g_4}^+.$$
Again, it is straightforward to check that $\bomega_{s'}\in \cal P_{\leq}$ and \eqref{case123} implies $s'\in \boc_{g_1}^-\cup\boc_{g_2}^- \cup \boc_{g_3}^-$. 
Let us check  
\begin{equation}\label{12s'notin}
	s'\in \boc_{g_2}^-.
\end{equation}
Recall from our assumptions that $i_1'<m<n<h +i_1'$ and $m<j_1'$. In particular, ${}^*\!s_1'\rhd s'$ and, since we assuming ${}^*\!s_1' \in\boc_{g_3}^-$, \eqref{e:nocpinpath} implies $s'\notin \boc_{g_3}^-$. If it were $s'\in \boc_{g_1}^-$, then $s'\rhd (s_1\cap s_1')$, i.e., $i_1<m\le j_1'<n$. Then, if we had $j'_2<n$, it would follow that $s'\rhd (s_1\cap s'_2)$, yielding a contradiction with \eqref{e:nocpinpath} since $s_1\cap s'_2\in g_1^+$ by assumption. Thus, $i_1<m\leq j_1'<n\leq j_2'$ and, hence, 
\begin{equation*}
	g_4(\supp(s'))+ \supp(s') = 2n \leq 2j_2'<2j_1 = g_4(0),
\end{equation*}
contradicting \eqref{boundp'''} and completing the proof of \eqref{12s'notin}. 

It follows from \eqref{12s'notin} that $s'\rhd(s_1\cup s_1')$, i.e., $i'_1< m\le j_1<n$. This in turn implies $m\le i'_2$ since, otherwise, we would have $s'\rhd (s_1\cup s'_2)$, yielding a contradiction with \eqref{e:nocpinpath} since $s'\in\boc_{g_2}^-$ and $s_1\cup s'_2\in g_2^+$ by assumption. On the other hand, since $s'\in \boc_{g_4}^+$, \eqref{uplowcorner}, together with \cref{ellwttau}, implies $1+s'\rhd (s_1\cup s'_2)$, i.e., $i_2'\leq m<j_1\leq n$. Thus, 
\begin{equation*}
	m=i_2'<j_1<n<h+i_1'
\end{equation*}
and an application of the second part of \cref{lowtoup} with $s_1\cup s_1'$ and $g_2$ in the place of $s$ and $p$, respectively, implies 
\begin{equation*}
	\exists \ n-h<m'<i_2' \ \ {\rm such\ that} \ \ s'':=[m', n]\in \boc_{g_2}^+.
\end{equation*}
Again, one easily checks $\bs\omega_{s''}\in \cal P_{\leq}$ and concludes using \eqref{case123} that we must have $s''\in \boc_{g_1}^-\cup \boc_{g_3}^-\cup \boc_{g_4}^-$. \cref{ellwttau} would then imply $s''\rhd s'''$ with $s'''\in \{s_1\cap s_1',s_1\cap s_2',s_1\cup s_2'\}$. All options imply $i_2'<m'$, yielding a contradiction as desired. 

\subsection{Length-2 Snakes}\label{ss:l=2}
In this section, we prove \cref{t:imagfromsnakegen} dropping the assumption \eqref{almostdual'}, but assuming $\bs s,\bs s'\in\lad^2$. The proof will use \cref{t:imagfromsnakegen} with the assumption \eqref{almostdual'} together with usual arguments arising from diagram subalgebras. Thus, let us start by recalling the latter. 

For a subdiagram $J\subseteq I$,  let $\lie g_J$ be the subalgebra of $\lie g$ generated by the corresponding simple root vectors. Let also $U_q(\lie a_J)$, with $\lie a=\lie g, \tlie g,\tlie h$, etc., be the respective quantum groups associated to $\lie a_J$. Let also $U_q(\lie a)_J$ be the the subalgebra of $U_q(\tlie g)$ generated by the generators corresponding to $J$. It is well known that there is an algebra isomorphism
\begin{equation*}
	U_q(\lie a)_J\cong U_q(\lie a_J).
\end{equation*}
This is a Hopf algebra isomorphism only if $\lie a\subseteq\lie g$. We shall always implicitly identify $U_q(\lie a)_J$ with $U_q(\lie a_J)$ without further notice.

For $\bs\varpi\in\mathcal P$ and $J\subseteq I$, let $\bs\varpi_J$ be the associated
$J$-tuple of rational functions and let $\cal
P_J=\{\bs\varpi_J:\bs\varpi\in \mathcal P\}$. Similarly define $\cal
P_J^+$. Notice that $\bs\varpi_J$ can be regarded as an element of
the $\ell$-weight lattice of $U_q(\tlie{sl}_{h_J})\cong U_q(\tlie g)_J$, where $h_J=|J|+1$. 
If $V$ is a highest-$\ell$-weight module with highest-$\ell$-weight vector $v$ and $J\subset I$, we let $V_J$ denote the $U_q(\tlie g)_J$-submodule of $V$ generated by $v$. Evidently, if $\bs\pi$ is the highest-$\ell$-weight of $V$, then $V_J$ is highest-$\ell$-weight with highest $\ell$-weight $\bs\pi_J$.

Suppose $J=[a,b]$ with $1\le a\le b\le N$ and let $N_J = b-a+1=|J|$. Note that if $c\in J$, when regarded as a node for the Dynkin diagram of 
\begin{equation*}
	U_q(\tlie{sl}_{h_J})\cong U_q(\tlie{g}_J)\cong U_q(\tlie g)_J,
\end{equation*}
$c$ is the $(c-a+1)$-th node. Suppose also that $s=[i,j]\in\seg$ is such that 
\begin{equation*}
	a\le j-i=\supp(s)\le b.
\end{equation*} 
This means $(\bs\omega_s)_J\ne\bs 1$ and, hence, when regarded as an $\ell$-weight for $U_q(\tlie{sl}_{h_J})$, $(\bs\omega_s)_J$ corresponds to
\begin{equation*}
	 \bs\varpi_{\supp(s)-a+1,\spar(s)},
\end{equation*} 
which, using \eqref{e:fundtoint}, can be represented by $\bs\omega_{\tilde s}$ where $\tilde s$ is the $N_J$-segment 
\begin{equation}\label{e:NJseg}
	\tilde s = [i+\frac{a-1}{2},j-\frac{a-1}{2}].
\end{equation}
Moreover, if $s'\in\seg$ also satisfies $a\le \supp(s')\le b$ and $s\rhd s'$, one easily checks that
\begin{equation}
	(\bs\omega_{s\diamond s'})_J = \bs\omega_{\tilde s\diamond\tilde s'}.
\end{equation}

Since $U_q(\tlie g)_J$ is not a sub-coalgebra of $U_q(\tlie g)$, if $M$ and $N$ are $U_q(\tlie g)_J$-submodules of $U_q(\tlie g)$-modules $V$ and $W$, respectively, it is in general not true that $M\otimes N$ is a $U_q(\tlie g)_J$-submodule of $V\otimes W$.  Denote by $M\otimes_J N$ the $U_q(\tlie g)_J$-module obtained by using the coalgebra structure from $ U_q(\tlie g_J)$. By \cite[Proposition 2.2]{CP96}, if $V$ and $W$ are highest-$\ell$-weight modules, then $V_J\otimes W_J$ is a $U_q(\tlie g)_J$-submodule of $V\otimes W$ isomorphic to $V_J\otimes_J W_J$ via the identity map. We will need the following consequence of this proposition (see \cite[Corollary 3.2.4]{MS24}).

\begin{lem}\label{subdsimple}
	Let $J\subset I$ be a connected subdiagram. If $V\otimes W$ is a simplie $U_q(\tilde{\lie g})$-module, then $V_J\otimes W_J$ is a simple $U_q(\tilde{\lie g})_J$-module. 
\end{lem}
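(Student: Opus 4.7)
Since $V\otimes W$ is simple as a $U_q(\tilde{\lie g})$-module, both $V$ and $W$ must be simple: any proper submodule $V'\subsetneq V$ would yield $V'\otimes W\subsetneq V\otimes W$, contradicting simplicity. Write $V=V(\bs\pi)$ and $W=V(\bs\pi')$ so that $V\otimes W\cong V(\bs\pi\bs\pi')$. By the \cite[Proposition 2.2]{CP96} identification recalled just before the lemma, $V_J\otimes W_J$ is the $U_q(\tilde{\lie g})_J$-submodule of $V\otimes W$ generated by the highest-$\ell$-weight vector $v\otimes w$, and hence is a highest-$\ell$-weight $U_q(\tilde{\lie g})_J$-module with highest $\ell$-weight $\bs\sigma:=\bs\pi_J\bs\pi'_J$.

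The plan is to show that the socle of $V_J\otimes W_J$ is simple and isomorphic to $V(\bs\sigma)$. Since $V_J\otimes W_J$ has simple head $V(\bs\sigma)$ automatically as a highest-$\ell$-weight module, and since $V(\bs\sigma)$ appears with multiplicity one in its composition series (because the $\bs\sigma$-$\ell$-weight space is one-dimensional), head $=$ socle will force $V_J\otimes W_J=V(\bs\sigma)$, hence simple.

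To produce the required simple socle, I would dualize: $(V\otimes W)^*\cong W^*\otimes V^*$ is also simple, so \cite[Proposition 2.2]{CP96} applied to $W^*$ and $V^*$ realizes $(W^*)_J\otimes (V^*)_J$ as the $U_q(\tilde{\lie g})_J$-submodule of $W^*\otimes V^*$ generated by its highest-$\ell$-weight vector. Dualizing this inclusion and using $(V\otimes W)^{**}\cong V\otimes W$ (cf.~\eqref{e:rldualcancel}) yields a $U_q(\tilde{\lie g})_J$-surjection $V\otimes W\twoheadrightarrow \bigl((W^*)_J\otimes (V^*)_J\bigr)^*$. The key step is to verify that the composite
\begin{equation*}
V_J\otimes W_J\hookrightarrow V\otimes W\twoheadrightarrow\bigl((W^*)_J\otimes (V^*)_J\bigr)^*
\end{equation*}
is an isomorphism onto a simple module with highest $\ell$-weight $\bs\sigma$, which would realize $V_J\otimes W_J$ as simple. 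The main obstacle is matching dimensions and highest $\ell$-weights on both sides, which requires carefully tracking the compatibility of the left/right dualities with the $J$-restriction operation $(-)_J$, together with the fact that the highest-$\ell$-weight vector of $V_J\otimes W_J$ does not map to zero in the dualized quotient.
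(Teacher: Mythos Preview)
The paper does not prove this lemma; it cites \cite[Corollary 3.2.4]{MS24} without further argument, so there is no in-paper proof to compare against.

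Your proposal has a genuine gap, which you yourself flag as the ``main obstacle.'' The difficulty is not mere bookkeeping. The functor $(-)_J$ extracts the submodule generated by the \emph{highest}-$\ell$-weight vector, while dualizing exchanges highest and lowest: $(W^*)_J$ is generated by the highest-$\ell$-weight vector of $W^*$, which corresponds to the lowest-$\ell$-weight vector of $W$, so $(W^*)_J$ bears no direct relation to $W_J$ or to any dual of $W_J$. Worse, the inclusion $(W^*)_J\otimes(V^*)_J\hookrightarrow W^*\otimes V^*$ is only a map of $U_q(\tlie g)_J$-modules; dualizing it requires the antipode of $U_q(\tlie g_J)$, and since the antipode of $U_q(\tlie g)$ does not restrict to that of the subalgebra, the $U_q(\tlie g_J)$-dual of the restricted module $W^*\otimes V^*$ is not $V\otimes W$ in any evident sense. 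Your displayed composite is therefore not well-defined as written, and nothing guarantees its target is simple with highest $\ell$-weight $\bs\sigma$.

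A route that sidesteps this entirely: use the standard fact that for simple modules $A,B$, the tensor product $A\otimes B$ is simple if and only if both $A\otimes B$ and $B\otimes A$ are highest-$\ell$-weight. Simplicity of $V\otimes W$ gives both orderings highest-$\ell$-weight; applying \cite[Proposition 2.2]{CP96} to each ordering shows that $V_J\otimes_J W_J$ and $W_J\otimes_J V_J$ are both highest-$\ell$-weight over $U_q(\tlie g_J)$. Combined with simplicity of $V_J$ and $W_J$ (a separate, nontrivial input, also handled in \cite{MS24}), the same criterion applied inside $U_q(\tlie g_J)$ yields the conclusion, with no need to transport duality across the restriction.
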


We are ready to prove \cref{t:imagfromsnakegen} without the assumption \eqref{almostdual'} in the case $l=2$.
Consider the subdiagram 
\begin{equation}\label{e:l=2subd}
	J = [a,b] \ \ \text{with}\ \ a=\min\supp (\bs s\cap \bs s')+1 \ \ \text{and}\ \ b=\max\supp(\bs s\cup \bs s')-1.
\end{equation} 
It easily follows that
\begin{equation*}
	a\le \supp(s_k)\le b \ \ \text{for}\ \ k\in\{1,2\},
\end{equation*}
and similarly for $\bs s', \bs s\cap_1\bs s'$, and $\bs s\cup_1\bs s'$. Thus, the fundamental $\ell$-weights appearing in $\bs\pi_1$ arising from the corresponding $N$-segments remain non-trivial when regarded as $\ell$-weights for $U_q(\tlie{sl}_{h_J})$ and, hence, the associated fundamental $\ell$-weight for $U_q(\tlie{sl}_{h_J})$ can be represented by an $N_J$-segment according to \eqref{e:NJseg}. We denote by $\tilde s_k=[\tilde i_k,\tilde j_k]$ the $N_J$-segment corresponding to $s_k$ and similarly define $\tilde s_k'$.  

We will check that $\tilde{\bs s}$ and $\tilde{\bs s}'$ satisfy \eqref{almostdual} (with $h$ replaced by $h_J$). It then follows from \cref{t:imagfromsnakegen} applied to the algebra $U_q(\tlie g_J)$ that $V(\bs\pi_1)_J $ is imaginary,  i.e.,
\begin{equation*}
	V(\bs\pi_1)_J  \otimes_J V(\bs\pi_1)_J \ \ \text{is reducible}.   
\end{equation*} 
An application of \cref{subdsimple} then implies the same is true for $V(\bs\pi_1)  \otimes V(\bs\pi_1)$, thus completing the proof.

We need to check
\begin{equation}
	\tilde j_1'<\tilde i_2 \ \ \text{and}\ \ \tilde j_2-\tilde i_1'>h_J.
\end{equation}
Using \eqref{e:NJseg} and that $h_J=b-a+2$, this is equivalent to
\begin{equation}
	j_1'-i_2 < \min\supp (\bs s\cap \bs s') \ \ \text{and}\ \ j_2-i_1' > \max\supp(\bs s\cup \bs s'). 
\end{equation}
Since $\bs s\rhd_1\bs s'$, we have
\begin{equation*}
	\supp (\bs s\cap \bs s') = \{j_1'-i_1,j_2'-i_2\} \ \ \text{and}\ \ \supp (\bs s\cup \bs s') = \{j_1-i_1', j_2-i_2'\}.
\end{equation*}
On the other hand, since $\bs s,\bs s'\in \mathbb L^2$, we also have 
\begin{gather*}
	j_1'-i_2 < j_1'-i_1, \ \ j_1'-i_2< j_2'-i_2, \ \ j_2-i_1'> j_2-i_2', \ \  j_2-i_1'>j_1 -i_1',
\end{gather*}
which clearly completes the proof.

\subsection{The Case of KR Modules}\label{ss:KRcase}
We now use a similar argument to the one used in \cref{ss:l=2} to  prove that the final conclusion of \cref{t:imagfromsnakegen} holds if the assumption \eqref{almostdual'} is replaced by
\begin{equation*}
	V(\bs\omega_{\bs s}) \ \ \text{and}\ \ V(\bs\omega_{\bs s'}) \ \ \text{are Kirillov-Reshetikhin modules.}
\end{equation*}
Recall that this means
\begin{equation}\label{e:KRcase}
	i_{k+1}=i_k+1 \ \ \text{and}\ \ j_{k+1}=j_k+1 \ \ \text{for all} \ \ 1\leq k<l
\end{equation} 
and similarly for $\bs s'$. In general, \eqref{e:KRcase} does not imply \eqref{almostdual}, so \cref{t:imagfromsnakegen} cannot be used directly to conclude $V(\bs\pi_1)$ is imaginary.

In fact, we will prove something more general. Instead of \eqref{e:KRcase}, we assume
\begin{equation}\label{e:minmaxat1}
	\min\supp (\bs s\cap \bs s')=\supp(s_1\cap s_1') \quad\text{and}\quad \max\supp(\bs s\cup \bs s') = \supp(s_1\cup s_1').
\end{equation}
Since \eqref{e:KRcase} implies $|\supp (\bs s\cap \bs s')| = |\supp (\bs s\cup \bs s')|=1$, it follows that the case of KR modules is a particular case of multisegments satisfying \eqref{e:minmaxat1}. Thus, assume \eqref{e:minmaxat1} holds and let $J$ be as in \eqref{e:l=2subd}. One easily checks \eqref{e:minmaxat1} implies 
\begin{equation*}
	a\le \supp(s_k)\le b \ \ \text{for}\ \ 1\le k\le l
\end{equation*}
and similarly for  $\bs s', \bs s\cap_1\bs s'$, and $\bs s\cup_1\bs s'$. From this point on, a copy of the argument of \cref{ss:l=2} leads to the conclusion that $V(\bs\pi_1)$ is imaginary.

\section{Characterizing and Counting Examples}\label{s:count}

We now characterize and count how many $1$-covering pairs of connected ladders exist (up to uniform shift).

\subsection{General Set Up}For the sake of record keeping, let us start by counting the segments which are covered by a given proper segment. Thus, given $s\in\tilde\seg$, set 
\begin{equation*}
	C(s) = \{s'\in\seg: s\rhd s'\}.
\end{equation*}
Writing $s=[i,j]$ and $s'=[i',j']$, then $s\rhd s'$ if and only if $i'<i\le j'<j\le h+i'$, or, equivalently,
\begin{equation*}
	i\le j'< j \ \ \text{and}\ \ j-h\le i'<i.
\end{equation*}
Thus, there are $j-i$ choices for $j'$ and $h+i-j$ choices for $i'$. In other words,
\begin{equation}\label{e:covercount}
	|C(s)| = \supp(s)\supp(s^*). 
\end{equation}
In the next section, the following subset of $C(s)$ will be relevant:
\begin{equation*}
	C_1(s) = \{s'\in C(s): i\le j'< j-1  \text{ and } j-h\le i'<i-1\},
\end{equation*}
whose cardinality is 
\begin{equation}\label{e:covercount1}
	|C_1(s)| = (\supp(s)-1)(\supp(s^*)-1). 
\end{equation}

\begin{lem}\label{l:coverred}
	Let $\bs s,\bs s'\in\lad^l$ and $0\le p<l$. Then, 
	\begin{equation*}
		\bs s\rhd_p\bs s' \quad\Leftrightarrow\quad \bs s(k,p+1+k)\rhd_p \bs s'(k,p+1+k) \ \ \text{for all}\ \ 0\le k<l-p.
	\end{equation*} 
\end{lem}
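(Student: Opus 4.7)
The plan is to unpack both sides of the equivalence as explicit statements about which pairs $(s_i, s'_m)$ are required to satisfy $s_i \rhd s'_m$, and then verify the matching set of pairs on each side by elementary index bookkeeping.

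First I would translate the left-hand side. By definition, $\bs s\rhd_p\bs s'$ means $\bs s(0,l-k)\rhd \bs s'(k,l)$ for all $0\le k\le p$, which in turn says $s_r\rhd s'_{k+r}$ for all $1\le r\le l-k$. Letting $m=k+r$, this is equivalent to the following single condition:
\begin{equation*}
	s_i\rhd s'_m \quad\text{for all}\quad 1\le i,m\le l \ \text{ with }\ 0\le m-i\le p.
\end{equation*}
Call this set of index pairs $\mathcal I_{l,p}$. Similarly, for fixed $k\in[0,l-p-1]$, the length-$(p+1)$ subladders $\bs s(k,p+1+k)$ and $\bs s'(k,p+1+k)$ play the roles of $\bs s,\bs s'$, so by the same translation $\bs s(k,p+1+k)\rhd_p \bs s'(k,p+1+k)$ is equivalent to $s_i\rhd s'_m$ for every pair $(i,m)$ with $k+1\le i,m\le k+p+1$ and $0\le m-i\le p$. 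Thus the right-hand side asserts the same family of covering relations, but restricted to pairs lying in one of the length-$(p+1)$ diagonal windows indexed by $k$.

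The forward implication is then immediate: if $s_i\rhd s'_m$ whenever $(i,m)\in\mathcal I_{l,p}$, then in particular it holds for every $(i,m)$ inside each window $[k+1,k+p+1]^2\cap \mathcal I_{l,p}$.

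For the reverse implication, the only thing to check is that every pair $(i,m)\in\mathcal I_{l,p}$ is contained in one of these windows, i.e., there exists $k\in[0,l-p-1]$ with $k+1\le i$ and $m\le k+p+1$. This amounts to showing the integer interval
\begin{equation*}
	\bigl[\max\{0,m-p-1\},\ \min\{i-1,l-p-1\}\bigr]
\end{equation*}
is non-empty. Using $0\le i-1$, $0\le l-p-1$ (from $p<l$), $m-p-1\le i-1$ (from $m-i\le p$), and $m-p-1\le l-p-1$ (from $m\le l$), all four required inequalities hold, so any $k$ in this interval does the job. This completes the equivalence. The only conceivable obstacle is the somewhat fiddly index manipulation in the reverse direction, but it reduces to the four inequalities above and is otherwise routine.
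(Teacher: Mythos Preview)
Your proof is correct. It takes a different route from the paper's, however: the paper proves the reverse implication by induction on $l$ (with the base cases $p=0$ and $p=l-1$ being immediate, and the inductive step combining the conclusions for $\bs s(0,l-1)$ and $\bs s(1,l)$), whereas you give a direct combinatorial argument by unpacking both sides into the same family of conditions $s_i\rhd s'_m$ indexed by pairs $(i,m)$ and then checking that the length-$(p+1)$ windows cover all of $\mathcal I_{l,p}$. Your approach is arguably cleaner, since the entire content reduces to the four one-line inequalities at the end and avoids any inductive bookkeeping; the paper's induction is equally valid but hides the underlying reason (the covering of index pairs by windows) behind the recursive structure.
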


\begin{proof}
	The $\Rightarrow$ part is immediate from the definition of $\rhd_p$. Thus, assume the right hand side of $\Leftrightarrow$ holds. Note that, for $p=0$, this is exactly the definition of $\bs s\rhd\bs s'$, so there is nothing to do. Also, if $p=l-1$, then $k=0$ and the assumption reads $\bs s(0,l)\rhd \bs s'(0,l)$, which coincides with the left hand side. Hence, we can assume $p\le l-2$. Let us proceed by induction on $l$, which starts when $l=2$ by the case $p=0$. Thus, assume $l>2$ and that the claim holds for ladders of length $l-1$. 
	
	Let $\tilde{\bs s}=\bs s(0,l-1)\in\lad^{l-1}$ and similarly define $\tilde{\bs s}'$. By assumption,
	\begin{equation*}
		\tilde{\bs s}(k,p+1+k)\rhd_p \tilde{\bs s}'(k,p+1+k) \ \ \text{for all}\ \ 0\le k<l-1-p.
	\end{equation*}
	The inductive assumption then implies $\tilde{\bs s}\rhd_p \tilde{\bs s}'$ or, equivalently
	\begin{equation}
		\bs s(0,l-1)\rhd_p\bs s'(0,l-1).
	\end{equation}
	Similarly, it follows that 
	\begin{equation}
		\bs s(1,l)\rhd_p\bs s'(1,l).
	\end{equation}
	These two facts together clearly imply $\bs s\rhd_p\bs s'$, as desired. 
\end{proof}
This lemma is a step towards reducing the task of characterizing the pairs $(\bs s,\bs s')$ such that $\bs s\rhd_p\bs s'$ to the case $p=l-1$.

\subsection{$1$-Covers of Length $2$}\label{ss:l21c}
Henceforth, let us focus on the case $p=1$. 
Given $s_1,s_1'\in\tilde\seg$ such that $s_1\rhd s_1'$, let
\begin{equation}\label{e:C1l=2}
	C_1(s_1,s_1') = \{(\bs s,\bs s')\in\lad^2\times\lad^2: \bs s(0,1)=s_1, \bs s'(0,1)=s_1', \text{ and } \bs s\rhd_1\bs s' \}.
\end{equation}
We will show
\begin{equation}\label{e:l=21covers}
	\begin{aligned}
		|C_1(s_1,s_1')| = &\ \frac{1}{4}(\supp(s_1'\lrcorner s_1)-1)(\supp(s_1'\llcorner s_1)-1)\times \\
		& \times  (\supp(^*\!s_1)+\supp(^*\!s_1\cap {}^*\!s_1'))(\supp(s_1)+\supp(s_1\cap s_1')).
	\end{aligned}
\end{equation}
Combining this with \eqref{e:covercount1}, it follows that, for each $s_1\in\tilde\seg$, there are
\begin{equation}
	|C_1(s_1)||C_1(s_1,s_1')|
\end{equation}
examples of imaginary modules covered by the conjecture for $k=1$ and $l=2$. On the other hand, if we let $D_2(s_1)$ be the subset of $C_1(s_1,s_1')$ of elements arising from the context of \cite{BC23}, we will see
\begin{equation}\label{e:BCl=2}
	D_2(s_1)=\supp(s_1^*)-1.
\end{equation}
We shall see below that, if $N=3$ and $C_1(s_1,s_1')\ne\emptyset$, then $C_1(s_1,s_1')=D_2(s_1)$ has a single element.

We want to characterize and count the elements in $C_1(s_1,s_1')$. As usual, write $\bs s=(s_1,s_2)$, $\bs s'=(s_1',s_2')$, $s_k = [i_k,j_k]$, and $s'_k=[i'_k,j'_k]$. Thus, we want to characterize the possible choices of $i_2,j_2,i'_2,j'_2$. The requests that $\bs s\in\lad$  and $\bs s \rhd_1\bs s'$ imply $s_2\rhd s_1$, i.e.,
\begin{equation*}
	i_1< i_2\le j_1<j_2\le h-i_1,
\end{equation*}
and similarly for $\bs s'$. On the other hand, the request $\bs s\rhd_1\bs s'$ is equivalent to
\begin{equation*}
	s_2\rhd s_2' \ \ \text{and}\ \ s_1\rhd s_2'.
\end{equation*}
Since the latter implies $i_2'<i_1$ and $j_2'<j_1$, it follows that we must have 
\begin{equation}\label{e:s'sis ladder}
	i_1'<i_2'<i_1<i_2 \ \ \text{and}\ \ j_1'<j_2'<j_1<j_2.
\end{equation}
In particular, $C_1(s_1,s_1')=\emptyset$ unless
\begin{equation}\label{e:sup>1}
	i_1-i_1'\ge 2 \ \ \text{and}\ \ j_1-j_1'\ge 2.
\end{equation}
Note this implies $N\ge 3$. Indeed, since the assumption $s_1\rhd s_1'$ says
\begin{equation*}
	i_1'<i_1\le j_1'<j_1\le h+i_1',
\end{equation*}
it follows that $j_1-i_1'\le h$. On the other hand,
\begin{equation*}
	j_1-i_1'\ge j_1- (i_1-2) \ge j_1-(j_1'-2)\ge 4,
\end{equation*}
so $h\ge 4$, as claimed. Moreover, in the case $h=4$, the above inequalities also imply 
\begin{equation*}
	\supp(s_1)=2, \quad s_1' = s_1^*, \quad s_2 = 1+s_1, \quad\text{and}\quad s_2' = 1+s_1'=s_2^*.  
\end{equation*}
In particular, $|C_1(s_1,s_1')|=1$, $V(\bs\omega_{\bs s})$ is a KR module and $V(\bs\omega_{\bs s'})\cong V(\bs\omega_{\bs s})^*$. 

Henceforth, let us assume $s_1$ and $s_1'$ also satisfy \eqref{e:sup>1}. Choose any $s_2'$ such that
\begin{equation}\label{e:chooses2'}
	i_1'<i_2'<i_1 \ \ \text{and}\ \ j_1'<j_2'<j_1.
\end{equation} 
There are $(i_1-i_1'-1)(j_1-j_1'-1)$ such choices. Recalling that $s_1'\lrcorner s_1 = [i_1',i_1]$ and $s_1\llcorner s_1 = [j_1',j_1]$, this number coincides with
\begin{equation*}
	(\supp(s_1'\lrcorner s_1)-1)(\supp(s_1'\llcorner s_1)-1).
\end{equation*}
Finally, choose any $s_2$ such that
\begin{equation}\label{e:chooses2}
	i_1<i_2\le j_2' \ \ \text{and}\ \ j_1<j_2\le h+i_2'.
\end{equation}
Note the inequalities $i_2\le j_2'$ and $j_2\le h+i_2'$ are imposed by the request $s_2\rhd s_2'$. This completes the characterization of $C_1(s_1,s_1')$. 

It remains to check \eqref{e:l=21covers}. Indeed, there are $j_2'-i_1$ choices for $i_2$ and $h+i_2'-j_1$ choices for $j_2$. Note
\begin{equation*}
	j_2'-i_1 \ge j_2'-j_1'> 0 \ \ \text{and}\ \ h+i_2'-j_1>h+i_1'-j_1\ge 0,
\end{equation*}
and, hence, for each choice of $s_2'$, there is at least one choice of $s_2$. In order to compute $|C_1(s_1,s_1')|$, we need to add over all possible choices of $s_2'$ the number of choices of $s_2$. In other words
\begin{align*}
	|C_1(s_1,s_1')|  = &\sum_{i_2' = i_1'+1}^{i_1-1} \sum_{j_2' = j_1'+1}^{j_1-1}  (j_2'-i_1)(h+i_2'-j_1)\\
		= & \left(\sum_{i_2' = i_1'+1}^{i_1-1} h+i_2'-j_1\right)\left(\sum_{j_2' = j_1'+1}^{j_1-1}  j_2'-i_1\right)\\
		= & \left(\frac{1}{2}(2h+i_1+i_1'-2j_1)(i_1-i_1'-1)\right)
		\left(\frac{1}{2}(j_1+j_1'-2i_1)(j_1-j_1'-1)\right)\\
		= &\ \frac{1}{4}(\supp(s_1'\lrcorner s_1)-1)(\supp(s_1'\llcorner s_1)-1)\times
		\\ & \times (\supp({}^*\!s_1)+\supp({}^*\!s_1\cap {}^*\!s_1'))(\supp(s_1)+\supp(s_1\cap s_1')).
\end{align*}
Let us check \eqref{e:BCl=2}. Since $s_1' = s_1^*$, 
\begin{equation*}
	i_1' = j_1-h \ \ \text{and}\ \ j_1' = i_1. 
\end{equation*}
Plugging this back in \eqref{e:chooses2'}, it means we must choose $s_2'$ such that $j_2' = i_2'+\supp(s_1) = i_2'+j_1-i_1$,
\begin{equation*}
	j_1-h<i_2'<i_1,  \ \ \text{and}\ \ i_1<i_2'+j_1-i_1<j_1.
\end{equation*} 
The assumption $\supp(s_1)\ge h/2$ made in \cite{BC23} implies that any arbitrary choice of $i_2'$ satisfying the first set inequalities above also satisfies the second set.  Since $\supp(s_1^*)> h/2$ if $\supp(s_1)< h/2$, we assume, without loss of generality, that  $\supp(s_1)\ge h/2$. In that case, we have $\supp(s_1^*)-1$ choices for $s_2'$. Since $s_2'$ must coincide with $s_2^*$, there are no further choices to make, completing the proof of \eqref{e:BCl=2}.

\begin{ex}
    Let us list all length-$2$ ladders in the case $N=4$ which give rise to imaginary modules.  We must have $j_1-i_1\in\{2,3\}$, we restrain ourselves to case $j_1-i_1=3\ge h/2$, as the other cases can be obtained by passing to dual modules. There are 5 examples if we fix $i_1$: 
    \begin{enumerate}
		\item $s_1'=[i_1-2,i_1]$,  $s_2' = [i_1-1,i_1+1]$, $s_2=[i_1+1,i_1+4]$;
		\item $s_1'=[i_1-2,i_1]$, $s_2' = [i_1-1,i_1+2]$, $s_2=[i_1+1,i_1+4]$;
		\item $s_1'=[i_1-2,i_1]$, $s_2' = [i_1-1,i_1+2]$, $s_2=[i_1+2,i_1+4]$;
		\item $s_1'=[i_1-2,i_1+1]$, $s_2' =[i_1-1,i_1+2]$, $s_2=[i_1+1,i_1+4]$;
			\item $s_1'=[i_1-2,i_1+1]$, $s_2' =[i_1-1,i_1+2]$, $s_2=[i_1+2,i_1+4]$.
	\end{enumerate}
    Only example (1) arises from \cite{BC23}. \endd
\end{ex}

\subsection{Higher Length $1$-Covers}
Let $l\ge 2$ and suppose $(\bs s,\bs s')\in\lad^{l-1}\times\lad^{l-1}$ satisfy $\bs s\rhd_1\bs s'$ and set
\begin{equation}\label{e:C1genl}
	C_1(\bs s,\bs s') = \{(\tilde{\bs s},\tilde{\bs s}')\in\lad^l\times\lad^l: \tilde{\bs s}(0,l-1)=\bs s, \tilde{\bs s}(0,l-1)=\bs s', \tilde{\bs s}\rhd_1\tilde{\bs s}' \}.
\end{equation}
In the case $l=2$, \eqref{e:C1genl}  coincides with \eqref{e:C1l=2}. \cref{l:coverred} implies we need to characterize the pairs $(s_l,s'_l)\in\lad$ such that 
\begin{equation*}
	(s_{l-1},s_l)\rhd_1 (s_{l-1}',s_l'),
\end{equation*} 
which is exactly $C_1(s_{l-1},s'_{l-1})$. This is what we have done in \cref{ss:l21c} and, hence, 
\begin{equation}
	|C_1(\bs s,\bs s')| = |C_1(s_{l-1},s'_{l-1})|. 
\end{equation}


\begin{thebibliography}{10}

\bibitem{BC19a}
\bgroup\scshape{}M.~Brito\egroup{} and \bgroup\scshape{}V.~Chari\egroup{},
  Tensor products and {$q$}-characters of {HL}-modules and monoidal
  categorifications,  \emph{J. \'{E}c. polytech. Math.} \textbf{6} (2019),
  581--619. \mr{4014632}.  \doi{10.5802/jep.101}.

\bibitem{BC23}
\bgroup\scshape{}M.~Brito\egroup{} and \bgroup\scshape{}V.~Chari\egroup{},
  Higher order {K}irillov--{R}eshetikhin modules for {$\bold U_q(A_n^{(1)})$},
  imaginary modules and monoidal categorification,  \emph{Journal für die
  reine und angewandte Mathematik (Crelles Journal)} \textbf{2023} no.~804
  (2023), 221--262. \doi{doi:10.1515/crelle-2023-0068}.

\bibitem{BMS24}
\bgroup\scshape{}M.~Brito\egroup{}, \bgroup\scshape{}A.~Moura\egroup{}, and
  \bgroup\scshape{}C.~Silva\egroup{}, Reality determining subgraphs and
  strongly real modules,  \emph{arXiv:2406.06970} (2024). Available at
  \url{https://doi.org/10.48550/arXiv.2406.06970}.

\bibitem{CM05}
\bgroup\scshape{}V.~Chari\egroup{} and \bgroup\scshape{}A.~A. Moura\egroup{},
  Characters and blocks for finite-dimensional representations of quantum
  affine algebras,  \emph{International Mathematics Research Notices}
  \textbf{2005} no.~5 (2005), 257--298. \doi{10.1155/IMRN.2005.257}.

\bibitem{CPHecke}
\bgroup\scshape{}V.~Chari\egroup{} and \bgroup\scshape{}A.~Pressley\egroup{},
  Quantum affine algebras and affine hecke algebras,  \emph{Pacific Journal of
  Mathematics} \textbf{174} (1995), 295--326. Available at
  \url{https://api.semanticscholar.org/CorpusID:17676806}.

\bibitem{CP96}
\bgroup\scshape{}V.~Chari\egroup{} and \bgroup\scshape{}A.~Pressley\egroup{},
  Minimal affinizations of representations of quantum groups: the simply laced
  case,  \emph{J. Algebra} \textbf{184} no.~1 (1996), 1--30. \mr{1402568}.
  \doi{10.1006/jabr.1996.0247}.

\bibitem{DLL19}
\bgroup\scshape{}B.~Duan\egroup{}, \bgroup\scshape{}J.-R. Li\egroup{}, and
  \bgroup\scshape{}Y.-F. Luo\egroup{}, Cluster algebras and snake modules,
  \emph{Journal of Algebra} \textbf{519} (2019), 325--377.
  \doi{https://doi.org/10.1016/j.jalgebra.2018.10.027}.

\bibitem{FR99}
\bgroup\scshape{}E.~Frenkel\egroup{} and
  \bgroup\scshape{}N.~Reshetikhin\egroup{}, The {$q$}-characters of
  representations of quantum affine algebras and deformations of
  {$\mathcal{W}$}-algebras,  in \emph{Recent developments in quantum affine
  algebras and related topics ({R}aleigh, {NC}, 1998)}, \emph{Contemp. Math.}
  \textbf{248}, Amer. Math. Soc., Providence, RI, 1999, pp.~163--205.
  \mr{1745260}.  \doi{10.1090/conm/248/03823}.

\bibitem{Gur21a}
\bgroup\scshape{}M.~Gurevich\egroup{}, On the hecke-algebraic approach for
  general linear groups over a $p$-adic field,  in \emph{Interactions of
  Quantum Affine Algebras with Cluster Algebras, Current Algebras and
  Categorification, in honor of Vyjayanthi Chari}, Progress in Mathematics 337,
  2021.

\bibitem{Gur21b}
\bgroup\scshape{}M.~Gurevich\egroup{}, Quantum invariants for decomposition
  problems in type a rings of representations,  \emph{Journal of Combinatorial
  Theory, Series A} \textbf{180} (2021), 105431.
  \doi{https://doi.org/10.1016/j.jcta.2021.105431}.

\bibitem{GM21}
\bgroup\scshape{}M.~Gurevich\egroup{} and
  \bgroup\scshape{}A.~Mínguez\egroup{}, Cyclic representations of general
  linear p-adic groups,  \emph{Journal of Algebra} \textbf{585} (2021), 25--35.
  \doi{https://doi.org/10.1016/j.jalgebra.2021.05.013}.

\bibitem{HL10}
\bgroup\scshape{}D.~Hernandez\egroup{} and
  \bgroup\scshape{}B.~Leclerc\egroup{}, Cluster algebras and quantum affine
  algebras,  \emph{Duke Math. J.} \textbf{154} no.~2 (2010), 265--341.
  \mr{2682185}.  \doi{10.1215/00127094-2010-040}.

\bibitem{HL13a}
\bgroup\scshape{}D.~Hernandez\egroup{} and
  \bgroup\scshape{}B.~Leclerc\egroup{}, A cluster algebra approach to
  {$q$}-characters of {K}irillov-{R}eshetikhin modules,  \emph{Journal of the
  European Mathematical Society} \textbf{18} (2013). \doi{10.4171/JEMS/609}.

\bibitem{KKKO15}
\bgroup\scshape{}S.-J. Kang\egroup{}, \bgroup\scshape{}M.~Kashiwara\egroup{},
  \bgroup\scshape{}M.~Kim\egroup{}, and \bgroup\scshape{}S.-j. Oh\egroup{},
  Simplicity of heads and socles of tensor products,  \emph{Compos. Math.}
  \textbf{151} no.~2 (2015), 377--396. \mr{3314831}.
  \doi{10.1112/S0010437X14007799}.

\bibitem{KKOP}
\bgroup\scshape{}M.~Kashiwara\egroup{}, \bgroup\scshape{}M.~Kim\egroup{},
  \bgroup\scshape{}S.-j. Oh\egroup{}, and \bgroup\scshape{}E.~Park\egroup{},
  Monoidal categorification and quantum affine algebras,  \emph{Compositio
  Mathematica} \textbf{156} no.~5 (2020), 1039–1077.
  \doi{10.1112/S0010437X20007137}.

\bibitem{KKOP24}
\bgroup\scshape{}M.~Kashiwara\egroup{}, \bgroup\scshape{}M.~Kim\egroup{},
  \bgroup\scshape{}S.~Oh\egroup{}, and \bgroup\scshape{}E.~Park\egroup{},
  Monoidal categorification and quantum affine algebras {II},
  \emph{Inventiones Mathematicae} \textbf{236} no.~2 (2024), 837--924
  (English). \doi{10.1007/s00222-024-01249-1}.

\bibitem{LM16}
\bgroup\scshape{}E.~Lapid\egroup{} and
  \bgroup\scshape{}A.~M{\'\i}nguez\egroup{}, On parabolic induction on inner
  forms of the general linear group over a non-archimedean local field,
  \emph{Selecta Mathematica} \textbf{22} no.~4 (2016), 2347--2400.
  \doi{10.1007/s00029-016-0281-7}.

\bibitem{LM18}
\bgroup\scshape{}E.~Lapid\egroup{} and \bgroup\scshape{}A.~Mínguez\egroup{},
  Geometric conditions for $\square$-irreducibility of certain representations
  of the general linear group over a non-archimedean local field,
  \emph{Advances in Mathematics} \textbf{339} (2018), 113--190.
  \doi{10.1016/j.aim.2018.09.027}.

\bibitem{Lec02}
\bgroup\scshape{}B.~Leclerc\egroup{}, Imaginary vectors in the dual canonical
  basis of $u_q(n)$,  \emph{Transformation Groups} \textbf{8} no.~1 (2003),
  95--104. \doi{10.1007/BF03326301}.

\bibitem{M11}
\bgroup\scshape{}A.~Moura\egroup{}, An introduction to finite-dimensional
  representations of classical and quantum affine algebras,  \emph{Trabajos de
  matem\'atica s\'erie B 59. Publicaciones de la FaMAF - Universidad Nacional
  de C\'ordoba, 2011.} (2011). Available at
  \url{https://www.famaf.unc.edu.ar/documents/887/BMat59.pdf}.

\bibitem{MS24}
\bgroup\scshape{}A.~Moura\egroup{} and \bgroup\scshape{}C.~Silva\egroup{}, On
  the primality of totally ordered q-factorization graphs,  \emph{Canadian
  Journal of Mathematics} \textbf{76} no.~2 (2024), 594–637.
  \doi{10.4153/S0008414X23000160}.

\bibitem{MY12a}
\bgroup\scshape{}E.~Mukhin\egroup{} and \bgroup\scshape{}C.~A.~S.
  Young\egroup{}, Extended {T}-systems,  \emph{Selecta Mathematica} \textbf{18}
  no.~3 (2012), 591--631. \doi{10.1007/s00029-011-0083-x}.

\bibitem{MY12}
\bgroup\scshape{}E.~Mukhin\egroup{} and \bgroup\scshape{}C.~Young\egroup{},
  Path description of type {B} $q$-characters,  \emph{Advances in Mathematics}
  \textbf{231} no.~2 (2012), 1119--1150.
  \doi{https://doi.org/10.1016/j.aim.2012.06.012}.

\bibitem{Nak04}
\bgroup\scshape{}H.~Nakajima\egroup{}, Quiver varieties and t-analogs of
  q-characters of quantum affine algebras,  \emph{Annals of Mathematics}
  \textbf{160} no.~3 (2004), 1057--1097. Available at
  \url{http://www.jstor.org/stable/3597332}.

\bibitem{Naoi24}
\bgroup\scshape{}K.~Naoi\egroup{}, Strong duality data of type {A} and extended
  {T}-systems,  \emph{Transformation Groups} (2024).
  \doi{10.1007/s00031-024-09860-5}.

\bibitem{Qin17}
\bgroup\scshape{}F.~Qin\egroup{}, {Triangular bases in quantum cluster algebras
  and monoidal categorification conjectures},  \emph{Duke Mathematical Journal}
  \textbf{166} no.~12 (2017), 2337 -- 2442. \doi{10.1215/00127094-2017-0006}.

\end{thebibliography}

       \providecommand{\bysame}{\leavevmode\hbox to3em{\hrulefill}\thinspace}
\providecommand{\noopsort}[1]{}
\providecommand{\mr}[1]{\href{http://www.ams.org/mathscinet-getitem?mr=#1}{MR~#1}}
\providecommand{\zbl}[1]{\href{http://www.zentralblatt-math.org/zmath/en/search/?q=an:#1}{Zbl~#1}}
\providecommand{\jfm}[1]{\href{http://www.emis.de/cgi-bin/JFM-item?#1}{JFM~#1}}
\providecommand{\arxiv}[1]{\href{http://www.arxiv.org/abs/#1}{arXiv~#1}}
\providecommand{\doi}[1]{\url{https://doi.org/#1}}
\providecommand{\MR}{\relax\ifhmode\unskip\space\fi MR }
\providecommand{\MRhref}[2]{%
  \href{http://www.ams.org/mathscinet-getitem?mr=#1}{#2}
}
\providecommand{\href}[2]{#2}

\end{document}